\theoremstyle{Summary}
\newtheorem{summary}{Summary}
\newtheorem{lemma}{Lemma}[section]
\newtheorem{proposition}[lemma]{Proposition}
\newtheorem{theorem}[lemma]{Theorem}
\newtheorem{condition}{Condition}
\newtheorem{remark}{Remark}
\newtheorem{example}{Example}[section]
\newtheorem{defn}{Definition}[section]
\newcommand{\R}[0]{\mathbb{R}}
\newcommand{\N}[0]{\mathbb{N}}
\newcommand{\sF}[0]{\mathcal{F}}
\newcommand{\sA}[0]{\mathcal{A}}
\newcommand{\sC}[0]{\mathcal{C}}
\newcommand{\sD}[0]{\mathcal{D}}
\newcommand{\NN}{\mathbb{N}}
\newcommand{\AAA}{\mathbb{A}}
\newcommand{\II}{\mathbb{I}}
\newcommand{\RR}{\mathbb{R}}
\newcommand{\cle}{\mathcal{E}}
\newcommand{\clh}{\mathcal{H}}
\newcommand{\clf}{\mathcal{F}}
\newcommand{\clg}{\mathcal{G}}
\newcommand{\clc}{\mathcal{C}}
\newcommand{\clz}{\mathcal{Z}}
\newcommand{\cly}{\mathcal{Y}}
\newcommand{\clu}{\mathcal{U}}
\newcommand{\clm}{\mathcal{M}}
\newcommand{\cla}{\mathcal{A}}
\newcommand{\ch}{\check}
\newcommand{\Om}{\Omega}
\newcommand{\Ups}{\Upsilon}
\newcommand{\chij}{\chi_J}
\newcommand{\vsg}{\varsigma}
\newcommand{\yr}{\mathfrak{b}^r}
\newcommand{\y}{\mathfrak{b}}
\newcommand{\id}{\mbox{id}}
\begin{document}

\begin{frontmatter}
\title{Simple Form Control Policies for Resource Sharing Networks with HGI Performance}
\runtitle{Control Policies for HGI Performance}

\begin{aug}
\author[A]{\fnms{Amarjit}~\snm{Budhiraja}\ead[label=e1]{budhiraj@email.unc.edu}}
\and
\author[B]{\fnms{Dane}~\snm{Johnson}\ead[label=e2]{djohnson52@elon.edu}}
\address[A]{Department of Statistics and Operations Research,
University of North Carolina,
Chapel Hill, NC 27599, USA.\printead[presep={,\ }]{e1}}

\address[B]{Department of Mathematics and Statistics,
Elon University,
 Elon, NC 27244, USA.\printead[presep={,\ }]{e2}}
\end{aug}

\begin{abstract}
We consider a family of resource sharing networks, known as bandwidth sharing models, in heavy traffic with general service and interarrival times. These networks, introduced in Massouli\'{e} and Roberts (2000) as models for internet flows, have the feature that a typical job may require simultaneous processing by multiple resources in the network. We construct simple form threshold policies that asymptotically achieve the Hierarchical Greedy Ideal (HGI) performance.
This performance benchmark, which was introduced in Harrison et al. (2014), is characterized by the following two features: every resource works at full capacity whenever there is work for that resource in the system;  total holding cost of  jobs of each type at any instant is the minimum cost possible for the associated vector of workloads. The control policy we provide is explicit in terms of a finite collection of vectors which can be computed offline by solving a system of linear inequalities. Proof of convergence is based on path large deviation estimates for renewal processes, Lyapunov function constructions, and analyses of suitable sample path excursions.
\end{abstract}

\begin{keyword}[class=MSC]
\kwd[Primary ]{60K20}
\kwd{68M20} \kwd{90B36}
\kwd[; secondary ]{60J70}
\end{keyword}
\begin{keyword}
\kwd{Stochastic processing networks}
\kwd{dynamic control}\kwd{asymptotic optimality}\kwd{heavy traffic}\kwd{Brownian control problems}
\kwd{diffusion approximations},\kwd{reflected Brownian motions}\kwd{resource sharing networks}\kwd{internet flows}
\end{keyword}

\end{frontmatter}

\section{Introduction}
In Harrison et al. (2014) \cite{harmandhayan} the authors formulated a challenging open problem of constructing simple form control policies, for Massouli\'{e} and Roberts \cite{masrob} Resource Sharing Networks (RSN) in heavy traffic, that asymptotically achieve the so called Hierarchical Greedy Ideal performance. In a recent work \cite{budjoh} we had made partial progress towards this goal. In the  current work we give a much more general treatment that removes some of the main restrictive assumptions of \cite{budjoh} and also provides a substantially more intuitive and easier to implement control policy.

We begin by describing the network model and the basic problem from \cite{harmandhayan}.
Resource sharing networks of the form considered in this work, which were originally proposed as models for internet flows \cite{masrob} (the terminology is due to \cite{verloop2005stability}), are  quite general processing systems that have the distinguishing feature that a typical job may require simultaneous processing  by multiple resources.  
We remark that they are a subset of the collection of stochastic networks introduced in \cite{har2}
and have the key feature that they are `one pass' systems with no feedback.
A  RSN considered in this work consists of $I$ resources (labeled $1, \ldots , I$) where the $i$-th resource has processing rate capacity $C_i$, $i = 1, \ldots , I$.
Jobs of type $1, \ldots , J$ arrive according to independent renewal processes with distributions depending on the job-type.  The job-sizes of different job-types are iid with distribution (depending on the type)  supported on the positive real line. We make usual assumptions on mutual independence. In general a job  will require simultaneous processing by several network resources. This  is made precise through a $I\times J$ incidence matrix $K$ for which $K_{i,j}= 1$ if $j$-th job-type requires processing by resource $i$ and $K_{i,j}=0$  otherwise. The processing of a job is accomplished by allocating to it the same instantaneous {\em flow rate}
 by all the resources  responsible for its simultaneous processing, and a job departs from the system when the integrated flow rate equals the size of the job. Throughout this work the term Resource Sharing Network (RSN) will refer to a network of the above form.
If at any time instant $\y = (\y_1, \ldots , \y_J)'$  is the nonnegative vector of flow rates allocated to various job-types, then $\y$ must satisfy the capacity constraint $K\y \le C$, where $C= (C_1, \ldots , C_I)'$.  There is a holding cost per unit time which is a linear function of the queue length.  Specifically, we let $h_j>0$ denote the cost per unit time per job for the $j$-th job type.

Optimal resource allocation problems for such networks are in general hard and intractable. A common approach when the system is critically loaded is through certain diffusion approximations that replace the original allocation problem by a certain Brownian control problem (BCP) (cf. \cite{har1}). Although in special cases such BCP can be explicitly solved, in general closed form solutions are not available and using this approach for constructing asymptotically optimal simple form allocation policies for general RSN becomes challenging. To address this, in \cite{harmandhayan}, authors propose to focus on a less demanding goal than constructing asymptotically optimal policies, which is to construct control policies that achieve Hierarchical Greedy Ideal(HGI) performance in the heavy traffic limit. Roughly speaking, HGI performance is the (in general sub-optimal) cost for a control in the BCP which has the following two features: (a) {\em No idleness:} Every resource works at full capacity whenever there is work for that resource in the system; (b) {\em Instantaneous holding cost minimization:} Total holding cost of  jobs of each type at any instant is the minimum cost possible for the associated vector of workloads.
Good performance of HGI control policies and comparison with other types of allocation policies, e.g. proportional fairness policies\cite{kelly1997charging, kelly1998rate, mo2000fair}
has been discussed in detail in \cite{harmandhayan} through simulations and numerical examples. This paper also put forward the challenging open problem of constructing simple form control policies for broad families of RSN that achieve HGI performance in the heavy traffic limit. 

In our recent work\cite{budjoh}  we made partial progress towards this open problem. Under a set of conditions on the topology of the RSN and system parameters (arrival and service rates, holding costs) the paper \cite{budjoh} constructed a sequence of threshold-type control policies, which, when the interarrival and service times are {\em exponentially distributed}, was proved to  asymptotically achieve HGI. One of the key assumptions imposed in \cite{budjoh} is the existence of a certain {\em ranking map} which identified a suitable form of priority among the various job-types.  This paper also provided some tractable sufficient conditions under which such a ranking map exists for a given network, nevertheless this is a restrictive condition and it is easy to produce examples of networks where this condition fails  (see for instance \cite[Example 6.6]{budjoh}).  Furthermore, in general constructing an explicit ranking map can be challenging.

In the current paper we revisit \cite{harmandhayan} and give a fairly complete solution to the open problem formulated there.  In particular we remove the key restrictive condition on the existence of a ranking map that was imposed in \cite{budjoh} and also allow for non-exponential interarrival and job-sizes. We impose a standard heavy traffic and  a stability condition (Condition \ref{cond:htc}), and a local traffic condition (Condition \ref{cond:locTraffic}). Both of these conditions were also assumed in \cite{harmandhayan} and \cite{budjoh}. The latter condition, first introduced in \cite{kankelleewil} is needed in order to ensure that the state space of the {\em workload process} is all of the positive orthant. Finally, although interarrival and job-sizes are not required to be exponential, we require them to satisfy a suitable exponential integrability condition (Condition \ref{eqn:mgfBnd}). Under these three conditions we introduce in Definition \ref{def:scheme} a sequence of control policies and prove that these policies achieve the HGI in the heavy traffic limit.  Our two main results are Theorem \ref{thm:disccost} and 
Theorem \ref{thm:ergcost}. The first proves the convergence to the HGI when the underlying cost is an infinite horizon discounted cost while the second shows the same result for the long-time cost per unit time (ergodic cost).
Specifically, HGI performance in the discounted cost setting is defined as the expectation of a functional of a reflected Brownian motion (with drift) in $\RR_+^I$ with normal reflections (namely the first expression in \eqref{eq:hgicosts}) while in the ergodic cost setting it is given as the expectation under the unique stationary distribution of the same reflected Brownian motion (namely the second expression in \eqref{eq:hgicosts}). The fact that these functionals involve the minimizer $\hat h$ defined in \eqref{eq:hhatdef}  captures the feature of instantaneous holding cost minimization, and the fact that the limit process is a reflected Brownian motion in $\RR_+^I$, which has the feature that the  reflection (which roughly corresponds to the  asymptotic idleness) occurs only when the process hits the faces of the orthant, captures the no-idleness property of HGI.

In addition to removing undesirable and restrictive assumptions, the other main contribution of this work is to the form of the control policy which we believe is significantly more intuitive and easy to implement than the policy presented in \cite{budjoh}. The control policy that we introduce in this work is given in terms of explicit thresholds determined from system parameters, and in addition requires the evaluation, for each $z\in\{0,1\}^J$, of $J$-dimensional vectors $v^b(z)$ and $v^c(z)$ (see below Proposition \ref{thm:fullCapVect}).  The evaluation of these  vectors can be done offline. Specifically, determining $v^b(z)$ for each fixed $z$ requires solving the inequalities $Av = 0$ and $Bv>0$ where $A$ and $B$ are given matrices (depending on $z$) with dimensions $r\times J$ and $s\times J$ where $r\le I$, $s\le J$. Similarly determining $v^c(z)$ for each fixed $z$ requires solving the inequalities $Av=0$, $Bv\ge 0$, $v\cdot f>0$ (or determining that no such $v$ exists) where $A$ and $B$ are given matrices of similar form and $f$ is a given vector in $\RR^J$. Once the vectors $v^c(z), v^b(z)$ and the thresholds are determined, the policy takes a simple explicit form which can be described by a single line (see \eqref{eq:singleline}) as opposed to the half-page description of the policy constructed in \cite{budjoh}.
Roughly speaking the policy works as follows. Consider a vector $z \in \{0,1\}^J$ representing the state of the system at some given time instant. An entry of $0$ in the vector $z$ means that the corresponding  job-type's queue is `far from empty' and an entry of $1$ means that the queue is `close to empty'. The vectors $v^c(z)$ and $v^b(z)$ will be used to make adjustments, depending on the state $z$, to the nominal instantaneous flow rate at the given instant to give the overall flow rate. The role of $v^c(z)$ will be to reduce the holding cost while keeping the net workload to be the same and ensuring that the queues close to empty do not get more than the nominal allocation. This vector helps with achieving the { second} feature of the HGI performance. The vector $v^b(z)$ is instrumental in achieving the { first} feature of the HGI performance by pushing the queues that are close to empty, away from $0$, so that idleness due to `blocking' is prevented. A detailed discussion of the policy can be found in Remark \ref{rem:policy}.

We now make some comments on the main ingredients in the proofs (additional discussion can be found in Remark \ref{rem:policy}). As noted previously, the two key characteristic properties of HGI performance are: {\em no idleness}, and {\em instantaneous holding cost minimization}.
 The main effort in the proof is to show that the sequence of control policies constructed here asymptotically have these two features. The main results that enable the verification of the first property are Propositions \ref{thm:skorokApprox} and \ref{thm:idleTimeBndMark} whereas the second feature emerges as a consequence of Proposition \ref{thm:costDiffResult}. Furthermore, as we need to consider an ergodic cost criterion, one needs to obtain suitable stability estimates that are uniform in the traffic parameter. This is done in Proposition \ref{thm:workloadExpBnd}. Proofs of these three results are the technical heart of this work. Some recurring tools in the various proofs are path large deviation estimate for renewal processes and excursion analyses of strong Markov processes. We were not able to find suitable references for the path large deviation estimates we need and so we provide self contained proofs of these results for reader's convenience (see Theorem \ref{thm:expTailBnd}). The excursion analyses that are used in the various proofs are guided by the form of our control policy which is described in terms of thresholds determined by a sequence of stopping times. The general approach of considering suitable excursions together with appropriate large deviation estimates in the analysis of control policies for networks in heavy traffic originates from the work of Bell and Williams\cite{belwil1, belwil2} (see also \cite{budgho1}, \cite{atakum} and \cite{budliusah} ). Our recent paper \cite{budjoh} also used an analogous approach, however in general the precise forms of excursions to consider,  and study of their properties, is problem specific and indeed such analyses constitute the major effort in the proofs. 
 
 The control policy we construct is in general not asymptotically optimal. However, it can be argued that when the function $\hat h$ introduced in \eqref{eq:hhatdef} is a nondecreasing function then the minimality properties of the Skorohod map imply that our policy is indeed asymptotically optimal. This is a consequence of a more general result on asymptotic lower bounds on costs of arbitrary control policies for resource sharing networks which will be reported elsewhere.
 
 The remaining paper is organized as follows. We close this section with the notations and conventions used in this work. Section \ref{sec:mainresults} presents the model description, our main assumptions, the sequence of control policies that we study, and our main results. Rest of the paper is devoted to the proofs of the main results, namely Theorems \ref{thm:disccost} and \ref{thm:ergcost}. Details of proof organization can be found at the end of Section \ref{sec:mainres}.

\subsection{Notation and Conventions.} 
For $j \in \N$, let $\sD^j = \sD([0,\infty) : \R^j)$ (resp. $\sD_+^j = \sD([0,\infty) : \R_+^j)$) denote the space of functions that are right continuous with left limits (RCLL) from $[0, \infty)$ to $\R^j$ (resp. $\R_+^j$) equipped with the usual Skorohod topology. 
Also, let  $\sC^j= \sC([0,\infty) : \R^j)$ (resp. $\sC_+^j = \sC_+([0,\infty) : \R^j)$) denote the space of continuous functions from
$[0, \infty)$ to $\R^j$ (resp. $\R_+^j$) equipped with the local uniform topology. 
For $T<\infty$, the
 spaces $\sD([0,T] : \R^j)$, $\sC([0,T] : \R^j)$, $\sD([0,T] : \R_+^j)$,  $\sC([0,T] : \R_+^j)$ are defined similarly.
 All stochastic processes in this work will have sample paths that are RCLL unless noted explicitly.  For $m \in \N$, we denote by $\AAA_m$ the set $\{1, 2, \ldots, m\}$ and $\chi_m$ the finite set of all vectors in $\RR^m$ with entries $0$ or $1$.
For $r \in \NN$, $\NN_{1/r} \doteq \frac{1}{r} \NN_0$  is the scaled (nonnegative) integer lattice.
 We will frequently do component-wise operations on vectors.  For instance, given two vectors $v^{1},v^{2}\in\mathbb{R}^{d}$ we will use $v^{1}v^{2}$ to denote component-wise multiplication. The vector $v^1/v^2$ is defined similarly.
For a vector $v\in\mathbb{R}^{d}$ and a constant $c\in\mathbb{R}$ we will interpret $v\vee c$, $v \wedge c$, and $v-c$ component-wise, e.g.
$
v\vee c=\left( v_{1}\vee c,v_{2}\vee c,...,v_{J}\vee c\right).$
We will also treat inputs to vectors of functions component-wise so for a vector $v\in\mathbb{R}^{d}$, a constant $c\in\mathbb{R}$, and a vector of functions $f=\left(f_{1},f_{2},...,f_{d}\right)$, from $\R$ to itself, we write
\begin{equation*}
f(v)=\left(f_{1}(v_{1}), f_{2}(v_{2}),...,f_{d}(v_{d})\right),\;\;
\mathbb{I}_{\{v\geq c\}}=\left(\mathbb{I}_{\{v_{1}\geq c\}},\mathbb{I}_{\{v_{2}\geq c\}},...,\mathbb{I}_{\{v_{d}\geq c\}}\right).
\end{equation*}
We will use coordinate-wise inequalities on vectors, e.g.  for  $v^1, v^2\in\mathbb{R}^{d}$ and $c\in\mathbb{R}$ the statements $v^1\ge v^2$
and $v^1\geq c$ mean  $v^1_{j}\geq v^2_{j}$  and $v^1_{j}\geq c$ for all $j\in\AAA_{d}$, resp. Inequalities for vector valued functions are interpreted pointwise and coordinate-wise.  For  $v\in\mathbb{R}^{d}$,
$|v |_{1}=\sum_{j=1}^{d}|v_{j}|$ and 
$|v |_{2}=\left(\sum_{j=1}^{d}|v_{j}|^{2}\right)^{\frac{1}{2}}$. 
As a convention, for a real sequence $\{a(l)\}_{l\in \NN}$, $\sum_{l=1}^{n} a(l)$ is taken to be $0$ if $n=0$.

\section{Main Results}
\label{sec:mainresults}
 A RSN in heavy traffic is described through a sequence of models, indexed by a traffic parameter $r \in \NN$, each of which has the same underlying network topology. 
Each network in the sequence consists of  $J$ types of jobs and $I$ resources and the network topology is described through an $I\times J$ matrix $K$ for which  $K_{i,j}=1$ if the $j$th job requires service from  the $i$th resource and $K_{i,j}=0$ otherwise. A job of type $j$ will be processed simultaneously by all resources in the set $\{i: K_{i,j}=1\}$ and so each resource in this set will allocate the same amount of processing capacity to the job at any instant.
As $r$ goes to $\infty$ the networks approach criticality in that the traffic intensity converges to $1$.
Specifically, as made precise in Condition \ref{cond:htc},
with the asymptotic load vector $\rho$ as defined in that condition, $C_i = \sum_{j=1}^J K_{i,j}\rho_j$ for every resource $i = 1, \ldots, I$ which says that the capacity of each resource equals the asymptotic load on the resource.
  In the $r$-th system $\{u^{r}_{j}(l)\}_{l=1}^{\infty}$ and $\{v^{r}_{j}(l)\}_{l=1}^{\infty}$ are the i.i.d, mutually independent, interarrival times and job-sizes for job type $j$, given on some probability space $(\Om, \clf, P)$,  with means $E[u^{r}_{j}(l)]=\frac{1}{\alpha^{r}_{j}}$ and $E[v^{r}_{j}(l)]=\frac{1}{\beta^{r}_{j}}$ and finite standard deviations $\sigma^{u,r}_{j}$ and $\sigma^{v,r}_{j}$.  We assume a first in, first out (FIFO) policy meaning that for each job type the oldest job in the queue is processed before another one is started.  In the case where the job sizes are exponentially distributed the ``memoryless" property implies that the precise manner of the allocation of the flow rate among jobs in the queue of a particular job type does not impact the distribution of the queue-length process.  Consequently if the job sizes are exponentially distributed we can drop the FIFO assumption in favor of something else, such as  a processor sharing policy that is common in the literature, without altering the results.
  
  We now introduce our main assumptions.
\subsection{Assumptions}

We will assume that
\begin{equation*}
P(u^{r}_{j}(1)>0)=P(v^{r}_{j}(1)>0)=1 \text{ for all } r \text{ and } j.
\end{equation*}
In fact, for notational convenience  we will  simply  assume (without loss of generality) that $v^{r}_{j}(l)>0$ and $u^{r}_{j}(l)>0$ for all $j\in\AAA_J$ and $l\in\mathbb{N}$.\\
 The following assumption on  finite moment-generating functions  in the neighborhood of the origin will be used to obtain certain large deviation estimates. Recall  that for $m \in \NN$, $\AAA_m=\{1,...,m\}$.
\begin{condition}
\label{eqn:mgfBnd}
There exists $\delta>0$ such that for all $y<\delta$ and $j\in\AAA_J$.
\begin{equation*}
\sup_{r>0}E\left[ e^{yv^{r}_{j}(1)}\right]<\infty,\;\;
\sup_{r>0}E\left[ e^{yu^{r}_{j}(1)}\right]<\infty .
\end{equation*}
\end{condition}
The following is our main {\em heavy traffic condition}.  
\begin{condition}
	\label{cond:htc}
For each $j\in \AAA_J$ there exist $\bar{\alpha}_{j},\bar{\beta}_{j}, \alpha_{j}, \beta_{j}, \sigma^{u}_{j},\sigma^{v}_{j}\in (0,\infty)$ such that $$\lim_{r\rightarrow\infty}r(\alpha^{r}_{j}-\alpha_{j})=\bar{\alpha}_{j},\;\; \lim_{r\rightarrow\infty}r(\beta^{r}_{j}-\beta_{j})=\bar{\beta}_{j},\;\; \lim_{r\rightarrow\infty}\sigma^{u,r}_{j}=\sigma^{u}_{j},\;\;  \lim_{r\rightarrow\infty}\sigma^{v,r}_{j}=\sigma^{v}_{j}.$$
Furthermore, with $\rho=\alpha/\beta$ we have $C=K \rho$ and with $\eta =\beta^{-2} (\bar{\alpha} \beta -\alpha\bar{\beta})$ and $\theta \doteq K\eta$ we have $\theta<0$.  Note that this implies $\lim_{r\rightarrow\infty}r(\rho^{r}-\rho)=\eta$ where $\rho^r = \alpha^r/\beta^r$.
\end{condition}
The $\theta<0$ part of the condition is a key stability assumption which will be crucially used in obtaining various types of uniform in time moment estimates (see e.g. Section \ref{sec:stabproofs}). 

Finally, we make the following local traffic assumption. It says that, every resource has at least one associated job-type that requires service from only that particular resource. This assumption was also made in  \cite{kankelleewil},  \cite{harmandhayan}, and \cite{budjoh}.
\begin{condition}
\label{cond:locTraffic}
For every $i\in\AAA_I$ there exists $j\in\AAA_J$ such that $K_{i,j}=1$ and $K_{l,j}=0$ for $l\neq i$.
\end{condition}

When the above condition does not hold the situation is quite different in that the HGI performance is not given by a reflected Brownian motion(RBM) in an orthant since the workload process may not achieve all vectors in $\mathbb{R}_+^I$. Consider for example the case where $I=2, J=2$ and the job-resource matrix $K$ is 
  $\begin{bmatrix}
1 & 1 \\
0 & 1\\
\end{bmatrix}$. In this case, if $\beta = [1,1]'$, the limiting Brownian motion will live in the wedge $\{(x_1, x_2): 0\le x_2 \le x_1<\infty\}$. Treating such settings will require additional new ideas and is beyond the scope of the current work.

\subsection{State Processes}
The arrival and service processes for job type $j \in \AAA_J$ are, resp., 
\begin{equation*}  
A^{r}_{j}(s)=\max\left\{n\geq 0: \sum_{l=1}^{n}u^{r}_{j}(l)\leq s\right\}, \;
S^{r}_{j}(s)=\max\left\{n\geq 0: \sum_{l=1}^{n}v^{r}_{j}(l)\leq s\right\}.
\end{equation*} 
The initial state of the system is described by the $J$-dimensional queue length vector $q^r \in \NN^{J}$
and residual arrival and service time vectors in $\RR_+^J$, defined as
%
%
%
\begin{equation*}
\Upsilon^{A,r}=\left(\Upsilon^{A,r}_{1},\Upsilon^{A,r}_{2},...,\Upsilon^{A,r}_{J}\right),\;\;
\Upsilon^{S,r}=\left(\Upsilon^{S,r}_{1},\Upsilon^{S,r}_{2},...,\Upsilon^{S,r}_{J}\right).
\end{equation*}
Here $\Upsilon^{A,r}_{j}$ [ resp. $\Upsilon^{S,r}_{j}$] represent the deterministic times after which the arrivals [resp. processing times] are governed by the renewal processes $\{A^{r}_{j}(s)\}_{s\ge 0}$ [resp.  $\{S^{r}_{j}(s)\}_{s\ge 0}$  ]. 
 These quantities capture initial state configurations when the  evolution is viewed onwards from a time instant at which the system has been in operation for some time.
 
A key ingredient in the state evolution of the queue-length process is a capacity allocation control policy which is described as a continuous $\RR_+^J$ valued stochastic process $B^r(\cdot)$ with appropriate measurability and feasibility properties that will be specified later in the section. Roughly speaking, for $j\in \AAA_J$, $B^r_j(t)$ specifies the cumulative amount of capacity used by type-$j$ jobs, in the $r$-th system, by time instant $t$.
Given such initial conditions and a control policy, the $J$-dimensional queue-length process is given by the equation
\begin{equation}\label{eq:qrt}
Q^{r}(t)=q^r+A^{r}\left(\left(t-\Upsilon^{A,r}\right)^{+}\right)+\mathcal{I}_{\{t\geq\Upsilon^{A,r}>0\}}-S^{r}\left(\left(B^{r}(t)-\Upsilon^{S,r}\right)^{+}\right)-\mathcal{I}_{\{B^{r}(t)\geq\Upsilon^{S,r}>0\}}.
\end{equation}
This evolution captures the fact that the queue length, of say the $j$-th job type, corresponds to the initial queue length $q^r_j$, plus all the arrivals that have occurred according to the renewal process $A^r_j$, with an additional arrival at time instant $\Upsilon^{A,r}$ if this quantity is positive, minus all the jobs that have been been completed according to the renewal process $S^r_j$, with an additional departure at instant $t$ where $B^r(t)= \Upsilon^{S,r}$ if the latter quantity is nonzero.

Let $M^{r}$ be the $J\times J$ diagonal matrix with entries $\{\frac{1}{\beta^{r}_{j}}\}_{j=1}^{J}$ and let $M$ be the $J\times J$ diagonal matrix with entries $\{\frac{1}{\beta_{j}}\}_{j=1}^{J}$. 
The  $I$-dimensional workload process $W^r(t)$ associated with the queue-length process $Q^r(t)$ is then given by the equation
\begin{eqnarray*}
W^r(t) \doteq KM^{r}Q^{r}(t)&=&KM^{r}q^r+KM^{r}\left(A^{r}\left(\left(t-\Upsilon^{A,r}\right)^{+}\right)-S^{r}\left(\left(B^{r}(t)-\Upsilon^{S,r}\right)^{+}\right)\right)\\
&&+KM^{r}\mathcal{I}_{\{t\geq\Upsilon^{A,r}>0\}}-KM^{r}\mathcal{I}_{\{B^{r}(t)\geq\Upsilon^{S,r}>0\}}.\\
\end{eqnarray*}
Note that this is a $I$-dimensional process which captures  the amount of work in the system at any instant for each of the $I$ resources in the network.\\

\noindent {\bf Scaled Processes.}  In order to study the behavior as the systems approach criticality, we will consider two types of scaling: diffusion scaling and fluid scaling. In both of these scalings, time is accelerated by a 
factor of $r^2$, but in  the first type of scaling the magnitude is scaled down by a factor of $r$ while in the second scaling  the magnitude is scaled 
down by factor of $r^2$. Processes obtained using diffusion scaling will typically be denoted using a `hat' symbol while the processes with fluid scaling 
will be denoted using a `bar' symbol. In particular, we define
 $$\hat{\Upsilon}^{A,r}=\frac{1}{r}\Upsilon^{A,r}, \;\; \hat{\Upsilon}^{S,r}=\frac{1}{r}\Upsilon^{S,r},\;\; \bar{\Upsilon}^{A,r}=\frac{1}{r^{2}}\Upsilon^{A,r}, \;\; \bar{\Upsilon}^{S,r}=\frac{1}{r^{2}}\Upsilon^{S,r}.$$
Similarly, we define $\hat Q^r(t) \doteq Q^r(r^2t)/r$, $\hat W^r(t) \doteq W^r(r^2t)/r$, and $\bar B^r(t) \doteq B^r(r^2t)/r^2$.
Letting,
\begin{align*} 
\hat{A}^{r}_{j}(s)&=\frac{1}{r}\max\left\{n\geq 0: \sum_{l=1}^{n}u^{r}_{j}(l)\leq r^{2}s\right\}-rs\alpha^{r}_{j},\\
\hat{S}^{r}_{j}(s)&=\frac{1}{r}\max\left\{n\geq 0: \sum_{l=1}^{n}v^{r}_{j}(l)\leq r^{2}s\right\}-rs\beta^{r}_{j}
\end{align*} 
we see that, with $\hat q^r = q^r/r$
\begin{eqnarray*}
\hat{Q}^{r}(t)&=& \hat q^r+\hat{A}^{r}\left(\left(t-\bar{\Upsilon}^{A,r}\right)^{+}\right)-\hat{S}^{r}\left(\left(\bar{B}^{r}(t)-\bar{\Upsilon}^{S,r}\right)^{+}\right)+\frac{1}{r}\mathcal{I}_{\{t\geq\bar{\Upsilon}^{A,r}>0\}}\nonumber\\
&&-\frac{1}{r}\mathcal{I}_{\{\bar{B}^{r}(t)\geq\bar{\Upsilon}^{S,r}>0\}}
+r\left(\alpha^{r}t-\beta^{r}\bar{B}^{r}(t)\right)-r\alpha^{r}\left(t\wedge \bar{\Upsilon}^{A,r}\right)+r\beta^{r}\left(\bar{B}^{r}(t)\wedge \bar{\Upsilon}^{S,r}\right)\label{eq:303n}
\end{eqnarray*}
and the corresponding diffusion-scaled workload process, using the identities $M^r \alpha^r = \rho^r$ and $K\rho=C$, is
\begin{eqnarray*}
\hat{W}^{r}(t) &=&
KM^{r}\hat{q}^{r}+KM^{r} \hat{A}^{r}\left(\left(t-\bar{\Upsilon}^{A,r}\right)^{+}\right)-KM^{r}\hat{S}^{r}\left(\left(\bar{B}^{r}(t)-\bar{\Upsilon}^{S,r}\right)^{+}\right)\\
&&+\frac{1}{r}KM^{r}\mathcal{I}_{\{t\geq\bar{\Upsilon}^{A,r}>0\}}-\frac{1}{r}KM^{r}\mathcal{I}_{\{\bar{B}^{r}(t)\geq\bar{\Upsilon}^{S,r}>0\}}+rKM^{r}(\alpha^{r}t-\beta^{r}\bar{B}^{r}(t))\\
&& -rKM^{r}\alpha^{r}\left(t\wedge \bar{\Upsilon}^{A,r}\right)+rKM^{r}\beta^{r}\left(\bar{B}^{r}(t)\wedge \bar{\Upsilon}^{S,r}\right)
\\
&=&KM^{r}\hat{q}^{r}+KM^{r} \hat{A}^{r}\left(\left(t-\bar{\Upsilon}^{A,r}\right)^{+}\right)-KM^{r}\hat{S}^{r}\left(\left(\bar{B}^{r}(t)-\bar{\Upsilon}^{S,r}\right)^{+}\right)\\
&&+\frac{1}{r}KM^{r}\mathcal{I}_{\{t\geq\bar{\Upsilon}^{A,r}>0\}}-\frac{1}{r}KM^{r}\mathcal{I}_{\{\bar{B}^{r}(t)\geq\bar{\Upsilon}^{S,r}>0\}}+rtK(\rho^{r}-\rho) \\
&&+r(Ct-K\bar{B}^{r}(t)) -rK\rho^{r}\left(t\wedge \bar{\Upsilon}^{A,r}\right) +rK\left(\bar{B}^{r}(t)\wedge \bar{\Upsilon}^{S,r}\right).
\end{eqnarray*}
It will be convenient to introduce the processes
\begin{align}
\hat{X}^{r}(t)\doteq& KM^{r}\hat{A}^{r}\left(\left(t-\bar{\Upsilon}^{A,r}\right)^{+}\right)+\frac{1}{r}KM^{r}\mathcal{I}_{\{t\geq\bar{\Upsilon}^{A,r}>0\}}-KM^{r}\hat{S}^{r}\left(\left(\bar{B}^{r}(t)-\bar{\Upsilon}^{S,r}\right)^{+}\right)\nonumber\\
&-\frac{1}{r}KM^{r}\mathcal{I}_{\{\bar{B}^{r}(t)\geq\bar{\Upsilon}^{S,r}>0\}} 
+rtK(\rho^{r}-\rho) -rK\rho^{r}\left(t\wedge \bar{\Upsilon}^{A,r}\right) +rK\left(\bar{B}^{r}(t)\wedge \bar{\Upsilon}^{S,r}\right) \label{eq:Xhat}
\end{align} 
and
\begin{equation*}
\hat{U}^{r}(t)=r(Ct-K\bar{B}^{r}(t))
\end{equation*}
so that, with $\hat w^r \doteq  KM^{r}\hat{q}^{r}$, we have
\begin{equation}\label{eq:eq424}
\hat{W}^{r}(t)=\hat w^r +\hat{X}^{r}(t)+\hat{U}^{r}(t).
\end{equation}
\subsection{Admissible Control Policies}
We now  specify what types of allocation policies are admissible. Roughly speaking an admissible control policy should not look into the future. This is made precise by introducing certain multiparameter filtrations.
Recall the probability space $(\Om, \clf, P)$ on which the sequences $\{u^{r}_{j}(l)\}_{l=1}^{\infty}$ and $\{v^{r}_{j}(l)\}_{l=1}^{\infty}$ are defined.
\begin{defn}
For $n=(n_{1},...,n_{J})\in\mathbb{N}_0^{J}$ and $m=(m_{1},...,m_{J})\in\mathbb{N}_0^{J}$ let 
\begin{equation*}
\mathcal{F}^{r}(n,m)=\sigma\{u^{r}_{j}(\tilde{n}_{j}), v^{r}_{j}(\tilde{m}_{j}):0\le \tilde{n}_{j}\leq n_{j},0\le \tilde{m}_{j}\leq m_{j}, j\in\AAA_J\},
\end{equation*}
where by convention $u^{r}_{j}(0)= v^{r}_{j}(0)=0$.
Let 
\begin{equation*}
\mathcal{F}^{r}=\sigma\left\{\bigcup_{(n,m)\in\mathbb{N}^{2J}}\mathcal{F}^{r}(n,m)\right\}
\end{equation*}
\end{defn}
Note that $\{\mathcal{F}^{r}(n,m), n,m\in\mathbb{N}^{J}\}$ is a multiparameter filtration generated by the arrival and service times with the following partial ordering:
\begin{equation*}
(n,m)\leq (\tilde n,\tilde m) \text{ if and only if } n_{j}\leq \tilde n_{j} \text{ and } m_{j}\leq \tilde m_{j} \text{ for all } j\in\AAA_J.
\end{equation*}
For some basic definitions of multiparameter filtrations and multiparameter stopping times, see \cite[Chapter 2, Section 8]{ethier2009markov}.
An admissible control policy is defined as follows.
\begin{defn}
\label{def:AdmissableCntrl}
For $r \in \N$,
$B^{r}(\cdot)$ is an admissible policy  (for the $r$-th system) for the initial condition $(q^r, \Ups^r=(\Upsilon^{A,r}, \Upsilon^{S,r})) \in \N_0^J\times \R_+^{2J}$ if the following hold:
\begin{enumerate}[(a)]
\item The stochastic process $B^{r}(\cdot)$ has sample paths that  are absolutely continuous, nonnegative, nondecreasing functions from $[0,\infty)\rightarrow\mathbb{R}^{J}$ with $B^{r}(0)=0$.
\item $C \geq K\frac{d}{dt}B^{r}(t)$ for a.e. $t\geq 0$, a.s.
\item The process $Q^r(\cdot)$ defined by the right side of \eqref{eq:qrt} satisfies
$Q^{r}(t)\geq0$ for all $t\geq 0$.
\item For each $r\in \N$ and $t\ge 0$ consider the $\mathbb{N}^{2J}$ valued random variable
	\begin{equation*}
	\tau^{r}(t)\doteq (\tau^{r,A}_{1}(t),...,\tau^{r,A}_{J}(t),\tau^{r,S}_{1}(t),...,\tau^{r,S}_{J}(t))
	\end{equation*}
	where for  $j\in\mathbb{N}^{J}$
	\begin{equation*}
	 \tau^{r,A}_{j}(t)=\min\left\{n\geq 0: \sum_{l=1}^{n}u^{r}_{j}(l)\geq r^{2}\left(t-\bar{\Upsilon}^{A}_{j}\right)^{+}\right\}
	\end{equation*}
	and
	\begin{equation*}
	 \tau^{r,S}_{j}(t)=\min\left\{n\geq 0: \sum_{l=1}^{n}v^{r}_{j}(l)\geq r^{2}\left(\bar{B}^{r}_{j}(t)-\bar{\Upsilon}^{S}_{j}\right)^{+}\right\},
	\end{equation*}
	where by convention, $\tau^{r,A}_{j}(t)$ [resp. $\tau^{r,S}_{j}(t)$] is defined to be $0$ if $t \le \bar{\Upsilon}^{A}_{j}$ [resp. $\bar{B}^{r}_{j}(t)\le \bar{\Upsilon}^{S}_{j}$].
	 Then $\tau^{r}(t)$ is an $\{\mathcal{F}^{r}(n,m)\}$-stopping time for all $t\geq 0$.
\item Consider the filtration 
	\begin{equation*}
	\clg^r(t)\doteq \mathcal{F}^{r}(\tau^{r}(t))=\sigma\{A\in\mathcal{F}^{r}:A\cap\{\tau^{r}(t)\leq (n,m)\}\in \mathcal{F}^{r}(n,m) \text{ for all } (n,m)\in\mathbb{N}^{2J}\}
	\end{equation*}
	Then $B^{r}(r^{2}t)$ is $\{\clg^{r}(t)\}$-adapted.
\end{enumerate}
Define $\sA$ to be the set of admissable controls.
\end{defn}
Parts (b) and (c) give the feasibility requirements on the control policy whereas parts (d) and  (e) make precise the non-anticipativity property of an admissible   policy.

\subsection{Proposed Control Policy}
\label{sec:propcon}
In this section we introduce our proposed control policy which will be shown to achieve the HGI performance asymptotically.
Recall that $K$ is an $I\times J$ matrix and $M$ is a $J\times J$ diagonal matrix with diagonal entries $\{\frac{1}{\beta_{j}}\}_{j=1}^{J}$. 
Let $h\in \RR^J$ such that $h>0$ be a given holding cost vector. This vector will determine the discounted and ergodic cost functions that will be introduced later.

 For  $w\in \mathbb{R}^{I}_{+}$ define
\begin{equation}\label{eq:hhatdef}
\Lambda(w)=\{q\in \mathbb{R}^{J}_{+}:KMq=w\}, \;\;
\hat{h}(w)=\inf_{q\in\Lambda(w)}(h\cdot q).
\end{equation}
Let $v_{j}$ be the $j$th column of $K$ and note that due to the local traffic condition (Condition \ref{cond:locTraffic}) the span of $\{v_{j}\}_{j=1}^{J}$ is $\mathbb{R}^{I}$.  As a consequence, for any $w\in\mathbb{R}^{I}$,  $\Lambda(w)$ is a nonempty compact subset of $\mathbb{R}^{J}$.  In particular  there exists $q^{*} = q^*(w)\in\Lambda(w)$ such that $h\cdot q^{*}=\hat{h}(w)$.

Define 
\begin{equation*}
\mathcal{C}_{K}^{h}=\{u\in \mathbb{R}^{J}: \textbf{0}=Ku \text{ and } (h\beta)\cdot u = 0\}
\end{equation*}
and note that $\mathcal{C}_{K}^{h}$ is a linear subspace of $\text{ker}(K)$.  Note that either $\mathcal{C}_{K}^{h}=\text{ker}(K)$ or $\text{dim}(\mathcal{C}_{K}^{h})=\text{dim}(\text{ker}(K))-1=J-I-1$.  If $\mathcal{C}_{K}^{h}=\text{ker}(K)$ then for any $w\in\mathbb{R}_+^{I}$ all $q\in\mathbb{R}_+^{J}$ which satisfy $w=KMq$ have the same cost, namely if $q, \tilde q \in \R^J_+$ satisfy $KMq=KM\tilde q=w$ then $h\cdot q = h \cdot \tilde q$.  Note that this situation, where all queue length vectors which give the same workload have the same holding cost, is trivial in the sense that instantaneous holding cost minimization is no longer a concern and the focus is entirely on preventing idleness.  The policy that we will present below (see Definition \ref{def:scheme}) takes a simpler form in this setting in that the vector $v^c(z)$ used for rate allocation adjustment in \eqref{eq:singleline} is simply $0$. However, as was pointed out to us by Mike Harrison in a private communication, one can treat this trivial case in a more elementary manner by simply giving lowest priority to the local traffic jobs 
so that the non-local traffic class form a subcritical RSN.  As a result in the diffusion-scaled heavy traffic limit the non-local traffic queue-lengths go to $0$ and the local traffic queue-lengths give the desired reflected Brownian motion workload.  We have chosen to present the alternative policy for this case as in Definition \ref{def:scheme} since it allows us to cover the two cases (namely $\text{dim}(\mathcal{C}_{K}^{h})=\text{dim}(\text{ker}(K))-1$ and $\text{dim}(\mathcal{C}_{K}^{h})=\text{dim}(\text{ker}(K))$) in a unified approach.  

We select an orthonormal basis of $\mathbb{R}^{J}$,  $(u_{1},...,u_{J})$, such that  $(u_{1},...,u_{J-I})$ is an orthonormal basis of $\text{ker}(K)$, 
and, in the case $\text{dim}(\mathcal{C}_{K}^{h})=\text{dim}(\text{ker}(K))-1=J-I-1$,
$\text{span}(u_{1},...,u_{J-I-1})=\mathcal{C}_{K}^{h}$, and $u_{J-I}$ is orthogonal to $\mathcal{C}_{K}^{h}$ and satisfies $h\beta\cdot u_{J-I}<0$.  \\ The latter  quantity will play an important role and we define
\begin{equation*}
\lambda\doteq h\beta\cdot u_{J-I}.
\end{equation*}
Note that since $\lambda<0$, adjusting the queue length by adding $\beta u_{J-I}$ reduces the cost while maintaining the same workload.
For  $q\in\mathbb{R}^{J}$ define
\begin{equation*}
\Xi (q)\doteq\{v\in\text{ker}(K):q+v\beta\geq 0 \}.
\end{equation*}
Note that $\Xi (q)$ is a compact set.
Let
\begin{equation*}
\tilde{d}(q)\doteq 
\begin{cases}
\sup_{v\in\Xi(q)}\{v\cdot u_{J-I}\} &\mbox{ if } \text{dim}(\mathcal{C}_{K}^{h})=\text{dim}(\text{ker}(K))-1=J-I-1\\
0&\mbox{ otherwise.}
\end{cases}
\end{equation*}
The following proposition gives a precise measure for how far away $q$ is  from the cost minimizing queue length for the workload $KMq$.  
\begin{proposition}[Proof in Section \ref{sec:gennet}]
\label{thm:lambdaEqu}
For all $q\in\mathbb{R}_{+}^{J}$ we have
$h\cdot q-\hat{h}\left(KMq\right)=|\lambda| \tilde{d}(q).$
\end{proposition}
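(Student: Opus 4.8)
The plan is to reduce the claim to a one–dimensional linear optimization over the compact set $\Xi(q)$ by a change of variables, and then evaluate that optimization using the chosen orthonormal basis of $\ker K$.

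First I would fix $q\in\mathbb{R}_+^J$, set $w=KMq$, and identify
\[
\Lambda(w)=\{\,q+\beta v : v\in\Xi(q)\,\}.
\]
Indeed, if $\tilde q\in\Lambda(w)$ then $v\doteq M(\tilde q-q)$ satisfies $Kv=KM\tilde q-KMq=w-w=0$, so $v\in\ker K$, and $\tilde q=q+M^{-1}v=q+\beta v\ge 0$, so $v\in\Xi(q)$; conversely, for any $v\in\Xi(q)$ the vector $q+\beta v$ lies in $\mathbb{R}_+^J$ and $KM(q+\beta v)=KMq+Kv=w$. Consequently
\[
\hat h(w)=\inf_{v\in\Xi(q)} h\cdot(q+\beta v)=h\cdot q+\inf_{v\in\Xi(q)}(h\beta)\cdot v,
\]
so that $h\cdot q-\hat h(KMq)=-\inf_{v\in\Xi(q)}(h\beta)\cdot v$. (The infimum is attained by compactness of $\Xi(q)$, but this is not needed for the identity.)

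Next I would evaluate the linear functional $v\mapsto(h\beta)\cdot v$ on $\ker K$, splitting into the two cases of the definition of $\tilde d$. If $\mathcal{C}_K^h=\ker K$, then $(h\beta)\cdot v=0$ for every $v\in\ker K\supseteq\Xi(q)$, so the right-hand side above is $0$; since also $\tilde d(q)=0$ by definition in this case and $\lambda$ is finite, the identity holds trivially. If instead $\dim(\mathcal{C}_K^h)=\dim(\ker K)-1=J-I-1$, then any $v\in\ker K$ expands as $v=\sum_{k=1}^{J-I}(v\cdot u_k)u_k$; since $u_1,\dots,u_{J-I-1}\in\mathcal{C}_K^h$ we have $(h\beta)\cdot u_k=0$ for $k\le J-I-1$, while $(h\beta)\cdot u_{J-I}=\lambda$ by definition, and therefore $(h\beta)\cdot v=\lambda\,(v\cdot u_{J-I})$ for all $v\in\ker K$. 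Combining this with the previous paragraph and using $\lambda<0$,
\[
h\cdot q-\hat h(KMq)=-\inf_{v\in\Xi(q)}\lambda\,(v\cdot u_{J-I})=-\lambda\sup_{v\in\Xi(q)}(v\cdot u_{J-I})=|\lambda|\,\tilde d(q),
\]
where the middle step uses that multiplication by the negative number $\lambda$ interchanges $\inf$ and $\sup$, and the last step is the definition of $\tilde d(q)$; this also recovers the expected sign $\tilde d(q)\ge 0$, since $0\in\Xi(q)$.

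There is no serious obstacle: once the parametrization $\tilde q\leftrightarrow v=M(\tilde q-q)$ of $\Lambda(w)$ is in place, the argument is elementary linear algebra together with the defining properties of the basis vectors $u_1,\dots,u_{J-I}$ and of $\lambda$. The only point that needs a little care is the sign bookkeeping in the final display — because $\lambda<0$, the cost-reducing direction is $+\beta u_{J-I}$ (as noted in the text preceding the proposition), so the infimum of $(h\beta)\cdot v$ over $\Xi(q)$ corresponds to the supremum of $v\cdot u_{J-I}$, which is exactly $\tilde d(q)$.
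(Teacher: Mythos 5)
Your proposal is correct and follows essentially the same route as the paper's proof: reparametrize $\Lambda(KMq)$ via $\tilde q\mapsto v=(\tilde q-q)/\beta$ to reduce the infimum over $\Lambda(KMq)$ to an infimum of the linear functional $v\mapsto(h\beta)\cdot v$ over $\Xi(q)$, then expand in the orthonormal basis of $\ker K$ to see that only the $u_{J-I}$ component contributes, and use $\lambda<0$ to convert the infimum into $|\lambda|\sup_{v\in\Xi(q)}(v\cdot u_{J-I})=|\lambda|\tilde d(q)$. The only cosmetic difference is that you treat the trivial case $\mathcal{C}_K^h=\ker K$ inline, whereas the paper dispatches it by reference to the discussion preceding the proposition.
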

\begin{figure}\centering
\includegraphics[height=1in, width=3in]{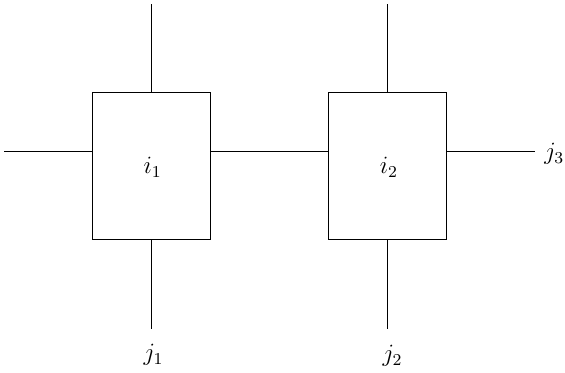}
\caption{2LLN Network}
\end{figure}
To illustrate the distinction between the trivial case where $\mathcal{C}_{K}^{h}=\text{ker}(K)$ and the nontrivial case where $\text{dim}(\mathcal{C}_{K}^{h})=\text{dim}(\text{ker}(K))-1$ we consider two examples of networks with $I=2$ resources and $J=3$ job types (referred to as 2LLN in \cite{harmandhayan}) pictured in Figure 1.
\begin{example}
\label{ex:trivial}
Consider a 2LLN network (see Figure 1) where $\alpha=\beta=(1,1,1)'$ so $\rho=(1,1,1)'$ and $C=(2,2)'$.  Let the holding cost vector be $h=(1,1,2)'$.
\end{example}

\begin{example}
\label{ex:nontrivial}
Just like Example \ref{ex:trivial} we consider a 2LLN network (see Figure 1) where $\alpha=\beta=(1,1,1)'$ so $\rho=(1,1,1)'$ and $C=(2,2)'$.  However, the holding cost vector is now $h=(1,1,1)'$.
\end{example}
For these 2LLN networks the incidence matrix is
$$
K=
\begin{bmatrix}
1 & 0 & 1 \\
0 & 1 & 1 
\end{bmatrix}
$$
so $\text{ker}(K)$ is one dimensional and an orthonormal basis of $\text{ker}(K)$ is the vector $u_1=\frac{1}{\sqrt{3}}\left(-1,-1,1\right)'$.  Consequently, for both Example \ref{ex:trivial} and Example \ref{ex:nontrivial}, all changes to the queue length that maintain the same workload involve adding a constant multiple of $\beta u_1$.
In Example \ref{ex:trivial} we have $h\beta \cdot  u_1=0$ so $\mathcal{C}_{K}^{h}=\text{ker}(K)$ and we say this network is trivial (in terms of instantaneous holding cost minimization) because all queue lengths which give the same workload have identical cost.  In Example \ref{ex:nontrivial} we have $\lambda= h\beta\cdot u_1=h\cdot u_1=-\frac{1}{\sqrt{3}}$ so $\text{dim}(\mathcal{C}_{K}^{h})=\text{dim}(\text{ker}(K))-1$ and in this case adding a positive multiple of $\beta u_1$ will maintain the same workload but reduce the cost. This is because the queue lengths $(1,1,0)'$ and $(0,0,1)'$ result in the same workload but $(0,0,1)'$ has a lower holding cost.  In contrast to Example \ref{ex:trivial} where all queue lengths are cost minimizing for their corresponding workloads, the only cost minimizing queue lengths in Example \ref{ex:nontrivial} are on the boundary where either $q_1=0$ or $q_2=0$ meaning we want as much workload as possible to come from type $3$ jobs.

Recall that $\chij$ denotes the finite set of all vectors in $\RR^J$ with entries $0$ or $1$.
For  $z\in\chij$ define
\begin{equation*}
\mathcal{A}_{z}=\{ i\in\AAA_I:\text{ there exists }j\in\AAA_J\text{ such that }K_{i,j}=1\text{ and } z_{j}=0 \}
\end{equation*}
Also, for  $q\in\mathbb{R}^{J}_{+}$ define $z^{q}\in\chij$ by $z^q\doteq \II_{\{q=0\}}$.
Here, for a given queue-length state $q \in \R_+^J$,  $z^q\in\chij$ is a  vector of indicators which will tell us which queues are `empty' (corresponding to coordinates that equal $1$) and the set $\mathcal{A}_{z^q}$ will tell us  which resources are used by at least one job-type whose queue length is not empty.  In fact, in describing the scheme we will use approximate versions of such indicator vectors  given as $\chij$ valued processes $\mathcal{Z}^r(\cdot)$ (see Definition \ref{def:scheme}) which will tell us which queues are {\em near empty}  so we can push them away from the boundary to avoid `blocking' and ensure the servers can work at full capacity.  
In the policy we describe the nominal  allocation for each job type $j$ is $\rho_j$ (essentially the amount needed to keep up with the arrival rate). This nominal allocation is modified through  two types of vectors, $v^b$ and $v^c$ in $\mathbb{R}^{J}$, that represent the amount we  subtract (amounts can be positive or negative) from the nominal capacity allocation while maintaining capacity constraints.  A positive entry in these vectors indicates an under-allocation that should result in queue length growth and a negative entry results in queue length decline.  

We now introduce a subset $\clm$ of $\chij$ that will play an important role. In the case when $\text{dim}(\mathcal{C}_{K}^{h})=\text{dim}(\text{ker}(K))-1=J-I-1$, we define
\begin{equation*}
\clm\doteq \{ z\in\chij: \text{ there exists } v\in\text{ker}(K)\text{ such that } v_{j}\geq 0 \text{ if } z_{j}=1 \text{ and } v\cdot u_{J-I}>0\}.
\end{equation*}
We define $\mathcal{M}$ to be the empty set when $\text{dim}(\mathcal{C}_{K}^{h})=\text{dim}(\text{ker}(K))=J-I$.
The set $\mathcal{M}$ lists the configurations of empty and nonempty queues, as indicated by $z$, which allow us to reduce cost while maintaining the same workload.  In particular, the following proposition demonstrates that if $z^{q}\not\in\mathcal{M}$ then $q$ is a cost minimizing queue length for the workload $KMq$ and so for such configurations $q$ we cannot reduce cost while maintaining the same workload. 
\begin{proposition}
\label{thm:notMprovesOptimal}
Let $q\in\mathbb{R}^{J}_{+}$ be such that $z^{q}\not\in\mathcal{M}$. Then $\tilde{d}(q)=0$.
\end{proposition}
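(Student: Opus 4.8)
The plan is to argue directly by unwinding the definitions of $\tilde d$, $\Xi$, and $\mathcal M$, with a short case split. First I would dispose of the trivial regime: if $\dim(\mathcal C_K^h)=\dim(\ker K)$, then by its very definition $\tilde d(q)=0$ for every $q\in\RR^J$, so there is nothing to prove. Hence I may assume throughout that $\dim(\mathcal C_K^h)=\dim(\ker K)-1=J-I-1$, which is exactly the regime in which $\tilde d$ is defined as a supremum and $\mathcal M$ has its nontrivial description.

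Next I would record the easy lower bound $\tilde d(q)\ge 0$. Since $q\ge 0$, the zero vector lies in $\Xi(q)=\{v\in\ker(K):q+v\beta\ge 0\}$, so the supremum defining $\tilde d(q)$ is taken over a set containing $0$ and is therefore at least $0\cdot u_{J-I}=0$. It then remains only to show $\tilde d(q)\le 0$, i.e. that $v\cdot u_{J-I}\le 0$ for every $v\in\Xi(q)$.

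The heart of the argument is the observation that any $v\in\Xi(q)$ with $v\cdot u_{J-I}>0$ would witness the membership $z^q\in\mathcal M$. Indeed, such a $v$ lies in $\ker(K)$ by definition of $\Xi(q)$; and for each coordinate $j$ with $z^q_j=1$, i.e. with $q_j=0$, the constraint $q+v\beta\ge 0$ reads $0+v_j\beta_j\ge 0$, which forces $v_j\ge 0$ since $\beta_j>0$. Thus $v$ satisfies precisely the conditions ``$v\in\ker(K)$, $v_j\ge 0$ whenever $z^q_j=1$, and $v\cdot u_{J-I}>0$'' that place $z^q$ in $\mathcal M$ — contradicting the hypothesis $z^q\notin\mathcal M$. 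Consequently no $v\in\Xi(q)$ can have $v\cdot u_{J-I}>0$, so $\tilde d(q)\le 0$, and together with the lower bound this gives $\tilde d(q)=0$.

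I do not anticipate any genuine obstacle here; the proposition is essentially a direct translation of definitions. The only points requiring care are the bookkeeping between the two cases in the piecewise definitions of $\tilde d$ and $\mathcal M$, and the sign bookkeeping ($\beta_j>0$, and the fact that $0\in\Xi(q)$). Compactness of $\Xi(q)$ ensures the supremum is attained, but this is not needed for the argument above. (Combined with Proposition \ref{thm:lambdaEqu}, this then says that for $z^q\notin\mathcal M$ the queue length $q$ is cost minimizing for the workload $KMq$, as asserted in the discussion preceding the statement.)
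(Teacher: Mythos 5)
Your proof is correct and follows essentially the same route as the paper's: both establish that any $v\in\Xi(q)$ with $v\cdot u_{J-I}>0$ would certify $z^q\in\mathcal{M}$, contradicting the hypothesis. The only difference is cosmetic — you make explicit the lower bound $\tilde d(q)\ge 0$ via $0\in\Xi(q)$, which the paper leaves implicit when it concludes $\tilde d(q)=0$ from the impossibility of $\tilde d(q)>0$.
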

\begin{proof}
	The result is immediate when $\text{dim}(\mathcal{C}_{K}^{h})=\text{dim}(\text{ker}(K))=J-I$. Consider now the case $\text{dim}(\mathcal{C}_{K}^{h})=\text{dim}(\text{ker}(K))-1=J-I-1$.
Suppose $\tilde{d}(q)>0$.  Then there exists $v\in\Xi(q)$ such that $v\cdot u_{J-I}>0$.  However, $v\in\Xi(q)$ implies that $v\in\text{ker}(K)$ and $q+\beta v\geq 0$.  For any $j\in\AAA_J$ such that $z^q_{j}=1$, we have,
$
0\leq q_{j}+\beta_{j}v_{j}=\beta_{j}v_{j}
$
and since $\beta_{j}>0$ we must have $v_{j}\geq 0$.  Consequently there exists $v\in\text{ker}(K)$ such that $v_{j}\geq 0$ for all $j$ with $z^q_{j}=1 $. Also, $v\cdot u_{J-I}>0$, which says that $z^{q}\in\mathcal{M}$. Thus we have a contradiction and the result follows.
\end{proof}
The following proposition will allow us to construct an allocation policy which under-allocates resource capacity to queues that are close to empty to increase their queue lengths while simultaneously ensuring each resource utilizes its full capacity unless all of the job-types that use  this resource have queue-lengths  near $0$ (in other words, all resources in $\mathcal{A}_{\clz^r(t)}$, where $\clz^r(t)$ is as in Definition \ref{def:scheme}, allocate their full capacity).
\begin{proposition}[Proof in Section \ref{sec:fullcapvec}]
\label{thm:fullCapVect}
For any  $z\in\chij$ there exists $v\in\mathbb{R}^{J}$ such that for all $i\in\mathcal{A}_{z}$ we have $(Kv)_i=0$
 and for all $j\in\AAA_J$ such that $z_{j}=1$ we have $v_{j}>0$.
\end{proposition}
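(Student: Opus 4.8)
The plan is to recast the existence claim as the solvability of a finite linear system and then exclude the dual obstruction to solvability using Condition~\ref{cond:locTraffic} together with the very definition of $\mathcal{A}_z$. Put $\mathcal{J}_1=\{j\in\AAA_J: z_j=1\}$ and $\mathcal{J}_0=\{j\in\AAA_J: z_j=0\}$; if $\mathcal{J}_1=\emptyset$ then $v=0$ works, so assume $\mathcal{J}_1\neq\emptyset$. For $\mu\in\RR^I$ write $K'\mu\in\RR^J$ for the vector with $j$th coordinate $\sum_{l=1}^{I}K_{l,j}\mu_l$. We seek $v\in\RR^J$ with $(Kv)_i=0$ for all $i\in\mathcal{A}_z$ and $v_j>0$ for all $j\in\mathcal{J}_1$, the coordinates $(v_j)_{j\in\mathcal{J}_0}$ being unconstrained. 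By a standard theorem of the alternative (Motzkin's transposition theorem, or LP duality), such a $v$ exists unless there is a $\mu\in\RR^I$ with $\mu_i=0$ for $i\notin\mathcal{A}_z$ such that $(K'\mu)_j=0$ for every $j\in\mathcal{J}_0$, $(K'\mu)_j\geq 0$ for every $j\in\mathcal{J}_1$, and $(K'\mu)_j>0$ for at least one $j\in\mathcal{J}_1$. So it suffices to prove that every $\mu\in\RR^I$ that is supported on $\mathcal{A}_z$, satisfies $K'\mu\geq 0$ componentwise, and has $(K'\mu)_j=0$ for $j\in\mathcal{J}_0$, is in fact $0$.

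To prove this I would run a two-step squeeze. First, by Condition~\ref{cond:locTraffic}, for each $i\in\mathcal{A}_z$ pick $j(i)\in\AAA_J$ with $K_{i,j(i)}=1$ and $K_{l,j(i)}=0$ for $l\neq i$; then $\mu_i=\sum_l K_{l,j(i)}\mu_l=(K'\mu)_{j(i)}\geq 0$ since $K'\mu\geq 0$, and as $\mu$ vanishes off $\mathcal{A}_z$ this gives $\mu\geq 0$. Second, fix any $i\in\mathcal{A}_z$; by the definition of $\mathcal{A}_z$ there is $j^\ast\in\mathcal{J}_0$ with $K_{i,j^\ast}=1$, and because $j^\ast\in\mathcal{J}_0$ we have $0=(K'\mu)_{j^\ast}=\sum_l K_{l,j^\ast}\mu_l$; each summand $K_{l,j^\ast}\mu_l$ is nonnegative (as $K\geq 0$ and $\mu\geq 0$) while the sum is zero, so each vanishes, whence $\mu_i=K_{i,j^\ast}\mu_i=0$. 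Since $i\in\mathcal{A}_z$ was arbitrary and $\mu_i=0$ off $\mathcal{A}_z$, we get $\mu=0$, so $K'\mu=0$, contradicting $(K'\mu)_j>0$ for some $j\in\mathcal{J}_1$. This is where I expect the only real obstacle to lie: seeing that the two structural inputs (each resource in $\mathcal{A}_z$ has a local-traffic class, and each resource in $\mathcal{A}_z$ also serves some job type with $z_j=0$) together pin $\mu$ down. Note a naive explicit construction does not work: fixing $v_j=1$ for $j\in\mathcal{J}_1$ and solving the $|\mathcal{A}_z|$ equations $(Kv)_i=0$ for $(v_j)_{j\in\mathcal{J}_0}$ can be inconsistent, since the relevant submatrix of $K$ need not have full row rank and the right-hand side need not lie in its column space; one genuinely needs unequal positive values on $\mathcal{J}_1$, which the nonconstructive argument supplies for free.

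If one prefers to avoid invoking a transposition theorem, the same proof goes through via the elementary fact that a subspace $L\subseteq\RR^J$ meets $\{v:\ v_j>0\text{ for all }j\in\mathcal{J}_1\}$ iff the orthogonal complement, inside $\RR^{\mathcal{J}_1}$, of the coordinate projection of $L$ onto $\RR^{\mathcal{J}_1}$ meets $\RR^{\mathcal{J}_1}_{\geq 0}$ only at the origin. Applying this with $L=\{w\in\RR^J:(Kw)_i=0\text{ for all }i\in\mathcal{A}_z\}$, and noting that $L^\perp=\{K'\mu:\mu\text{ supported on }\mathcal{A}_z\}$ so that the complement in question equals $\{(K'\mu)|_{\mathcal{J}_1}:\mu\text{ supported on }\mathcal{A}_z,\ (K'\mu)|_{\mathcal{J}_0}=0\}$, the two-step squeeze above shows that any element of this complement lying in $\RR^{\mathcal{J}_1}_{\geq 0}$ forces $\mu=0$ and hence is the zero vector, which completes the proof.
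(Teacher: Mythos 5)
Your proof is correct, and it takes a genuinely different route from the paper. The paper's argument is an explicit construction: for each $i\in\mathcal{A}_z$ it singles out the local-traffic class $s_i$ from Condition~\ref{cond:locTraffic}, assigns $v_j=1$ to the remaining $j$ with $z_j=1$, assigns $v_j=-J$ to the remaining $j$ with $z_j=0$, and then solves $v_{s_i}=-\sum_{j\neq s_i}K_{i,j}v_j$; a short counting estimate then shows $v_{s_i}\geq 2$ whenever $z_{s_i}=1$. You instead prove pure existence via a theorem of the alternative, and show that the dual obstruction vector $\mu$ must vanish by a two-step squeeze: the local-traffic classes force $\mu\geq 0$ on $\mathcal{A}_z$, while the defining property of $\mathcal{A}_z$ (each $i\in\mathcal{A}_z$ serves some $j^\ast$ with $z_{j^\ast}=0$) together with $(K'\mu)_{j^\ast}=0$ and nonnegativity forces $\mu_i=0$. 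Both arguments use exactly the same two structural inputs, but package them very differently. What your approach buys is brevity and conceptual transparency, with no case analysis and no ad hoc numerical choices like $-J$; it also shows precisely where the two hypotheses do their work, and generalizes without change if the ``$>0$'' constraints were replaced by any open half-space conditions. What the paper's approach buys is an explicit formula for $v^{b}(z)$, which matters because the introduction advertises that these vectors are computed offline and plugged into the single-line control policy \eqref{eq:singleline}; the authors want a concrete $v$, not merely its existence. Your side remark that a naive uniform choice $v_j\equiv 1$ on $\mathcal{J}_1$ followed by solving for $\mathcal{J}_0$ can be inconsistent is also accurate, and worth retaining: one can cook up small $K$'s (three resources, five job types) where the resulting system on $(v_j)_{j\in\mathcal{J}_0}$ is overdetermined and infeasible, and the paper's construction avoids this exactly by letting $v_{s_i}$ on the local-traffic classes absorb the slack rather than fixing them uniformly.
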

{\bf Vectors $v^c(z)$ and $v^b(z)$.} We now introduce our main control policy. It will be defined in terms of two vector functions $v^c$ on the finite set $\clm$ and  $v^b$ on the finite set $\chij$.
Define $\rho^* \doteq \min_{j\in \AAA_J} \{\rho_j\}$.
Recall that for any $z\in \mathcal{M}$ we can find $v^c(z) \in \text{ker}(K)$ such that
$v^{c}_{j}(z)\geq 0$ 
if $z_{j}=1$, and $v^{c}(z)\cdot u_{J-I}>0$. Without loss of generality we can assume that
$\rho-v^{c}(z)>\frac{\rho^*}{2}$. Also note that
\begin{align*}
	(h\beta)\cdot v^{c}(z) &= \sum_{m=1}^{J-I}  (v^{c}(z)\cdot u_m )((h\beta)\cdot u_m )=  (v^{c}(z)\cdot u_{J-I}) ((h\beta)\cdot u_{J-I} )\\
	&=
\lambda (v^{c}(z)\cdot u_{J-I}) \doteq \lambda_c(z)<0.\end{align*}
The $c$ superscript refers to the fact that these vectors will be used to reduce the cost while being workoad neutral.  
In particular the vectors $v^{c}(z)$ determine changes to bandwidth allocation (from nominal allocation) that reduces cost in a manner that the rate allocation to queues that are close to empty is not increased. 
Next we introduce the vector function $v^b$.  Let
\begin{equation}\label{def:tillam}
\tilde{\lambda}\doteq \max\{\lambda_{c}(z):z\in \mathcal{M}\}.
\end{equation}
From Proposition \ref{thm:fullCapVect},  for any $z\in \chij$ we can find $v^{b}(z)\in\mathbb{R}^{J}$ such that 
\begin{align}
	|v^{b}(z)|&\leq \min\left\{\frac{\rho^*}{4}, \frac{|\tilde{\lambda}|}{4|\beta||h|}\right\},\;\;  {(Kv^b(z))_i=0}
\mbox{ for all } i\in\mathcal{A}_{z},\nonumber\\
& \mbox{ and } v^{b}_{j}(z)>0 \mbox{ for all } j\in\AAA_J \mbox{ such that } z_{j}=1. \label{eq:919}
\end{align}
 The $b$ superscript refers to the fact that these vectors are used to keep the queue lengths away from the boundary. 
\begin{defn}[{\bf Control Policy}]
\label{def:scheme}
Let $c_{1}< c_{2}$ and $0<\kappa<\frac{1}{4}$ be arbitrary and let $\tilde{c}_{1}=\min_{j}\{\beta_{j}\}c_{1}$ and $\tilde{c}_{2}=\min_{j}\{\beta_{j}\}c_{2}$.
Define the $\chij$ valued process
$
\mathcal{Z}^{r}(t)\doteq \II_{\{Q^{r}(t)<\tilde{c}_{2}r^{\kappa}\}}$
and, for $j \in \AAA_J$, consider the stopping times:
$
\tilde{\tau}^{r,j}_{1}=\inf\{ t\geq 0: Q^{r}_{j}(t)<\tilde{c}_{1}r^{\kappa}\},
$
and, for $l\ge 1$,
\begin{equation*}
\tilde{\tau}^{r,j}_{2l}=\inf\{ t\geq \tilde{\tau}^{r,j}_{2l-1}: Q^{r}_{j}(t)\geq \tilde{c}_{2}r^{\kappa}\},\;\;
\tilde{\tau}^{r,j}_{2l+1}=\inf\{ t\geq \tilde{\tau}^{r,j}_{2l}: Q^{r}_{j}(t)< \tilde{c}_{1}r^{\kappa}\}.
\end{equation*}
Define
$\mathcal{E}_{j}^{r}(t)\doteq \sum_{l=1}^{\infty} \II_{[\tilde{\tau} _{2l-1}^{r,j},\tilde{\tau} _{2l}^{r,j})}(t)$ for $j\in \N_j$ and let
\begin{equation}
x^{r}(t) \doteq \rho-v^{c}\left(\mathcal{Z}^{r}(t)\right)\II_{\{\mathcal{Z}^{r}(t)\in \mathcal{M} \}}-v^{b}\left(\mathcal{Z}^{r}(t)\right),\;\;\;
\yr(t) \doteq x^{r}(t)\II_{ \{\mathcal{E}^{r}(t)=0\}}. \label{eq:singleline}
\end{equation}
The control policy is then given as
$B^{r}(t)=\int_{0}^{t}\yr(s)ds$, $t\ge 0$.
\end{defn}
Going forward we will assume that $r$ is sufficiently large that $|\rho^{r}-\rho |<\frac{\rho^*}{4}$ component-wise and $(\tilde{c}_{2}-\tilde{c}_{1})r^{\kappa}>1$. \\

To  illustrate  the scheme  we now apply it to the 2LLN network in Example \ref{ex:nontrivial}.  Recall that in this simple example $\text{ker}(K)$ is the one-dimensional space spanned by $u_1=\frac{1}{\sqrt{3}}\left(-1,-1,1\right)'$ and $u_1$ satisfies $\lambda =h\beta\cdot u_1=-\frac{1}{\sqrt{3}}$.  Note that since the first two components of $u_1$ are negative we have $\clm=\{ (0,0,0)',(0,0,1)'\}$ and we can define 
\begin{equation*}
v^c((0,0,0)')=v^c((0,0,1)')=\frac{1}{\sqrt{3}}u_1.
\end{equation*}
This says that if both $q_1$ and $q_2$ are positive we can reduce the cost while maintaining the same workload by moving the queue length in the $\frac{1}{\sqrt{3}}\beta u_1=\frac{1}{3}\left(-1,-1,1\right)'$
direction, and, if not, the queue length provides the minimum cost for its corresponding workload.  For  $z\in\chij$ define 
\begin{align*}
 &v^b_1(z)=\frac{1}{36}\mathcal{I}_{\{z_1=1\}\cup\left(\{z_2=1\}\cap\{z_3=0\}\right)}-\frac{1}{36}\mathcal{I}_{\{z_1=0\}\cap\{z_3=1\}},\\
& v^b_2(z)=\frac{1}{36}\mathcal{I}_{\{z_2=1\}\cup\left(\{z_1=1\}\cap\{z_3=0\}\right)}-\frac{1}{36}\mathcal{I}_{\{z_2=0\}\cap\{z_3=1\}},\\
& v^b_3(z)=\frac{1}{36}\mathcal{I}_{\{z_3=1\}}-\frac{1}{36}\mathcal{I}_{\{z_3=0\}\cap\left(\{z_1=1\}\cup\{z_2=1\}\right)}.
\end{align*}
and note that this definition of $v^b(z)$ satisfies (\ref{eq:919}).  Observe that the magnitudes of $v^b(z)$ and $v^c(z)$ have been chosen so that while $v^b(z)$ is attempting to keep the queue lengths away from the boundary and to avoid any unnecessary idle time it does not overwhelm the cost reduction efforts of $v^c(z)$ that is trying to shift the workload to the less expensive type $3$ jobs. 

\begin{remark}
	\label{rem:policy}
The basic idea underlying the proposed scheme is as follows.
  We want $\tilde{d}\left(\hat{Q}^{r}(t)\right)$ close to zero so that (from Proposition \ref{thm:lambdaEqu}) our queue lengths are near cost-minimizing for the given workload.  When $\mathcal{Z}^{r}(t)\in \mathcal{M}$ the policy uses the vector $v^{c}(\mathcal{Z}^{r}(t))$ to reduce $\tilde{d}\left( \hat{Q}^{r}(t) \right)$ and when $\mathcal{Z}^{r}(t)\not\in \mathcal{M}$, due to Lemma \ref{thm:nearBndZ} and Proposition \ref{thm:costDiffBndQ}, $\tilde{d}\left( \hat{Q}^{r}(t) \right)$ is already close to $0$ so the 
  cost associated with that configuration of queue lengths is close to optimal for the corresponding workload.  We also want  the resources to be utilized at full capacity when their workloads are not near the origin so that asymptotically these workloads behave like reflected Brownian motions.  In order to ensure this we want to prevent all queues from being completely empty so that idleness, when there is work present, is avoided.  To achieve this behavior, when $\hat{Q}^{r}_{j}(t)$ falls below $\tilde{c}_{2}r^{\kappa-1}$ the allocation scheme attempts to increase the corresponding queue.  This is because, due to the property $v^c_j(z)\ge 0$ when $z_j=1$, the vectors $v^{c}(\clz^r(t))$ will not attempt to decrease queue lengths of job types that fall below this threshold 
 while the vectors $v^{b}(z)$ will attempt to increase queue lengths of job types below this threshold.  If $\hat{Q}^{r}_{j}(t)$ continues to decline past $\tilde{c}_{1}r^{\kappa-1}$, so that $\mathcal{E}_{j}^{r}(t)=1$,  then we stop processing this job type altogether until the corresponding scaled queue length exceeds $\tilde{c}_{2}r^{\kappa-1}$ again.   The magnitudes of the vectors $v^{c}(z)$ and $v^{b}(z)$ are chosen so that while the vector $v^{b}(z)$ is keeping queue lengths nonempty and ensuring resources can operate at full capacity 
  it does not overwhelm  $v^{c}(z)$ and prevent it from reducing $\tilde{d}\left(\hat{Q}^{r}(t) \right)$ to make  the queue length configuration to be near cost-minimizing for the associated workload.  Note that as long as at least one job type that uses resource $i$ has a queue length greater than $\tilde{c}_{2}r^{\kappa-1}$ 
  and all of the job types that use resource $i$ are still being processed (meaning $\mathcal{E}_{j}^{r}(t)=0$ for all $j\in\AAA_J$ such that $K_{i,j}=1$)
   then  from the properties $(Kv^b(z))_i=0$ for $i \in \cla_z$, $v^c(z) \in \text{ker}(K)$, and $\yr_j(t)= x_j^r(t)$ for all $j$ with $\mathcal{E}_{j}^{r}(t)=0$, we see that
   under the policy resource $i$ is working at full capacity.  In particular, recalling the relation between $c_2$ and $\tilde c_2$, it
follows that
    for any $i\in\AAA_I$, and large $r$, if
\begin{equation}\label{eq:240n}
\hat{W}^{r}_{i}(t)\geq 2Jc_{2}r^{\kappa-1}
\mbox{ and }
\sum_{j\in\AAA_J} K_{i,j} \mathcal{I}_{\{\mathcal{E}_{j}^{r}(t)=1\}}=0
\mbox{ 
then }
\sum_{j\in\AAA_J}K_{i,j}\yr_{j}(t)=C_{i}.
\end{equation}
To argue that the workload process asymptotically behaves like a  reflected Brownian motion  we want a resource to (almost always) work at full capacity when its workload exceeds (the asymptotically $0$) level $2Jc_{2}r^{\kappa-1}$, namely, in view of \eqref{eq:240n}, we want to show that
$
\sum_{j\in\AAA_J} K_{i,j}\mathcal{I}_{\{\mathcal{E}_{j}^{r}(t)=1\}}>0$
does not happen too frequently.  This key property is established in Proposition \ref{thm:idleTimeBndMark} which provides an exponential decay bound on the probability of this happening frequently.
In addition, Proposition \ref{thm:costDiffResult} shows that under this scheme the difference between the holding cost of the queue-length process and the optimal cost for the corresponding workload is arbitrarily small for large values of $r$.  These two propositions, which capture the two main ingredients of the HGI performance, are crucial to 
the proofs of our main results, namely Theorems \ref{thm:disccost} and \ref{thm:ergcost}, which say that the cost associated with this scheme achieves the hierarchical greedy ideal performance.
\end{remark}

\subsection{Main Results}
\label{sec:mainres}

We now introduce some additional notation used in the rest of the paper.  Some of this notation will be used to expand the state space of the process under our scheme defined above so that it becomes a Markov process.\\
\begin{defn}
\label{def:newStartArrAndServ}
For $x \in \R_+$ and $j \in \AAA_J$, define
\begin{equation*}  
A^{r,x}_{j}(s)=\max\Big\{n\geq 0:\sum_{l=\tau^{r,A}_{j}(x)+1}^{\tau^{r,A}_{j}(x)+n}u^{r}_{j}(l)\leq s\Big\},\;
S^{r,x}_{j}(s)=\max\Big\{n\geq 0:\sum_{l=\tau^{r,S}_{j}(x)+1}^{\tau^{r,S}_{j}(x)+n}v^{r}_{j}(l)\leq s\Big\}
\end{equation*}
along with their diffusion-scaled versions
\begin{equation*} 
\hat{A}^{r,x}_{j}(s)=\frac{1}{r}A^{r,x}_{j}(r^2s)
-rs\alpha^{r}_{j},\;\;
\hat{S}^{r,x}_{j}(s)=\frac{1}{r}S^{r,x}_{j}(r^2s)-rs\beta^{r}_{j}.
\end{equation*} 
\end{defn}
We now introduce the following processes.
\begin{defn}
\label{def:xi}
For  $t\in [0,\infty)$ and $j\in \AAA_J$ define
\begin{equation*}
\bar{\xi}^{A,r}_{j}(t)=\frac{1}{r^{2}}\sum_{l=1}^{\tau^{r,A}_{j}(t)}u^{r}_{j}(l),\;\;\;
\bar{\xi}^{S,r}_{j}(t)=\frac{1}{r^{2}}\sum_{l=1}^{\tau^{r,S}_{j}(t)}v^{r}_{j}(l).
\end{equation*}
\end{defn}
We will also need the following  fluid-scaled residual arrival and service times at an arbitrary  instant $t$.
\begin{defn}
\label{def:Upsilon}
Let, for $t \ge 0$, $r\in \NN$ and $j \in \AAA_J$,
\begin{equation*}
\bar{\Upsilon}^{A,r}_{j}(t)\doteq 
\bar{\xi}^{A,r}_{j}(t)-\left(t-\bar{\Upsilon}^{A,r}_{j}\right), \;\;\;
\bar{\Upsilon}^{S,r}_{j}(t)\doteq 
\bar{\xi}^{S,r}_{j}(t)-\left(\bar{B}_{j}^{r}(t)-\bar{\Upsilon}^{S,r}_{j}\right).
\end{equation*}
In addition, define
\begin{equation*}
\hat{\Upsilon}^{A,r}_{j}(t)=r\bar{\Upsilon}^{A,r}_{j}(t),\;\;
\hat{\Upsilon}^{S,r}_{j}(t)=r\bar{\Upsilon}^{S,r}_{j}(t), \mbox{ and }
\hat{\Upsilon}^{r}(t)=\left( \hat{\Upsilon}^{A,r}(t),\hat{\Upsilon}^{S,r}(t)\right).
\end{equation*}
\end{defn}
In addition, we will use the following fluid-scaled version of the indicator vector $\cle^r$ which tells us which jobs are currently not being processed due to their queue lengths being close to $0$. 
\begin{defn}
\label{def:markProc}
 For $j \in \N_j$, $t\ge 0$ and $r \in \N$, define
$
\tilde{\mathcal{E}}^{r}_{j}(t) \doteq \mathcal{E}^{r}_{j}(r^{2}t)$.
\end{defn}
From \eqref{eq:303n}, for all $j\in\AAA_J$ and $0\leq s<t$, we have

\begin{align}
\hat{Q}^{r}_{j}(t)&=\hat{Q}^{r}_{j}(s)+r^{-1}\mathcal{I}_{\{t-s\geq \bar{\Upsilon}^{A,r}_{j}(s)>0\}}+r^{-1}A_{j}^{r,s}\left(r^2\left(t-s-\bar{\Upsilon}^{A,r}_{j}(s)\right)^{+}\right)\nonumber\\
&\quad -r^{-1}\mathcal{I}_{\{\bar{B}^{r}_{j}(t)- \bar{B}^{r}_{j}(s)\geq \bar{\Upsilon}^{S,r}_{j}(s)>0\}}-r^{-1}S_{j}^{r,s}\left(r^2\left(\bar{B}^{r}_{j}(t)-\bar{B}^{r}_{j}(s)- \bar{\Upsilon}^{S,r}_{j}(s)\right)^{+}\right).\label{eq:328}
\end{align}

It then follows that
$\hat{Y}^{r}(t) \doteq \left(\hat{Q}^{r}(t),\hat{\Upsilon}^{r}(t),\tilde{\mathcal{E}}^{r}(t) \right)$ is a strong Markov process with values in $\cly^r \doteq \NN_{1/r}^J\times \mathbb{R}^{2J}_{+}\times \chij$, with respect to the filtration $\{\clg^r(t)\}_{t\ge 0}$.
For $y=\left(\hat{q},\hat{\Upsilon},\tilde{\mathcal{E}}\right)\in \cly^r$ and bounded measurable $f:\cly^r\rightarrow\mathbb{R}$ we use the notation
\begin{align*}
&E_y\left[f\left(\left(\hat{Q}^{r}(t),\hat{\Upsilon}^{r}(t),\tilde{\mathcal{E}}^{r}(t) \right) \right)\right] \\
&= E\left[f\left(\left(\hat{Q}^{r}(t),\hat{\Upsilon}^{r}(t),\tilde{\mathcal{E}}^{r}(t) \right) \right) |\left(\hat{Q}^{r}(0),\hat{\Upsilon}^{r}(0),\tilde{\mathcal{E}}^{r}(0) \right)=y\right].
\end{align*}
When $f = 1_A$, we will write $E_{y}\left[ f\left(\hat{Y}^{r}(t) \right)\right]$ as $P_y(\hat{Y}^{r}(t) \in A)$.\\

Recall the holding cost vector $h$ from Section \ref{sec:propcon}. We consider two types of costs.  The first is the infinite horizon discounted cost.  Fix a discount factor $\vsg\in (0,\infty)$.  For  $r \in \NN$ and $y^{r}\in\cly^r$ the infinite horizon discounted cost associated with the control policy $B^r$ in Definition \ref{def:scheme} is
\begin{equation*}
J^{r}_{D}\left(B^{r},y^{r}\right)\doteq \int_{0}^{\infty}e^{-\vsg t}E_{y^{r}}\left[h\cdot \hat{Q}^{r}(t) \right]dt.
\end{equation*}
The second cost we consider is the long-term cost per unit time (also referred to as the ergodic cost).  Define, for $r\in \N$, $y^{r}\in\cly^r$, and $T>0$,
\begin{equation*}
J^{r,T}_{E}(B^{r},y^r)\doteq E_{y^{r}} \left[\frac{1}{T}\int_{0}^{T}h\cdot \hat{Q}^{r}(t)dt \right].
\end{equation*}
Then the long-term cost per unit time  for $B^r$ and $y^r\in\cly^r$ is 
\begin{equation*}
J^{r}_{E}(B^{r},y^r)\doteq \limsup_{T\rightarrow\infty} J^{r,T}_{E}(B^{r},y^r).
\end{equation*}

In order to describe the limit model under the heavy traffic scaling we now recall  the  definition of a Skorokhod map on the positive orthant $\RR_+^d$ associated with  normal reflections at the boundary. The reason such a Skorohod map emerges in our analysis is that, 
(i) due to the local traffic condition and the fact that (under HGI performance) every resource utilizes its full capacity whenever there is work for that resource in the system, the state space of the workload process is all of the positive orthant; and (ii) since this is a `one pass' system, idleness of one resource does not (directly) impact the flow of work to any other resource and as a consequence one does not get oblique reflections from idleness.
\begin{defn}
\label{def:skorohod}
Let $T\in (0,\infty)$ and $f\in\mathcal{D}\left([0,T]:\mathbb{R}^{d}\right)$.  We say that $(\phi,h)\in \mathcal{D}\left([0,T]:\mathbb{R}^{d}\right) \times \mathcal{D}\left([0,T]:\mathbb{R}^{d}\right)$ solves the Skorohod Problem for $f$ if 
\begin{inparaenum}[(a)]
\item $\phi(t)=f(t)+h(t)$ for all $t\in [0,T]$,
\item $h(\cdot)$ is nondecreasing and $h(0)=-f(0)\vee 0$,
\item $\phi(\cdot)\geq 0$,
\item $\int_{[0,\infty)}\mathcal{I}_{\{\phi_{i}(t)>0\}}dh_{i}(t)=0$ for all $i\in\mathbb{N}_{d}$.
\end{inparaenum}
\end{defn}
It is known that there is a unique solution to the above Skorohod problem with normal reflections (which can be essentially regarded as $d$-one dimensional Skorohod problems)  for every $f \in D([0,T]: \mathbb{R}^d)$ and denoting the unique $\phi$ associated with $f$ as $\Gamma_d(f)$, the Skorohod map $\Gamma_d: \mathcal{D}\left([0,T]:\mathbb{R}^{d}\right) \to \mathcal{D}\left([0,T]:\mathbb{R}^{d}\right)$  has the following Lipschitz property: There exists $K_{\Gamma_d} \in (0,\infty)$ such that for all $T>0$ and $f_1, f_2 \in D([0,T]: \mathbb{R}^d)$ 
$$\sup_{0\le t \le T}|\Gamma_d(f_1)(t) - \Gamma_d(f_2)(t)| \le K_{\Gamma_d} \sup_{0\le t \le T}|f_1(t)- f_2(t)|.$$
Note that for $f \in D([0,T]: \mathbb{R}^d)$, $\Gamma_d(f)_i = \Gamma_1(f_i)$ for all $i = 1, \ldots d$.
When $d=I$ we will write $\Gamma_d=\Gamma_I$ as simply $\Gamma$. Also it is easily verified that $K_{\Gamma_1}$ can be taken to be $2$. We refer the reader to \cite[Section 3.6.C]{karshr} for a discussion of the one dimensional Skorohod problem.

Consider diagonal matrices $\Sigma^u$  and $\Sigma^v$ given  with diagonal entries $\{\sigma^{u}_{j}\}_{j=1}^{J}$ and $\{\sigma^{v}_{j}\}_{j=1}^{J}$ resp., and define $\Sigma \doteq KM\left(\Sigma^u + \Sigma^v R\right)M^\mathsf{T}K^\mathsf{T}$.
Let $(\ch \Om, \ch \clf, \{\ch \clf_t\}, \ch P)$ be a filtered probability space on which is given an $I$-dimensional $\{\ch \clf_t\}$-Brownian motion $\{\hat{X}(t)\}$
with drift $\theta$ and covariance matrix $\Sigma$.  For $w_0 \in \RR_+^I$, let $\ch W^{w_0}$ be a $\mathbb{R}_+^I$ valued continuous stochastic process defined as 
\begin{equation}
	\label{eq:eqrbm}
\ch W^{w_0}(t) = \Gamma (w_0 +\hat{X}(\cdot))(t), \; t \ge 0.
\end{equation}
The process $\ch W^{w_0}$ is referred to as a $I$-dimensional reflected Brownian motion with initial value $w_0$, drift $\theta$ and covariance matrix $\Sigma$.
It is well known\cite{harwil1} that, under our conditions (specifically the property $\theta<0$), $\{\ch W^{w_0}\}_{w_0 \in \RR_+^I}$ defines a Markov process that has a unique invariant probability distribution which we denote as $\pi$. 

The {\bf hierarchical greedy ideal} (HGI) associated with the discounted and the ergodic cost, for $w_0 \in \R_+^I$,
are given respectively as
\begin{align}
	\mbox{HGI}_D(w_0) \doteq \int_0^{\infty} e^{-\vsg t} E  \left[\hat{h}(\ch W^{w_0}(t))\right] dt, \;\;\; 
		\mbox{HGI}_E \doteq \int_{\RR^+_I} \hat{h}(w) \pi(dw). \label{eq:hgicosts}
\end{align}

The following theorem says that our scheme achieves the hierarchical greedy ideal infinite horizon discounted cost.
\begin{theorem}[Proof in Section \ref{sec:thmdisccost}]
	\label{thm:disccost}
	Let $y^{r} = (\hat{q}^r,\hat{\Upsilon}^r,\tilde{\mathcal{E}^r})\in \cly^r$ be an arbitrary sequence satisfying \\
$\lim_{r\rightarrow\infty}\hat q^{r}=\tilde{q}$ for some $\tilde{q}\in\mathbb{R}^{J}_{+}$ and $\sup_{r}r\hat{\Upsilon}^{r}<\infty$.
	 Define $w_0\doteq KM\tilde{q}$. Then\\
$\lim_{r\rightarrow\infty}J^{r}_{D}(B^{r},y^{r})=\mbox{HGI}_D(w_{0})$.
\end{theorem}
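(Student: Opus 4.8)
The plan is to decompose the integrand $h\cdot\hat Q^{r}(t)$ as $\hat h(\hat W^{r}(t))+\big(h\cdot\hat Q^{r}(t)-\hat h(\hat W^{r}(t))\big)$. By Proposition~\ref{thm:lambdaEqu}, applied with the matrix $M$ (the resulting discrepancy with $\hat W^{r}(t)=KM^{r}\hat Q^{r}(t)$ is of order $|\hat Q^{r}(t)|/r$ since $M^{r}\to M$ at rate $1/r$ by Condition~\ref{cond:htc} and $\hat h$ is Lipschitz), the second term equals $|\lambda|\tilde d(\hat Q^{r}(t))\ge 0$ up to this lower-order error. Thus it suffices to prove (a) $\int_{0}^{\infty}e^{-\vsg t}E_{y^{r}}\big[h\cdot\hat Q^{r}(t)-\hat h(\hat W^{r}(t))\big]\,dt\to 0$, and (b) $\int_{0}^{\infty}e^{-\vsg t}E_{y^{r}}\big[\hat h(\hat W^{r}(t))\big]\,dt\to \mbox{HGI}_D(w_{0})$, the latter being the first expression in~\eqref{eq:hgicosts}.

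For (a), Proposition~\ref{thm:costDiffResult} yields that the cost gap $h\cdot\hat Q^{r}(t)-\hat h(\hat W^{r}(t))$ is asymptotically negligible uniformly over compact time intervals. To upgrade this to the infinite horizon I would use the uniform-in-$r$ exponential/moment estimate of Proposition~\ref{thm:workloadExpBnd}: together with the linear growth of $\hat h$ and the bound by a fixed constant times $|\hat Q^{r}(t)|$, this produces an integrable dominating function $e^{-\vsg t}\psi(t)$ with $\sup_{r}\int_{0}^{\infty}e^{-\vsg t}\psi(t)\,dt<\infty$, so dominated convergence gives (a); the $M$ versus $M^{r}$ error term is absorbed the same way.

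For (b), the crux is the weak convergence $\hat W^{r}\Rightarrow\hat W^{w_{0}}$ in $\sD([0,\infty):\R^{I})$. First, $\hat w^{r}=KM^{r}\hat q^{r}\to KM\tilde q=w_{0}$, and $\sup_{r}r\hat\Upsilon^{r}<\infty$ forces the fluid- and diffusion-scaled initial residuals to vanish. The fluid limit of the control is $\bar B^{r}(t)\to\rho t$ locally uniformly, because the nominal rate is $\rho$, the corrections $v^{c},v^{b}$ are $O(1)$, and the total length of the blocking intervals $\{\mathcal E^{r}_{j}=1\}$ is controlled by Proposition~\ref{thm:idleTimeBndMark}. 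A functional CLT for the renewal processes $\hat A^{r}$ and $\hat S^{r}$ (using Conditions~\ref{cond:htc} and~\ref{eqn:mgfBnd}) together with a random-time-change argument for the composition $\hat S^{r}\big((\bar B^{r}(t)-\bar\Upsilon^{S,r})^{+}\big)$ then gives, via~\eqref{eq:Xhat}, $\hat w^{r}+\hat X^{r}\Rightarrow w_{0}+\hat X$ with $\hat X$ the Brownian motion of drift $\theta$ and covariance $\Sigma$. Next, Propositions~\ref{thm:skorokApprox} and~\ref{thm:idleTimeBndMark} show that $(\hat W^{r},\hat U^{r})$ asymptotically solves the Skorohod problem of Definition~\ref{def:skorohod} for $\hat w^{r}+\hat X^{r}$ — informally, \eqref{eq:240n} forces the candidate reflection term $\hat U^{r}$ to grow only when $\hat W^{r}$ is within $O(r^{\kappa-1})$ of a face of $\R_{+}^{I}$ — so that $\sup_{t\le T}|\hat W^{r}(t)-\Gamma(\hat w^{r}+\hat X^{r})(t)|\to 0$ in probability for each $T$. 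The Lipschitz property of $\Gamma$ then gives $\hat W^{r}\Rightarrow\Gamma(w_{0}+\hat X(\cdot))=\hat W^{w_{0}}$ by~\eqref{eq:eqrbm}. Since $\hat h$ is Lipschitz continuous (it is the polyhedral value function~\eqref{eq:hhatdef}), the continuous mapping theorem gives $\hat h(\hat W^{r}(t))\Rightarrow\hat h(\hat W^{w_{0}}(t))$ for every fixed $t$; invoking the uniform moment bound of Proposition~\ref{thm:workloadExpBnd} once more for uniform integrability, $E_{y^{r}}[\hat h(\hat W^{r}(t))]\to E[\hat h(\hat W^{w_{0}}(t))]$ for each $t$, and a final dominated-convergence step in the $t$-integral yields (b).

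The step I expect to be the main obstacle is the Skorohod approximation for $\hat W^{r}$: one must simultaneously control the fluid behaviour of $\bar B^{r}$ and the blocking intervals $\{\mathcal E^{r}_{j}=1\}$ — which is exactly where the excursion analysis behind Proposition~\ref{thm:idleTimeBndMark} and the path large deviation bounds for renewal processes of Theorem~\ref{thm:expTailBnd} enter — and, separately, the infinite-horizon uniformity, for which the uniform stability estimate of Proposition~\ref{thm:workloadExpBnd} is indispensable, since the discount factor alone does not justify exchanging the limit in $r$ with the time integral. The remaining pieces (the renewal FCLT, the random time change, Lipschitz continuity of $\Gamma$ and of $\hat h$, and the Markov-process identification of $\hat W^{w_{0}}$) are routine once these are established.
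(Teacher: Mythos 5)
Your proposal is correct and follows the same route as the paper: weak convergence of $\hat W^{r}$ on $D([0,T])$ via the functional CLT (Proposition~\ref{thm:XhatLimit}) and the Skorohod approximation (Propositions~\ref{thm:skorokApprox}, \ref{thm:idleTimeBndMark}), the cost-gap estimate of Proposition~\ref{thm:costDiffResult} to replace $h\cdot\hat Q^{r}$ by $\hat h(\hat W^{r})$, and the uniform stability bound of Proposition~\ref{thm:workloadExpBnd} to control tails in both $t$ and the magnitude of $\hat W^{r}$. The only cosmetic difference is that the paper truncates $\hat h$ to $\hat h_{k}=k\wedge\hat h$ and chains explicit $\varepsilon$-estimates rather than invoking the continuous mapping theorem plus uniform integrability and dominated convergence; your handling of the $M$ versus $M^{r}$ discrepancy via the Lipschitz property of $\hat h$ also matches what the paper does in the proof of Proposition~\ref{thm:costDiffResult}.
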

The next theorem gives  a similar result for the  long-term cost per unit time. 
\begin{theorem}[Proof in Section \ref{subsec:erg}]
\label{thm:ergcost}
Let $y^{r}=(\hat{q}^r,\hat{\Upsilon}^r,\tilde{\mathcal{E}^r})\in \cly^r$ be an arbitrary sequence satisfying $\sup_{r} \hat q^{r}<\infty$ and $\sup_{r}r\hat{\Upsilon}^{r}<\infty$.
Then
$
\lim_{r\rightarrow\infty}J^{r}_{E}(B^{r},y^r)=\mbox{HGI}_E$.
\end{theorem}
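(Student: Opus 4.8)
The plan is to recast the prelimit ergodic cost as an integral of the holding cost against the unique invariant measure of the Markov process $\hat Y^r=(\hat Q^r,\hat\Upsilon^r,\tilde{\mathcal E}^r)$, then replace the holding cost by the minimal holding cost $\hat h$ of the associated workload (the instantaneous--holding--cost--minimization feature), and finally pass to the limit $r\to\infty$ via weak convergence of the corresponding workload marginals to $\pi$. \emph{Step 1 (prelimit ergodic theory).} For all sufficiently large $r$ I would first show that $\hat Y^r$ is a positive (Harris) recurrent strong Markov process on $\cly^r$ with a unique invariant probability $\nu^r$ and that $\sup_r\int_{\cly^r}e^{c|\hat q|}\,\nu^r(dy)<\infty$ for some $c>0$. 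The exponential moment control on the workload, hence on $\hat Q^r$, is precisely Proposition \ref{thm:workloadExpBnd}; positive recurrence of the full state then follows by a Foster--Lyapunov argument, with the residual-time coordinates handled through Condition \ref{eqn:mgfBnd} and the $\tilde{\mathcal E}^r$-coordinate being $\chi_J$-valued. The same uniform bound yields uniform integrability of the Cesàro averages, so that for each fixed $r$ and for the bounded initial conditions $y^r$ of the statement,
\begin{equation*}
\lim_{T\to\infty}J^{r,T}_E(B^r,y^r)=\int_{\cly^r}(h\cdot\hat q)\,\nu^r(dy)=:\bar J^r,
\end{equation*}
so in particular the $\limsup$ defining $J^r_E$ is a genuine limit, equal to $\bar J^r$ independently of $y^r$.

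\emph{Step 2 (cost reduction).} Write $\bar J^r=\int\hat h(KM\hat q)\,\nu^r(dy)+\int\big(h\cdot\hat q-\hat h(KM\hat q)\big)\,\nu^r(dy)$. Since $\hat q\in\Lambda(KM\hat q)$, the integrand of the second term is nonnegative; evaluating Proposition \ref{thm:costDiffResult} in stationarity (initial law $\nu^r$, time--averaging, and using invariance), together with the Lipschitz property of $\hat h$ to absorb the asymptotically negligible difference between $M^r$ and $M$ and the bound $\sup_r\int|\hat q|\,\nu^r(dy)<\infty$, shows this term tends to $0$. For the first term, let $\pi^r$ be the push--forward of $\nu^r$ under $\hat q\mapsto KM\hat q$, so that $\int\hat h(KM\hat q)\,\nu^r(dy)=\int_{\RR_+^I}\hat h(w)\,\pi^r(dw)$; the uniform exponential moment bound makes $\{\pi^r\}$ tight and $\{\hat h\}$ uniformly integrable under $\{\pi^r\}$ (recall $\hat h$ is Lipschitz with $\hat h(0)=0$), so it remains to prove $\pi^r\Rightarrow\pi$.

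\emph{Step 3 (identification of the limit and conclusion).} Along any subsequence with $\pi^{r_k}\Rightarrow\mu$, run the $r_k$-th network from its stationary state $\nu^{r_k}$; then $\hat W^{r_k}(\cdot)=KM^{r_k}\hat Q^{r_k}(\cdot)$ is a stationary process whose one--dimensional marginal converges to $\mu$. By the diffusion--approximation machinery already assembled for Theorem \ref{thm:disccost} --- the decomposition \eqref{eq:eq424}, the functional CLT for the renewal inputs, the approximate Skorokhod representation of Proposition \ref{thm:skorokApprox}, and the no--idleness estimate of Proposition \ref{thm:idleTimeBndMark} --- all applied uniformly over the tight family of random initial conditions $\nu^{r_k}$, one obtains $\hat W^{r_k}(\cdot)\Rightarrow\hat W(\cdot)$, where $\hat W$ is the process of \eqref{eq:eqrbm} with $w_0$ replaced by a random variable of law $\mu$ independent of $\hat X$. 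A weak limit of stationary processes is stationary, so $\hat W$ is stationary, $\mu$ is invariant for the reflected Brownian motion, and by uniqueness of $\pi$ we get $\mu=\pi$. Since every subsequence has a further subsequence converging to $\pi$, $\pi^r\Rightarrow\pi$, hence $\int\hat h\,d\pi^r\to\int\hat h\,d\pi=\mbox{HGI}_E$. Combining with Step 2, $J^r_E(B^r,y^r)=\bar J^r\to\mbox{HGI}_E$.

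\emph{Main obstacle.} The crux is the uniform--in--$r$ stability input of Step 1 --- positive recurrence with a \emph{tight} family of invariant measures carrying a uniform exponential moment --- which is Proposition \ref{thm:workloadExpBnd} and, as flagged in the introduction, part of the technical heart of the paper. The remaining delicate points are (a) that the process--level convergence $\hat W^{r_k}\Rightarrow\hat W$ is robust to random, $r_k$--dependent (but tight) stationary initial conditions and that the policy's no--idleness property of Proposition \ref{thm:idleTimeBndMark} persists under such starts rather than a deterministic one; and (b) transferring Proposition \ref{thm:costDiffResult} from a transient time--average bound to the stationary regime, where the uniform moment estimates re--enter to supply the needed uniform integrability.
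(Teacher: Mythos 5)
Your route is genuinely different from the paper's, and it contains a real gap. The paper never attempts to prove positive Harris recurrence of $\hat Y^r$ nor to identify a unique stationary distribution for the prelimit chain. Instead it works directly with the random Ces\`aro occupation measure of Definition~\ref{def:randMeas},
\begin{equation*}
\nu^{r}= \frac{1}{T_{r}}\int_{0}^{T_{r}}\delta_{\left(\hat{W}^{r}(t),\,\hat{X}^{r}(t+\cdot)-\hat{X}^{r}(t) \right)}\,dt,
\end{equation*}
a random element of $\mathcal{P}\left(\RR_+^I\times\sD([0,1]:\RR^I)\right)$. Tightness of $\{\nu^r\}$ (Proposition~\ref{thm:tightMeas}) follows from Proposition~\ref{thm:workloadExpBnd}, and the weak limit points are characterized in Proposition~\ref{thm:convMeas} as couplings of the driving Brownian motion with an invariant law of the reflected Brownian motion; uniqueness of $\pi$ then forces the first marginal of any limit to equal $\pi$ almost surely. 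The cost reduction step is then done along the subsequence with $\hat h_k$ truncations, Proposition~\ref{thm:workloadExpBnd}, and Proposition~\ref{thm:costDiffResult}. This organization deliberately bypasses all prelimit ergodic theory: one never needs to know that each $\hat Y^r$ has a unique invariant probability, only that the time averages stabilize as $r\to\infty$.

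The gap in your proposal is Step~1. Under Conditions~\ref{eqn:mgfBnd}--\ref{cond:locTraffic} the interarrival and service distributions are only required to have finite exponential moments near the origin; they need not be spread out and may be lattice or even degenerate. In that case the residual-time coordinates $\hat\Upsilon^{A,r},\hat\Upsilon^{S,r}$ of $\hat Y^r$ do not yield $\varphi$-irreducibility, and positive Harris recurrence --- hence the existence of a \emph{unique} invariant probability (your $\nu^r$) and the pathwise convergence $\frac{1}{T}\int_0^T h\cdot\hat Q^r(t)\,dt \to \int (h\cdot\hat q)\,d\nu^r$ on which your $\bar J^r$ rests --- is simply not available without strengthening the assumptions. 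Proposition~\ref{thm:workloadExpBnd} is a Lyapunov-type moment bound and gives tightness of time averages, but tightness alone does not produce uniqueness of the stationary law for the prelimit chain. A secondary issue: Proposition~\ref{thm:costDiffResult} is stated for initial states with $\hat q^r\le M$ and $r\hat\Upsilon^r\le M$; invoking it ``in stationarity'' as in your Step~2 needs a further truncation argument to replace the deterministic bounded start by the random state. Both of these are precisely the obstacles the paper's occupation-measure formulation is designed to sidestep.
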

For some background and rationale for the terminology of HGI for the costs in \eqref{eq:hgicosts}, we refer the reader to \cite{harmandhayan}. 
Roughly speaking the fact that the asymptotic cost for our sequence of policies is given in terms of the function $\hat h$ corresponds to the feature of instantaneous holding cost minimization for the given workload (recall the definition of $\hat h$), and the fact that the limit is determined by a reflected Brownian motion, which has the feature that the  reflection (which roughly corresponds to the  asymptotic idleness) occurs only when the process hits the faces of the orthant (namely one of the coordinates is zero), captures the no-idleness property of HGI.

 In {the} rest of the paper we simplify the notation by leaving out the subscript $y$ on the expected value that specifies the initial condition unless it is particularly relevant in that situation.  

The following result is immediate from the definition of the control policy $B^r(\cdot)$. For part (d) see Remark \ref{rem:policy} and for part (e) recall that $v_j^b(z)>0$ if $z_j=1$.
\begin{proposition}
\label{thm:schemeSum}
The scheme given in Definition \ref{def:scheme} has following properties:
\begin{enumerate}[(a)]
\item For any $j\in\AAA_J$ and $t\geq 0$ if $\tilde{\mathcal{E}}_{j}^{r}(t)=1$ then $\frac{d}{dt}\bar{B}^{r}_{j}(t)=0$.
\item For any $j\in\AAA_J$ and $t\geq 0$ if $\tilde{\mathcal{E}}_{j}^{r}(t)=0$ then $\tilde{\mathcal{E}}_{j}^{r}(s)=0$ for all $s\geq t$ such that
 $\bar{B}^{r}_{j}(s)-\bar{B}^{r}_{j}(t)<\bar{\Upsilon}^{S,r}_{j}(t))$.
\item For $r$ sufficiently large for all $j\in\AAA_J$ and $t\geq 0$ we have $x^{r}_{j}(t)\geq \frac{\rho^*}{4}$ and consequently if $\mathcal{E}^{r}_{j}(t)=0$ then  $\yr_{j}(t)\geq \frac{\rho^*}{4}$.
\item For all $i\in\AAA_I$ and $t\geq 0$  if $\hat{W}^{r}_{i}(t)\geq 2Jc_{2}r^{\kappa-1}$ and $\sum_{j=1}^{J} K_{i,j}\mathcal{I}_{\{ \tilde{\mathcal{E}}_{j}^{r}(t)=1\}}=0$ then $\frac{d}{dt} (K\bar{B}^{r}(t))_i=( K\yr)_i(r^{2}t)=C_{i}$.
\item For $r$ sufficiently large there exists $\Delta>0$ such that for all $j\in\AAA_J$ and $t\geq 0$ if
 $Q^{r}_{j}(t)<\tilde{c}_{2}r^{\kappa}$ then $\beta^{r}_{j}\yr_{j}(t)\leq \alpha^{r}_{j}-\Delta$.
\end{enumerate}
\end{proposition}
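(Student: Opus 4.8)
The plan is to obtain all five assertions by unwinding Definition \ref{def:scheme}, using the identities $\bar B^r(t)=r^{-2}\int_0^{r^2t}\yr(s)\,ds$, $\tilde{\mathcal{E}}^r_j(t)=\mathcal{E}^r_j(r^2t)$ and $\mathcal{Z}^r_j(t)=\II_{\{Q^r_j(t)<\tilde c_2 r^\kappa\}}$; I would take the parts in the order (a), (c), (e), (d), (b), since (c) and (e) share an argument, (d) builds on it, and (b) is the only genuinely delicate point. Part (a) is immediate: $\bar B^r_j$ is constant on each interval comprising $\{t:\tilde{\mathcal{E}}^r_j(t)=1\}$, because there $\yr_j(r^2t)=x^r_j(r^2t)\II_{\{\mathcal{E}^r_j(r^2t)=0\}}=0$.

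For (c) I would use the normalizations imposed below Proposition \ref{thm:fullCapVect}, namely $\rho-v^c(z)>\rho^*/2$ componentwise for $z\in\mathcal{M}$ and $|v^b(z)|\le\rho^*/4$ for $z\in\chij$: these give $x^r_j(t)\ge\rho^*/4$ whether or not $\mathcal{Z}^r(t)\in\mathcal{M}$, and $\yr_j(t)=x^r_j(t)$ when $\mathcal{E}^r_j(t)=0$. For (e): if $Q^r_j(t)<\tilde c_2 r^\kappa$ then $z\doteq\mathcal{Z}^r(t)$ has $z_j=1$, so the defining properties of $v^c$ and $v^b$ give $v^c_j(z)\ge 0$ and $v^b_j(z)>0$; finiteness of $\chij$ yields $\delta_b\doteq\min\{v^b_j(z):z\in\chij,\,j\in\AAA_J,\,z_j=1\}>0$, so $\yr_j(t)\le x^r_j(t)\le\rho_j-\delta_b$ and hence, using $\beta^r_j\rho^r_j=\alpha^r_j$, $\beta^r_j\yr_j(t)\le\alpha^r_j-\beta^r_j\big(\delta_b-(\rho_j-\rho^r_j)\big)$; for $r$ large (so $|\rho^r_j-\rho_j|<\delta_b/2$) this is $\le\alpha^r_j-\Delta$ with $\Delta\doteq\tfrac12\delta_b\min_j\inf_{r\ge r_0}\beta^r_j>0$.

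For (d), which restates \eqref{eq:240n}, I would first observe (a short counting estimate, using $\tilde c_2/\beta^r_j\le 2c_2$ for large $r$ and at most $J$ relevant indices) that $\hat W^r_i(t)\ge 2Jc_2 r^{\kappa-1}$ is incompatible with $Q^r_j(r^2t)<\tilde c_2 r^\kappa$ holding for every $j$ with $K_{i,j}=1$; hence some $j_0$ with $K_{i,j_0}=1$ has $\mathcal{Z}^r_{j_0}(r^2t)=0$, i.e.\ $i\in\mathcal{A}_{\mathcal{Z}^r(r^2t)}$. Writing $z=\mathcal{Z}^r(r^2t)$, since $v^c(z)\in\ker K$ and $(Kv^b(z))_i=0$ for $i\in\mathcal{A}_z$ by \eqref{eq:919}, we get $(Kx^r(r^2t))_i=(K\rho)_i=C_i$; and the hypothesis $\sum_j K_{i,j}\II_{\{\tilde{\mathcal{E}}^r_j(t)=1\}}=0$ forces $\yr_j(r^2t)=x^r_j(r^2t)$ for every $j$ with $K_{i,j}=1$, so $\tfrac{d}{dt}(K\bar B^r(t))_i=(K\yr(r^2t))_i=(Kx^r(r^2t))_i=C_i$.

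The hard part will be (b). The key fact to extract from the definitions of $\bar\xi^{S,r}_j$, $\bar\Upsilon^{S,r}_j(t)$ and the recursion \eqref{eq:328} is that $\bar\Upsilon^{S,r}_j(t)$ is exactly the amount by which $\bar B^r_j$ must grow beyond $\bar B^r_j(t)$ before the next type-$j$ departure occurs. Granting this, for any $s\ge t$ with $\bar B^r_j(s)-\bar B^r_j(t)<\bar\Upsilon^{S,r}_j(t)$ one has the same strict inequality with $s$ replaced by any $s'\in[t,s]$ (monotonicity of $\bar B^r_j$), so in \eqref{eq:328} the indicator term and the argument of $S^{r,t}_j$ both vanish and $\hat Q^r_j(s')\ge\hat Q^r_j(t)$; moreover $\tilde{\mathcal{E}}^r_j(t)=0$ forces $Q^r_j(r^2t)\ge\tilde c_1 r^\kappa$, whence $Q^r_j\ge\tilde c_1 r^\kappa$ throughout $[r^2t,r^2s]$, the $\mathcal{E}^r_j$-clock (which can only be triggered once $Q^r_j$ drops strictly below $\tilde c_1 r^\kappa$) is never started there, and therefore $\tilde{\mathcal{E}}^r_j(s)=0$. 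Apart from this residual-service-time bookkeeping in (b), and the scaling constants in (d), everything follows directly from the displayed properties of $v^b$, $v^c$ and the heavy-traffic limits in Condition \ref{cond:htc}.
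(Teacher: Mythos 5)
Your proof is correct and follows the only natural route; the paper itself treats Proposition~\ref{thm:schemeSum} as ``immediate from the definition of the control policy'' and supplies only one-line pointers for parts (d) (to Remark~\ref{rem:policy}/\eqref{eq:240n}) and (e) (to the property $v^b_j(z)>0$ when $z_j=1$), so your argument simply makes explicit the bookkeeping the authors leave implicit. In particular your identification of $\bar\Upsilon^{S,r}_j(t)$ as the additional fluid-scaled allocation required before the next type-$j$ departure is exactly the residual-service-time fact needed for (b), and your counting estimate and use of $Kv^c(z)=0$ and $(Kv^b(z))_i=0$ for $i\in\mathcal{A}_z$ in (d) match the reasoning spelled out in Remark~\ref{rem:policy} (where the large-$r$ qualifier the statement of (d) omits is made explicit).
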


The remainder of the paper is organized as follows.  Section 3 proves the main results of this work, namely Theorems \ref{thm:disccost} and \ref{thm:ergcost}, by introducing a set of seven propositions. 
{The rest} of the paper is devoted to the proof of these propositions.
Propositions \ref{thm:tightMeas} and 
\ref{thm:convMeas} give the tightness of certain path occupation measures and characterize the weak limit points. These results are needed only in the treatment of the ergodic cost. Analogous results when the interarrival times and service times are exponential were established in \cite{budjoh} and so we only provide a sketch of the proofs. These are in Section \ref{sec:tightness}. Proposition \ref{thm:XhatLimit} gives a functional central limit theorem which proceeds by standard methods using the heavy traffic condition and central limit theorem for renewal processes. Proof is sketched in Section \ref{sec:pfxhatlim}. Propositions \ref{thm:skorokApprox} and \ref{thm:idleTimeBndMark} are key ingredients in establishing the no-idleness feature of the HGI performance.  The first is proved in Section \ref{sec:thmskorok} while the second is proved in Section \ref{sec:thmidletimebnd} using certain large deviation estimates for renewal processes (Proposition \ref{thm:expTailBnd}) the proofs of which is deferred to Section \ref{sec:thmexpTailBnd}. Proposition \ref{thm:workloadExpBnd}  gives a key uniform in time exponential moment estimate. The proof of this result is in Section   \ref{sec:secworkloadexp}  based on several Lyapunov function lemmas, the proofs of which are relegated to Section \ref{sec:stabproofs}.
Finally Proposition \ref{thm:costDiffResult} is the key ingredient in showing that our policy achieves the instantaneous cost minimization feature of the HGI performance. Proof of this proposition is in Section \ref{sec:costdiffres}, based on some auxiliary lemmas that are proved in Sections \ref{sec:expoestim} and \ref{sec:gennetm}.

\begin{figure}[h!]\centering
\includegraphics[height=2in, width=3in]{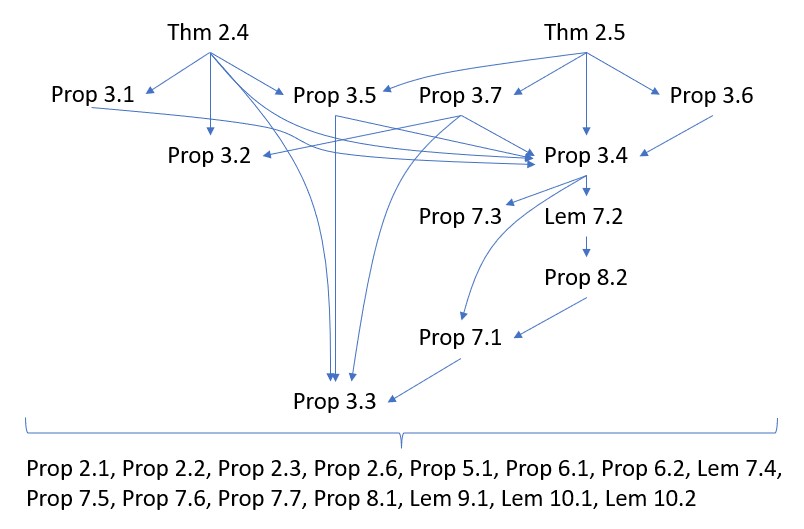}
\caption{The interdependence of results}\label{fig:ResultDependence}
\end{figure}

To describe the interdependence of results of the paper we provide Figure \ref{fig:ResultDependence} with the interpretation that the results can only depend on other results located lower in the figure.  To simplify the figure and to make it easier to read we have placed results which are not directly used in the proofs of HGI performance (Theorems \ref{thm:disccost} and \ref{thm:ergcost}) and are not heavily dependent on other results into a "foundation" group at the bottom.  The proofs of results in this foundation group depend on at most one other result in the paper which is also in the foundation group, but these foundation results may be used in the proofs of results higher up the figure. The interdependence of results outside of the foundation group is indicated using arrows, where, for instance, an arrow going from Thm 2.4 to Prop 3.1 indicates that Proposition \ref{thm:XhatLimit} is used in the proof of Theorem \ref{thm:disccost}.

\section{Proof of Main Theorems}
\subsection{Some Auxiliary Results}
Proof of the following proposition follows by standard methods. A sketch is given in Section \ref{sec:pfxhatlim}.
\begin{proposition}
\label{thm:XhatLimit}
 Let $y^{r} = (\hat{q}^r,\hat{\Upsilon}^r,\tilde{\mathcal{E}^r})\in \cly^r$ be an arbitrary sequence satisfying $\sup_{r}\hat{q}^{r}<\infty$ and $\lim_{r\rightarrow\infty}\hat{\Upsilon}^{r}=0$.  Then  $\hat{X}^{r}(\cdot)\rightarrow\hat{X}(\cdot)$ in distribution in $\sD^I$ where $\hat{X}(\cdot)$ is as introduced above \eqref{eq:eqrbm}.
\end{proposition}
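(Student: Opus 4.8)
The plan is to split $\hat X^r$ into an asymptotically negligible remainder, a deterministic drift term, and centered renewal--process terms, and then to invoke the functional CLT for renewal processes together with a random--time--change argument.

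First, collect into a remainder $R^r$ the four terms $r^{-1}KM^r\mathcal I_{\{t\ge\bar\Upsilon^{A,r}>0\}}$, $-r^{-1}KM^r\mathcal I_{\{\bar B^r(t)\ge\bar\Upsilon^{S,r}>0\}}$, $-rK\rho^r(t\wedge\bar\Upsilon^{A,r})$ and $rK(\bar B^r(t)\wedge\bar\Upsilon^{S,r})$. The first two are bounded by $r^{-1}|KM^r\mathbf{1}|\to0$; for the third, $r(t\wedge\bar\Upsilon^{A,r})\le r\bar\Upsilon^{A,r}=\hat\Upsilon^{A,r}\to0$ uniformly in $t$ by hypothesis, and likewise $r(\bar B^r(t)\wedge\bar\Upsilon^{S,r})\le\hat\Upsilon^{S,r}\to0$; hence $\sup_{t\le T}|R^r(t)|\to0$ for each $T$. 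The drift term satisfies $rtK(\rho^r-\rho)=t\,K\,[\,r(\rho^r-\rho)\,]\to tK\eta=t\theta$ uniformly on $[0,T]$ by Condition~\ref{cond:htc}. What remains are the two stochastic pieces $KM^r\hat A^r((t-\bar\Upsilon^{A,r})^+)$ and $-KM^r\hat S^r((\bar B^r(t)-\bar\Upsilon^{S,r})^+)$.

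Condition~\ref{cond:htc} gives convergence of the interarrival and job-size variances, so Donsker's theorem for renewal processes yields $\hat A^r_j\Rightarrow\hat A_j$ and $\hat S^r_j\Rightarrow\hat S_j$ in $\sD^1$, with $\hat A_j,\hat S_j$ Brownian motions of variance coefficients $\alpha_j^3(\sigma^u_j)^2$ and $\beta_j^3(\sigma^v_j)^2$ respectively (Condition~\ref{eqn:mgfBnd} gives far more than the moment hypotheses required). By the assumed mutual independence of all the sequences $\{u^r_j(l)\},\{v^r_j(l)\}$, the vector $(\hat A^r,\hat S^r)$ converges jointly to a $2J$-dimensional Brownian motion with independent coordinates. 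Since $\bar\Upsilon^{A,r}=r^{-1}\hat\Upsilon^{A,r}\to0$ and the limit is continuous, the standard oscillation bound gives $\sup_{t\le T}|\hat A^r((t-\bar\Upsilon^{A,r})^+)-\hat A^r(t)|\Rightarrow0$, so the arrival term may be replaced by $KM^r\hat A^r(t)\Rightarrow KM\hat A(t)$.

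The genuinely delicate point is the service term, where $\hat S^r$ is evaluated at the random, control-dependent time change $\bar B^r$; this is where I expect the main obstacle. The feasibility constraint $K\tfrac{d}{dt}B^r\le C$ forces $\tfrac{d}{dt}\bar B^r_j\le\min\{C_i:K_{ij}=1\}$, so $\{\bar B^r\}$ is equi-Lipschitz, hence $C$-tight, and $(\bar B^r(t)-\bar\Upsilon^{S,r})^+$ differs from $\bar B^r(t)$ by at most $\bar\Upsilon^{S,r}\to0$ uniformly on compacts. I would then establish that $\bar B^r_j(\cdot)\to\rho_j(\cdot)$ uniformly on compacts, in probability. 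To see this, the diffusion-scaled queue-length identity (cf.~\eqref{eq:303n}) rearranges, after re-centering the renewal increments, to
\begin{equation*}
r\beta^r_j\bigl(\bar B^r_j(t)-\rho^r_j t\bigr)=\hat q^r_j+\hat A^r_j\bigl((t-\bar\Upsilon^{A,r}_j)^+\bigr)-\hat S^r_j\bigl((\bar B^r_j(t)-\bar\Upsilon^{S,r}_j)^+\bigr)-\hat Q^r_j(t)+o_P(1),
\end{equation*}
whose right side is $O_P(1)$ once $\{\hat Q^r\}$ is known to be stochastically bounded on compacts (which follows in turn from the $C$-tightness of $\{\hat X^r\}$ --- itself a consequence of the equi-Lipschitz bound on $\bar B^r$ and the renewal FCLT --- together with the soft barriers the policy imposes: $\yr_j\ge\rho^*/4$ whenever $\mathcal E^r_j=0$, and $\mathcal E^r_j=1$ only while $Q^r_j<\tilde c_2 r^\kappa$, cf.~Proposition~\ref{thm:schemeSum}). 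Combined with $\hat Q^r_j\ge0$, this identity and the policy's barriers pin $\bar B^r_j(t)-\rho_j t$ down to $o_P(1)$ uniformly on compacts. Granting this, the random-time-change theorem for weakly convergent processes gives $\hat S^r_j((\bar B^r_j(\cdot)-\bar\Upsilon^{S,r}_j)^+)\Rightarrow\hat S_j(\rho_j\,\cdot)$, jointly with the arrival terms. Assembling everything, $\hat X^r\Rightarrow\bigl(t\mapsto\theta t+KM\hat A(t)-KM\hat S(\rho\,\cdot)(t)\bigr)$, which is precisely the $I$-dimensional Brownian motion with drift $\theta$ and covariance $\Sigma=KM(\Sigma^u+\Sigma^v R)M^{\mathsf T}K^{\mathsf T}$ introduced above \eqref{eq:eqrbm} (the matrix $R=\mathrm{diag}(\rho)$ encoding the service time change $\bar B^r\to\rho\cdot\mathrm{id}$). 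Everything here is routine heavy-traffic FCLT machinery except the identification $\bar B^r\to\rho\cdot\mathrm{id}$, which is where the structure of the proposed policy genuinely enters.
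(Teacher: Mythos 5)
Your decomposition of $\hat X^r$ into remainder, deterministic drift, and centered renewal terms, the application of the renewal-process FCLT, and the isolation of $\bar B^r\to\rho\,\mathrm{id}$ as the only non-routine step all match the paper's proof sketch (which relegates exactly that step to Propositions \ref{thm:workloadExpBnd} and \ref{thm:expTailBnd} and an argument from \cite{budjoh}). So the overall strategy is the same.

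The gap is in your justification that $\hat Q^r$ is stochastically bounded on compacts, which is the load-bearing step. You claim this follows from $C$-tightness of $\{\hat X^r\}$ plus the policy's soft barriers. But the workload decomposition $\hat W^r=\hat w^r+\hat X^r+\hat U^r$ has the nonnegative, nondecreasing idleness process $\hat U^r=r(Ct-K\bar B^r(t))$, and $C$-tightness of $\hat X^r$ alone does not control $\hat U^r$ at all: a priori the policy could accumulate a macroscopic amount of scaled idleness, in which case $\hat W^r$ (and hence $\hat Q^r$) would be large even with $\hat X^r$ bounded. Tightness of $\hat W^r$ comes only after one shows that $\hat U^r$ essentially increases only when $\hat W^r$ is near zero, i.e.\ that the pair approximately solves a Skorohod problem so that the Lipschitz property of $\Gamma$ transfers the bound on $\hat X^r$ to $\hat W^r$. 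That is precisely the content of Propositions \ref{thm:skorokApprox} and \ref{thm:idleTimeBndMark} (or, bypassing them, the uniform exponential moment bound of Proposition \ref{thm:workloadExpBnd}, which the paper actually invokes here); it uses the excursion analysis and the large-deviation estimates of Proposition \ref{thm:expTailBnd}, not merely the a.s.\ properties of the policy listed in Proposition \ref{thm:schemeSum}. Your sketch cites the ``soft barriers'' but does not supply the missing step from ``processing rate is bounded below when the queue is non-empty'' to ``cumulative scaled idleness is negligible''; without it the boundedness of $\hat Q^r$, and hence the identification $\bar B^r\to\rho\,\mathrm{id}$, remains unproved.

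A smaller point: the student form of the identity $r\beta^r_j(\bar B^r_j(t)-\rho^r_j t)=\hat q^r_j+\hat A^r_j(\cdot)-\hat S^r_j(\cdot)-\hat Q^r_j(t)+o_P(1)$ is correct and is exactly the right tool once the boundedness of $\hat Q^r$ is in hand; and reading off $R=\mathrm{diag}(\rho)$ from the service time change $\bar B^r\to\rho\,\mathrm{id}$ is the intended interpretation of $\Sigma=KM(\Sigma^u+\Sigma^v R)M^{\mathsf T}K^{\mathsf T}$.
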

Next two propositions are needed to establish the non-idleness feature of the HGI performance.
 \begin{proposition}[Proof in Section \ref{sec:thmskorok}]
 \label{thm:skorokApprox}
For all $r\in \NN$, $y^r \in \cly^r$, $t\geq 0$ and $T\in (0,\infty)$ we have $P_{y^r}$ a.s.,
 \begin{align*}
&\sup_{s\in [0,T]}\left\{| \Gamma\left(\hat{W}^{r}_{i}(t)+\hat{X}^{r}_{i}(t+\cdot)-\hat{X}^{r}_{i}(t)\right)(s)- \hat{W}^{r}_{i}(t+s)|\right\}\\
&\leq 3Jc_{2}r^{\kappa-1}+2r^{-1}C_{i}\sum_{j=1}^{J}\int_{r^{2}t}^{r^{2}(t+T)}\mathcal{I}_{\{ \mathcal{E}_{j}^{r}(s)=1\}}ds.
\end{align*}
 \end{proposition}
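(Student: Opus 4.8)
The plan is to write the workload process in the form $\hat W^r(t+s) = \hat W^r(t) + (\hat X^r(t+s) - \hat X^r(t)) + (\hat U^r(t+s) - \hat U^r(t))$, exploiting the decomposition \eqref{eq:eq424}, and to compare this with the Skorohod map applied to the path $s\mapsto \hat W^r_i(t) + \hat X^r_i(t+s) - \hat X^r_i(t)$. Since the Skorohod map on the orthant decouples into $I$ one-dimensional maps with $K_{\Gamma_1}=2$, it suffices to argue coordinate by coordinate in $i$. The one-dimensional Skorohod map has the explicit ``reflection at zero'' representation, and the key point will be that $\hat U^r_i(\cdot) = r(C_i\cdot - (K\bar B^r(\cdot))_i)$ is nondecreasing up to a small error and increases only when $\hat W^r_i$ is near zero; if that held exactly, $\hat W^r_i(t+\cdot)$ would literally be $\Gamma_1$ applied to its own driving path and the left side would be $0$. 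The content of the proposition is to quantify how badly these two properties fail.

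First I would fix $i \in \AAA_I$ and $t \ge 0$, and set $g(s) \doteq \hat W^r_i(t) + \hat X^r_i(t+s) - \hat X^r_i(t)$ so that the claim is a bound on $\sup_{s\le T}|\Gamma_1(g)(s) - \hat W^r_i(t+s)|$. I would decompose the ``pushing'' term $\hat U^r_i(t+s) - \hat U^r_i(t) = r\int_{r^2 t}^{r^2(t+s)} (C_i - (K\y^r)_i(u))\, du$ (after an appropriate change of variables) into the part coming from instants where resource $i$ is working at full capacity and the part where it is not. By Proposition \ref{thm:schemeSum}(d), on the set where $\hat W^r_i(\cdot) \ge 2Jc_2 r^{\kappa-1}$ and no job-type using $i$ is blocked, the integrand $C_i - (K\y^r)_i$ vanishes. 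So the ``bad'' contributions to the pushing term come from (i) times when $\hat W^r_i$ is below the threshold $2Jc_2 r^{\kappa-1}$ — this is exactly where a genuine reflection should occur, and it contributes the $O(r^{\kappa-1})$ term; and (ii) times when some $j$ with $K_{i,j}=1$ has $\mathcal E^r_j=1$ (blocking), each such time allowing resource $i$ to lose at most $C_i$ units of capacity rate, which after diffusion scaling gives the $2r^{-1}C_i \sum_j \int_{r^2 t}^{r^2(t+T)} \mathcal I_{\{\mathcal E^r_j(s)=1\}}\, ds$ term (the factor $2 = K_{\Gamma_1}$ from the Lipschitz constant).

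Concretely, I would write $\hat W^r_i(t+s) = g(s) + \big(\hat U^r_i(t+s)-\hat U^r_i(t)\big)$ and introduce an auxiliary nondecreasing process $L(s)$ that agrees with the pushing term except on the ``bad'' set, so that $\hat W^r_i(t+s) + (\text{error}_1(s)) = g(s) + L(s)$ with $L$ nondecreasing, $L(0)=0$, $\hat W^r_i(t+\cdot)\ge 0$, and $L$ increasing only when $\hat W^r_i(t+\cdot) < 2Jc_2 r^{\kappa-1}$ — i.e., $(\hat W^r_i(t+\cdot), L)$ solves a perturbed Skorohod problem in which the constraint ``flat off the boundary'' is relaxed to ``flat off a $2Jc_2 r^{\kappa-1}$-neighborhood of the boundary,'' and $\text{error}_1$ is bounded in sup norm by $r^{-1}C_i\sum_j\int \mathcal I_{\{\mathcal E^r_j=1\}}$. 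Then I would invoke the Lipschitz property of $\Gamma_1$ (constant $2$) together with a standard lemma that the solution of such a $\varepsilon$-relaxed Skorohod problem differs from the true $\Gamma_1(g)$ by at most $C\varepsilon$ plus $2\|\text{error}_1\|_\infty$; tracking the constants ($\varepsilon = 2Jc_2 r^{\kappa-1}$, and $3 = 1 + 2\cdot 1$ for the relaxed-problem comparison) yields the stated $3Jc_2 r^{\kappa-1} + 2r^{-1}C_i\sum_j \int \mathcal I_{\{\mathcal E^r_j=1\}}$.

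The main obstacle, I expect, will be setting up the ``relaxed Skorohod problem'' comparison cleanly — i.e., verifying that the process $L$ is genuinely nondecreasing (the integrand $C_i - (K\y^r)_i$ can be negative when resources are under-allocated by the $v^b, v^c$ adjustments, but by Proposition \ref{thm:schemeSum}(c) and the capacity constraint (b) in Definition \ref{def:AdmissableCntrl} it stays $\ge 0$ once we are on the relevant set, or the negative part is absorbed into $\text{error}_1$) and that the flatness property ``$dL$ supported on $\{\hat W^r_i < 2Jc_2 r^{\kappa-1}\}$ up to the blocking error'' follows precisely from Proposition \ref{thm:schemeSum}(d). The bookkeeping of the change of variables $u = r^2(t+s)$ and of the factor $2$ from $K_{\Gamma_1}$ must be done carefully to land exactly on the constants $3$ and $2$ in the statement, but this is routine once the structural picture above is in place.
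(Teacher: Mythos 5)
Your proposal takes essentially the same route as the paper: decompose the idleness/pushing term $\hat U^r_i$ into the part occurring near the boundary and the part due to blocking, sandwich $\hat W^r_i(t+\cdot)$ between two applications of $\Gamma_1$, and then use the Lipschitz constant $K_{\Gamma_1}=2$ to compare the two driving paths, arriving at $3\cdot(\text{threshold})+2\cdot(\text{blocking term})$. Two small remarks. First, the ``standard lemma'' you invoke about a relaxed Skorohod problem is precisely what the paper establishes as an auxiliary comparison principle (Proposition \ref{thm:skorokIneq}), which gives one-sided bounds $\phi_2\le\Gamma_1(f)\le\phi_3$ under the appropriate monotonicity and flatness conditions; this is not quite off-the-shelf in the exact form needed, and the paper proves it directly, so you would need to supply that argument rather than cite it. Second, your concern about whether $L$ (equivalently $\hat U^r_i$) is nondecreasing is unfounded: the integrand $C_i-(K\y^r)_i(r^2 u)$ is \emph{always} nonnegative by the capacity constraint (b) of Definition \ref{def:AdmissableCntrl}, regardless of the $v^b, v^c$ adjustments (those can only \emph{under}-allocate, which keeps the integrand positive); there is no negative part to absorb into an error term, and Proposition \ref{thm:schemeSum}(c) is not the relevant citation. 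One also needs to be a bit careful with the threshold: the paper uses $Jc_2 r^{\kappa-1}$ (not $2Jc_2 r^{\kappa-1}$) in defining the off-boundary pushing $\hat I^r_i$, which is what produces the final constant $3Jc_2 r^{\kappa-1}$; with your choice $\varepsilon=2Jc_2 r^{\kappa-1}$ the same bookkeeping would yield $6Jc_2 r^{\kappa-1}$.
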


\begin{proposition}[Proof in Section \ref{sec:thmidletimebnd}]
\label{thm:idleTimeBndMark}
For each $\epsilon>0$ there exists $B,R\in (0,\infty)$ such that for all $r\geq R$,  $y^r = (\hat{q}^r,\hat{\Upsilon}^r,\tilde{\mathcal{E}^r})\in \cly^r$, $T\geq 1$, and $j\in\AAA_J$ we have
\begin{equation*}
 P_{y^r}\left(\int_{0}^{r^{2}T}\mathcal{I}_{\{ \mathcal{E}_{j}^{r}(s)=1\}}ds\geq r\hat{\Upsilon}^{A,r}_{j}+\epsilon T r^{\frac{7}{4}\kappa}\right)\leq  e^{-B Tr^{\frac{1}{8}\kappa}}.\\
\end{equation*}
\end{proposition}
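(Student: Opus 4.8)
The plan is to decompose the blocked time $\int_{0}^{r^{2}T}\mathcal{I}_{\{\mathcal{E}_{j}^{r}(s)=1\}}\,ds$ over the successive blocked excursions $[\tilde{\tau}^{r,j}_{2l-1},\tilde{\tau}^{r,j}_{2l})$ (together with the possibly in-progress excursion at time $0$ when $\tilde{\mathcal{E}}_{j}^{r}(0)=1$), and to bound its total through two large deviation facts. First, \emph{each excursion is short}: during a blocked excursion no type-$j$ job is processed (Proposition \ref{thm:schemeSum}(a)), so $Q_{j}^{r}$ rises from a height below $\tilde{c}_{1}r^{\kappa}$ up to $\tilde{c}_{2}r^{\kappa}$ purely by arrivals, consuming at most $(\tilde{c}_{2}-\tilde{c}_{1})r^{\kappa}+2$ type-$j$ arrivals; by the strong Markov property of $\hat{Y}^{r}$ at the stopping time $\tilde{\tau}^{r,j}_{2l-1}$, and using Condition \ref{eqn:mgfBnd} (to control both the residual interarrival time at $\tilde{\tau}^{r,j}_{2l-1}$ and the sum of the subsequent interarrival times) together with the renewal path large deviation estimates of Proposition \ref{thm:expTailBnd}, the excursion length exceeds $C_{1}r^{\kappa}$ with conditional probability at most $e^{-c_{1}r^{\kappa}}$; the very first excursion, when $\tilde{\mathcal{E}}_{j}^{r}(0)=1$, picks up in addition the initial residual interarrival time, which is exactly the term $r\hat{\Upsilon}^{A,r}_{j}$ in the assertion. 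Second, \emph{excursions are rare}: once an excursion ends, $Q_{j}^{r}\approx\tilde{c}_{2}r^{\kappa}$, and before the next one can begin $Q_{j}^{r}$ must descend the whole band $[\tilde{c}_{1}r^{\kappa},\tilde{c}_{2}r^{\kappa})$, of width of order $r^{\kappa}$, \emph{against} the strictly positive drift $\Delta$ furnished by Proposition \ref{thm:schemeSum}(e) at every height below $\tilde{c}_{2}r^{\kappa}$.

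To make the second fact precise I would consider the last visit of $Q_{j}^{r}$ to height $\tilde{c}_{2}r^{\kappa}$ preceding the next time it drops below $\tilde{c}_{1}r^{\kappa}$: on that final descent the queue is confined to the band and there has drift at least $\Delta$, so a Lundberg-type first-passage estimate --- obtained from an exponential supermartingale built out of the arrival renewal process and the service renewal process time-changed by $B^{r}_{j}$, with the requisite moment generating function bounds coming from Condition \ref{eqn:mgfBnd} and the uniform (over sub-intervals) renewal fluctuation control from Proposition \ref{thm:expTailBnd} --- shows that every ``entry of $Q_{j}^{r}$ into the band from above'' is completed into a full descent to $\tilde{c}_{1}r^{\kappa}$ with conditional probability at most $e^{-c_{2}r^{\kappa}}$. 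Now partition $[0,r^{2}T]$ into $\lceil T\rceil$ blocks of length $r^{2}$ and let $X_{i}$ denote the blocked time accrued in block $i$. Since a block of length $r^{2}$ contains at most $O(r^{2})$ band entries and at most $O(r^{2})$ interarrival intervals, combining the two facts above gives that $X_{i}$ has exponential tails at scale $r^{\kappa}$ \emph{uniformly in the state of $\hat{Y}^{r}$ at the start of the block}; concretely, there is a fixed $\lambda_{0}>0$ such that $\sup_{y\in\cly^{r}}E_{y}\!\left[e^{\lambda X_{1}}\right]\le e^{C_{3}\lambda r^{\kappa}}$ for all $0<\lambda\le\lambda_{0}$ and all large $r$.

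Given this uniform tail bound, the Markov property of $\hat{Y}^{r}$ applied at the block endpoints yields $E\!\left[e^{\lambda\sum_{i=1}^{\lceil T\rceil}X_{i}}\right]\le e^{C_{3}\lambda r^{\kappa}\lceil T\rceil}$ for $0<\lambda\le\lambda_{0}$. Writing the blocked time as $r\hat{\Upsilon}^{A,r}_{j}$ plus $\sum_{i}X_{i}$ (the residual of the initial excursion being absorbed into the first term), Markov's inequality with $\lambda=\lambda_{0}$ bounds the probability in the statement by $e^{-\lambda_{0}(\epsilon T r^{\frac{7}{4}\kappa}-2C_{3}T r^{\kappa})}$, using $\lceil T\rceil\le 2T$ for $T\ge1$; for $r\ge R$ with $R$ large enough that $\epsilon r^{\frac{7}{4}\kappa}\ge 4C_{3}r^{\kappa}$ and $\tfrac12\lambda_{0}\epsilon\, r^{\frac{7}{4}\kappa}\ge B r^{\frac{1}{8}\kappa}$ this is at most $e^{-B T r^{\frac{1}{8}\kappa}}$. (The genuine rate of decay here is $r^{\frac{7}{4}\kappa}$, or $r^{\kappa}$ if the band-descent estimate turns out to be the weaker input, so the modest exponent $r^{\frac18\kappa}$ is reached with room to spare; carrying the powers $r^{\kappa}$, $r^{\frac34\kappa}$, $r^{\frac74\kappa}$, $r^{\frac18\kappa}$ through the argument is routine once the two facts above are in hand.)

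The crux of the proof is the ``excursions are rare'' estimate. The queue-length process is not a simple Markov chain at the level of its renewal clocks: interarrival and service times are general rather than exponential, the service clock runs on the time scale $B^{r}_{j}$, the residual times must be controlled at the random stopping times $\tilde{\tau}^{r,j}_{2l}$, and $Q_{j}^{r}$ may leave the band through the top and re-enter it many times before (if ever) reaching the bottom; consequently the intuitive statement ``crossing an $r^{\kappa}$-wide band against a positive drift is exponentially unlikely'' must be realized through the exponential-supermartingale first-passage argument above, for which the self-contained renewal path large deviation bounds of Proposition \ref{thm:expTailBnd} are precisely the tool provided. A secondary point needing care is to assemble the per-block tail estimates into a bound whose exponent is \emph{linear} in $T$ rather than merely in $\log T$ --- which is why the Chernoff bound is run over the $\lceil T\rceil$ blocks of length $r^{2}$ and not through a union bound over the individual excursions.
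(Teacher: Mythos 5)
Your decomposition---per-block MGF of the blocked time, with the number of blocked excursions per block controlled by a descent-against-drift estimate---is a genuinely different route from the paper's. The paper instead slices time starting at $\bar{\Upsilon}^{A,r}_j$ into subintervals of length $r^{3\kappa/2-2}$, declares per-subinterval bad events $\mathcal{U}_n^{r}$ (arrival/service fluctuations over the subinterval exceeding $\frac{\tilde{c}_2-\tilde{c}_1}{16}r^\kappa$, residual interarrival at the subinterval's start too large), shows the blocked time is $0$ or $O(r^\kappa)$ on good subintervals while propagating a good starting state to the next subinterval, and bounds $\sum_n\mathcal{I}_{\mathcal{U}_n^r}$ by a conditional exponential Markov inequality. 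Both routes rest on the same two facts (climb is short, descent is exponentially rare) and both would deliver decay far stronger than $e^{-BTr^{\kappa/8}}$. However, two of your steps fail as written. The uniform bound $\sup_{y\in\cly^r}E_y[e^{\lambda X_1}]\le e^{C_3\lambda r^\kappa}$ cannot hold: for $y$ with $\tilde{\mathcal{E}}^r_j(0)=1$ and $\hat{\Upsilon}^{A,r}_j\ge r$ the block-$1$ blocked time is deterministically $r^2$, which swamps $r^\kappa$. You strip $r\hat{\Upsilon}^{A,r}_j$ from block $1$, but iterating the Markov property at block boundaries needs the bound for the \emph{realized} states $\hat{Y}^r(i)$, whose residuals $\hat{\Upsilon}^{A,r}_j(i)$ are random and, starting from that $y$, can still carry the remnant of the large initial residual into block $2$. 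The paper sidesteps this by starting the subinterval decomposition at time $\bar{\Upsilon}^{A,r}_j$ and, crucially, by putting the per-subinterval residual into the bad event $\mathcal{A}^{r,1}_n$ and controlling it through Proposition~\ref{thm:nextArrTimeBnd}; your per-block argument needs a matching per-block treatment of the residuals.

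Second, the ``excursions are rare'' descent probability $\le e^{-c_2 r^\kappa}$ is asserted but not constructed. The Lundberg-type exponential supermartingale you invoke is standard for i.i.d.\ increments, but here the departure clock is $S^r_j$ time-changed by the policy-dependent allocation $B^r_j$, and no off-the-shelf exponential supermartingale in $Q^r_j$ exists in that generality. The paper obtains the effect you want---a downward crossing of $[\tilde{c}_1 r^\kappa,\tilde{c}_2 r^\kappa)$ against drift $\Delta$ is exponentially unlikely---by bounding the raw renewal fluctuations of $A^{r,t}_j$ and $S^{r,t}_j$ over each subinterval via Proposition~\ref{thm:expTailBnd} (the events $\mathcal{A}^{r,2}_n$ and $\mathcal{B}^{r,2}_n$) and using $\frac{d}{dt}\bar{B}^r_j\le\max_i C_i$ to dominate the time-changed service count, after which the drift wins deterministically on the good event. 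A working version of your argument must replicate that fluctuation-plus-drift estimate inside each excursion, and should also replace the ``last visit to $\tilde{c}_2 r^\kappa$'' framing, which is not a stopping time, with a union over the band-entry stopping times.
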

The proofs of the next two propositions are in Section \ref{sec:workexpandcostdiff}.
The first  gives a key uniform in time exponential moment estimate. The second is key in showing that our policy achieves the instantaneous cost minimization feature of the HGI performance.

\begin{proposition}[Proof in Section \ref{sec:workexpandcostdiff}]
\label{thm:workloadExpBnd}
There exist constants $\tilde{R},\tilde{B}_{1},\tilde{B}_{2},\tilde{B}_{3},\tilde{\delta},\delta>0$ such that for all $r\geq \tilde{R}$, $y^r= (\hat{q}^r,\hat{\Upsilon}^r,\tilde{\mathcal{E}^r})\in\mathcal{Y}^{r}$, $c\in(0,\delta]$, and $t\geq 0$ we have
\begin{equation*}
E_{y^r}\left[e^{c|\hat{W}^{r}(t)|} \right] \leq \tilde{B}_{1}e^{-\tilde{\delta}t+\tilde{B}_{2}\left(|\hat{q}^r|+|\hat{\Upsilon}^r|\right)}+\tilde{B}_{3}.
\end{equation*}
\end{proposition}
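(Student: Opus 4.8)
The plan is to prove this by a Foster--Lyapunov/drift argument applied to the strong Markov process $\hat{Y}^{r}=(\hat{Q}^{r},\hat{\Upsilon}^{r},\tilde{\mathcal{E}}^{r})$, working with a Lyapunov function of the form $V(y)=e^{c(|\hat w|+|\hat\Upsilon|)}$ where $\hat w = KM^{r}\hat q$, for $c$ in a sufficiently small range $(0,\delta]$ (small relative to the constant in Condition \ref{eqn:mgfBnd} and the Lipschitz constant $K_{\Gamma_1}=2$ of the one-dimensional Skorohod map). The target inequality is, up to interpolation, equivalent to a geometric drift estimate: there exist $T_{0}\in(0,\infty)$, $\gamma\in(0,1)$, $b\in(0,\infty)$, all independent of $r$ large, such that $E_{y}[V(\hat{Y}^{r}(T_{0}))]\leq \gamma V(y)+b$ for all $y\in\mathcal{Y}^{r}$, together with a companion "no blow-up over a window" bound $\sup_{s\leq T_{0}}E_{y}[V(\hat{Y}^{r}(s))]\leq C V(y)$. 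Iterating the first estimate over times $nT_{0}$ and interpolating with the second yields the stated bound with exponential decay $e^{-\tilde{\delta}t}$, the constant $\tilde{B}_{2}$ arising from $c$ times the norm-equivalence constant relating $|\hat q|$ to $|\hat w|$ on $\mathbb{R}_{+}^{J}$, and $\tilde{B}_{3}$ of order $b/(1-\gamma)$.

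The heart of the matter is the one-window drift estimate. Here I would use the decomposition $\hat{W}^{r}(t+s)=\hat{W}^{r}(t)+(\hat{X}^{r}(t+s)-\hat{X}^{r}(t))+(\hat{U}^{r}(t+s)-\hat{U}^{r}(t))$ together with the Skorohod comparison of Proposition \ref{thm:skorokApprox}. Combining that comparison, the monotonicity and Lipschitz property of $\Gamma_{1}$, and the representation $\hat{X}^{r}_{i}(t+\cdot)-\hat{X}^{r}_{i}(t)=-\mu^{r}_{i}(\cdot)+N^{r}_{i}(\cdot)$ --- where the deterministic part has slope tending to a quantity $\mu_{i}>0$ as $r\to\infty$ (this is precisely where the stability part $\theta=K\eta<0$ of Condition \ref{cond:htc} enters), any contribution of the initial residuals is bounded by $c|\hat\Upsilon^{r}|$, and $N^{r}_{i}$ is a fluctuation built from the centered renewal processes $\hat{A}^{r},\hat{S}^{r}$ --- gives a pathwise bound of the form
\begin{equation*}
|\hat{W}^{r}(t+T_{0})|\leq \bigl(|\hat{W}^{r}(t)|-\mu_{*}T_{0}\bigr)^{+}+C_{1}\sup_{s\leq T_{0}}|N^{r}(t+s)|+C_{2}r^{\kappa-1}+C_{3}\sum_{j}\int_{r^{2}t}^{r^{2}(t+T_{0})}\mathcal{I}_{\{\mathcal{E}_{j}^{r}(s)=1\}}ds.
\end{equation*}
Choosing $T_{0}$ large enough that $e^{-c\mu_{*}T_{0}}\sup_{r}E\bigl[e^{cC_{1}\sup_{s\leq T_{0}}|N^{r}(s)|}\bigr]<1$ --- finiteness of this exponential moment uniformly in $r$ being a consequence of Condition \ref{eqn:mgfBnd} and the path large deviation estimates for renewal processes in Theorem \ref{thm:expTailBnd} --- produces the contraction $\gamma<1$ whenever $|\hat{W}^{r}(t)|$ is large, while boundedness of $|\hat{W}^{r}(t)|$ yields the additive constant $b$. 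The companion no-blow-up bound over $[0,T_{0}]$ is routine from the same tools, since over a bounded window $|\hat W^r(\cdot)|$ grows by at most a quantity with uniformly bounded exponential moments (fluctuations controlled over $[0,T_0]$, residuals carried inside $V$).

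The main obstacle, I expect, is controlling the cumulative idleness term $\int_{r^{2}t}^{r^{2}(t+T_{0})}\mathcal{I}_{\{\mathcal{E}_{j}^{r}(s)=1\}}ds$ inside the exponential. When the workload of resource $i$ is large, Proposition \ref{thm:schemeSum}(d) gives full-capacity operation unless some job type using $i$ is blocked, and blocking is exactly the mechanism that can prevent draining; so the drift argument is only valid on events where this idleness is small. The idle-time estimate of Proposition \ref{thm:idleTimeBndMark} bounds this integral by $r\hat{\Upsilon}^{A,r}_{j}+\epsilon T_{0}r^{\frac{7}{4}\kappa}$ off an event of probability at most $e^{-BT_{0}r^{\frac{1}{8}\kappa}}$; on the good event the term is negligible after fluid scaling, and on the bad event one splits the expectation with H\"older's inequality against the uniformly bounded exponential moments of $N^{r}$ and absorbs the loss into the exponentially small probability. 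A parallel care is needed for the residual components: a large initial residual service time delays draining for a window of length proportional to $|\hat{\Upsilon}^{S,r}|$, which is the source of the $e^{\tilde{B}_{2}|\hat{\Upsilon}^{r}|}$ factor; once each coordinate experiences its first renewal the residuals refresh to typical values whose exponential moments are controlled by Condition \ref{eqn:mgfBnd}, and the geometric drift proceeds unimpeded. Collecting constants while keeping everything uniform in $r\geq\tilde{R}$ --- so that $\mu^{r}_{*}\geq\mu_{*}/2>0$, $|\rho^{r}-\rho|<\rho^{*}/4$ component-wise, and the renewal-process exponential moment bounds hold simultaneously --- finishes the proof; I anticipate these individual ingredients being packaged as the "several Lyapunov function lemmas" referenced in the statement.
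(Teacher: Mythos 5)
Your plan takes a genuinely different route from the paper. You propose a Foster--Lyapunov drift argument with a \emph{state-based} Lyapunov function $V(y)=e^{c(|\hat w|+|\hat\Upsilon|)}$; the paper instead uses a \emph{hitting-time} Lyapunov function $V^{r,\tilde\upsilon}_i(y)=E_y[e^{\tilde\delta\,\tilde\gamma^{r,\tilde\upsilon}_{i,0}}]$ (Definition~\ref{def:V}), where $\tilde\gamma^{r,\tilde\upsilon}_{i,0}$ is the first time the $i$-th workload plus residuals drop below $\tilde\upsilon$. The paper proves the geometric drift $E_y[V^{r,\tilde\upsilon}_i(\hat Y^r(t))]\le e^{-\tilde\delta t}V^{r,\tilde\upsilon}_i(y)+B$ (Lemma~\ref{thm:VMarkovLong}), then squeezes $e^{c|\hat W^r_i(t)|}$ between bounds on $V^{r,\tilde\upsilon}_i$ from above (Proposition~\ref{thm:waitTimeExpBnd}) and below (Proposition~\ref{thm:TildeGammaLower}). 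A structural advantage of that choice is that everything reduces to tail estimates $P(\tilde\gamma>T)\le B_1e^{-B_2T}$, which follow directly from Proposition~\ref{thm:idleTimeBndMark}; no exponential moment of any idle-time integral is ever needed.

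This is exactly where I see a concrete gap in your sketch. On the bad event of Proposition~\ref{thm:idleTimeBndMark}, you propose to "split the expectation with H\"older's inequality against the uniformly bounded exponential moments of $N^r$ and absorb the loss into the exponentially small probability." But on that event the dominant contribution to $|\hat W^r(t+T_0)|$ is the cumulative idleness term $r^{-1}\sum_j\int_{r^2t}^{r^2(t+T_0)}\mathcal I_{\{\mathcal E^r_j(s)=1\}}ds$ (your displayed pathwise bound is in fact missing this $r^{-1}$), which can be as large as order $rT_0$, not the fluctuation $N^r$. H\"older against $N^r$'s exponential moment does nothing to control it, and the obvious Cauchy--Schwarz alternative $E[\mathcal I_{\text{bad}}\,e^{c|\hat W^r(T_0)|}]\le P(\text{bad})^{1/2}E[e^{2c|\hat W^r(T_0)|}]^{1/2}$ is circular unless you feed in the crude bound $|\hat W^r(T_0)|\lesssim |\hat w|+rT_0$, which yields $e^{-BT_0r^{\kappa/8}/2}\,e^{c\,O(r)}\to\infty$ since $\kappa/8<1$.

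The fix exists but is a missing idea in your write-up: Proposition~\ref{thm:idleTimeBndMark} holds for \emph{all} $T\ge 1$, and by monotonicity of the idle-time integral in its upper limit one can, for any level $a>\hat\Upsilon^{A,r}_j+\epsilon T_0 r^{\frac74\kappa-1}$, choose $T'\ge T_0\vee 1$ so that $r\hat\Upsilon^{A,r}_j+\epsilon T'r^{\frac74\kappa}=ra$ and deduce
\begin{equation*}
P\Bigl(r^{-1}\!\int_{r^2t}^{r^2(t+T_0)}\!\mathcal I_{\{\mathcal E^r_j=1\}}\,ds > a\Bigr)\le e^{-B(a-\hat\Upsilon^{A,r}_j)\,r^{1-\frac{13}{8}\kappa}/\epsilon}.
\end{equation*}
Because $\kappa<\tfrac14$ gives $1-\tfrac{13}{8}\kappa>0$, this is an exponential tail whose rate grows with $r$, so the exponential moment of the idle time \emph{is} uniformly bounded (up to an $e^{c\hat\Upsilon^{A,r}_j}$ factor) for $r$ large. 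With that in hand your state-based Lyapunov argument can be closed, but as written the handling of the bad event does not.
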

\begin{proposition}[Proof in Section \ref{sec:workexpandcostdiff}]
\label{thm:costDiffResult}
For any $\epsilon\in (0,1)$ and $M<\infty$ there exist constants $T^{*},R\in(0,\infty)$ such that for all $r\geq R$,   $T\geq T^{*}$, $t\geq 0$, and  $y^r \in \cly^r$ satisfying $\hat{q}^r\leq M$ and $r\hat{\Upsilon}^{r}\leq M$
 we have
\begin{equation*}
E_{y^r}\left[\frac{1}{T}\int_{0}^{T}\left|h\cdot \hat{Q}^{r}(t+s)-\hat{h}\left(\hat{W}^{r}(t+s) \right)\right|ds\right] \leq \epsilon
\end{equation*}
and
\begin{equation*}
E_{y^r}\left[\int_{0}^{\infty}e^{-\vsg s}\left|h\cdot \hat{Q}^{r}(t+s)-\hat{h}\left(\hat{W}^{r}(t+s) \right)\right|ds\right] \leq \epsilon
\end{equation*}
\end{proposition}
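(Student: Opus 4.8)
The plan is to first use Proposition \ref{thm:lambdaEqu} to rewrite the integrand: since $\hat{Q}^{r}(s)\geq 0$, we have $|h\cdot \hat{Q}^{r}(s)-\hat{h}(\hat{W}^{r}(s))| = |\lambda|\,\tilde{d}(\hat{Q}^{r}(s))$, so it suffices to bound $E_{y^r}[\tfrac1T\int_0^T \tilde{d}(\hat{Q}^{r}(t+s))\,ds]$ and $E_{y^r}[\int_0^\infty e^{-\vsg s}\tilde{d}(\hat{Q}^{r}(t+s))\,ds]$ by $\epsilon/|\lambda|$. I then decompose the time integral according to which of three disjoint events holds at time $t+s$: $\{\tilde{\mathcal{E}}^{r}\neq 0\}$, $\{\tilde{\mathcal{E}}^{r}=0,\ \mathcal{Z}^{r}\notin\clm\}$, and $\{\tilde{\mathcal{E}}^{r}=0,\ \mathcal{Z}^{r}\in\clm\}$. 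Two structural facts drive everything. First, on $\{\tilde{\mathcal{E}}^{r}=0,\ \mathcal{Z}^{r}\notin\clm\}$ the auxiliary results of Section \ref{sec:gennetm} (Lemma \ref{thm:nearBndZ} and Proposition \ref{thm:costDiffBndQ}) give the pathwise bound $\tilde{d}(\hat{Q}^{r}(t+s))\leq C r^{\kappa-1}$. Second, on $\{\tilde{\mathcal{E}}^{r}=0,\ \mathcal{Z}^{r}\in\clm\}$ the process $s\mapsto h\cdot\hat{Q}^{r}(t+s)$ has fluid-drift at most $-\tfrac34 r|\lambda_c(\mathcal{Z}^{r})|+C\leq -\tfrac12 r|\tilde{\lambda}|$ for large $r$: the correction $v^{c}(\mathcal{Z}^{r})$ lowers $h\cdot\hat{Q}^{r}$ at rate $r\lambda_c(\mathcal{Z}^{r})<0$, while the calibration $|(h\beta)\cdot v^{b}(z)|\leq |\tilde{\lambda}|/4\leq|\lambda_c(z)|/4$ from \eqref{eq:919} prevents the $v^{b}$ correction from overwhelming it; at the same time $v^{c}(\mathcal{Z}^{r})\in\ker K$, so $\hat{W}^{r}$ and hence $\hat{h}(\hat{W}^{r})$ are unaffected by $v^{c}$ and move only on the diffusive scale (up to an $O(r^{\kappa-1})$ contribution from coordinates of $\hat{W}^{r}$ pinned near $0$). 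Hence $\Psi^{r}:=|\lambda|\tilde{d}(\hat{Q}^{r})=h\cdot\hat{Q}^{r}-\hat{h}(\hat{W}^{r})\geq 0$ decreases at fluid-rate $\geq\tfrac12 r|\tilde{\lambda}|-C$ along every excursion of $\{\tilde{\mathcal{E}}^{r}=0,\ \mathcal{Z}^{r}\in\clm\}$.

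For the contribution of $\{\tilde{\mathcal{E}}^{r}=0,\ \mathcal{Z}^{r}\notin\clm\}$, the pathwise bound gives $O(r^{\kappa-1})$ for both averages, which tends to $0$. For the contribution of $\{\tilde{\mathcal{E}}^{r}\neq 0\}$ I use $\tilde{d}(\hat{Q}^{r}(t+s))\leq C|\hat{Q}^{r}(t+s)|\leq C|\hat{W}^{r}(t+s)|$ — the last inequality following from Condition \ref{cond:locTraffic}, since every coordinate of $\hat{W}^{r}$ is a nonnegative combination dominating a constant multiple of each $\hat{Q}^{r}_{j}$ entering it — and then apply Cauchy--Schwarz together with the uniform exponential moment bound of Proposition \ref{thm:workloadExpBnd} (using $\hat{q}^{r}\leq M$, $r\hat{\Upsilon}^{r}\leq M$) and the idle-time estimate of Proposition \ref{thm:idleTimeBndMark}, the latter applied through the strong Markov property at time $t$ and a uniform bound on $E[r\hat{\Upsilon}^{A,r}_{j}(t)]$ for the residual renewal clocks; this shows $\tfrac1T\int_0^T P_{y^r}(\tilde{\mathcal{E}}^{r}(t+s)\neq 0)\,ds$ (and its $e^{-\vsg s}$-weighted analogue) is arbitrarily small once $r$ (and $T$) are large, hence so is this contribution.

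The contribution of $\{\tilde{\mathcal{E}}^{r}=0,\ \mathcal{Z}^{r}\in\clm\}$ is the heart of the matter and is handled by excursion analysis. Along a maximal excursion $[a,b]$ of this event, the drift estimate gives $\Psi^{r}(s)\leq(\Psi^{r}(a)+R^{*})-\tfrac12 r|\tilde{\lambda}|(s-a)$, where $R^{*}=\sup_{[a,b]}|R_{[a,\cdot]}|$ collects the martingale and remaining bounded-variation fluctuations; since $\Psi^{r}\geq 0$ this forces $\int_a^b\Psi^{r}(s)\,ds\leq C r^{-1}(\Psi^{r}(a)+R^{*})^{2}$, and (solving a quadratic for $E[b-a\mid\clg^{r}(a)]$) $E[(R^{*})^{2}\mid\clg^{r}(a)]\leq C r^{-1}\Psi^{r}(a)+C r^{-2}$. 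I then split the excursions according to the event holding just before $a$. Those preceded by $\{\tilde{\mathcal{E}}^{r}=0,\ \mathcal{Z}^{r}\notin\clm\}$ start with $\Psi^{r}(a)\leq C r^{\kappa-1}$, and the total number of excursions in a window of fluid-length $T$ is $O(r^{2}T)$ in expectation (each begins at a jump of $\hat{Q}^{r}$, i.e.\ an arrival or departure), so their aggregate contribution to $\tfrac1T\int_0^T\Psi^{r}$ is $O(r^{2\kappa-1})\to 0$. Those preceded by $\{\tilde{\mathcal{E}}^{r}\neq 0\}$ start with $\Psi^{r}(a)\leq C|\hat{W}^{r}(a)|$, and (after discarding lower-order terms as above) their contribution reduces to controlling $\tfrac1{rT}E_{y^r}[\sum_k |\hat{W}^{r}(a_k)|^{2}]$; this is where the renewal-process large-deviation estimates of Section \ref{sec:expoestim} are combined with Proposition \ref{thm:idleTimeBndMark} and Proposition \ref{thm:workloadExpBnd} to bound both how many such excursions occur and how large $\hat{W}^{r}$ can have grown during the preceding idle period. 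The discounted bound follows from the identical decomposition with an extra $e^{-\vsg s}$ weight, which only improves the summability of the excursion series (and removes the need for $T$ to be large).

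The main obstacle is exactly this last point: controlling the excursions of $\{\tilde{\mathcal{E}}^{r}=0,\ \mathcal{Z}^{r}\in\clm\}$ that immediately follow an idle period, where $\Psi^{r}$ at the start of the excursion is bounded only by $|\hat{W}^{r}|$, which need not be small — so one must show that, in aggregate, such excursions are both rare and short. A secondary technical point is making the drift decomposition of $\Psi^{r}$ rigorous given that $\tilde{d}$ is only Lipschitz (it is concave, so the second-order correction has the favorable sign) and $\hat{Q}^{r}$ is pure jump; this is dealt with by working directly from the queue-length identity \eqref{eq:328} and Proposition \ref{thm:schemeSum}, and treating $\hat{h}(\hat{W}^{r})$ via its Lipschitz property, rather than differentiating $\tilde{d}$ itself.
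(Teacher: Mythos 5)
Your reduction to bounding $E[\tfrac1T\int_0^T\tilde d(\hat Q^r(t+s))\,ds]$ (and its discounted analogue), and the three structural facts you single out — pathwise smallness of $\tilde d$ when $\mathcal Z^r\notin\clm$ via Lemma~\ref{thm:nearBndZ} and Proposition~\ref{thm:costDiffBndQ}, the negative fluid drift of $h\cdot\hat Q^r$ of size $\approx -\tfrac12 r|\tilde\lambda|$ when $\mathcal Z^r\in\clm$ driven by $v^c$ and not overwhelmed by $v^b$, and control of the idle set via Proposition~\ref{thm:idleTimeBndMark} together with Proposition~\ref{thm:workloadExpBnd} — are exactly the ingredients the paper's proof is built from. (A small slip: $\hat W^r=KM^r\hat Q^r$, so Proposition~\ref{thm:lambdaEqu} gives $h\cdot\hat Q^r-\hat h(KM\hat Q^r)=|\lambda|\tilde d(\hat Q^r)$ and there is an additional $O(r^{-1})$-Lipschitz correction term from $M^r\ne M$, which the paper handles via Lemma~\ref{thm:costDiffBndW} and Proposition~\ref{thm:workloadExpBnd}; this is harmless but should be written in.)

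Where you depart from the paper — and where the proposal has a real gap — is in the global excursion bookkeeping for $\{\tilde{\mathcal E}^r=0,\ \mathcal Z^r\in\clm\}$. The paper does not sum contributions over the (random, unbounded-in-number) collection of all such excursions on $[0,T]$. Instead it fixes a deterministic window of fluid length $kr^{-1}$, constructs a single high-probability ``good'' event $(\mathcal B^{r,k,t})^c$ on which the renewal processes behave, the residual clocks are small, the total idle time within the window is $\le\xi kr^{7\kappa/4-1}$ (unscaled), and $\tilde d$ at the start of the post-burn-in subwindow is $\le\xi k$, and then shows by a pathwise argument (with the stopping times $\zeta^r_l$ controlling up/down crossings of $\tilde d$ across the level $B_{\tilde d}J^{1/2}\tilde c_2 r^{\kappa-1}$, all inside the same window) that $\tilde d\le\epsilon r^{-1/8}$ on $[t+\xi k(1+4|\lambda|/|\tilde\lambda|)r^{-1},\,t+kr^{-1}]$; the bad set is absorbed by Cauchy–Schwarz and the uniform exponential moment bound. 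The crucial feature is that this avoids ever having to count how many times the idle state is entered or how many excursions of $\{\mathcal Z^r\in\clm\}$ there are.

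Your per-excursion decomposition, by contrast, requires a bound on the \emph{number} of excursions of $\{\tilde{\mathcal E}^r=0,\ \mathcal Z^r\in\clm\}$ that immediately follow an idle period — equivalently, a bound on the number of distinct idle periods in $[0,T]$. Proposition~\ref{thm:idleTimeBndMark} only bounds the \emph{total} duration $\int_0^{r^2T}\II_{\{\mathcal E^r_j(s)=1\}}\,ds$, not the count of such intervals, and a priori a queue could oscillate across $[\tilde c_1 r^\kappa,\tilde c_2 r^\kappa]$ very many times, each crossing of negligible length. Your bound $\frac1{rT}E[\sum_k|\hat W^r(a_k)|^2]$ cannot be controlled from the results of the paper without such a count, so as written the ``idle-preceded'' excursion term is not closed. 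Two further points in the excursion argument are heuristic and would need real work: the inequality $\int_a^b\Psi^r\le Cr^{-1}(\Psi^r(a)+R^*)^2$ implicitly assumes the excursion terminates when $\Psi^r$ first reaches (near) zero, whereas the excursion of $\{\mathcal Z^r\in\clm\}$ is defined by the queue-length indicators and need not end at that time (you must argue separately that $\Psi^r$ then stays small for the remainder of the excursion, which is essentially what the $\zeta^r_l$ argument in the paper does); and the conditional moment estimate $E[(R^*)^2\mid\clg^r(a)]\le Cr^{-1}\Psi^r(a)+Cr^{-2}$ is a nontrivial optional-stopping/BDG-type claim that is asserted rather than derived. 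The fixed-window approach sidesteps all three issues at once, which is why the paper adopts it.

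Your treatment of the $\{\tilde{\mathcal E}^r\ne0\}$ contribution as a separate term via Cauchy–Schwarz is also a different bookkeeping choice: the paper does not estimate $\tilde d$ on the idle set directly but folds the idle-time bound into the definition of the good set $\mathcal B^{r,k,t}$ and into the pathwise drift/workload estimates on its complement. Both routes should give bounds of the right order, but yours requires a uniform-in-$t$ bound on $E_{y^r}[r\hat\Upsilon^{A,r}_j(t)]$ for the residual clock, which is indeed available from Proposition~\ref{thm:nextArrTimeBnd}, so that piece is fine.

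In summary: you have the right mechanisms, but the global excursion decomposition needs either a new estimate on the number of idle periods (not available from the stated propositions) or a reorganization into the fixed-window form, and the per-excursion area bound and conditional second-moment bound would need full proofs.
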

\subsection{Proof of Theorem \ref{thm:disccost}}
\label{sec:thmdisccost}
Fix $T\in(0,\infty)$.  From Proposition \ref{thm:XhatLimit}  $\hat{X}^{r}(\cdot)\rightarrow \hat{X}(\cdot)$ in distribution on $D([0,T]: \mathbb{R}^I)$ where $\hat{X}(\cdot)$ is as introduced above \eqref{eq:eqrbm}.  Since $KM^{r}\hat q^{r}\rightarrow w_{0}$, by the continuity of the Skorokhod map, we have
$
\Gamma\left(KM^{r}\hat q^{r}+\hat{X}^{r}(\cdot)\right) \rightarrow \ch W^{w_0}
$
in distribution on $D([0,T]: \mathbb{R}^d)$, where $\ch W^{w_0}(t)$ is the reflected Brownian motion given by \eqref{eq:eqrbm}.  In addition, using Propositions \ref{thm:skorokApprox} and \ref{thm:idleTimeBndMark} (and recalling that $\sup_{r}r\hat{\Upsilon}^{r}<\infty$ and $\kappa <1/4$) we see that
\begin{equation*}
\sup_{t\in[0,T]}|\Gamma\left(KM^{r}\hat q^{r}+\hat{X}^{r}(\cdot)\right)(t)-\hat{W}^{r}(t)|\rightarrow 0
\end{equation*}
in probability.  Combining this gives
$
\hat{W}^{r} \rightarrow \ch W^{w_0}$
in distribution on $D([0,T]: \mathbb{R}^d)$.  For $k\in(0,\infty)$ define
$
\hat{h}_{k}(w)=k\wedge\hat{h}(w)$
and note that $\hat{h}_{k}$ is a bounded, continuous function on $\mathbb{R}^{I}_{+}$.  Consequently, it follows that for any $T<\infty$,
\begin{equation*}
\lim_{r\rightarrow\infty}E\left[\int_{0}^{T}e^{-\vsg t}\hat h_{k}\left(\hat{W}^{r}(t)\right) dt\right]=E\left[\int_{0}^{T}e^{-\vsg t} \hat{h}_{k}\left(\ch W^{w_0}(t)\right)dt \right].
\end{equation*}


Let $\epsilon>0$ be arbitrary.  Proposition \ref{thm:costDiffResult} (recall that $\hat{q}^{r}\rightarrow \tilde{q}$ and $\sup_{r}r\hat{\Upsilon}^{r}<\infty$ ) tells us that there exists $R_{1}\in(0,\infty)$ such that for all $r\geq R_{1}$ we have
\begin{align*}
&\left|E\left[\int_{0}^{\infty}e^{-\vsg t}h\cdot \hat{Q}^{r}(t)dt\right]-E\left[\int_{0}^{\infty}e^{-\vsg t}\hat{h}(\hat{W}^{r}(t))dt\right]\right|\\
&\leq E\left[\int_{0}^{\infty}e^{-\vsg t}|\hat{h}(\hat{W}^{r}(t))-h\cdot \hat{Q}^{r}(t)|dt\right]\leq \frac{\epsilon}{6}.  
\end{align*}
Due to Proposition \ref{thm:workloadExpBnd} (and once more using $\hat{q}^{r}\rightarrow \tilde{q}$ and $\sup_{r}r\hat{\Upsilon}^{r}<\infty$ ) there exists $B_{1}\in (0,\infty)$ and $R_{2}\in[R_{1},\infty)$ such that for all $r\geq R_{2}$ and $t\geq 0$ we have
$
E\left[|\hat{W}^{r}(t)| \right]\leq B_{1}.$
In addition, the definition of $\hat{h}$ implies that there exists $B_{2}<\infty$ such that for all $w\in\mathbb{R}^{I}_{+}$ we have 
$
\hat{h}(w)\leq B_{2}|w|$.
Choose $T_{1} \in (0,\infty)$ sufficiently large that
$
e^{-\vsg T_{1}} \frac{B_{2}B_{1}}{\vsg }\leq \frac{\epsilon}{3}$.
Consequently for all $T\geq T_{1}$ and $r\geq R_{2}$ we have
\begin{eqnarray*}
\left|\int^{\infty}_{0}e^{-\vsg t}E\left[ \hat{h}\left(\hat{W}^{r}(t)\right)\right]dt-\int^{T}_{0}e^{-\vsg t}E\left[ \hat{h}\left( \hat{W}^{r}(t)\right)\right]dt\right|  
\leq \frac{B_{2}B_{1}}{\vsg }e^{-\vsg T_{1}}
\leq \frac{\epsilon}{3}.
\end{eqnarray*}
Proposition \ref{thm:workloadExpBnd} also implies that there exist constants $K_{1}\in(0,\infty)$ and $R_{3}\in[R_{2},\infty)$ such that for all $r\geq R_{3}$ and $t\geq 0$ we have
$
E\left[\mathcal{I}_{\{|\hat{W}^{r}(t)|\geq K_{1}\}}|\hat{W}^{r}(t)|\right]\leq \vsg\epsilon/(3B_{2})$.
Consequently for $r\geq R_{3}$, $t\geq 0$, and $k\geq K_{1}B_{2}$ we have
\begin{align*}
&E\left[ |\hat{h}(\hat{W}^{r}(t))-\hat{h}_{k}(\hat{W}^{r}(t))|\right] \\
&\le E\left[\mathcal{I}_{\{\hat{h}(\hat{W}^{r}(t))\geq k\}}\hat{h}(\hat{W}^{r}(t)) \right] \leq  B_{2} E\left[\mathcal{I}_{\left\{|\hat W^{r}(t)|\geq K_{1}\right\}}|\hat{W}^{r}(t)| \right]
\leq \frac{\vsg\epsilon}{3}.
\end{align*}
Then for all $T<\infty$ we have
\begin{eqnarray*}
\left|E\left[ \int_{0}^{T}e^{-\vsg t}\hat{h}(\hat{W}^{r}(t))dt\right] -E\left[\int_{0}^{T}e^{-\vsg t}\hat{h}_{k}(\hat{W}^{r}(t))dt\right]\right| \leq 
\int_{0}^{\infty}e^{-\vsg t}\frac{\vsg\epsilon}{3}dt 
\leq \frac{\epsilon}{3}.
\end{eqnarray*}

In addition, due to the monotone convergence theorem there exists $T_{2}\in[T_{1},\infty)$ and
$K_{2}\in[K_{1},\infty)$ such that for all $T\geq T_{2}$ and $k\geq K_{2}$ we have
\begin{equation*}
|E\left[\int_{0}^{T}e^{-\vsg t} \hat{h}_{k}\left( \ch W^{w_0}(t)\right) dt\right]-E\left[\int_{0}^{\infty}e^{-\vsg t} \hat{h}\left( \ch W^{w_0}(t)\right) dt\right]|\leq \frac{\epsilon}{6}.
\end{equation*}
Finally, for $k\geq K_{2}$ and $T\geq T_{2}$ we have
\begin{align*}
&\limsup_{r\rightarrow\infty}J^{r}_{D}(B^{r},y^{r}_{0})\\
&= \limsup_{r\rightarrow\infty} E\left[\int_{0}^{\infty}e^{-\vsg t} h\cdot \hat{Q}^{r}(t) dt\right] \le\limsup_{r\rightarrow\infty}E\left[\int_{0}^{\infty}e^{-\vsg t} \hat{h}\left(\hat{W}^{r}(t) \right)dt\right]+\frac{\epsilon}{6}\\
&\leq\limsup_{r\rightarrow\infty}E\left[\int_{0}^{T}e^{-\vsg t} \hat{h}\left( \hat{W}^{r}(t) \right)dt\right]+\frac{3\epsilon}{6}
\leq\limsup_{r\rightarrow\infty} E\left[\int_{0}^{T}e^{-\vsg t} \hat{h}_{k}\left( \hat{W}^{r}(t) \right)dt\right]+\frac{5\epsilon}{6}\\
&= E\left[\int_{0}^{T}e^{-\vsg t} \hat{h}_{k}\left(\ch W^{w_0}(t) \right)dt\right]+\frac{5\epsilon}{6}
\leq E\left[\int_{0}^{\infty}e^{-\vsg t} \hat{h}\left(\ch W^{w_0}(t) \right)dt\right]+\epsilon.
\end{align*}
Similarly
\begin{eqnarray*}
\liminf_{r\rightarrow\infty}J^{r}_{D}(B^{r},y^{r}_{0})\geq E\left[\int_{0}^{\infty}e^{-\vsg t} \hat{h}\left( \ch W^{w_0}(t)\right) dt\right]-\epsilon.\\
\end{eqnarray*}
Since $\epsilon>0$ is arbitrary, the result follows.
\hfill \qed

\subsection{Proof of Theorem \ref{thm:ergcost}}
\label{subsec:erg}
We begin with some auxiliary results.
From Proposition \ref{thm:workloadExpBnd}  it follows that, with $y^r$ as in  Theorem \ref{thm:ergcost},  there exist constants $R,B_1\in(0,\infty)$ and $c>0$ such that for all $r\geq R$ and $t\geq 0$ we have
$
E_{y^r}\left[e^{c|\hat{W}^{r}(t)|} \right] \leq B_{1}$.
As a result $\sup_{r\geq R}\{J^{r}_{E}(B^{r}, y^r)\}<\infty$.
\begin{defn}
\label{def:randMeas}
For $r\in\mathbb{N}$ and $y^r\in \cly^r$ such that $J^{r}_{E}(B^{r}, y^r)<\infty$ choose $T_{r}\in [r,\infty)$ such that $|J^{r,T_{r}}_{E}(B^{r})-J^{r}_{E}(B^{r})|<\frac{1}{r}$.
If $J^{r}_{E}(B^{r}, y^r)=\infty$, set $T_r=1$.  Define the random variable $\nu^{r}$ with values in $\mathcal{P}\left(\mathbb{R}^{I}_{+}\times  D([0,1]: \mathbb{R}^I)\right)$ by
\begin{equation*}
\nu^{r}\doteq \frac{1}{T_{r}}\int_{0}^{T_{r}}\delta_{\left(\hat{W}^{r}(t),\hat{X}^{r}(t+\cdot)-\hat{X}^{r}(t) \right)}dt
\end{equation*}
\end{defn}
The  next two propositions give tightness of the above occupation measure and characterize its weak limit points.
Since analogous results for exponential   primitives were studied in \cite{budjoh} we only give proof sketches
 in Section \ref{sec:tightness}.
\begin{proposition}
\label{thm:tightMeas}
Let $\{\nu^{r}\}$ be as in Definition \ref{def:randMeas} associated with a $\{y^r\} \in \cly^r$ and assume $\sup_{r}\hat{q}^{r}<\infty$ and $\sup_{r}r\hat{\Upsilon}^{r}<\infty$.  Then  $\{\nu^{r}\}$ is tight as a sequence of random variables with values in  $\mathcal{P}\left(\mathbb{R}^{I}_{+}\times D([0,1]: \mathbb{R}^I)\right)$.
\end{proposition}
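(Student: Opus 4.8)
The plan is to reduce tightness of the $\mathcal{P}(S)$-valued random variables $\nu^{r}$, where $S\doteq\mathbb{R}^{I}_{+}\times D([0,1]:\mathbb{R}^{I})$, to tightness of their mean measures $\bar{\nu}^{r}(\cdot)\doteq E[\nu^{r}(\cdot)]=\frac{1}{T_{r}}\int_{0}^{T_{r}}P_{y^{r}}\big((\hat{W}^{r}(t),\hat{X}^{r}(t+\cdot)-\hat{X}^{r}(t))\in\cdot\big)\,dt$ in $\mathcal{P}(S)$. This reduction is standard: if $\{\bar{\nu}^{r}\}$ is tight, then for $\epsilon>0$ one picks compacts $\mathcal{K}_{m}\subset S$ with $\sup_{r}\bar{\nu}^{r}(\mathcal{K}_{m}^{c})<\epsilon\,2^{-2m}$, so by Markov's inequality $\sup_{r}P(\nu^{r}(\mathcal{K}_{m}^{c})>2^{-m})<\epsilon\,2^{-m}$, and the set $\{\mu\in\mathcal{P}(S):\mu(\mathcal{K}_{m}^{c})\le 2^{-m}\text{ for all }m\}$ is relatively compact in $\mathcal{P}(S)$ and carries all but $\epsilon$ of the mass of every $\nu^{r}$. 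Since $S$ is a product of Polish spaces, it then suffices to establish tightness of the two coordinate marginals of $\{\bar{\nu}^{r}\}$.

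For the $\mathbb{R}^{I}_{+}$-marginal we use Proposition \ref{thm:workloadExpBnd}: under the standing hypotheses there are $R,B_{1}\in(0,\infty)$ and $c>0$ with $\sup_{r\ge R}\sup_{t\ge 0}E_{y^{r}}[e^{c|\hat{W}^{r}(t)|}]\le B_{1}$, so by Markov's inequality $P_{y^{r}}(|\hat{W}^{r}(t)|>N)\le B_{1}e^{-cN}$ uniformly in $t\ge 0$ and $r\ge R$; averaging in $t$ gives the same bound for the first marginal of $\bar{\nu}^{r}$, and the finitely many indices $r<R$ are handled directly. For the $D([0,1]:\mathbb{R}^{I})$-marginal we use the standard criterion for tightness (compact containment of $\sup_{0\le s\le 1}|\cdot|$ together with a vanishing oscillation modulus; the jumps of $\hat{X}^{r}$ are $O(1/r)$, so limit points are continuous). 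Writing $\hat{X}^{r}(t+s)-\hat{X}^{r}(t)$ out from \eqref{eq:Xhat}, the contributions are: a drift term $rsK(\rho^{r}-\rho)$, which by Condition \ref{cond:htc} is Lipschitz in $s$ with constant tending to $|K\eta|$; the increment of the centered renewal process $KM^{r}\hat{A}^{r}$ over a window of length at most $s$; the increment of $KM^{r}\hat{S}^{r}$ over a window of length at most $\bar{B}^{r}(t+s)-\bar{B}^{r}(t)\le Cs$, since $\yr$ is uniformly bounded in $r$ and $t$ by construction (cf. Proposition \ref{thm:schemeSum}); and residual- and indicator-type terms of order $O(1/r)$ together with $O(r\,\bar{\Upsilon}^{A,r})+O(r\,\bar{\Upsilon}^{S,r})=O(1/r)$, because $\sup_{r}r\hat{\Upsilon}^{r}<\infty$ forces $\bar{\Upsilon}^{A,r},\bar{\Upsilon}^{S,r}=O(r^{-2})$. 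Thus everything reduces to a uniform-in-$u$, uniform-in-$r$ oscillation estimate for the centered renewal processes: for each $\eta>0$ and $j\in\AAA_{J}$,
\begin{equation*}
\lim_{\delta\downarrow 0}\ \limsup_{r\to\infty}\ \sup_{u\ge 0}\ P\Big(\sup_{0\le s_{1}\le s_{2}\le\delta}|\hat{A}^{r}_{j}(u+s_{2})-\hat{A}^{r}_{j}(u+s_{1})|>\eta\Big)=0,
\end{equation*}
and likewise for $\hat{S}^{r}_{j}$. Averaging these bounds over $t\in[0,T_{r}]$ and over $j$ yields the oscillation bound for $\bar{\nu}^{r}$, while a simpler version of the same argument over $[0,1]$ gives compact containment; combined with the first marginal this proves $\{\bar{\nu}^{r}\}$, hence $\{\nu^{r}\}$, is tight.

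The displayed renewal oscillation estimate — uniform both in the shift $u$ (which effectively ranges over $[0,T_{r}]$ with $T_{r}\to\infty$) and in $r$ — is the main obstacle. The non-stationarity of renewal processes means one must control the age and residual life of the interarrival sequence at the large time $r^{2}u$; Condition \ref{eqn:mgfBnd} is precisely what makes these edge corrections uniformly negligible, via a moment-generating-function bound on the number of renewals in a window of length $r^{2}\delta$. Since the corresponding estimates for exponential primitives were carried out in \cite{budjoh}, we only indicate the modifications required for general renewal primitives satisfying Condition \ref{eqn:mgfBnd}.
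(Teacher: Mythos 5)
Your proposal follows essentially the same route as the paper: reduce to the two coordinate marginals, handle the $\mathbb{R}^{I}_{+}$-marginal via the uniform exponential moment bound of Proposition \ref{thm:workloadExpBnd}, decompose the increment $\hat X^{r}(u+s)-\hat X^{r}(u)$, and reduce to a shift-uniform oscillation estimate for the centered renewal processes, with the large-deviation estimate and residual-life moment bounds doing the real work. You also make explicit the standard reduction from tightness of the $\mathcal{P}(S)$-valued random variables $\nu^{r}$ to tightness of the mean measures $\bar\nu^{r}$, which the paper uses implicitly via \eqref{eq:XhatTightCond} without spelling out.

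One point to tighten: in your increment decomposition you estimate the residual-life contributions as $O(r\,\bar{\Upsilon}^{A,r})+O(r\,\bar{\Upsilon}^{S,r})=O(1/r)$, citing $\sup_{r}r\hat{\Upsilon}^{r}<\infty$. But the relevant quantities in the increment over $[u,u+s]$ are $\bar{\Upsilon}^{A,r}(u)$ and $\bar{\Upsilon}^{S,r}(u)$ — the \emph{random} residuals at time $u$ — not the deterministic initial residuals $\bar{\Upsilon}^{A,r}$, $\bar{\Upsilon}^{S,r}$. These are not controlled by the initial-condition hypothesis; they require the uniform exponential moment bounds on $u^{r}_{j}(\tau^{r,A}_{j}(u))$ and $v^{r}_{j}(\tau^{r,S}_{j}(u))$ from Propositions \ref{thm:nextArrTimeBnd} and \ref{thm:nextSerTimeBnd}, which is exactly the machinery the paper invokes. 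You do acknowledge this issue in your closing paragraph (``one must control the age and residual life\ldots at the large time $r^{2}u$''), so the idea is there, but the earlier sentence is internally inconsistent and should be corrected to cite those propositions rather than the $O(r^{-2})$ bound on the initial residuals. Relatedly, the paper's mechanism for achieving uniformity in $u$ is to rewrite the increment in terms of the shifted processes $\hat A^{r,u}_{j}$, $\hat S^{r,u}_{j}$ and invoke Lemma \ref{thm:sameDist} (distributional invariance under shift by a multiparameter stopping time); naming that lemma would make your ``modifications for general renewal primitives'' concrete rather than gestured at.
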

\begin{proposition}
\label{thm:convMeas}
Assume $\{y^r\}\in \cly^r$ satisfies $\sup_{r}\hat{q}^{r}<\infty$ and $\sup_{r}r\hat{\Upsilon}^{r}<\infty$ and consider a subsequence $\{\nu^{r_{m}}\}_{m=1}^{\infty}$ of the tight sequence in { Proposition} \ref{thm:tightMeas} that converges in distribution to a random variable $\nu^{*}$ with values in $\mathcal{P}\left(\mathbb{R}^{I}_{+}\times D([0,1]: \mathbb{R}^I)\right)$.  Then the coordinate maps $(w, x)$
on  $\mathbb{R}^{I}_{+}\times D([0,1]: \mathbb{R}^I)$ satisfy, under $\nu^*(\omega)$, for a.e. $\omega$, 
\begin{enumerate}[(a)]
\item $x$ is a Brownian motion  with drift $\theta$ and covariance $\Sigma$ with respect to the filtration $\sF(t) = \sigma(w, x(s)) : s\le t)$,   
\item  $\Gamma_{I}(w+x(\cdot))(s) \overset{d}{=} w$ for every $s\in [0,1]$. 
\end{enumerate}
\end{proposition}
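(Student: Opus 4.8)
The plan is to characterize $\nu^*(\omega)$, for a.e.\ $\omega$, by a martingale problem forcing $x$ to be the Brownian motion $\hat X$ of Proposition~\ref{thm:XhatLimit} relative to $\sF(\cdot)$, and then to read off the distributional stationarity in part~(b). Each assertion will be cast in the form ``$\int\Phi\,d\nu^*(\omega)=0$ for a.e.\ $\omega$'' for a suitable bounded functional $\Phi$, and established by showing $\int\Phi\,d\nu^{r_m}\to 0$ in probability along the chosen subsequence; because the jumps of $\hat X^r$ are of size $O(1/r)$, the path marginal of $\nu^*$ is a.s.\ supported on $C([0,1]:\RR^I)$ and a.s.\ charges no path with a discontinuity at a prescribed time, so the functionals $\Phi$ below are $\nu^*$-a.s.\ continuity points and this implication is valid. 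Throughout, $\sup_r\hat q^r<\infty$ and $\sup_r r\hat\Upsilon^r<\infty$ make the residual-time terms $\hat\Upsilon^r$ negligible, while Proposition~\ref{thm:workloadExpBnd} controls the contribution of atypically large states over the long averaging horizon $[0,T_r]$.

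For part~(a), fix $0\le s<t\le 1$, times $0\le s_1<\cdots<s_k\le s$, a bounded Lipschitz $g$ on $\RR^I\times(\RR^I)^k$, and $\varphi\in C_c^\infty(\RR^I)$, and set $\mathcal L\varphi(x)=\theta\cdot\nabla\varphi(x)+\tfrac12\sum_{i,l}\Sigma_{il}\,\partial_i\partial_l\varphi(x)$. It suffices to prove, for a.e.\ $\omega$,
\begin{equation*}
\int\Big(\varphi(x(t))-\varphi(x(s))-\int_s^t\mathcal L\varphi(x(u))\,du\Big)g\big(w,x(s_1),\dots,x(s_k)\big)\,\nu^*(\omega)(dw,dx)=0,
\end{equation*}
since this identity, over a countable separating family of such data, characterizes $x$ under $\nu^*(\omega)$ as a Brownian motion with drift $\theta$ and covariance $\Sigma$ adapted to $\sF(\cdot)$. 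Writing $\Psi$ for the integrand, one has $\int\Psi\,d\nu^r=\frac1{T_r}\int_0^{T_r}\Psi\big(\hat W^r(t'),\hat X^r(t'+\cdot)-\hat X^r(t')\big)\,dt'$, so it is enough that (1) its expectation tends to $0$ and (2) its variance tends to $0$. For (1) one conditions on $\clg^r(t'+s)$ --- which up to the negligible residual terms contains $\hat W^r(t')$ and the increments $\hat X^r(t'+s_i)-\hat X^r(t')$, $i\le k$ --- so that $E\big[\int\Psi\,d\nu^r\big]$ reduces to the $\frac1{T_r}\int_0^{T_r}$-average of the one-step quantity $E\big[\varphi(\hat X^r(t'+t)-\hat X^r(t'))-\varphi(\hat X^r(t'+s)-\hat X^r(t'))-\int_s^t\mathcal L\varphi(\hat X^r(t'+u)-\hat X^r(t'))\,du\,\big|\,\clg^r(t'+s)\big]$, which is $o(1)$ uniformly in $t'$ by the martingale decomposition of the centered renewal sums in \eqref{eq:Xhat} under the random clock $\bar B^r$, whose predictable part matches $\int\mathcal L\varphi$ and whose bracket matches $\Sigma$. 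For (2) one partitions $[0,T_r]$ into blocks of a fixed length $L$ and uses a decorrelation estimate: two evaluations of $\Psi$ at times more than $L$ apart are nearly independent, with an error governed by $\sup_{r,t}P_{y^r}(|\hat W^r(t)|\ge\Lambda)$, which is small for large $\Lambda$ by Proposition~\ref{thm:workloadExpBnd}; this bounds the variance by $O(L/T_r)+o(1)\to 0$.

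For part~(b), fix $s\in[0,1]$ and a bounded Lipschitz $f:\RR^I\to\RR$. Since $\int f\big(\Gamma_I(w+x(\cdot))(s)\big)\,\nu^r(dw,dx)=\frac1{T_r}\int_0^{T_r}f\big(\Gamma(\hat W^r(t')+\hat X^r(t'+\cdot)-\hat X^r(t'))(s)\big)\,dt'$, Proposition~\ref{thm:skorokApprox} (and $\mathrm{Lip}(f)<\infty$) shows the integrand differs from $f(\hat W^r(t'+s))$ by at most a constant multiple of $r^{\kappa-1}+r^{-1}\sum_j\int_{r^2 t'}^{r^2(t'+1)}\mathcal I_{\{\mathcal E_j^r(u)=1\}}\,du$; for each $j$, a Fubini computation bounds the $\frac1{T_r}\int_0^{T_r}$-average of $r^{-1}\int_{r^2 t'}^{r^2(t'+1)}\mathcal I_{\{\mathcal E_j^r(u)=1\}}\,du$ by $\tfrac1{rT_r}\int_0^{r^2(T_r+1)}\mathcal I_{\{\mathcal E_j^r(u)=1\}}\,du$, which by Proposition~\ref{thm:idleTimeBndMark} (applied with $T=T_r+1$), together with $\kappa<\tfrac14$ (so $r^{7\kappa/4-1}\to 0$) and $\sup_r r\hat\Upsilon^r<\infty$, is $o(1)$ with probability tending to $1$. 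On the other hand $\big|\frac1{T_r}\int_0^{T_r}f(\hat W^r(t'+s))\,dt'-\frac1{T_r}\int_0^{T_r}f(\hat W^r(t'))\,dt'\big|\le 2\|f\|_\infty/T_r\to 0$. Hence $\int f\big(\Gamma_I(w+x(\cdot))(s)\big)\,d\nu^{r_m}$ and $\int f(w)\,d\nu^{r_m}$ have the same limit in probability, and since $(w,x)\mapsto\Gamma_I(w+x(\cdot))(s)$ is continuous on $\RR^I_+\times C([0,1]:\RR^I)$ by the Lipschitz property of $\Gamma$, it follows that $\int f\big(\Gamma_I(w+x(\cdot))(s)\big)\,d\nu^*(\omega)=\int f(w)\,d\nu^*(\omega)$ for a.e.\ $\omega$. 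Letting $f$ range over a countable convergence-determining class and $s$ over the rationals, and using continuity of $s\mapsto\Gamma_I(w+x(\cdot))(s)$, yields $\Gamma_I(w+x(\cdot))(s)\overset{d}{=}w$ under $\nu^*(\omega)$ for every $s\in[0,1]$, for a.e.\ $\omega$.

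The main obstacle is step~(1) of part~(a): establishing, in the presence of the control, the approximate martingale property of $\hat X^r$, i.e.\ that increments of $\hat X^r$ past time $t'+s$ are asymptotically conditionally independent of $\clg^r(t'+s)$ with the correct infinitesimal drift and covariance, because the service-side terms of \eqref{eq:Xhat} are time-changed by $\bar B^r$, which is itself adapted to the same interarrival and service sequences. This is handled by a renewal-process martingale decomposition under the random clock $\bar B^r$ and by absorbing the residual terms $\hat\Upsilon^r$ into $o(1)$ errors using $\sup_r r\hat\Upsilon^r<\infty$ and the exponential moment bounds (Condition~\ref{eqn:mgfBnd}, Proposition~\ref{thm:workloadExpBnd}). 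As the entire argument parallels the exponential-primitives treatment in \cite{budjoh}, only a sketch will be given in Section~\ref{sec:tightness}.
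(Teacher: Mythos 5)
Your overall strategy matches the paper's: for (a) show that the $\nu^{r_m}$-average of the martingale-problem functional has vanishing second moment, and for (b) use Propositions \ref{thm:skorokApprox} and \ref{thm:idleTimeBndMark} together with the Lipschitz continuity of $\Gamma$ on continuous paths. Part (b) is essentially right as written. In part (a), however, two places in your sketch gloss over where the technical work actually lies. Your claim that the one-step conditional expectation is $o(1)$ ``by the martingale decomposition of the centered renewal sums under the random clock $\bar B^r$, whose predictable part matches $\int\mathcal L\varphi$'' presupposes that $\bar B^r(t'+z)-\bar B^r(t')\approx\rho z$ for $z\in[0,1]$, uniformly (after averaging) in $t'$. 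This law-of-large-numbers replacement is exactly \eqref{eq:BBarToRhoU}, and it is nontrivial, relying on Proposition \ref{thm:workloadExpBnd}, Proposition \ref{thm:expTailBnd}, Lemma \ref{thm:sameDist}, the residual-time moment bounds, and the argument of \cite[Theorem 15, part 2]{budjoh}. The paper makes this precise by introducing the deterministically clocked process $\tilde{X}^{r,u}$ and the hybrid $\breve{X}^{r,u}$: the increment of $\breve{X}^{r,u+s}$ past time $u+s$ is genuinely independent of $\clg^r(u+s)$ and identically distributed over $u$, so \eqref{eq:fXTilde0} follows from the plain CLT. Second, your ``decorrelation estimate'' for the variance --- that two evaluations of $\Psi$ more than $L$ apart are nearly independent, with an error governed by the workload tail probability --- is not supportable as stated: a workload bound alone does not yield asymptotic independence of far-apart blocks for the controlled Markov process $\hat Y^r$; you would need mixing/ergodicity that is neither established nor needed. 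What actually kills the far-apart cross terms is the same conditional-mean smallness as in your step (1): for $v\le u-1$, the time-$v$ factors and $G^u_s(\hat W^{r_m}(u),\hat X^{r_m})$ are $\clg^{r_m}(u+s)$-measurable, so conditioning on $\clg^{r_m}(u+s)$ extracts $E[F^u_{s,t}(\breve{X}^{r_m,u+s})\mid\clg^{r_m}(u+s)]$, which is uniformly small. Reformulating your step (2) in this way makes the sketch coincide with the paper's argument.
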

We now prove Theorem \ref{thm:ergcost}.  It suffices to show that for any subsequence of $\{J^{r}_{E}(B^{r}, y^r)\}_{r}$ there is a further subsequence that converges to $\mbox{HGI}_E$.  Let $\nu^{r}$ be the random variables given by Definition \ref{def:randMeas} associated with a $y^r \in \cly^r$ as in the statement of Theorem \ref{thm:ergcost}.  For an arbitrary subsequence Propositions \ref{thm:tightMeas} and \ref{thm:convMeas} show that there is a further subsequence (which we will index by $r_m$) which satisfies $\nu^{r_{m}}\rightarrow \nu^{*}$ where $\nu^*$ is such that the coordinate variables $(x,w)$ under $\nu^{*}(\omega)$ for a.e. $\omega$ satisfy $w\overset{d}{=}\Gamma_{I}(w+x(\cdot))$ and $x$ is a Brownian motion with drift $\theta$ and covariance $\Sigma$ with respect to the filtration $\sF(t) = \sigma(w, x(s)) : s\le t)$.  Since the invariant distribution $\pi$ of the reflected Brownian motion in equation (\ref{eq:eqrbm}) is unique it follows that $\nu^{*}_{(1)}(\omega)\overset{d}{=} \pi$ for a.e. $\omega$, where $\nu^{*}_{(1)}$ is the first marginal of $\nu^*$.  Consequently
\begin{equation*}
\int_{\mathbb{R}^{I}_{+}}\hat{h}(w)\pi(dw)=E\left[\int_{\mathbb{R}^{I}_{+}}\hat{h}(w)\nu^{*}_{(1)}(dw)\right].
\end{equation*}
For $k<\infty$ let $\hat{h}_{k}(w) \doteq k\wedge \hat{h}(w)$. Then since $\hat{h}_{k}(\cdot)$ is a bounded, continuous function
\begin{eqnarray*}
\lim_{m\rightarrow\infty}\int_{\mathbb{R}^{I}_{+}}E\left[\hat{h}_{k}(w)\nu^{r_{m}}_{(1)}(dw)\right]=E\left[\int_{\mathbb{R}^{I}_{+}}\hat{h}_{k}(w)\nu^{*}_{(1)}(dw)\right] = \int_{\mathbb{R}^{I}_{+}}\hat{h}_k(w)\pi(dw).
\end{eqnarray*}
  Let
 $\epsilon>0$ be arbitrary.  
As in the proof of Theorem \ref{thm:disccost}, using Proposition\ref{thm:workloadExpBnd} (recall that $\sup_{r}\hat{q}^{r}<\infty$ and $\sup_{r}r\hat{\Upsilon}^{r}<\infty$),  we see that there exist constants $K_{1},R_{1}\in(0,\infty)$ such that for all $r\geq R$, $t\geq 0$, and $k\geq K_{1}$ 
%
\begin{equation*}
E\left[ \hat{h}(\hat{W}^{r}(t))-\hat{h}_{k}(\hat{W}^{r}(t))\right] \leq 
\frac{\epsilon}{4}.
\end{equation*}
Using the monotone convergence theorem we can choose $K_{2}\in[ K_{1},\infty)$ such that for all $k\geq K_{2}$ 
\begin{equation*}
\left|E\left[\int_{\mathbb{R}^{I}_{+}}\hat{h}(w)\nu^{*}_{(1)}(dw)\right]-E\left[\int_{\mathbb{R}^{I}_{+}}\hat{h}_{k}(w)\nu^{*}_{(1)}(dw)\right]\right|\leq \frac{\epsilon}{2}.
\end{equation*}
In addition, from Proposition \ref{thm:costDiffResult} (recall that $\sup_{r}\hat{q}^{r}<\infty$ and $\sup_{r}r\hat{\Upsilon}^{r}<\infty$) there exists a constant  $R_{2}\in [R_{1},\infty)$ such that for all $r\geq R_{2}$ 
\begin{equation*}
E\left[\frac{1}{T_{r}}\int_{0}^{T_{r}}|h\cdot \hat{Q}^{r}(t)-\hat{h}\left(\hat{W}^{r}(t) \right)|dt\right] \leq \frac{\epsilon}{4}.
\end{equation*}
Consequently for all $k\geq K_{2}$ we have
\begin{align*}
&\limsup_{m\rightarrow\infty}J^{r_{m}}_{E}(B^{r}) \\
&\leq \limsup_{m\rightarrow\infty} E\left[\frac{1}{T_{r_{m}}}\int_{0}^{T_{r_{m}}} h\cdot \hat{Q}^{r}(t) dt\right]\leq \limsup_{m\rightarrow\infty} E\left[\frac{1}{T_{r_{m}}}\int_{0}^{T_{r_{m}}} \hat{h}\left(\hat{W}^{r}(t) \right)dt\right]+\frac{\epsilon}{4}\\
&\leq \limsup_{m\rightarrow\infty} E\left[\frac{1}{T_{r_{m}}}\int_{0}^{T_{r_{m}}} \hat{h}_{k}\left(\hat{W}^{r}(t) \right)dt\right]+\frac{\epsilon}{2}= \limsup_{m\rightarrow\infty}E\left[\int_{\mathbb{R}^{I}_{+}}\hat{h}_{k}(w)\nu^{r_{m}}_{(1)}(dw)\right]+\frac{\epsilon}{2}\\
&= E\left[\int_{\mathbb{R}^{I}_{+}}\hat{h}_{k}(w)\nu^{*}_{(1)}(dw)\right]+\frac{\epsilon}{2} \le E\left[\int_{\mathbb{R}^{I}_{+}}\hat{h}(w)\nu^{*}_{(1)}(dw)\right]+\epsilon= \int_{\mathbb{R}^{I}_{+}}\hat{h}(w)\pi(dw)+\epsilon.
\end{align*}
Similarly,
$
\liminf_{m\rightarrow\infty}J^{r_{m}}_{E}(B^{r})\geq\int_{\mathbb{R}^{I}_{+}}\hat{h}(w)\pi(dw)-\epsilon$.
Since $\epsilon>0$ was arbitrary  the result follows.
\hfill \qed

\section{Proof Sketch of Proposition \ref{thm:XhatLimit}}
\label{sec:pfxhatlim}
Recall the definition of $\hat{X}^{r}(\cdot)$ in \eqref{eq:Xhat}.  The central limit theorem for renewal processes (see e.g. \cite[Theorem 14.6]{BillingsleyConv})
and the fact that $M^{r}\rightarrow M$) implies that $\theta \id(\cdot)+KM^{r}\hat{A}^{r}(\cdot)-KM^{r}\hat{S}^{r}(\rho \id(\cdot))\rightarrow \hat{X}(\cdot)$ in distribution in $\sD^I$, where $\id$ is the identity map.
Clearly $\frac{1}{r}KM^{r}\mathcal{I}_{\{\id(\cdot)\geq\bar{\Upsilon}^{A,r}>0\}}\rightarrow 0$, $\frac{1}{r}KM^{r}\mathcal{I}_{\{\bar{B}^{r}(\cdot)\geq\bar{\Upsilon}^{S,r}>0\}}\rightarrow 0$, and (due to Condition \ref{cond:htc} and the paragraph that follows) $rK(\rho^{r}-\rho)\id(\cdot)\rightarrow \theta\id(\cdot)$ in $\sD^I$.  Since $\lim_{r\rightarrow\infty}\hat{\Upsilon}^{r} =0$ it follows that $\left(\id(\cdot)-\bar{\Upsilon}^{A,r}\right)^{+}\rightarrow \id(\cdot)$, $rK\rho^{r}\left(\id(\cdot)\wedge \bar{\Upsilon}^{A,r}\right)\rightarrow 0$, and $rK\left(\bar{B}^{r}(\cdot)\wedge \bar{\Upsilon}^{S,r}\right)\rightarrow 0$ in $\sD^I$.  In addition, it can be shown using Proposition \ref{thm:workloadExpBnd}, Proposition \ref{thm:expTailBnd},  and the assumptions that $\sup_{r}\hat{q}^{r}<\infty$ and $\sup_{r}r\hat{\Upsilon}^{r}<\infty$ that $\left(\bar{B}^{r}(\cdot)-\bar{\Upsilon}^{S,r}\right)^{+}\rightarrow \rho\id(\cdot)$ in $\sD^I$.  The proof of this is very similar to the proof of \cite[Theorem 15 part 2]{budjoh} and is therefore omitted.  Putting all of this together implies that $\hat{X}^{r}(\cdot)\rightarrow\hat{X}(\cdot)$ in distribution in $\sD^I$ and completes the proof.

\hfill \qed

\section{Proof of Proposition
 \ref{thm:skorokApprox}}
\label{sec:thmskorok}
 We begin with an auxiliary result.
 \begin{proposition}
 \label{thm:skorokIneq}
 Let $T\in (0,\infty)$ and $f\in\sD\left([0,T]:\mathbb{R}\right)$ be arbitrary and let $\phi_{1}=\Gamma_1(f)$.  Assume $\phi_{2}=f+h_{2}$ where $h_{2}\in\sD\left([0,T]:\mathbb{R}\right)$ is a nondecreasing function satisfying $0\leq h_{2}(0)\leq  (-f(0))^{+}$ and $\int_{0}^{T}\mathcal{I}_{\{\phi_{2}(t)>0\}}dh_{2}(t)=0$.    Then $\phi_{2} \leq \phi_{1}$.  Let $\phi_{3}=f+h_{3}$ where $h_{3}\in\sD\left([0,T]:\mathbb{R}\right)$ is a nondecreasing function satisfying $h_3(0)\ge 0$, and $\phi_{3}\geq 0$.  Then $ \phi_{1}\leq \phi_{3}$.
 \end{proposition}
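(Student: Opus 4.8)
The plan is to reduce both inequalities to pointwise comparisons of the one-dimensional regulators, using the standard explicit representation of the Skorokhod map: for $f\in\sD([0,T]:\RR)$ one has $\phi_1=\Gamma_1(f)=f+h_1$ with $h_1(t)=\sup_{0\le s\le t}(-f(s))^+$, which is nondecreasing, RCLL, and satisfies $h_1(0)=(-f(0))^+$ (see \cite[Section 3.6.C]{karshr}). Writing $\phi_2=f+h_2$ and $\phi_3=f+h_3$, the assertions $\phi_2\le\phi_1$ and $\phi_1\le\phi_3$ are equivalent, respectively, to $h_2\le h_1$ and $h_1\le h_3$ on $[0,T]$.

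The bound $h_1\le h_3$ is immediate. Fix $t\in[0,T]$. For each $s\in[0,t]$, $\phi_3(s)\ge0$ gives $h_3(s)\ge -f(s)$, while $h_3(s)\ge h_3(0)\ge0$ by monotonicity; hence $h_3(s)\ge(-f(s))^+$, and monotonicity of $h_3$ gives $h_3(t)\ge h_3(s)\ge(-f(s))^+$. Taking the supremum over $s\in[0,t]$ yields $h_3(t)\ge h_1(t)$, i.e. $\phi_3\ge\phi_1$.

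For $h_2\le h_1$ I would first extract from the complementarity condition $\int_0^T\mathcal{I}_{\{\phi_2>0\}}\,dh_2=0$ — i.e. from the fact that the nonnegative Lebesgue--Stieltjes measure $\mathcal{I}_{\{\phi_2>0\}}\,dh_2$ is the zero measure — the two consequences: (i) if $\phi_2>0$ on $(a,b]$ then $h_2(b)=h_2(a)$; and (ii) if $\phi_2(\beta)>0$ then the jump $h_2(\beta)-h_2(\beta-)$ vanishes. Set $\psi\doteq h_1-h_2$, so that $\psi(0)=(-f(0))^+-h_2(0)\ge0$. I claim $\psi\ge0$ on $[0,T]$. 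Suppose not and pick $t_0$ with $\psi(t_0)<0$, and let $\tau\doteq\sup\{s\in[0,t_0]:\psi(s)\ge0\}\in[0,t_0]$, so $\psi<0$ on $(\tau,t_0]$. On $(\tau,t_0]$ we have $h_2(s)>h_1(s)\ge(-f(s))^+\ge -f(s)$, hence $\phi_2(s)>0$. If $\psi(\tau)\ge0$, then $\tau<t_0$ (as $\psi(t_0)<0$), and (i) gives $h_2(t_0)=h_2(\tau)$, so $\psi(t_0)=h_1(t_0)-h_2(\tau)\ge h_1(\tau)-h_2(\tau)=\psi(\tau)\ge0$, a contradiction. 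If instead $\psi(\tau)<0$, then the supremum defining $\tau$ is not attained, so there are $s_n\uparrow\tau$ with $\psi(s_n)\ge0$; hence $\tau>0$, $\psi(\tau-)\ge0>\psi(\tau)$, and since $h_1$ is nondecreasing the jump $h_2(\tau)-h_2(\tau-)=\big(h_1(\tau)-h_1(\tau-)\big)-\big(\psi(\tau)-\psi(\tau-)\big)>0$; by (ii) this forces $\phi_2(\tau)\le0$, i.e. $h_2(\tau)\le -f(\tau)\le(-f(\tau))^+\le h_1(\tau)$, so $\psi(\tau)\ge0$, again a contradiction. Hence $\psi\ge0$, i.e. $h_2\le h_1$ and $\phi_2\le\phi_1$.

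The one delicate point is the treatment of jumps in the last part: consequence (ii) — that $h_2$ cannot jump upward at an instant where $\phi_2$ is strictly positive — is precisely what is needed to handle the ``last exit time'' $\tau$ in the case where the defining supremum is not attained (which can happen only when $f$, and hence $\phi_2$, is discontinuous). Note that no use is made of $\phi_2\ge0$, consistent with the hypotheses of the proposition. Everything else is elementary manipulation of the explicit formula for $\Gamma_1$ together with monotonicity.
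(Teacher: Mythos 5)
Your proof is correct and follows essentially the same approach as the paper's: both rely on the explicit representation $h_1(t)=\sup_{0\le s\le t}(-f(s))^+$ with $\phi_1=f+h_1$, reduce the two inequalities to pointwise comparisons of the regulators, and establish $h_2\le h_1$ by a contradiction argument built around a last-exit time and the complementarity condition $\int_0^T\mathcal{I}_{\{\phi_2>0\}}\,dh_2=0$, with the jump at that time handled separately (your case where the supremum defining $\tau$ is not attained mirrors the paper's sub-case $h_2(t_1^*)>a$). Your treatment of $h_1\le h_3$ by a direct monotonicity argument is marginally cleaner than the paper's contradiction argument, but the substance is identical.
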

 \begin{proof}
 Define $h_{1}(t)=\sup_{s\in[0,t]}\{(-f(s))^{+}\}$ and note that $\phi_{1}=f+h_{1}$.  We will first prove $\phi_{2}\leq \phi_{1}$ and to do so it is sufficient to prove $h_{2}\leq h_{1}$.  Note that $0\leq h_{2}(0)\leq  (-f(0))^{+}$ implies $h_{2}(0)\leq h_{1}(0)$.  Arguing via contradiction, assume there exists $t_{2}^{*}\in(0,T]$ such that $h_{2}(t^{*}_{2})>h_{1}(t^{*}_{2})=\sup_{s\in[0,t^{*}_{2}]}\{(-f(s))^{+}\}$.  For notational convenience let $a\doteq\sup_{s\in[0,t^{*}_{2}]}\{(-f(s))^{+}\}$ and define
$
 t^{*}_{1}=\sup\{s\in[0,T]:h_{2}(s)\leq a\}$.
 Note that since $h_{2}(t^{*}_{2})>a$ we have $t^{*}_{1}\leq t^{*}_{2}$.  If $h_{2}(t^{*}_{1})>a$ then, since $h_{2}(s)\leq a$ for all $s<t^{*}_{1}$, we must have 
 $
 \int_{\{t^{*}_{1}\}}dh_{2}(u)>0$.
However, 
 \begin{align*}
 \phi_{2}(t^{*}_{1})&=f(t^{*}_{1})+h_{2}(t^{*}_{1})>f(t^{*}_{1})+a= f(t^{*}_{1})+\sup_{s\in[0,t^{*}_{2}]}\{(-f(s))^{+}\}\\
 &\geq f(t^{*}_{1})+\sup_{s\in[0,t^{*}_{1}]}\{(-f(s))^{+}\}\geq 0
 \end{align*}
 which contradicts the fact that $\int_{0}^{T}\mathcal{I}_{\{\phi_{2}(t)>0\}}dh_{2}(t)=0$.  Therefore we must have $h_{2}(t^{*}_{1})\leq a$ and so $t^{*}_{1}<t^{*}_{2}$.  By the definition of $t^{*}_{1}$ we have $h_{2}(t)>a$ for all $t\in(t^{*}_{1},t^{*}_{2}]$.  Therefore for any $t\in(t^{*}_{1},t^{*}_{2}]$ we have 
 \begin{equation*}
 \phi_{2}(t)=f(t)+h_{2}(t)>f(t)+a=f(t)+\sup_{s\in[0,t^{*}_{2}]}\{(-f(s))^{+}\}\geq f(t)+\sup_{s\in[0,t]}\{(-f(s))^{+}\}\geq 0.
 \end{equation*}
  However, since $h_{2}(t^{*}_{2})> a \ge h_2(t_1^*)$ we have $\int_{(t^{*}_{1},t^{*}_{2}]}dh_{2}(t)>0$, which, since $\phi_{2}(t)>0$ for all $t\in (t^{*}_{1},t^{*}_{2}]$  contradicts the fact that $\int_{0}^{T}\mathcal{I}_{\{\phi_{2}(t)>0\}}dh_{2}(t)=0$.  Thus we have a contradiction and so  we must have $h_{2}(t)\leq h_{1}(t)$ for all $t\in [0,T]$ which implies $\phi_{2}(t)\leq \phi_{1}(t)$ for all $t\in[0,T]$.  

 Now we will prove that $\phi_{1} \leq \phi_{3}$.  It is sufficient to show that $h_{3}\geq h_{1}$.  Once again arguing via contradiction, assume there exists $t^{*}_{2}\in[0,T]$ such that $h_{3}(t^{*}_{2})<h_{1}(t^{*}_{2})=\sup_{s\in[0,t^{*}_{2}]}\{(-f(s))^{+}\}$.  Then there exists $t^{*}_{1}\in[0,t^{*}_{2}]$ such that $(-f(t^{*}_{1}))^{+}>h_{3}(t^{*}_{2})\geq h_{3}(t^{*}_{1})$ which implies  $(-f(t^{*}_{1}))^{+} =  f(t^{*}_{1}$ and $\phi_{3}(t^{*}_{1})=f(t^{*}_{1})+h_{3}(t^{*}_{1})<f(t^{*}_{1})-f(t^{*}_{1})$ meaning $\phi_{3}(t^{*}_{1})<0$.  However this contradicts the fact that $\phi_{3} \geq 0$ which proves that $h_{3} \geq h_{1}$.
 \end{proof}

We now proceed to the proof of Proposition \ref{thm:skorokApprox}.
Fix $t\ge 0$. Define 
 \begin{equation*}
 \hat{I}^{r}_{i}(s)\doteq \int_{(t,t+s]}\mathcal{I}_{\{\hat{W}^{r}_{i}(u)- Jc_{2}r^{\kappa-1}>0\}}d\hat{U}^{r}_{i}(u), \; s \ge 0.
 \end{equation*}
 Then, from \eqref{eq:eq424}, for $s,t\ge 0$,
 \begin{align*}
\hat{W}^{r}_{i}(t+s)-Jc_{2}r^{\kappa-1}&=\hat{W}^{r}_{i}(t)-Jc_{2}r^{\kappa-1}+\hat{X}^{r}_{i}(t+s)-\hat{X}^{r}_{i}(t)\\
&\quad+\hat{I}^{r}_{i}(s)+\int_{(t,t+s]}\mathcal{I}_{\{\hat{W}^{r}_{i}(u)- Jc_{2}r^{\kappa-1}\leq 0\}}d\hat{U}^{r}_{i}(u)
 \end{align*}
 and consequently from the first part of  Proposition \ref{thm:skorokIneq} we have
  \begin{equation*}
\hat{W}^{r}_{i}(t+s)-Jc_{2}r^{\kappa-1}\leq\Gamma_1\left(\hat{W}^{r}_{i}(t)-Jc_{2}r^{\kappa-1}+\hat{X}^{r}_{i}(t+\cdot)-\hat{X}^{r}_{i}(t)+\hat{I}^{r}_{i}(\cdot)\right)(s)
 \end{equation*}
 for all $s\in [0,T]$.  In addition, 
 \begin{equation*}
\hat{W}^{r}_{i}(t+s)=\hat{W}^{r}_{i}(t)+\hat{X}^{r}_{i}(t+s)-\hat{X}^{r}_{i}(t)+\hat{U}^{r}_{i}(t+s)-\hat{U}^{r}_{i}(t)
 \end{equation*}
 where $\hat{U}^{r}_{i}(t+\cdot)-\hat{U}^{r}_{i}(t)$ is nondecreasing and nonnegative, and $ \hat{W}^{r}_{i}(t+\cdot)\geq 0$ so using the second part of Proposition \ref{thm:skorokIneq} once more, we have
   \begin{equation}\label{eq:916}
\Gamma_1\left(\hat{W}^{r}_{i}(t)+\hat{X}^{r}_{i}(t+\cdot)-\hat{X}^{r}_{i}(t)\right)(s)\leq \hat{W}^{r}_{i}(t+s)
\end{equation}
for all $s\in [0,T]$.  Since 
\begin{align*}
&\sup_{s\in[0,T]}\Big|\Gamma_1\left(\hat{W}^{r}_{i}(t)+\hat{X}^{r}_{i}(t+\cdot)-\hat{X}^{r}_{i}(t)\right)(s)\\
&\quad-\Gamma_1\left(\hat{W}^{r}_{i}(t)-Jc_{2}r^{\kappa-1}+\hat{X}^{r}_{i}(t+\cdot)-\hat{X}^{r}_{i}(t)+\hat{I}^{r}_{i}(\cdot)\right)(s)\Big|\\
&\leq 2\sup_{s\in[0,T]}\Big|\left(\hat{W}^{r}_{i}(t)+\hat{X}^{r}_{i}(t+s)-\hat{X}^{r}_{i}(t)\right)\\
&\quad -\left(\hat{W}^{r}_{i}(t)-Jc_{2}r^{\kappa-1}+\hat{X}^{r}_{i}(t+s)-\hat{X}^{r}_{i}(t)+\hat{I}^{r}_{i}(s)\right)\Big|\\
&\leq 2Jc_{2}r^{\kappa-1}+2\int_{(t,t+T]}\mathcal{I}_{\{\hat{W}^{r}_{i}(u)- Jc_{2}r^{\kappa-1}> 0\}}d\hat{U}^{r}_{i}(u)
\end{align*}
 we have, for all $s\in [0,T]$,
\begin{align}
&\Gamma_1\left(\hat{W}^{r}_{i}(t)+\hat{X}^{r}_{i}(t+\cdot)-\hat{X}^{r}_{i}(t)\right)(s)\nonumber\\
&\geq \Gamma_1\left(\hat{W}^{r}_{i}(t)-Jc_{2}r^{\kappa-1}+\hat{X}^{r}_{i}(t+\cdot)-\hat{X}^{r}_{i}(t)+\hat{I}^{r}_{i}(\cdot)\right)(s)-2Jc_{2}r^{\kappa-1}\nonumber\\
&\quad -2\int_{(t,t+T]}\mathcal{I}_{\{\hat{W}^{r}_{i}(u)- Jc_{2}r^{\kappa-1}> 0\}}d\hat{U}^{r}_{i}(u)\nonumber\\
&\geq  \hat{W}^{r}_{i}(t+s)-3Jc_{2}r^{\kappa-1}-2\int_{(t,t+T]}\mathcal{I}_{\{\hat{W}^{r}_{i}(u)- Jc_{2}r^{\kappa-1}> 0\}}d\hat{U}^{r}_{i}(u).\label{eq:915}
\end{align}
Combining \eqref{eq:916} and \eqref{eq:915}, we have,
\begin{align*}
&\sup_{s\in [0,T]}\left\{| \Gamma_1\left(\hat{W}^{r}_{i}(t)+\hat{X}^{r}_{i}(t+\cdot)-\hat{X}^{r}_{i}(t)\right)(s)- \hat{W}^{r}_{i}(t+s)|\right\}\\
&\leq 3Jc_{2}r^{\kappa-1}+2\int_{(t,t+T]}\mathcal{I}_{\{\hat{W}^{r}_{i}(u)- Jc_{2}r^{\kappa-1}> 0\}}d\hat{U}^{r}_{i}(u).
\end{align*}
In addition due to Proposition \ref{thm:schemeSum} part (d) we have
\begin{eqnarray*}
\int_{(t,t+T]}\mathcal{I}_{\{\hat{W}^{r}_{i}(u)- Jc_{2}r^{\kappa-1}> 0\}}d\hat{U}^{r}_{i}(u)\leq r^{-1}C_{i}\sum_{j=1}^{J}\int_{r^{2}t}^{r^{2}(t+T)}\mathcal{I}_{\{ \mathcal{E}_{j}^{r}(s)=1\}}ds.
\end{eqnarray*}
The result follows.
\hfill \qed

\section{Proof of Proposition \ref{thm:idleTimeBndMark}}
\label{sec:thmidletimebnd}

We will use the following two propositions in the proof of Proposition \ref{thm:idleTimeBndMark}.  The proof of the first proposition is a simpler version of that of Proposition \ref{thm:nextSerTimeBnd} which gives a similar estimate for service times. Since the proof of the latter result is given in full detail in 
Section \ref{sec:propnextser}, we omit the proof of Proposition \ref{thm:nextArrTimeBnd}. We make the convention that $u_j^r(0)=0$ for $r\in \NN$ and $j \in \AAA_J$.
\begin{proposition}
\label{thm:nextArrTimeBnd}
Let $\delta$ be as in Condition \ref{eqn:mgfBnd}.  There exists $R<\infty$ and for any $c<\delta$ a corresponding $K(c)<\infty$ such that for any $j\in\AAA_J$, $y^r \in \cly^r$, and $t\geq 0$
we have 
\begin{equation*}
\sup_{r\geq R} E\left[e^{c u^{r}_{j}(\tau^{r,A}_{j}(t))}\right] <K(c).
\end{equation*}
\end{proposition}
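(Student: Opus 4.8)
The plan is to recognize that $u^{r}_{j}(\tau^{r,A}_{j}(t))$ is the length of the interarrival interval straddling the \emph{deterministic} time $s\doteq r^{2}(t-\bar{\Upsilon}^{A}_{j})^{+}$, where $\bar{\Upsilon}^{A}_{j}$ is the (scaled) initial residual time prescribed by $y^{r}$. If $t\le\bar{\Upsilon}^{A}_{j}$ then $\tau^{r,A}_{j}(t)=0$ and $u^{r}_{j}(0)=0$, so the bound is trivial; assume $s>0$. Let $S_{n}\doteq\sum_{l=1}^{n}u^{r}_{j}(l)$ denote the (fresh) renewal partial sums governing arrivals after time $\bar{\Upsilon}^{A}_{j}$, and let $F_{r}$ be the common interarrival law in the $r$-th system. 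Then $u^{r}_{j}(\tau^{r,A}_{j}(t))$ is a size–biased (``inspection paradox'') version of a single interarrival time, and the plan is to bound its tail via the renewal structure. The key identity is that, for $n\ge1$,
\begin{equation*}
\{\tau^{r,A}_{j}(t)=n\}=\bigl\{S_{n-1}\in[(s-u^{r}_{j}(n))\vee 0,\;s)\bigr\}.
\end{equation*}
Since $S_{n-1}$ and $u^{r}_{j}(n)\sim F_{r}$ are independent, summing over $n$ gives
\begin{equation*}
P\bigl(u^{r}_{j}(\tau^{r,A}_{j}(t))>a\bigr)=\int_{a}^{\infty}E\bigl[\#\{m\ge0:\;S_{m}\in[(s-u)\vee0,\;s)\}\bigr]\,dF_{r}(u).
\end{equation*}
The window $[(s-u)\vee0,\;s)$ has length $\min(u,s)\le u$, so a short argument using the strong Markov property at $\nu\doteq\min\{m\ge0:S_{m}\ge(s-u)\vee0\}$ (on $\{S_{\nu}<s\}$ the part of the window ahead of $S_{\nu}$ has length at most $u$) bounds the inner expectation by $1+m^{r}(u)$, where $m^{r}(u)\doteq\sum_{n\ge1}P(S_{n}\le u)$ is the renewal function; crucially this bound is free of the unbounded location $s$.

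Integrating $a\mapsto ce^{ca}$ against this tail bound and interchanging the order of integration yields
\begin{equation*}
E\bigl[e^{c\,u^{r}_{j}(\tau^{r,A}_{j}(t))}\bigr]\le E\bigl[e^{cu^{r}_{j}(1)}\bigr]+\int_{0}^{\infty}e^{cu}\,m^{r}(u)\,dF_{r}(u).
\end{equation*}
The first term is bounded uniformly over $r$ by Condition \ref{eqn:mgfBnd}, since $c<\delta$. For the second, I would control $m^{r}$ by a Chernoff estimate on the left tail: for any $\gamma>0$, $P(S_{n}\le u)\le e^{\gamma u}\phi_{r}(\gamma)^{n}$ with $\phi_{r}(\gamma)\doteq E[e^{-\gamma u^{r}_{j}(1)}]<1$, so $m^{r}(u)\le e^{\gamma u}/(1-\phi_{r}(\gamma))$. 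Taking $\gamma\doteq(\delta-c)/2$, so that $c+\gamma<\delta$, gives $\int_{0}^{\infty}e^{cu}m^{r}(u)\,dF_{r}(u)\le(1-\phi_{r}(\gamma))^{-1}E[e^{(c+\gamma)u^{r}_{j}(1)}]$, which is uniformly bounded provided $\phi_{r}(\gamma)$ stays bounded away from $1$ for large $r$. This last point follows from Conditions \ref{cond:htc} and \ref{eqn:mgfBnd}: the means $1/\alpha^{r}_{j}\to1/\alpha_{j}>0$ are bounded below and the second moments $E[(u^{r}_{j}(1))^{2}]$ are uniformly bounded (exponential moments dominate polynomial ones), so a Cauchy--Schwarz / Paley--Zygmund argument yields fixed $c_{0},p_{0}>0$ with $P(u^{r}_{j}(1)\ge c_{0})\ge p_{0}$ for all large $r$, whence $1-\phi_{r}(\gamma)\ge(1-e^{-\gamma c_{0}})p_{0}>0$.

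Combining these estimates produces a finite $K(c)$ depending only on $c$ (through $\delta$, the uniform exponential moments, and $c_{0},p_{0}$) and a threshold $R<\infty$, with the bound holding for every $j\in\AAA_{J}$, every $t\ge0$ and every $y^{r}\in\cly^{r}$, since $s$ entered only as a deterministic parameter that was eliminated. The main obstacle is exactly that elimination: the crude estimate $E[\#\{m:S_{m}<s\}]\le 1+m^{r}(s)$ grows linearly in $s=r^{2}(t-\bar{\Upsilon}^{A}_{j})^{+}$ and is useless for a bound uniform in $t$, so one must exploit that the spanning interval, though size–biased, still confines the relevant renewal count to a window no longer than its own length; the remaining steps are routine combinations of Chernoff bounds with the uniform moment hypotheses. (This is the easier analogue of Proposition \ref{thm:nextSerTimeBnd}, where the deterministic $\bar{\Upsilon}^{A}_{j}$ is replaced by the control-dependent $\bar{B}^{r}_{j}(t)$ and one must additionally account for the adaptedness of the inspection time.)
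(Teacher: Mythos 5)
Your argument is correct, and its core is the same as the paper's (which is omitted, being declared a simpler version of the proof of Proposition \ref{thm:nextSerTimeBnd}): the straddling interval is identified, the event $\{\tau^{r,A}_j(t)=n\}$ is localized to a window of length at most $u^r_j(n)$, and the inspection-paradox sum collapses, after conditioning, to $\int e^{cu}\,(1+m^r(u))\,dF_r(u)$ with the deterministic location $s=r^2(t-\bar\Upsilon^{A}_j)^+$ eliminated. This is exactly the arrival-time specialization (with $\tilde{a}=1$, $\bar B^r_j(\cdot)$ replaced by the identity, and no idle-period bookkeeping) of the decomposition
$\{\tau^{r,S}_j(t)=n\}\subset\{\tau^{r,S}_j(t)>n-1\}\cap\{\tau^{r,S}_j((t-v^r_j(n)/(\tilde a r^2))^+)\le n-1\}$ together with the $\hat\tau^{r,S}_j(z)$ device used in Section \ref{sec:propnextser}.

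Where you depart is the bound on the renewal function. The paper bounds $E[\hat\tau^{r,S}_j(z)]$ linearly in $z$ by a geometric-trials argument: a ``success'' is $v^r_j(l)\ge 1/(4K\beta_j)$, with success probability uniformly bounded below, so the number of renewals needed to accumulate work $\asymp z$ is stochastically dominated by a sum of $O(1+z)$ geometrics. You instead use the left-tail Chernoff estimate $P(S_n\le u)\le e^{\gamma u}\phi_r(\gamma)^n$ to get $m^r(u)\le e^{\gamma u}/(1-\phi_r(\gamma))$, then sacrifice half the margin $\delta-c$ by choosing $\gamma=(\delta-c)/2$. Both close the argument under Condition \ref{eqn:mgfBnd}, and your Paley--Zygmund step serves the same purpose as the paper's lower bound $P(v^r_j(1)\ge 1/(4\beta_j))\ge 1/(4K\beta_j)$. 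The paper's linear bound is tighter and does not need to split the exponent; your Chernoff bound is shorter to state and is entirely adequate here because $c<\delta$ is assumed strict. One further small distinction: your proof leans explicitly on the determinism of $s$ and the strong Markov property of the partial-sum process at the passage time $\nu$, whereas the paper's (service-time) template has to route the same independence through the multiparameter filtration and Lemma \ref{thm:sameDist} because the corresponding inspection location is control-dependent; you correctly flag at the end that this is why your shortcut does not transfer verbatim to Proposition \ref{thm:nextSerTimeBnd}.
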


The proof of the next proposition is in Section \ref{sec:thmexpTailBnd}.  
\begin{proposition}[Proof in Section \ref{sec:thmexpTailBnd}]
\label{thm:expTailBnd}
Let $j\in \AAA_J$, $c_{1},c_{2} \ge 0$, and $\epsilon>0$ be arbitrary.  Then there exists $B_{1},B_{2},R\in (0,\infty)$ such that for all $T\in [0,\infty)$ and $r\geq R$ we have
\begin{equation}\label{eq:31.d}
P\left(\sup_{0\leq t \leq r^{2c_{1}+c_{2}}T}|A_{j}^{r}(t)-t\alpha^{r}_{j}|\geq \epsilon r^{c_{1}+c_{2}}T\right)\leq B_{1}e^{-r^{c_{2}}T B_{2}}
\end{equation}
and
\begin{equation}\label{eq:31.c}
P\left(\sup_{0\leq t \leq r^{2c_{1}+c_{2}}\max_{i}\{C_{i}\}T}|S_{j}^{r}(t)-t\beta^{r}_{j}|\geq \epsilon r^{c_{1}+c_{2}}T\right)\leq B_{1}e^{-r^{c_{2}}T B_{2}}.
\end{equation}
In particular, for $\kappa \ge 0$,
\begin{equation}\label{eq:31.b}
P\left(\sup_{0\leq t \leq r^{\frac{3}{2}\kappa}T}|A_{j}^{r}(t)-t\alpha^{r}_{j}|\geq \epsilon r^{\kappa}T\right)\leq B_{1}e^{-Tr^{\frac{1}{2}\kappa} B_{2}}.
\end{equation}
and
\begin{equation}\label{eq:31.a}
P\left(\sup_{0\leq t \leq r^{\frac{3}{2}\kappa}\max_{i}\{C_{i}\}T}|S_{j}^{r}(t)-t\beta^{r}_{j}|\geq \epsilon r^{\kappa}T\right)\leq B_{1}e^{-Tr^{\frac{1}{2}\kappa} B_{2}}.
\end{equation}
\end{proposition}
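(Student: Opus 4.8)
The plan is to reduce this path-level estimate over continuous time to a maximal inequality for the embedded partial-sum random walk, and then run a Chernoff/Doob argument based on moment generating function bounds that are uniform in $r$. Write $T_n^r\doteq\sum_{l=1}^n u_j^r(l)$ for the renewal epochs, so $A_j^r(t)=\max\{n\ge 0:T_n^r\le t\}$, and put $L\doteq r^{2c_1+c_2}T$ and $N\doteq\lceil 2\alpha_j^r L\rceil$. Since $A_j^r$ is nondecreasing and piecewise constant with a.s.\ unit jumps while $t\mapsto t\alpha_j^r$ is increasing, the extremes of $A_j^r(t)-t\alpha_j^r$ on $[0,L]$ occur at the epochs $T_n^r$ or at $t=L$, so
\[
\sup_{0\le t\le L}\bigl|A_j^r(t)-t\alpha_j^r\bigr|\ \le\ 1+\Bigl(\max_{n:\,T_n^r\le L}\bigl|n-\alpha_j^r T_n^r\bigr|\Bigr)\vee\bigl|A_j^r(L)-\alpha_j^r L\bigr|.
\]
On $G_r\doteq\{A_j^r(L)\le N\}=\{T_{N+1}^r>L\}$ the first maximum ranges over $n\le N$, so it suffices to bound (i) $P(G_r^c)$, (ii) $P(\max_{1\le n\le N}|n-\alpha_j^r T_n^r|\ge a)$ with $a\doteq\epsilon r^{c_1+c_2}T-1$, and (iii) the endpoint term $P(|A_j^r(L)-\alpha_j^r L|\ge a)$. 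We may assume $a\ge 1$: in the complementary range $\epsilon r^{c_1+c_2}T<2$ one has $r^{c_2}T=r^{-c_1}\,\epsilon^{-1}(\epsilon r^{c_1+c_2}T)\le 2/\epsilon$, so the asserted bound holds trivially once $B_1$ is large.

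For the moment generating function input, set $\xi_l\doteq 1/\alpha_j^r-u_j^r(l)$; these are i.i.d., mean zero, and $n-\alpha_j^r T_n^r=\alpha_j^r S_n$ for $S_n\doteq\sum_{l\le n}\xi_l$. Using Condition \ref{eqn:mgfBnd} together with $\alpha_j^r\to\alpha_j$ and $\sigma_j^{u,r}\to\sigma_j^u$ from Condition \ref{cond:htc}, I would first show there exist $y_0\in(0,\delta)$, $\gamma\in(0,\infty)$ and $R_0$ with $\log E[e^{\pm y\xi_1}]\le\gamma y^2$ for $|y|\le y_0$, $r\ge R_0$; this uses $e^{y\xi}-1-y\xi\le\tfrac12 y^2\xi^2(e^{y_1\xi}+e^{-y_1\xi})$ for $|y|\le y_0<y_1<\delta$ and the uniform-in-$r$ boundedness of $E[\xi^2 e^{\pm y_1\xi}]$ (valid since $\xi^2 e^{\pm y_1\xi}\le\text{const}\cdot e^{\pm y_2\xi}$ for $y_1<y_2<\delta$). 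Now $G_r^c=\{T_{N+1}^r\le L\}$ is a lower deviation by roughly $L$ of a sum of $N+1\asymp r^{2c_1+c_2}T$ positive terms with mean $(N+1)/\alpha_j^r>2L$, so the Chernoff bound with the above estimate gives $P(G_r^c)\le e^{-\vartheta r^{2c_1+c_2}T}\le e^{-\vartheta r^{c_2}T}$ for $r\ge\max(R_0,1)$ and some $\vartheta>0$; the term (iii) is handled by the same one-sided Chernoff bounds applied to $T_n^r$ at the indices $n\approx\alpha_j^r L\pm a$.

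For (ii), $E[e^{\pm y\xi_1}]\ge 1$ by Jensen, so $\{e^{yS_n}\}_n$ and $\{e^{-yS_n}\}_n$ are submartingales for $0\le y\le y_0$, $r\ge R_0$, and Doob's maximal inequality gives $P(\max_{1\le n\le N}(\pm S_n)\ge b)\le e^{-yb}E[e^{\pm yS_N}]\le e^{-yb+N\gamma y^2}$; optimizing over $y\in[0,y_0]$ yields $e^{-b^2/(4N\gamma)}$ if $b\le 2N\gamma y_0$ and $e^{-y_0 b/2}$ otherwise. Taking $b=a/\alpha_j^r$, and recalling $a\ge\tfrac12\epsilon r^{c_1+c_2}T$ and $N\asymp\alpha_j^r r^{2c_1+c_2}T$, both branches produce $e^{-\vartheta_1 r^{c_2}T}$ with $\vartheta_1=\vartheta_1(\epsilon)>0$, using $c_1\ge 0$ and that $\alpha_j^r$ is bounded away from $0$ and $\infty$. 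Combining (i)--(iii) yields \eqref{eq:31.d}. Then \eqref{eq:31.c} follows by the identical argument with $u_j^r$ replaced by $v_j^r$ (also covered by Condition \ref{eqn:mgfBnd}) and $L$ by $\max_i\{C_i\}L$, and \eqref{eq:31.b}, \eqref{eq:31.a} are the special case $c_1=c_2=\kappa/2$, for which $2c_1+c_2=\tfrac32\kappa$, $c_1+c_2=\kappa$ and $r^{c_2}=r^{\kappa/2}$.

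The step I expect to be the main obstacle is ensuring that the final estimate comes out with a \emph{single} exponential and no polynomial-in-$r$ prefactor. The naive route -- covering $[0,L]$ by $\asymp r^{c_1+c_2}$ mesh points and union-bounding pointwise Chernoff estimates -- introduces a factor $r^{c_1+c_2}$ that cannot be absorbed into $e^{-B_2 r^{c_2}T}$ when $c_2$ is small, and in particular fails at $c_2=0$; routing the argument through the embedded random walk and Doob's submartingale maximal inequality removes the discretization entirely. The only other point needing care is the uniform-in-$r$ quadratic bound on the log-moment generating functions, which is routine given Conditions \ref{eqn:mgfBnd} and \ref{cond:htc}.
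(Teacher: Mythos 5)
Your proposal is correct and follows essentially the same strategy as the paper's proof: reduce the continuous-time supremum to a maximal inequality for the embedded partial-sum random walk, establish a uniform-in-$r$ quadratic bound on the log-moment-generating function via Condition~\ref{eqn:mgfBnd}, apply Doob's maximal inequality to an exponential (sub)martingale, and then tune the free parameter so that a single exponential without polynomial prefactor comes out. The one organizational difference is how the reduction to the discrete walk is carried out: you bound the continuous-time sup by the max over epochs plus the endpoint and then control the number of epochs via the auxiliary event $G_r$ (with a separate Chernoff step), whereas the paper uses the exact duality $\{A^r(s)\ge m\}=\{C^r(m)\le s\}$ to convert the event to a max of $\frac{n}{\alpha^r}-C^r(n)$ over a \emph{deterministic} index range $n\le p_c^r$, avoiding both the auxiliary event and the separate endpoint term; this is a tidier bookkeeping but not a different method.
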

We note that equations (\ref{eq:31.b}) and (\ref{eq:31.a}) are immediate from \eqref{eq:31.d} and
\eqref{eq:31.c} on taking $c_1=c_2 = \kappa/2$.
The former set of equations  are essential to understanding the behavior of $Q^r_j(t)$ in the region $[0, \tilde{c}_2 r^\kappa]$ in the proof of Proposition \ref{thm:idleTimeBndMark}; see for example the proof of \eqref{eq:1204n} completed below \eqref{eq:rev1}, and the estimates in \eqref{eq:revfirstuse} and \eqref{eq:revseconduse}.

We now proceed to the proof of Proposition \ref{thm:idleTimeBndMark}.  \\

\noindent {\bf Overall Proof idea.} Recall, as described in Definition \ref{def:scheme}, that $ \mathcal{E}_{j}^{r}(t)$ changes from $0$ to $1$ when $Q^r_j(t)$ drops below $\tilde{c}_1 r^\kappa$.  This says that we stop processing type $j$ jobs until $Q^r_j(t)$ reaches $\tilde{c}_2 r^\kappa$ at which point $ \mathcal{E}_{j}^{r}(t)$ changes back to $0$ and the processing of type $j$ jobs resumes. The key to this proof is that if $Q^r_j(t)< \tilde{c}_2 r^\kappa $ then type $j$ jobs are processed at a rate lower than their arrival rate (see Proposition \ref{thm:schemeSum} part (e)) in an attempt to boost $Q^r_j(t)$ back up to $\tilde{c}_2 r^\kappa$.  To prove this result we first get an upper bound on  $P(\clc^r )$ where $\clc^r$, defined in (\ref{eq:leaveBndry}), is the event that $Q^r_j(r^2 t)$ fails to exceed $\tilde{c}_1 r^\kappa-1$ by time $t=\bar{\Upsilon}^{A,r}_{j}+Tr^{\frac{3}{2}\kappa-2}$.  Note that due to  Proposition \ref{thm:schemeSum} part (a) once $Q^r_j(r^2 t)$ enters the region $[\tilde{c}_1 r^\kappa -1,\infty)$ it never leaves, and it is convenient for the remainder of this proof to focus on the event $(\clc^r)^c $ where this entrance has occurred by time  $t=\bar{\Upsilon}^{A,r}_{j}+Tr^{\frac{3}{2}\kappa-2}$.  The bulk of the proof is devoted to providing an upper bound on
\begin{equation*}
 P\left( \int_{r\hat{\Upsilon}^{A,r}_{j}+Tr^{\frac{3}{2}\kappa}}^{r^2 T}\mathcal{I}_{\{ \mathcal{E}_{j}^{r}(t)=1\}}dt\geq \epsilon T r^{\frac{7}{4}\kappa}\right).
\end{equation*}
This is accomplished by dividing the time interval $[\bar{\Upsilon}^{A,r}_{j}+Tr^{\frac{3}{2}\kappa-2}, T]$ into subintervals of length $r^{\frac{3}{2}\kappa-2}$ and showing that bad behavior on the $n$th subinterval, as defined by the set $\clu_n^r$ in (\ref{eq:cluSet}), is unlikely.  On the set $(\clc^r)^c $ we can use the frequency of the events $\{\clu_n^r\}_{n\geq 1}$ to bound the amount of time the process is in the state $\mathcal{E}_{j}^{r}(t)=1$ using (\ref{eq:idleTimeBnd}).  Demonstrating that with high probability $\clu_n^r$ only occurs for a small percentage of subintervals completes the proof.


Let $j\in\AAA_J$ be arbitrary.   Let $R_{1}<\infty$ and $\Delta>0$ be such that for all $r\geq R_{1}$ we have \begin{equation}\label{eq:1141n}
2<r^{\kappa}\frac{\tilde{c}_{2}-\tilde{c}_{1}}{16}, \; r^{\kappa}\frac{5(\tilde{c}_{2}-\tilde{c}_{1})}{\Delta 4}<r^{\frac{3}{2}\kappa}, \mbox{ and if } Q^{r}_{j}(t)<\tilde{c}_{2}r^{\kappa}, \mbox{ then }  \beta^{r}_{j}\yr_{j}(t)\leq  \alpha^{r}_{j}-\Delta.
\end{equation}
Existence of such a $R_1$ and $\Delta$ follows from
Proposition \ref{thm:schemeSum} part (e).

For $n\in \mathbb{N}_0$ define {
$
\hat{t}_{n}=\bar{\Upsilon}^{A,r}_{j}+Tr^{\frac{3}{2}\kappa-2}+(n-1)r^{\frac{3}{2}\kappa-2}$.}
In addition, for $n\in \mathbb{N}_0$ define the sets
\begin{equation*}
\mathcal{A}^{r,1}_{n}=\left\{ r^{2}\bar{\Upsilon}^{A,r}_{j}(\hat{t}_{n})\geq r^{\kappa}\frac{\tilde{c}_{2}-\tilde{c}_{1}}{\alpha^{r}_{j}16} \right\},\;\;\;
\mathcal{B}^{r,1}_{n}=\left\{ \tau^{r,S}_{j} \left( \hat{t}_{n} \right)<\tau^{r,S}_{j} \left( \hat{t}_{n+1} \right)\right\},
\end{equation*}
and
\begin{eqnarray*}
\mathcal{A}^{r,2}_{n}&=&\Big\{ \sup_{\bar{\xi}^{A,r}_{j}(\hat{t}_{n})\leq s\leq \bar{\xi}^{A,r}_{j}(\hat{t}_{n})+r^{\frac{3}{2}\kappa-2}-\bar{\Upsilon}^{A,r}_{j}(\hat{t}_{n})}\Big|A^{r}_{j}(r^{2}s)-A^{r}_{j}(r^{2}\bar{\xi}^{A,r}_{j}(\hat{t}_{n}))\\
&&\enspace -r^{2}(s-\bar{\xi}^{A,r}_{j}(\hat{t}_{n}))\alpha^{r}_{j}\Big| \quad >r^{\kappa}\frac{\tilde{c}_{2}-\tilde{c}_{1}}{16} \Big\}\\
&=&\Big\{ \sup_{0\leq s\leq r^{\frac{3}{2}\kappa-2} - \bar{\Upsilon}^{A,r}_{j}(\hat{t}_{n})   }\Big|A^{r,\hat{t}_{n}}_{j}(r^{2}s)-r^{2}s \alpha^{r}_{j}\Big| >r^{\kappa}\frac{\tilde{c}_{2}-\tilde{c}_{1}}{16} \Big\},
\end{eqnarray*}

\begin{eqnarray*}
\mathcal{B}^{r,2}_{n}&=&
\Big\{ \sup_{\bar{\xi}^{S,r}_{j}(\hat{t}_{n})\leq s\leq \bar{B}^{r}_{j}(\hat{t}_{n+1})-\bar{B}^{r}_{j}(\hat{t}_{n})+\bar{\xi}^{S,r}_{j}(\hat{t}_{n})-\bar{\Upsilon}^{S,r}_{j}(\hat{t}_{n})}\Big|S^{r}_{j}(r^{2}s)-S^{r}_{j}(r^{2}\bar{\xi}^{S,r}_{j}(\hat{t}_{n}))\\
&&\enspace -r^{2}(s-\bar{\xi}^{S,r}_{j}(\hat{t}_{n}))\beta^{r}_{j}\Big|>r^{\kappa}\frac{\tilde{c}_{2}-\tilde{c}_{1}}{16} \Big\}\\
&=& \Big\{ \sup_{0\leq s\leq \bar{B}^{r}_{j}(\hat{t}_{n+1})-\bar{B}^{r}_{j}(\hat{t}_{n})-\bar{\Upsilon}^{S,r}_{j}(\hat{t}_{n})}\Big|S^{r,\hat{t}_{n}}_{j}(r^{2}s)-r^{2}s\beta^{r}_{j}\Big| >r^{\kappa}\frac{\tilde{c}_{2}-\tilde{c}_{1}}{16} \Big\}.
\end{eqnarray*}
Also define {
\begin{equation}
\label{eq:leaveBndry}
\clc^r = \{Q^r_j(r^2\hat t_1) < r^{\kappa} \tilde c_1 -1\}.
\end{equation}
}
From \eqref{eq:qrt},  we have, for all $s\geq 0$ and $j \in \AAA_J$
\begin{align*}
Q^{r}(r^{2}s)=&q^r_j+A^{r}_{j}(r^{2}(s-\bar{\Upsilon}^{A,r}_{j})^{+})+\mathcal{I}_{\{s\geq \bar{\Upsilon}^{A,r}_{j}>0\}}\\
&-S^{r}_{j}\left(r^{2}\left(\bar{B}^{r}_{j}(s)-\bar{\Upsilon}^{S,r}_{j}\right)^{+}\right)-\mathcal{I}_{\{\bar{B}^{r}_{j}(s)\geq \bar{\Upsilon}^{S,r}_{j}>0\}}
\end{align*}
and for any $n\geq1 $ and $\hat{t}_{n}\leq s \leq \hat{t}_{n+1}$ we have
\begin{align*}
A^{r}_{j}(r^{2}(s-\bar{\Upsilon}^{A,r}_{j})^{+})+\mathcal{I}_{\{s\geq \bar{\Upsilon}^{A,r}_{j}>0\}}=&A^{r}_{j}(r^{2}(\hat{t}_{n}-\bar{\Upsilon}^{A,r}_{j})^{+})+\mathcal{I}_{\{\hat{t}_{n}\geq \bar{\Upsilon}^{A,r}_{j}>0\}}\\
& +\mathcal{I}_{\{s-\hat{t}_{n}\geq\bar{\Upsilon}^{A,r}_{j}(\hat{t}_{n})>0\}}+A^{r,\hat{t}_{n}}_{j}(r^{2}(s-\hat{t}_{n}-\bar{\Upsilon}^{A,r}_{j}(\hat{t}_{n}))^{+})
\end{align*}
and
\begin{align*}
&S^{r}_{j}\left(r^{2}\left(\bar{B}^{r}_{j}(s)-\bar{\Upsilon}^{S,r}_{j}\right)^{+}\right)+\mathcal{I}_{\{\bar{B}^{r}_{j}(s)\geq \bar{\Upsilon}^{S,r}_{j}>0\}}\\
&= S^{r}_{j}\left(r^{2}\left(\bar{B}^{r}_{j}(\hat{t}_{n})-\bar{\Upsilon}^{S,r}_{j}\right)^{+}\right)+\mathcal{I}_{\{\bar{B}^{r}_{j}(\hat{t}_{n})\geq \bar{\Upsilon}^{S,r}_{j}>0\}} +\mathcal{I}_{\{\bar{B}^{r}_{j}(s)-\bar{B}^{r}_{j}(\hat{t}_{n})\geq \bar{\Upsilon}^{S,r}_{j}(\hat{t}_{n})>0\}}\\
&\quad +S^{r,\hat{t}_{n}}_{j}\left(r^{2}\left(\bar{B}^{r}_{j}(s)-\bar{B}^{r}_{j}(\hat{t}_{n})- \bar{\Upsilon}^{S,r}_{j}(\hat{t}_{n})\right)^{+}\right).
\end{align*}
Let 
$
\zeta^{r,n}_{0}\doteq \inf\{s\geq \hat{t}_{n}:Q^{r}_{j}(r^{2}s)\geq r^{\kappa}\tilde{c}_{2}\}
$
and for $l\geq 0$ define
\begin{equation*}
\zeta^{r,n}_{2l+1}\doteq \inf\{s\geq \zeta^{r,n}_{2l}:Q^{r}_{j}(r^{2}s)< r^{\kappa}\tilde{c}_{2}\},\;\;
\zeta^{r,n}_{2l+2}\doteq \inf\{s\geq \zeta^{r,n}_{2l+1}:Q^{r}_{j}(r^{2}s)\geq r^{\kappa}\tilde{c}_{2}\}.
\end{equation*}
If $\zeta^{r,n}_{0}>\hat{t}_{n}$ then for all $s\in [\hat{t}_{n},\zeta^{r,n}_{0})$ we have $Q^{r}_{j}(r^{2}s)<r^{\kappa}\tilde{c}_{2}$.  Consequently for all $s\in [\hat{t}_{n},\zeta^{r,n}_{0})$ we have $\beta^{r}_{j}\left(\bar{B}^{r}_{j}(s)-\bar{B}^{r}_{j}(\hat{t}_{n})\right)\leq \left(s-\hat{t}_{n}\right)\left( \alpha^{r}_{j}-\Delta\right)$.  In addition, on $(\clc^r)^c$, $Q^{r}_{j}(r^{2}s)\geq r^{\kappa}\tilde{c}_{1}-1$ for all $s\geq \hat t_1$ because if $Q^{r}_{j}(r^{2}s)<r^{\kappa}\tilde{c}_{1}$ then $\tilde{\mathcal{E}}^{r}_{j}(s)=1$ so $\frac{d}{ds}\bar{B}^{r}_{j}(s)=0$ due to Proposition \ref{thm:schemeSum} part (a).  Consequently, with 
{
\begin{equation}
\label{eq:cluSet}
\clu_n^r \doteq \mathcal{A}^{r,1}_{n}\cup \mathcal{A}^{r,2}_{n} \cup (\mathcal{B}^{r,1}_{n}\mathcal{B}^{r,2}_{n}),
\end{equation}
}
 on the set $(\clu_n^r)^{c}\cap (\clc^r)^{c}$ for all $s\in [\hat{t}_{n},\zeta^{r,n}_{0})\cap [\hat{t}_{n}, \hat{t}_{n+1}]$ we have
\begin{align}
Q^{r}(r^{2}s) &= Q^{r}(r^{2}\hat{t}_{n})+\mathcal{I}_{\{s-\hat{t}_{n}\geq \bar{\Upsilon}^{A,r}_{j}(\hat{t}_{n})>0\}} -\mathcal{I}_{\{\bar{B}^{r}_{j}(s)-\bar{B}^{r}_{j}(\hat{t}_{n})\geq \bar{\Upsilon}^{S,r}_{j}(\hat{t}_{n})>0\}}\nonumber\\
&\quad+A^{r,\hat{t}_{n}}_{j}(r^{2}(s-\hat{t}_{n}-\bar{\Upsilon}^{A,r}_{j}(\hat{t}_{n}) )^{+})-S^{r,\hat{t}_{n}}_{j}(r^{2}(\bar{B}^{r}_{j}(s)-\bar{B}^{r}_{j}(\hat{t}_{n})-\bar{\Upsilon}^{S,r}_{j}(\hat{t}_{n}))^{+})\nonumber\\
&\geq r^{\kappa}\tilde{c}_{1}-2+r^{2}\alpha^{r}_{j}(s-\hat{t}_{n}-\bar{\Upsilon}^{A,r}_{j}(\hat{t}_{n}) )^{+}-r^{2}\beta^{r}_{j}(\bar{B}^{r}_{j}(s)-\bar{B}^{r}_{j}(\hat{t}_{n})-\bar{\Upsilon}^{S,r}_{j}(\hat{t}_{n}))^{+} \nonumber\\
&\quad-r^{\kappa}\frac{\tilde{c}_{2}-\tilde{c}_{1}}{8}\nonumber\\
&\geq r^{\kappa}\tilde{c}_{1}-2+r^{2}\alpha^{r}_{j}\left(s-\hat{t}_{n}\right)-r^{2}\alpha^{r}_{j} \bar{\Upsilon}^{A,r}_{j}(\hat{t}_{n})-r^{2}\beta^{r}_{j}\left(\bar{B}^{r}_{j}(s)-\bar{B}^{r}_{j}(\hat{t}_{n})\right)\nonumber\\
&\quad-r^{\kappa}\frac{\tilde{c}_{2}-\tilde{c}_{1}}{8}\nonumber\\
&\geq r^{\kappa}\tilde{c}_{1}-r^{\kappa}\frac{\tilde{c}_{2}-\tilde{c}_{1}}{16}+r^{2}\alpha^{r}_{j}\left(s-\hat{t}_{n}\right)-\alpha^{r}_{j}\left(r^{\kappa}\frac{\tilde{c}_{2}-\tilde{c}_{1}}{16\alpha^{r}_{j}} \right)\nonumber\\
&\quad-r^{2}\beta^{r}_{j}\left(\bar{B}^{r}_{j}(s)-\bar{B}^{r}_{j}(\hat{t}_{n})\right) -r^{\kappa}\frac{\tilde{c}_{2}-\tilde{c}_{1}}{8}\nonumber\\
&\geq r^{\kappa}\tilde{c}_{1}-r^{\kappa}\frac{\tilde{c}_{2}-\tilde{c}_{1}}{4}+\Delta r^{2}\left(s-\hat{t}_{n}\right)\label{eq:227}
\end{align}
where the last two lines use \eqref{eq:1141n}.
Recall that $\Delta>0$ and from \eqref{eq:1141n} $r^2(\hat{t}_{n+1}-\hat{t}_{n})>r^{\kappa}\frac{5(\tilde{c}_{2}-\tilde{c}_{1})}{4\Delta }$ so if $\zeta^{r,n}_{0}-\hat{t}_{n}>r^{\kappa-2}\frac{5(\tilde{c}_{2}-\tilde{c}_{1})}{4\Delta }$ then, on $(\clu_n^r)^{c}\cap (\clc^r)^{c}$,
\begin{equation*}
Q^{r}\left(r^{2}\left(\hat{t}_{n}+r^{\kappa-2}\frac{5(\tilde{c}_{2}-\tilde{c}_{1})}{4\Delta }\right)\right)\geq  r^{\kappa}\tilde{c}_{1}-r^{\kappa}\frac{\tilde{c}_{2}-\tilde{c}_{1}}{4}+\Delta r^{2}\left(r^{\kappa-2}\frac{5(\tilde{c}_{2}-\tilde{c}_{1})}{4\Delta }\right)= r^{\kappa}\tilde{c}_{2}
\end{equation*}
which contradicts the definition of $\zeta^{r,n}_{0}$ so on the set $(\clu_n^r)^{c}\cap (\clc^r)^{c}$ we have
 $\zeta^{r,n}_{0}-\hat{t}_{n}\leq r^{\kappa-2}\frac{5(\tilde{c}_{2}-\tilde{c}_{1})}{4\Delta}$ and consequently $\zeta^{r,n}_{0}<\hat{t}_{n+1}$.  Therefore on the set $(\clu_n^r)^{c}\cap (\clc^r)^{c}$ we have
 \begin{equation}\label{eq:1143n}
 r^{2}\left(\zeta^{r,n}_{0}-\hat{t}_{n}\right)\leq  r^{\kappa}\frac{5(\tilde{c}_{2}-\tilde{c}_{1})}{4}.
 \end{equation}
 By definition, for all $l\geq 0$,  $Q^{r}_{j}(r^{2}s)\geq r^{\kappa}\tilde{c}_{2}$ for all $s\in [\zeta^{r,n}_{2l},\zeta^{r,n}_{2l+1}]$ so $\mathcal{E}_{j}^{r}(r^{2}s)=0$ for all $s\in [\zeta^{r,n}_{2l},\zeta^{r,n}_{2l+1})$.  On the set $(\clu_n^r)^{c}\cap (\clc^r)^{c}$ for any $l\geq 0$ satisfying $\zeta^{r,n}_{2l+1}<\hat{t}_{n+1}$ and $s\in[\zeta^{r,n}_{2l+1},\zeta^{r,n}_{2l+2})\cap [\zeta^{r,n}_{2l+1}, \hat{t}_{n+1}]$ we have
\begin{align*}
&Q^{r}(r^{2}s)\\
&=Q^{r}(r^{2}\zeta^{r,n}_{2l+1})+\mathcal{I}_{\{s-\hat{t}_{n}\geq \bar{\Upsilon}^{A,r}_{j}(\hat{t}_{n})>\zeta^{r,n}_{2l+1}-\hat{t}_{n}\}} -\mathcal{I}_{\{\bar{B}^{r}_{j}(s)-\bar{B}^{r}_{j}(\hat{t}_{n})\geq  \bar{\Upsilon}^{S,r}_{j}(\hat{t}_{n})>\bar{B}^{r}_{j}(\zeta^{r,n}_{2l+1})-\bar{B}^{r}_{j}(\hat{t}_{n})\}}\\
&\quad+\left(A^{r,\hat{t}_{n}}_{j}(r^{2}(s-\hat{t}_{n}-\bar{\Upsilon}^{A,r}_{j}(\hat{t}_{n}))^{+})-A^{r,\hat{t}_{n}}_{j}(r^{2}(\zeta^{r,n}_{2l+1}-\hat{t}_{n}-\bar{\Upsilon}^{A,r}_{j}(\hat{t}_{n}))^{+})\right)\\
&\quad-\left(S^{r,\hat{t}_{n}}_{j}(r^{2}(\bar{B}^{r}_{j}(s)-\bar{B}^{r}_{j}(\hat{t}_{n})-\bar{\Upsilon}^{S,r}_{j}(\hat{t}_{n}))^{+})\right. \\
&\quad  \left.-S^{r,\hat{t}_{n}}_{j}(r^{2}(\bar{B}^{r}_{j}(\zeta^{r,n}_{2l+1})-\bar{B}^{r}_{j}(\hat{t}_{n})-\bar{\Upsilon}^{S,r}_{j}(\hat{t}_{n}))^{+})\right)\\
&\geq r^{\kappa}\tilde{c}_{2}-2-r^{\kappa}\frac{\tilde{c}_{2}-\tilde{c}_{1}}{8}+r^{2}\alpha^{r}_{j}\left((s-\hat{t}_{n}-\bar{\Upsilon}^{A,r}_{j}(\hat{t}_{n}))^{+}-(\zeta^{r,n}_{2l+1}-\hat{t}_{n}-\bar{\Upsilon}^{A,r}_{j}(\hat{t}_{n}))^{+}\right)\\
&\quad -r^{\kappa}\frac{\tilde{c}_{2}-\tilde{c}_{1}}{8}-r^{2}\beta^{r}_{j}\left((\bar{B}^{r}_{j}(s)-\bar{B}^{r}_{j}(\hat{t}_{n})-\bar{\Upsilon}^{S,r}_{j}(\hat{t}_{n}))^{+}\right.\\
&\quad \left.-(\bar{B}^{r}_{j}(\zeta^{r,n}_{2l+1})-\bar{B}^{r}_{j}(\hat{t}_{n})-\bar{\Upsilon}^{S,r}_{j}(\hat{t}_{n}))^{+}\right)\\
\end{align*}
since $Q^{r}_{j}\left( r^{2}\zeta^{r,n}_{2l+1}\right)\geq r^{\kappa}\tilde{c}_{2}-1$.
Consequently
\begin{align*}
&Q^{r}(r^{2}s)\\
&\geq r^{\kappa}\tilde{c}_{2}-r^{\kappa}\frac{5(\tilde{c}_{2}-\tilde{c}_{1})}{16}+r^{2}\alpha^{r}_{j}\left(s-\zeta^{r,n}_{2l+1}\right)-r^{2}\alpha^{r}_{j} \bar{\Upsilon}^{A,r}_{j}(\hat{t}_{n})-r^{2}\beta^{r}_{j}\left(\bar{B}^{r}_{j}(s)-\bar{B}^{r}_{j}(\zeta^{r,n}_{2l+1})\right)\\
&\geq r^{\kappa}\tilde{c}_{2}-r^{\kappa}\frac{5(\tilde{c}_{2}-\tilde{c}_{1})}{16}+r^{2}\Delta\left(s-\zeta^{r,n}_{2l+1}\right)-\alpha^{r}_{j}\left(r^{\kappa}\frac{\tilde{c}_{2}-\tilde{c}_{1}}{\alpha^{r}_{j}16} \right)\\
&\geq  r^{\kappa}\left(\frac{5}{8}\tilde{c}_{2}+\frac{3}{8}\tilde{c}_{1}\right)+ r^{2}\Delta\left(s-\zeta^{r,n}_{2l+1}\right),\\
\end{align*}
where once more we have used \eqref{eq:1141n}.
Because $\Delta>0$ we have $Q^{r}_{j}(r^{2}s)\geq r^{\kappa}\left(\frac{5}{8}\tilde{c}_{2}+\frac{3}{8}\tilde{c}_{1}\right)$ for all $s\in [\zeta^{r,n}_{2l+1},\zeta^{r,n}_{2l+2}\wedge \hat{t}_{n+1}]$ and because by definition $\mathcal{E}_{j}^{r}(r^{2}\zeta^{r,n}_{2l+1})=0$ we have $\mathcal{E}_{j}^{r}(r^{2}s)=0$ for all $s\in [\zeta^{r,n}_{2l+1},\zeta^{r,n}_{2l+2})\cap  [\zeta^{r,n}_{2l+1},\hat{t}_{n+1}]$.  Since $l\geq 0$ such that $\zeta^{r,n}_{2l+1}<\hat{t}_{n+1}$ was arbitrary we have $\mathcal{E}_{j}^{r}(r^{2}s)=0$ for all $s\in [\zeta^{r,n}_{0}, \hat{t}_{n+1}]$ on the set $(\clu_n^r)^{c}\cap (\clc^r)^{c}$.  In addition, we have shown that on the set $(\clu_n^r)^{c}\cap (\clc^r)^{c}$ we have $Q^{r}_{j}(r^{2}s)\geq  r^{\kappa}\left(\frac{5}{8}\tilde{c}_{2}+\frac{3}{8}\tilde{c}_{1}\right)$ for all  $s\in [\zeta^{r,n}_{0}, \hat{t}_{n+1}]$ so $Q^{r}_{j}(r^{2}\hat{t}_{n+1})\geq  r^{\kappa}\left(\frac{5}{8}\tilde{c}_{2}+\frac{3}{8}\tilde{c}_{1}\right)$.  Consequently, from \eqref{eq:1143n}, on the set $(\clu_n^r)^{c}\cap (\clc^r)^{c}$ we have 
 \begin{equation}\label{eq:240}
 \int_{r^{2}\hat{t}_{n}}^{r^{2}\hat{t}_{n+1}}\mathcal{I}_{\{ \mathcal{E}_{j}^{r}(s)=1\}}ds= \int_{r^{2}\hat{t}_{n}}^{r^{2}\zeta^{r,n}_{0}}\mathcal{I}_{\{ \mathcal{E}_{j}^{r}(s)=1\}}ds+ \int_{r^{2}\zeta^{r,n}_{0}}^{r^{2}\hat{t}_{n+1}}\mathcal{I}_{\{ \mathcal{E}_{j}^{r}(s)=1\}}ds\leq r^{\kappa}\frac{5(\tilde{c}_{2}-\tilde{c}_{1})}{4},
 \end{equation}
 \begin{equation}\label{eq:241}
 \mathcal{E}_{j}^{r}(r^{2}\hat{t}_{n+1})=0, \mbox{ and, }
 Q^{r}_{j}(r^{2}\hat{t}_{n+1})\geq  r^{\kappa}\left(\frac{5}{8}\tilde{c}_{2}+\frac{3}{8}\tilde{c}_{1}\right).
 \end{equation}
 In particular, with $H^{r}_{n}=\{ \mathcal{E}_{j}^{r}(r^{2}\hat{t}_{n})=0 \}\cap \left\{Q^{r}_{j}(r^{2}\hat{t}_{n})\geq r^{\kappa}\left(\frac{5}{8}\tilde{c}_{2}+\frac{3}{8}\tilde{c}_{1}\right) \right\}$, for $n> 2$ we have
 \begin{equation}\label{eq:256}
 (H^{r}_{n})^{c}\cap (\clu_{n-1}^r)^{c}\cap (\clc^r)^{c}=\emptyset
 \end{equation}
 Next, on the set $H^{r}_{n}\cap (\clu_n^r)^{c}\cap (\clc^r)^{c}$ for
  $s\in [\hat{t}_{n}, \zeta^{r,n}_{0})$ (recall that $\zeta^{r,n}_{0}<\hat{t}_{n+1}$ on $(\clu_n^r)^{c}\cap (\clc^r)^{c}$) we have, from similar calculations as in \eqref{eq:227}
\begin{eqnarray*}
Q^{r}(r^{2}s)
&\geq&r^{\kappa}\left(\frac{5}{8}\tilde{c}_{2}+\frac{3}{8}\tilde{c}_{1}\right)+r^{2}\Delta \left(s-\hat{t}_{n}\right)-\alpha^{r}_{j}\left(r^{\kappa}\frac{\tilde{c}_{2}-\tilde{c}_{1}}{\alpha^{r}_{j}16} \right)-r^{\kappa}\frac{3(\tilde{c}_{2}-\tilde{c}_{1})}{16}\\
&\geq&r^{\kappa}\left(\frac{3}{8}\tilde{c}_{2}+\frac{5}{8}\tilde{c}_{1}\right)+\Delta r^{2}\left(s-\hat{t}_{n}\right).
\end{eqnarray*}
 Because $\Delta>0$ this implies $Q^{r}_{j}(r^{2}s)>r^{\kappa}\tilde{c}_{1}$ for all
  $s\in [\hat{t}_{n},\zeta^{r,n}_{0})$ and since $\mathcal{E}_{j}^{r}(r^{2}\hat{t}_{n})=0$ this implies $\mathcal{E}_{j}^{r}(r^{2}s)=0$ for all $s\in  [\hat{t}_{n},\zeta^{r,n}_{0})$.  Consequently on $H^{r}_{n}\cap (\clu_n^r)^{c}\cap (\clc^r)^{c}$  we have that \eqref{eq:241} holds and 
  \begin{equation}\label{eq:237}
 \int_{r^{2}\hat{t}_{n}}^{r^{2}\hat{t}_{n+1}}\mathcal{I}_{\{ \mathcal{E}_{j}^{r}(s)=1\}}ds= \int_{r^{2}\hat{t}_{n}}^{r^{2}\zeta^{r,n}_{0}}\mathcal{I}_{\{ \mathcal{E}_{j}^{r}(s)=1\}}ds+ \int_{r^{2}\zeta^{r,n}_{0}}^{r^{2}\hat{t}_{n+1}}\mathcal{I}_{\{ \mathcal{E}_{j}^{r}(s)=1\}}ds=0,
 \end{equation}

 Therefore for any $N\geq 1$ we have, on $(\clc^r)^c$,
 {
 \begin{eqnarray*}
  \int_{r^{2}\hat{t}_{1}}^{r^{2}\hat{t}_{N}}\mathcal{I}_{\{ \mathcal{E}_{j}^{r}(s)=1\}}ds&=&\sum_{n=1}^{N-1}\int_{r^{2}\hat{t}_{n}}^{r^{2}\hat{t}_{n+1}}\mathcal{I}_{\{ \mathcal{E}_{j}^{r}(s)=1\}}ds\\
  &=&\sum_{n=1}^{N-1}\mathcal{I}_{\clu^r_n}\int_{r^{2}\hat{t}_{n}}^{r^{2}\hat{t}_{n+1}}\mathcal{I}_{\{ \mathcal{E}_{j}^{r}(s)=1\}}ds
  +\sum_{n=1}^{N-1}\mathcal{I}_{H^{r}_{n}\cap (\clu_n^r)^{c}}\int_{r^{2}\hat{t}_{n}}^{r^{2}\hat{t}_{n+1}}\mathcal{I}_{\{ \mathcal{E}_{j}^{r}(s)=1\}}ds\\
   &&+\sum_{n=1}^{N-1}\mathcal{I}_{(H^{r}_{n})^{c}\cap \left(\clu^{r}_n\right)^{c}}\int_{r^{2}\hat{t}_{n}}^{r^{2}\hat{t}_{n+1}}\mathcal{I}_{\{ \mathcal{E}_{j}^{r}(s)=1\}}ds\\
  &\leq& r^{\frac{3}{2}\kappa}\sum_{n=1}^{N-1}\mathcal{I}_{ \clu^{r}_n}
 +r^{\kappa}\frac{5(\tilde{c}_{2}-\tilde{c}_{1})}{4}\sum_{n=1}^{N-1}\mathcal{I}_{(H^{r}_{n})^{c}\cap \left(\clu^{r}_n\right)^{c}} ,
 \end{eqnarray*}}
 where we have used \eqref{eq:241} and \eqref{eq:237} in obtaining the last inequality.

 From \eqref{eq:256} we have, on $(\clc^r)^c$,
 \begin{equation*}
 \sum_{n=1}^{N-1}\mathcal{I}_{(H^{r}_{n})^{c}\cap\left(\mathcal{U}^{r}_{n}\right)^{c}}\leq 1+\sum_{n=1}^{N-1}\mathcal{I}_{\mathcal{U}^{r}_{n}}.
 \end{equation*}
 Therefore, on $(\clc^r)^c$, 
 {
 \begin{equation}
 \label{eq:idleTimeBnd}
 \int_{0}^{r^{2}\hat{t}_{N}}\mathcal{I}_{\{ \mathcal{E}_{j}^{r}(s)=1\}}ds\leq
 r\hat \Upsilon^{A,r}_j +Tr^{\frac{3}{2}\kappa} + r^{\kappa}\frac{5(\tilde{c}_{2}-\tilde{c}_{1})}{4}+ \left( r^{\frac{3}{2}\kappa}+ r^{\kappa}\frac{5(\tilde{c}_{2}-\tilde{c}_{1})}{4}\right)\sum_{n=1}^{N-1}\mathcal{I}_{\mathcal{U}^{r}_{n}}.
 \end{equation}}
From the above estimate, in order to prove the result,  it now suffices to show that there exists $R,B<\infty$ such that for all $r\geq R$ and $T\ge 1$, we have
\begin{equation}\label{eq:1204n}
	P(\clc^r) \le e^{-BTr^{\frac{1}{8}\kappa}},
\end{equation}
and {
  \begin{equation}\label{eq:536}
 P\left(\sum_{n=1}^{\lceil Tr^{2-\frac{3}{2}\kappa}\rceil-1}\mathcal{I}_{ \mathcal{U}^{r}_{n}}\geq 3r^{\frac{1}{8}\kappa}T\right)\leq e^{-BTr^{\frac{1}{8}\kappa}}.
 \end{equation}}
 For \eqref{eq:1204n}, note that since from Proposition \ref{thm:schemeSum} part (a), if $Q^{r}_{j}(r^{2}s)<r^{\kappa}\tilde{c}_{1}$ then  $\frac{d}{ds}\bar{B}^{r}_{j}(s)=0$, we have that
{

 \begin{align}\label{eq:rev1}
 	P(\clc^r) \le P(A^r_j(Tr^{3\kappa/2}) \le r^{\kappa} \tilde c_1).
 \end{align}
}

 The estimate in \eqref{eq:1204n} now follows readily from \eqref{eq:31.b}.

 The estimate in \eqref{eq:536} follows if we can show that there exists $R,B<\infty$ such that for all $r\geq R$ and { $T\geq 1$ } we have
 \begin{equation}\label{eq:512}
 P\left(\sum_{n=1}^{\lceil Tr^{2-\frac{3}{2}\kappa}\rceil-1} 
 \mathcal{I}_{ \left(\mathcal{B}^{r,1}_{n}\mathcal{B}^{r,2}_{n} \right)}\geq r^{\frac{1}{8}\kappa}T\right)\leq  e^{-BTr^{\frac{1}{8}\kappa}}, \\
 \end{equation} 
 \begin{equation}\label{eq:517}
 P\left(\sum_{n=1}^{\lceil Tr^{2-\frac{3}{2}\kappa}\rceil-1} \mathcal{I}_{ \mathcal{A}^{r,1}_{n}}\geq r^{\frac{1}{8}\kappa}T\right)\leq  e^{- BTr^{\frac{1}{8}\kappa}}, \\
 \end{equation}
 and
   \begin{equation}\label{eq:530}
 P\left(\sum_{n=1}^{\lceil Tr^{2-\frac{3}{2}\kappa}\rceil-1} \mathcal{I}_{\left(\mathcal{A}^{r,1}_{n}\right)^{c}\cap\mathcal{A}^{r,2}_{n}}\geq r^{\frac{1}{8}\kappa}T\right)\leq  e^{- BT r^{\frac{1}{8}\kappa}}. \\
 \end{equation}
First we show  \eqref{eq:512}, with $B= \frac{1}{4}$, for $r$ sufficiently large.
Define 
\begin{equation*}
\mathcal{F}^{r,S}_{j}(k)=\sigma\left\{u^{r}_{l}(m^{u}_{l}),v^{r}_{l'}(m^{v}_{l'}), v^{r}_{j}(m^{v}_{j}): 
m^v_{l'}\ge 0, m^u_l\ge 0, l \in \AAA_J, l' \in \AAA_J\setminus\{j\}, m^v_j \le k
\right\}
\end{equation*}
 which is the filtration that contains the information on all inter-arrival times, all service times from queues other than the $j$-th queue, and   the first $k$ service times from queue $j$. 
 Note that $\tau^{r,S}_{j}(\hat{t}_{n})$ is a $\mathcal{F}^{r,S}_{j}(k)$ stopping time and thus with $\clh^j_n \doteq \mathcal{F}^{r,S}_{j}(\tau^{r,S}_{j}(\hat{t}_{n}))$ for $n\geq 0$, $\{\clh^j_n\}_{n\ge 0}$ is a filtration.  Note that for $k<n$ $\mathcal{B}^{r,1}_{k}\cap\mathcal{B}^{r,2}_{k}$ is $\clh^j_n $-measurable.  In addition, $\bar{\xi}^{S,r}_{j}(\hat{t}_{n})$ is $\clh^j_n $-measurable
and $S^{r,\hat{t}_{n}}_{j}$ is independent of $\clh^j_n $. Write
$$\clu_n = \exp\{\frac{1}{2} \mathcal{I}_{\mathcal{B}^{r,1}_{n}\cap\mathcal{B}^{r,2}_{n}}\}  = e^{1/2} 
\mathcal{I}_{\mathcal{B}^{r,1}_{n}\cap\mathcal{B}^{r,2}_{n}}
+ \mathcal{I}_{(\mathcal{B}^{r,1}_{n}\cap\mathcal{B}^{r,2}_{n})^c} .$$
Then we have {
\begin{multline*}
E\left[e^{\frac{1}{2}\sum_{n=1}^{N}\mathcal{I}_{\mathcal{B}^{r,1}_{n}\cap\mathcal{B}^{r,2}_{n}}} \right]=
E\left[\prod_{n=1}^N \clu_n\right] = E\left[ E(\clu_N \mid \clh^j_{N-1})\prod_{n=1}^{N-1} \clu_n\right] \\
\leq E\left[\left[\left(1-P\left(\mathcal{B}^{r,2}_{N}| \clh^j_{N-1}\right)\right)+e^{\frac{1}{2}}P\left(\mathcal{B}^{r,2}_{N}| \clh^j_{N-1}\right) \right]\prod_{n=1}^{N-1} \clu_n\right].\\
\end{multline*}}
For $n\geq 0$, since $\bar{\Upsilon}^{S,r}_{j}(\hat{t}_{n})\geq 0$ and $\bar{B}^{r}_{j}(\hat{t}_{n+1})-\bar{B}^{r}_{j}(\hat{t}_{n})\leq \max_{i}\{C_{i}\}r^{\frac{3}{2}\kappa-2}$, we have
\begin{equation*}
P\left(\mathcal{B}^{r,2}_{n}|\clh^j_{n}\right)\leq P\left( \sup_{0\leq s\leq \max_{i}\{C_{i}\}r^{\frac{3}{2}\kappa-2}}\left|S^{r,\hat{t}_{n}}_{j}(r^{2}s)-r^{2}s\beta^{r}_{j}\right| >r^{\kappa}\frac{\tilde{c}_{2}-\tilde{c}_{1}}{16} \right)
\end{equation*}
and due to Proposition \ref{thm:expTailBnd} \eqref{eq:31.a} there exists $R_{2}\in [R_{1},\infty)$ and $B_{1}<\infty$ such that for all $r\geq R_{2}$ and $n\geq 0$ we have
\begin{equation}\label{eq:revfirstuse}
P\left( \sup_{0\leq s\leq \max_{i}\{C_{i}\}r^{\frac{3}{2}\kappa-2}}\right|S^{r,\hat{t}_{n}}_{j}(r^{2}s)-r^{2}s\alpha^{r}_{j}\left| >r^{\kappa}\frac{\tilde{c}_{2}-\tilde{c}_{1}}{16} \right)\leq e^{-r^{\frac{\kappa}{2}}B_{1}}.
\end{equation}
Since $\left(1-p+pe^{\frac{1}{2}}\right)$ is an increasing function of $p$ we have
\begin{equation*}
E\left[e^{\frac{1}{2}\sum_{n=1}^{N}\mathcal{I}_{\mathcal{B}^{r,1}_{n}\cap\mathcal{B}^{r,2}_{n}}} \right]\leq E\left[e^{\frac{1}{2}\sum_{n=1}^{N-1}\mathcal{I}_{\mathcal{B}^{r,1}_{n}\cap\mathcal{B}^{r,2}_{n}}} \right]\left(1+e^{-r^{\frac{\kappa}{2}}B_{1}}\left(e^{\frac{1}{2}}-1\right)\right).
\end{equation*}
Now by a standard recursive argument we see
\begin{equation*}
E\left[e^{\frac{1}{2}\sum_{n=1}^{N}\mathcal{I}_{\mathcal{B}^{r,1}_{n}\cap\mathcal{B}^{r,2}_{n}}} \right]\leq \left(1+e^{-r^{\frac{\kappa}{2}}B_{1}}\left(e^{1/2}-1\right)\right)^{N+1}.
\end{equation*}
Since $e^{\frac{1}{2}}-1\leq 1$ we have
$
1+e^{-r^{\frac{\kappa}{2}}B_{1}}\left(e^{\frac{1}{2}}-1\right)\leq e^{e^{-r^{\frac{\kappa}{2}}B_{1}}}
$
which gives
\begin{equation}\label{eq:941}
E\left[e^{\frac{1}{2}\sum_{n=1}^{N}\mathcal{I}_{\mathcal{B}^{r,1}_{n}\cap\mathcal{B}^{r,2}_{n}}} \right]\leq e^{(N+1)e^{-r^{\frac{\kappa}{2}}B_{1}}}.
\end{equation}
Choose $R_{3}\in [R_{2},\infty)$ such that for all $r\geq R_{3}$ and $T\geq 1$ we have 
 $$- \frac{1}{2}r^{\frac{1}{8}\kappa}T+\left(Tr^{2-\frac{3}{2}\kappa}+1\right) e^{-r^{\frac{\kappa}{2}}B_{1}}\leq - \frac{1}{4}r^{\frac{1}{8}\kappa}T.$$  Then for all $r\geq R_{3}$ and { $T\geq 1$} we have from \eqref{eq:941} that
  \begin{eqnarray*}
 P\left(\sum_{n=1}^{\lceil Tr^{2-\frac{3}{2}\kappa}\rceil-1} \mathcal{I}_{ \left(\mathcal{B}^{r,1}_{n}\cap \mathcal{B}^{r,2}_{n} \right)}\geq r^{\frac{1}{8}\kappa}T\right)&\leq& e^{- \frac{1}{2}r^{\frac{1}{8}\kappa}T+\lceil Tr^{2-\frac{3}{2}\kappa}\rceil e^{-r^{\frac{\kappa}{2}}B_{1}}}
 \leq  e^{-\frac{1}{4}Tr^{\frac{1}{8}\kappa}}. \\
 \end{eqnarray*}
 This completes the proof of \eqref{eq:512}.
Let $\delta>0$ be as in Condition \ref{eqn:mgfBnd}
and let $\upsilon\in (0,\delta)$ be arbitrary.  Now 
we will show that \eqref{eq:517} holds with $B= \frac{\upsilon}{2}$ for $r$ sufficiently large.
 For  $x\in \mathbb{R}_+$ define
\begin{equation*}
\tilde{\tau}^{r,A}_{j}(x)=\min\left\{k\geq 0:\sum_{l=1}^{k}u^{r}_{j}(l)\geq x\right\},
\end{equation*}
Let $\check u^r_j(x) \doteq \mathcal{I}_{\{x>0\}}u^{r}_{j}\left( \tilde{\tau}^{r,A}_{j}(x)\right)$ and define
\begin{equation*}
\Phi^{r}_{\upsilon}(x)\doteq E\exp\left\{\upsilon\mathcal{I}_{\left\{\check u^r_j(x)>r^{\kappa}\frac{\tilde{c}_{2}-\tilde{c}_{1}}{\alpha^{r}_{j}16}\right\}}+\upsilon\sum_{l=1}^{\infty}\mathcal{I}_{\left\{\check u^r_j(x)>lr^{\frac{3}{2}\kappa}\right\}}\right\}.
\end{equation*}
Let
\begin{equation*}
\mathcal{F}^{r,A}_{j}(k)=\sigma\left\{u^{r}_{l'}(m^{u}_{l'}),v^{r}_{l}(m^{v}_{l}), u^{r}_{j}(m^{u}_{j}): 
m^u_{l'}\ge 0, m^v_l\ge 0, l \in \AAA_J, l' \in \AAA_J\setminus\{j\},  m^u_j \le k
\right\}
\end{equation*}
 which is the filtration that contains the information on all  all service times, all inter-arrival times for queues other than the $j$-th queue, and   the first $k$ arrival times from queue $j$. Note that $\tau^{r,A}_{j}(\hat{t}_{n})$ is a $\mathcal{F}^{r,A}_{j}(k)$-stopping time and thus with $\tilde \clh^j_n \doteq \mathcal{F}^{r,A}_j( \tau^{r,A}_{j}(\hat{t}_{n}))$ for $n\geq 0$, $\{\tilde \clh^j_n \}_{n\geq 0}$ is a filtration.
For  $n\geq 0$ and a $\tilde \clh^j_n$-measurable positive  random variable $X$ define
\begin{equation*}
\tilde{\tau}^{r,A,n}_{j}(X)=\min\left\{k\geq 0:\sum_{l= \tau^{r,A}_{j}(\hat{t}_{n})+1}^{l= \tau^{r,A}_{j}(\hat{t}_{n})+k}u^{r}_{j}(l)\geq X \right\}.
\end{equation*}
and
\begin{eqnarray*}
\psi^{r,n}_{\upsilon}(X)&\doteq& \exp \left[\upsilon\mathcal{I}_{\left\{ X>0, u^{r}_{j}\left( \tilde{\tau}^{r,A}_{j}(X)+\tau^{r,A}_{j}(\hat{t}_{n})\right)>r^{\kappa}\frac{\tilde{c}_{2}-\tilde{c}_{1}}{\alpha^{r}_{j}16}\right\}}\right.\\
&&\quad \quad \left.+\upsilon\sum_{l=1}^{\infty}\mathcal{I}_{\left\{X>0, u^{r}_{j}\left( \tilde{\tau}^{r,A}_{j}(X)+\tau^{r,A}_{j}(\hat{t}_{n})\right)>lr^{\frac{3}{2}\kappa}\right\}}\right].
\end{eqnarray*}
Note that
\begin{equation}
\label{eq:psiCondExp}
E\left[ \psi^{r,n}_{\upsilon}(X)\vert \tilde \clh^j_n  \right]=\Phi^{r}_{\upsilon}(X).
\end{equation}
Due to Proposition \ref{thm:nextArrTimeBnd} there exists $R_{4}\in [R_{3},
\infty)$ and $B_{2}<\infty$ such that 
\begin{equation*}
\sup_{r\geq R_{4},x\in\mathbb{R}_+} E\left[e^{\upsilon \check u^r_j(x) }\right]\leq B_{2}
\end{equation*}
and for all $r\geq R_{4}$ we have $ e^{- \upsilon\left( r^{\frac{3}{2}\kappa}-1\right)}<\frac{1}{2}$, $ r^{\kappa}\frac{\tilde{c}_{2}-\tilde{c}_{1}}{\alpha^{r}_{j}16}\leq r^{\frac{3}{2}\kappa}$, $2\alpha^{r}_{j}\geq \alpha_{j}$, and $r^{\kappa}\frac{\tilde{c}_{2}-\tilde{c}_{1}}{\alpha^{r}_{j}32}>1$.
Consequently for all $x\in\mathbb{R}$ and $l\geq 1$ we have 
\begin{equation*}
P\left( \check u^r_j(x)\geq r^{\kappa}\frac{\tilde{c}_{2}-\tilde{c}_{1}}{\alpha^{r}_{j}16}\right)\leq B_{2}e^{-\upsilon r^{\kappa}\frac{\tilde{c}_{2}-\tilde{c}_{1}}{\alpha^{r}_{j}16}}, \;\; 
P\left( \check u^r_j(x)\geq lr^{\frac{3}{2}\kappa} \right)\leq B_{2}e^{-\upsilon lr^{\frac{3}{2}\kappa}}.
\end{equation*}
Therefore for $r\geq R_{4}$ we have
\begin{eqnarray*}
\Phi^{r}_{\upsilon}(x)&\leq&1+ e^{\upsilon }B_{2}e^{-\upsilon r^{\kappa}\frac{\tilde{c}_{2}-\tilde{c}_{1}}{\alpha^{r}_{j}16}}+\sum_{l=1}^{\infty}e^{\upsilon (l+1)}B_{2}e^{-\upsilon l r^{\frac{3}{2}\kappa}}\\
&\leq&1+ e^{\upsilon }B_{2}e^{-\upsilon r^{\kappa}\frac{\tilde{c}_{2}-\tilde{c}_{1}}{\alpha^{r}_{j}16}}+e^{2\upsilon}B_{2}e^{-\upsilon r^{\frac{3}{2}\kappa}}\sum_{l=0}^{\infty}e^{-\upsilon l \left( r^{\frac{3}{2}\kappa}-1\right)}\\
&\leq&1+ e^{\upsilon }B_{2}e^{-\upsilon r^{\kappa}\frac{\tilde{c}_{2}-\tilde{c}_{1}}{\alpha^{r}_{j}16}}+2e^{2\upsilon}B_{2}e^{-\upsilon r^{\frac{3}{2}\kappa}}\\
&\leq&1+ 3B_{2}e^{2\upsilon}e^{-\upsilon r^{\kappa}\frac{\tilde{c}_{2}-\tilde{c}_{1}}{\alpha^{r}_{j}16}}
\leq e^{3B_{2}e^{2\upsilon}e^{-\upsilon r^{\kappa}\frac{\tilde{c}_{2}-\tilde{c}_{1}}{\alpha^{r}_{j}16}}}.\\
\end{eqnarray*}
Letting $B_{3}=3B_{2}e^{2\upsilon}$ and $B_{4}=\upsilon \frac{2(\tilde{c}_{2}-\tilde{c}_{1})}{\alpha_{j}16}$ we have that,   for $r\geq R_{4}$, 
$
\sup_{r\geq R_{4},x\in\mathbb{R}}\left\{\Phi^{r}_{\upsilon}(x)\right\}\leq e^{B_{3}e^{-B_{4}r^{\kappa}}}
$
which combined with equation \eqref{eq:psiCondExp} implies that for any $n\geq 0$ and $\tilde \clh^j_n$-measurable real valued random variable $X$ we have a.e.
\begin{equation}
\label{eq:psiCondExpBnd}
E\left[ \psi^{r,n}_{\upsilon}(X)\vert \tilde \clh^j_n  \right]=\Phi^{r}_{\upsilon}(X)\leq e^{B_{3}e^{-B_{4}r^{\kappa}}}.
\end{equation}
Next,
$$\mathcal{I}_{\mathcal{A}^{r,1}_{N}} \le \mathcal{I}_{\left\{r^{2}\bar{\Upsilon}^{A,r}_{j}(\hat{t}_{N-1})\geq r^{\frac{3}{2}\kappa} \right\}}
 + \mathcal{I}_{\left\{r^{2}\bar{\Upsilon}^{A,r}_{j}(\hat{t}_{N-1})< r^{\frac{3}{2}\kappa} \right\}} \mathcal{I}_{\mathcal{A}^{r,1}_{N}}.$$
 Also,
 \begin{align*}
&\mathcal{I}_{\left\{r^{2}\bar{\Upsilon}^{A,r}_{j}(\hat{t}_{N-1})< r^{\frac{3}{2}\kappa} \right\}} \mathcal{I}_{\mathcal{A}^{r,1}_{N}}\\
& \le 
\mathcal{I}_{\left\{r^{\frac{3}{2}\kappa} -r^{2}\bar{\Upsilon}^{A,r}_{j}(\hat{t}_{N-1})>0, u^{r}_{j}\left(\tau^{r,A}_{j}(\hat{t}_{N-1})+\tilde{\tau}^{r,A,N-1}_{j}\left(r^{\frac{3}{2}\kappa} -r^{2}\bar{\Upsilon}^{A,r}_{j}(\hat{t}_{N-1})\right)\right)>r^{\kappa}\frac{\tilde{c}_{2}-\tilde{c}_{1}}{\alpha^{r}_{j}16}\right\}}.
\end{align*}
Thus
\begin{align*}
e^{\upsilon\mathcal{I}_{\mathcal{A}^{r,1}_{N}} }&\leq e^{\upsilon\mathcal{I}_{\left\{r^{2}\bar{\Upsilon}^{A,r}_{j}(\hat{t}_{N-1})\geq r^{\frac{3}{2}\kappa} \right\}}+\upsilon\mathcal{I}_{\left\{r^{2}\bar{\Upsilon}^{A,r}_{j}(\hat{t}_{N-1})< r^{\frac{3}{2}\kappa} \right\}} \mathcal{I}_{\mathcal{A}^{r,1}_{N}}}\\
& \le  e^{\upsilon\mathcal{I}_{\left\{r^{2}\bar{\Upsilon}^{A,r}_{j}(\hat{t}_{N-1})\geq r^{\frac{3}{2}\kappa} \right\}}}\psi^{r,N-1}_{\upsilon}( r^{\frac{3}{2}\kappa} -r^{2}\bar{\Upsilon}^{A,r}_{j}(\hat{t}_{N-1})).
\end{align*}
By conditioning, using \eqref{eq:psiCondExpBnd},  and the fact that $ r^{\frac{3}{2}\kappa} -r^{2}\bar{\Upsilon}^{A,r}_{j}(\hat{t}_{N-1})$ is  $ \tilde \clh^j_{N-1}$-measurable it then follows
\begin{eqnarray*}
	E\left[ e^{\upsilon\sum_{n=1}^{N}\mathcal{I}_{\mathcal{A}^{r,1}_{n}}}\right]
\leq e^{B_{3}e^{-B_{4}r^{\kappa}}}E  e^{\upsilon\sum_{n=1}^{N-1}\mathcal{I}_{\mathcal{A}^{r,1}_{n}}+\upsilon\mathcal{I}_{\left\{r^{2}\bar{\Upsilon}^{A,r}_{j}(\hat{t}_{N-1})\geq r^{\frac{3}{2}\kappa} \right\}}}.
\end{eqnarray*}
By a successive conditioning argument we now have that
\begin{equation*}
E  e^{\upsilon\sum_{n=1}^{N}\mathcal{I}_{\mathcal{A}^{r,1}_{n}}} \leq e^{NB_{3}e^{-B_{4}r^{\kappa}}}E e^{\upsilon\mathcal{I}_{\mathcal{A}^{r,1}_{0}}+\upsilon\sum_{l=1}^{N}\mathcal{I}_{ \left\{r^{2}\bar{\Upsilon}^{A,r}_{j}(\hat{t}_{0})\geq l r^{\frac{3}{2}\kappa} \right\}}}.
\end{equation*}
By definition $\bar{\Upsilon}^{A,r}_{j}(\hat{t}_{0})=0$ so the expectation on the right side equals
$1$  
which gives
\begin{equation*}
E\left[ e^{\upsilon\sum_{n=1}^{N}\mathcal{I}_{\mathcal{A}^{r,1}_{n}}}\right]\leq e^{NB_{3}e^{-B_{4}r^{\kappa}}}.
\end{equation*}
Choose $R_{5}\in[R_{4},\infty)$ such that for all $r\geq R_{5}$ and $T\geq 1$ we have 
\begin{equation*}
- T\upsilon r^{\frac{1}{8}\kappa}+ Tr^{2-\frac{3}{2}\kappa}B_{3}e^{-B_{4}r^{\kappa}}\leq - T\frac{\upsilon}{2} r^{\frac{1}{8}\kappa}.
\end{equation*}
  Then for all $r\geq R_{5}$ and $T\geq 1$ we have
  \begin{align*}
& P\left(\sum_{n=1}^{\lceil Tr^{2-\frac{3}{2}\kappa}\rceil-1} \mathcal{I}_{ \mathcal{A}^{r,1}_{n}}\geq r^{\frac{1}{8}\kappa}T\right) \leq e^{-\upsilon r^{\frac{1}{8}\kappa}T} E\left[e^{\upsilon \sum_{n=1}^{\lceil Tr^{2-\frac{3}{2}\kappa}\rceil-1} \mathcal{I}_{ \mathcal{A}^{r,1}_{n}} }\right]\\
& \leq  e^{- \upsilon r^{\frac{1}{8}\kappa}T+\left(\lceil Tr^{2-\frac{3}{2}\kappa}\rceil-1 \right)B_{3}e^{-B_{4}r^{\kappa}}}
 \leq e^{- \upsilon r^{\frac{1}{8}\kappa}T+ Tr^{2-\frac{3}{2}\kappa} B_{3}e^{-B_{4}r^{\kappa}}}
 \leq e^{- \frac{\upsilon}{2} r^{\frac{1}{8}\kappa}T}. 
 \end{align*}
 This proves \eqref{eq:517}.
 
Finally, we will show \eqref{eq:530}, with $B= 1/4$ and for $r$ sufficiently large.
Note that for $n\geq 0$ since $A^{r,\hat{t}_{n}}_{j}(\cdot)$ is independent of $\mathcal{F}^{r}(\tau^{r}(\hat{t}_{n}))$, $\bar{\Upsilon}^{A,r}_{j}(\hat{t}_{n})$ is $\mathcal{F}^{r}(\tau^{r}(\hat{t}_{n}))$-measurable, and $\bar{\Upsilon}^{A,r}_{j}(\hat{t}_{n})\geq 0$ we have
\begin{eqnarray*}
P\left(\mathcal{A}^{r,2}_{n}| \mathcal{F}^{r}(\tau^{r}(\hat{t}_{n}))\right)
&\leq& P\left( \sup_{0\leq s\leq r^{\frac{3}{2}\kappa-2}}\left|A^{r,\hat{t}_{n}}_{j}(r^{2}s)-r^{2}s \alpha^{r}_{j}\right| >r^{\kappa}\frac{\tilde{c}_{2}-\tilde{c}_{1}}{16}| \mathcal{F}^{r}(\tau^{r}(\hat{t}_{n})) \right)\\
&\leq& P\left( \sup_{0\leq s\leq r^{\frac{3}{2}\kappa-2}}\left|A^{r,\hat{t}_{n}}_{j}(r^{2}s)-r^{2}s \alpha^{r}_{j}\right| >r^{\kappa}\frac{\tilde{c}_{2}-\tilde{c}_{1}}{16} \right).
\end{eqnarray*}
From Proposition \ref{thm:expTailBnd} \eqref{eq:31.b} there exists $R_{6}\in[R_{5},\infty)$ and $B_{5}<\infty$ such that for all $r\geq R_{6}$ we have
\begin{equation}\label{eq:revseconduse}
P\left( \sup_{0\leq s\leq r^{\frac{3}{2}\kappa-2}}\left|A^{r,\hat{t}_{n}}_{j}(r^{2}s)-r^{2}s \alpha^{r}_{j}\right| >r^{\kappa}\frac{\tilde{c}_{2}-\tilde{c}_{1}}{16} \right)\leq e^{-2B_{5}r^{\frac{\kappa}{2}}}
\end{equation}
and $e^{\frac{1}{2}}\leq B_{5}r^{\frac{\kappa}{2}}$.
Therefore for all $r\geq R_{6}$ and $n\geq 0$ we have
\begin{equation*}
P\left(\mathcal{A}^{r,2}_{n}| \mathcal{F}^{r}(\tau^{r}(\hat{t}_{n}))\right)\leq e^{-2B_{5}r^{\frac{\kappa}{2}}}
\end{equation*}
and so
\begin{align*}
	E\left[e^{\frac{1}{2}\mathcal{I}_{\left(\mathcal{A}^{r,1}_{n}\right)^{c}\cap\mathcal{A}^{r,2}_{n}}}|\mathcal{F}^{r}(\tau^{r}(\hat{t}_{n})) \right]
&\le \left(1+e^{\frac{1}{2}}P\left(\mathcal{A}^{r,2}_{n} |\mathcal{F}^{r}(\tau^{r}(\hat{t}_{n}))\right) \right)\\
&\le \left(1+e^{\frac{1}{2}}e^{-2B_{5}r^{\frac{\kappa}{2}}} \right)
\le \left(1+e^{-B_{5}r^{\frac{\kappa}{2}}} \right) .
\end{align*}
Since for $0\leq m<n$ the set $\left(\mathcal{A}^{r,1}_{m}\right)^{c}\cap\mathcal{A}^{r,2}_{m}$ is $\mathcal{F}^{r}(\tau^{r}(\hat{t}_{n}))$-measurable we have by a successive conditioning argument
%
\begin{align*}
E\left[e^{\frac{1}{2}\sum_{n=1}^{N}\mathcal{I}_{\left(\mathcal{A}^{r,1}_{n}\right)^{c}\cap\mathcal{A}^{r,2}_{n}}} \right] &\leq \left(1+e^{-B_{5}r^{\frac{\kappa}{2}}} \right)^{N}E\left[e^{\frac{1}{2}\mathcal{I}_{\left(\mathcal{A}^{r,1}_{0}\right)^{c}\cap\mathcal{A}^{r,2}_{0}}}\right]\\
&\le \left(1+e^{-B_{5}r^{\frac{\kappa}{2}}} \right)^{N}e^{1/2} \le e^{Ne^{-B_{5}r^{\frac{\kappa}{2}}}} e^
{1/2}.
\end{align*}
  Choose $R_{7}\in [R_{6},\infty)$ such that for all $r\geq R_{7}$ and $T\geq 1$ we have $- \frac{1}{2} r^{\frac{1}{8}\kappa}T+Tr^{2-\frac{3}{2}\kappa}e^{-B_{5}r^{\frac{\kappa}{2}}} + \frac{1}{2}\leq - \frac{1}{4} r^{\frac{1}{8}\kappa}T$.  Then for all $r\geq R_{7}$ we have
  \begin{align*}
 P\Big(\sum_{n=1}^{\lceil Tr^{2-\frac{3}{2}\kappa}\rceil-1} \mathcal{I}_{\left(\mathcal{A}^{r,1}_{n}\right)^{c}\cap\mathcal{A}^{r,2}_{n}}\geq r^{\frac{1}{8}\kappa}T\Big)&\leq e^{- \frac{1}{2} r^{\frac{1}{8}\kappa}T+\left(\lceil Tr^{2-\frac{3}{2}\kappa}\rceil-1\right) e^{-B_{5}r^{\frac{\kappa}{2}}}+ \frac{1}{2}}\\
 &\leq e^{- \frac{1}{2} r^{\frac{1}{8}\kappa}T+ Tr^{2-\frac{3}{2}\kappa} e^{-B_{5}r^{\frac{\kappa}{2}}} +\frac{1}{2}}
 \leq e^{- \frac{1}{4} r^{\frac{1}{8}\kappa}T}. 
 \end{align*}
 Consequently for all $r\geq R_{7}$ we have that \eqref{eq:512}-\eqref{eq:530} hold which proves \eqref{eq:536}.
As noted previously, this completes the proof.
\hfill \qed

\section{Proof of Propositions \ref{thm:workloadExpBnd} 
and \ref{thm:costDiffResult}}
\label{sec:workexpandcostdiff}
In this section we provide proofs of Propositions \ref{thm:workloadExpBnd} 
and \ref{thm:costDiffResult}.
\subsection{Proof of Proposition \ref{thm:workloadExpBnd}}
\label{sec:secworkloadexp}
We begin with some preliminary stability results.
\begin{defn}
\label{def:TildeGamma}
Let $\tilde{\upsilon}>0$ and $\xi \ge 0$ be arbitrary and define
\begin{equation*}
\tilde{\gamma}^{r,\tilde{\upsilon}}_{i,\xi}\doteq \inf\left\{t\geq\xi :\hat{W}^{r}_{i}(t)+\sum_{j=1}^{J}\hat{\Upsilon}^{S,r}_{j}(t)+\sum_{j=1}^{J}\hat{\Upsilon}^{A,r}_{j}(t)<\tilde{\upsilon}\right\}.
\end{equation*}
\end{defn}
Proof of the following Proposition is in Section \ref{sec:thmwaittimeex}.
\begin{proposition}[Proof in Section \ref{sec:thmwaittimeex}]
\label{thm:waitTimeExpBnd}
There exist constants $\tilde{\delta}, {B}_{1}, {B}_{2}, {B}_{3}, {R}\in (0,\infty)$ such that for all $c\in(0,\tilde{\delta}]$, $r\geq  {R}$, $\tilde{\upsilon}>0$, $\xi \ge 0$,
$y^r=(\hat{q}^r,\hat{\Upsilon}^r,\tilde{\mathcal{E}}^r)\in \cly^r$, and $i\in\AAA_I$ we have
\begin{equation*}
E_{y^r}\left[e^{c\tilde{\gamma}^{r,\tilde{\upsilon}}_{i,\xi}} \right]\leq {B}_{1}e^{{B}_{2}\left(\xi+\tilde{\upsilon}+\hat{w}^r_i+\sum_{j=1}^{J}\hat{\Upsilon}^{S,r}_{j}+\sum_{j=1}^{J}\hat{\Upsilon}^{A,r}_{j}\right)}+{B}_{3},
\end{equation*}
where $\hat{w}^r = KM^r\hat{q}^r$.
\end{proposition}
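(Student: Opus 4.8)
Proposition \ref{thm:waitTimeExpBnd} asks for a uniform-in-$r$ exponential moment bound on the first time the sum $\hat W^r_i(t) + \sum_j \hat\Upsilon^{S,r}_j(t) + \sum_j \hat\Upsilon^{A,r}_j(t)$ drops below a prescribed level $\tilde\upsilon$, starting from time $\xi$. The natural strategy is a Lyapunov/drift argument executed via a renewal-type decomposition of the trajectory into excursions, combined with the path large deviation estimates of Proposition \ref{thm:expTailBnd} (and Propositions \ref{thm:nextArrTimeBnd}, \ref{thm:nextSerTimeBnd} for the residual-time terms).

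The plan is as follows. First I would reduce to the case $\xi = 0$: the time increment from $0$ to $\xi$ only adds a deterministic factor $e^{c\xi}$ (after using the Markov property at time $\xi$ and absorbing the change in the state, which is where the $\hat\Upsilon$-terms and $\hat w^r_i$ on the right enter), so it suffices to bound $E_{y^r}[e^{c\tilde\gamma^{r,\tilde\upsilon}_{i,0}}]$. Second, I would establish a one-step geometric-decay estimate: there is a fixed time horizon $T_0$ (in diffusion scale) and constants $\vartheta \in (0,1)$, $c_0>0$ such that, uniformly in $r$ large and in states $y$ with $V(y) \doteq \hat w_i + \sum_j \hat\Upsilon^{S,r}_j + \sum_j \hat\Upsilon^{A,r}_j \ge \tilde\upsilon$, one has $E_y[e^{c_0 V(\hat Y^r(T_0))}\mathbf{1}_{\{\tilde\gamma > T_0\}}] \le \vartheta\, e^{c_0 V(y)}$ together with $P_y(\tilde\gamma > T_0) \le \vartheta$-type control. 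The key input here is the negative drift $\theta = K\eta < 0$ from Condition \ref{cond:htc}: when $\hat W^r_i$ is above level $\tilde\upsilon$ and not too large, Proposition \ref{thm:schemeSum}(d) and the FCLT-type estimates force the workload to decrease at a deterministic linear rate over $[0,T_0]$ with overwhelming probability, while the residual arrival/service times $\hat\Upsilon^{A,r}, \hat\Upsilon^{S,r}$ do not blow up (controlled by Propositions \ref{thm:nextArrTimeBnd}, \ref{thm:nextSerTimeBnd} and \ref{thm:expTailBnd}); on the exceptional event one uses the exponential-integrability Condition \ref{eqn:mgfBnd} and the path LDP to pay only a $\vartheta$-small price. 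Third, I would iterate: writing $\tilde\gamma$ as the sum of successive $T_0$-blocks, a standard geometric-series / supermartingale argument (Foster--Lyapunov with a multiplicative drift condition, e.g. in the spirit of Meyn--Tweedie) converts the one-step estimate into the claimed bound $E_{y^r}[e^{c\tilde\gamma^{r,\tilde\upsilon}_{i,\xi}}] \le B_1 e^{B_2 V(y^r)} + B_3$ for all $c$ up to some $\tilde\delta>0$.

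The main obstacle I expect is the uniformity in both $r$ and $\tilde\upsilon$ in the one-step estimate. The issue is that one needs the geometric decay factor $\vartheta$ and the exponent $c_0$ to be chosen \emph{independently} of $\tilde\upsilon$ and of the starting state; this requires that the negative-drift push $\theta$ dominates uniformly, which in turn needs the control policy's no-idleness guarantee (Proposition \ref{thm:schemeSum}(d)) to kick in at the scale $2Jc_2 r^{\kappa-1}$, and then Proposition \ref{thm:idleTimeBndMark} to ensure idleness is rare. Carefully tracking the residual-time coordinates $\hat\Upsilon^{A,r}(t), \hat\Upsilon^{S,r}(t)$ — which are themselves part of $V$ and can be momentarily large at the start — is delicate: one must show that after one block these have relaxed (regenerated) with exponential tails, using that an arrival/service renewal occurs within $O(1)$ fluid time with exponentially small failure probability, which is exactly Proposition \ref{thm:expTailBnd}.

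I would organize the actual proof by first stating and proving a sequence of Lyapunov lemmas (the ones the excerpt says are relegated to Section \ref{sec:stabproofs}) that make the one-step drift estimate precise — one lemma handling the decrease of $\hat W^r_i$ under the no-idleness property, one handling the exponential tails of the residual times, and one assembling these into the multiplicative drift inequality — and then conclude by the iteration argument. The bound on $c$ by $\tilde\delta$ comes from Condition \ref{eqn:mgfBnd}: $\tilde\delta$ must be small enough that all the moment generating functions appearing in the excursion estimates are finite uniformly in $r$.
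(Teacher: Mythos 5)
Your proposed route — an iterated Foster--Lyapunov argument with a multiplicative drift condition over fixed blocks of length $T_0$ — is genuinely different from the paper's proof, which is a single-pass tail bound. The paper does not iterate at all here. Instead it fixes a time $T > T^*$, where $T^*$ is a deterministic function of $(\hat w^r_i, \hat\Upsilon^r, \tilde\upsilon, \xi)$, and defines a ``good'' event $\mathcal{A}^{r,T}$ that bundles together (i) the path LDP estimates from Proposition \ref{thm:expTailBnd} on the primitive renewal processes over $[0, r^2 T]$, (ii) the idleness bound from Proposition \ref{thm:idleTimeBndMark}, and (iii) a bound on the contribution of atypically large interarrival/service times from Proposition \ref{thm:largeWaitBnd}. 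It then shows, by a direct pathwise computation tracking $\hat W^r_i$ from the last time $\hat t \le \xi$ at which the quantity dipped below $\tilde\upsilon$, that $\{\tilde\gamma^{r,\tilde\upsilon}_{i,\xi}>T\}\cap\mathcal{A}^{r,T}$ would force $\hat W^r_i(T)<0$, a contradiction. Hence $P(\tilde\gamma^{r,\tilde\upsilon}_{i,\xi}>T)\le P((\mathcal{A}^{r,T})^c) \le B_1 e^{-B_2 T}$, and integrating the exponential gives the claim. No Markov property or one-step renewal decomposition is used in this proof; the Markov/iteration machinery appears only downstream, in Lemma \ref{thm:VMarkovLong} and Proposition \ref{thm:VMarkovShort}, where the object being iterated is the derived Lyapunov function $V^{r,\tilde\upsilon}_i(y)=E_y[e^{\tilde\delta\tilde\gamma^{r,\tilde\upsilon}_{i,0}}]$, whose very finiteness is what Proposition \ref{thm:waitTimeExpBnd} is supplying. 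So the paper's design choice is to prove the exponential moment directly and then build the Lyapunov function out of it, rather than (as you propose) proving it via a Lyapunov function built out of the explicit state coordinates.

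Your sketch could in principle be made to work, but there are two imprecisions you would need to fix. First, the bound ``$P_y(\tilde\gamma>T_0)\le\vartheta$'' cannot hold uniformly over all $y$ with $V(y)\ge\tilde\upsilon$: for $V(y)$ large, the process cannot reach level $\tilde\upsilon$ in a fixed horizon $T_0$ with any appreciable probability, so this estimate should be stated only on the sublevel set $V(y)\le C$ for some $C$, with the multiplicative drift estimate $E_y[e^{c_0 V(\hat Y^r(T_0))}\mathbf{1}_{\{\tilde\gamma>T_0\}}]\le\vartheta e^{c_0 V(y)}$ carrying the weight on the superlevel set. Second, the reduction of the case $\xi>0$ to $\xi=0$ by conditioning at time $\xi$ requires an exponential moment bound on $V(\hat Y^r(\xi))$, which is not free — the paper sidesteps this by incorporating $\xi$ directly into the threshold $T^*$ through the auxiliary time $\hat t=\sup\{t\in[0,\xi]:\hat W^r_i(t)+\sum_j\hat\Upsilon^{S,r}_j(t)+\sum_j\hat\Upsilon^{A,r}_j(t)<\tilde\upsilon\}$, which is cleaner. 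What your approach would buy is a more modular, textbook-Meyn--Tweedie structure; what the paper's buys is avoiding the Markov machinery entirely at this stage and a shorter argument, since the LDP estimate over a single interval of length $T$ already yields exponential decay in $T$ without iteration.
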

\begin{defn}
\label{def:V}
For $\tilde{\upsilon}>0$ let $\tilde{\delta}$ be as in Proposition \ref{thm:waitTimeExpBnd} and for all $y=(\hat{q},\hat{\Upsilon},\tilde{\mathcal{E}})\in \cly^r$, and $i\in\AAA_I$ define 
\begin{equation*}
V^{r,\tilde{\upsilon}}_{i}\left( y\right)=E_{y}[ e^{\tilde{\delta} \tilde{\gamma}^{r,\tilde{\upsilon}}_{i,0}}].
\end{equation*}
\end{defn}
$\{\hat Y^r(t)\}$ from Section \ref{sec:mainres}.
\begin{lemma}[Proof in Section \ref{sec:vmarkovlong}]
\label{thm:VMarkovLong}
For $\tilde{\upsilon}>0$ let $\tilde{\delta}$ be as in Proposition \ref{thm:waitTimeExpBnd}.  There exist constants $B,R\in(0,\infty)$ such that for any $y=(\hat{q},\hat{\Upsilon},\tilde{\mathcal{E}})\in \cly^r$, $t\geq 0$, $i\in\AAA_I$, and $r\geq R$ we have
\begin{equation*}
E_{y}\left[ V^{r,\tilde{\upsilon}}_{i}\left(\hat{Y}^{r}(t)\right)\right]\leq e^{-\tilde{\delta}t}V^{r,\tilde{\upsilon}}_{i}\left(y\right)+B.
\end{equation*}
\end{lemma}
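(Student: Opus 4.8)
The plan is to combine the strong Markov property of $\hat Y^r$ with Proposition~\ref{thm:waitTimeExpBnd}, reducing the assertion to a bound for $V^{r,\tilde{\upsilon}}_i$ started from a bounded set near the boundary, uniform in time and in $r$. For $y=(\hat q,\hat\Upsilon,\tilde{\mathcal E})\in\cly^r$ write $g(y)\doteq (KM^r\hat q)_i+\sum_{j=1}^J\bigl(\hat{\Upsilon}^{S,r}_j(y)+\hat{\Upsilon}^{A,r}_j(y)\bigr)$, so that $\tilde{\gamma}^{r,\tilde{\upsilon}}_{i,\xi}=\inf\{s\ge\xi: g(\hat Y^r(s))<\tilde{\upsilon}\}$, and set $\bar L\doteq\{y\in\cly^r: g(y)\le\tilde{\upsilon}\}$ (on $\bar L$, $\hat W^r_i$ and every residual are $\le\tilde{\upsilon}$). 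Since $\tilde{\gamma}^{r,\tilde{\upsilon}}_{i,0}$ is the first entrance time of an open set by the RCLL strong Markov process $\hat Y^r$ it is a $\{\clg^r(t)\}$-stopping time, and the Markov property gives $V^{r,\tilde{\upsilon}}_i(\hat Y^r(t))=E\bigl[e^{\tilde{\delta}(\tilde{\gamma}^{r,\tilde{\upsilon}}_{i,t}-t)}\mid\clg^r(t)\bigr]$, hence $E_y[V^{r,\tilde{\upsilon}}_i(\hat Y^r(t))]=e^{-\tilde{\delta}t}E_y[e^{\tilde{\delta}\tilde{\gamma}^{r,\tilde{\upsilon}}_{i,t}}]$. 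Let $\sigma\doteq\tilde{\gamma}^{r,\tilde{\upsilon}}_{i,0}$. On $\{\sigma>t\}$ one has $\tilde{\gamma}^{r,\tilde{\upsilon}}_{i,t}=\sigma$, contributing $e^{-\tilde{\delta}t}E_y[\mathcal{I}_{\{\sigma>t\}}e^{\tilde{\delta}\sigma}]\le e^{-\tilde{\delta}t}V^{r,\tilde{\upsilon}}_i(y)$. On $\{\sigma\le t\}$ we have $g(\hat Y^r(\sigma))\le\tilde{\upsilon}$ (the first entrance point of $\{g<\tilde{\upsilon}\}$ satisfies $g\le\tilde{\upsilon}$ by right-continuity), i.e. $\hat Y^r(\sigma)\in\bar L$; applying the strong Markov property at $\sigma$ and identifying $\tilde{\gamma}^{r,\tilde{\upsilon}}_{i,t}-\sigma$ with the first entrance time of $\{g<\tilde{\upsilon}\}$ at or after time $t-\sigma$ for the process restarted at $\hat Y^r(\sigma)$,
\[
e^{-\tilde{\delta}t}E_y\bigl[\mathcal{I}_{\{\sigma\le t\}}e^{\tilde{\delta}\tilde{\gamma}^{r,\tilde{\upsilon}}_{i,t}}\bigr]
= e^{-\tilde{\delta}t}E_y\bigl[\mathcal{I}_{\{\sigma\le t\}}e^{\tilde{\delta}\sigma}e^{\tilde{\delta}(t-\sigma)}E_{\hat Y^r(\sigma)}[V^{r,\tilde{\upsilon}}_i(\hat Y^r(t-\sigma))]\bigr]\le\psi^*\,P_y(\sigma\le t)\le\psi^*,
\]
where $\psi^*\doteq\sup_{r\ge R}\sup_{z\in\bar L}\sup_{s\ge0}E_z[V^{r,\tilde{\upsilon}}_i(\hat Y^r(s))]$. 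Thus the lemma follows with $B=\psi^*$, provided $\psi^*<\infty$.

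To prove $\psi^*<\infty$ I would use an excursion decomposition. Fix $z\in\bar L$ and $s\ge0$, and define $S_0\doteq0$ and recursively $S_k'\doteq\inf\{u\ge S_{k-1}: g(\hat Y^r(u))\ge2\tilde{\upsilon}\}$, $S_k\doteq\inf\{u\ge S_k': g(\hat Y^r(u))<\tilde{\upsilon}\}$, with $S_k'=S_k=\infty$ when the relevant set is empty. Each $S_k$ is a stopping time, $\hat Y^r(S_k)\in\bar L$ on $\{S_k<\infty\}$, and only finitely many $S_k$ fall below $s$ (each completed cycle consumes at least a fixed positive time, since $g$ can decrease only at a bounded rate). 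Decomposing over the cycle straddling $s$ and using the strong Markov property at $S_k$,
\[
E_z[V^{r,\tilde{\upsilon}}_i(\hat Y^r(s))]=\sum_{k\ge0}E_z\bigl[\mathcal{I}_{\{S_k\le s<S_{k+1}\}}V^{r,\tilde{\upsilon}}_i(\hat Y^r(s))\bigr]\le\Theta^*\sum_{k\ge0}P_z(S_k\le s),
\]
with $\Theta^*\doteq\sup_{r\ge R}\sup_{z'\in\bar L}\sup_{u\ge0}E_{z'}[\mathcal{I}_{\{u<S_1\}}V^{r,\tilde{\upsilon}}_i(\hat Y^r(u))]$. Proposition~\ref{thm:waitTimeExpBnd} gives $\Theta^*<\infty$: on $\{u<S_1'\}$ we have $g(\hat Y^r(u))<2\tilde{\upsilon}$, so $V^{r,\tilde{\upsilon}}_i(\hat Y^r(u))\le B_1e^{3B_2\tilde{\upsilon}}+B_3$ (Proposition~\ref{thm:waitTimeExpBnd} with $\xi=0$, $c=\tilde{\delta}$ at the state $\hat Y^r(u)$); while on $\{S_1'\le u<S_1\}$ the first entrance time of $\{g<\tilde{\upsilon}\}$ for the restarted process equals $S_1-u$, so $V^{r,\tilde{\upsilon}}_i(\hat Y^r(u))=E[e^{\tilde{\delta}(S_1-u)}\mid\clg^r(u)]$, and using $e^{\tilde{\delta}S_1'}\le e^{\tilde{\delta}u}$ on $\{S_1'\le u\}$ together with Proposition~\ref{thm:waitTimeExpBnd} at $\hat Y^r(S_1')$, where $g(\hat Y^r(S_1'))=2\tilde{\upsilon}+O_r$ with overshoot $O_r\ge0$ of size $O(1/r)$ times a renewal increment and hence of uniformly bounded exponential moment for $r$ large by Condition~\ref{eqn:mgfBnd}, to bound $E[e^{\tilde{\delta}(S_1-S_1')}\mid\clg^r(S_1')]$, one finds $E_{z'}[\mathcal{I}_{\{S_1'\le u<S_1\}}V^{r,\tilde{\upsilon}}_i(\hat Y^r(u))]$ bounded by a constant independent of $u,z',r$.

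It remains to control $\sum_{k\ge0}P_z(S_k\le s)$. Since $P_z(S_k\le s)\le P_z(S_k<\infty)$ and, conditionally on $\clg^r(S_{k-1})$ and on $\{S_{k-1}<\infty\}$, $P(S_k'<\infty\mid\clg^r(S_{k-1}))=P_{\hat Y^r(S_{k-1})}(S_1'<\infty)$, it suffices to establish $\sup_{r\ge R}\sup_{z'\in\bar L}P_{z'}(S_1'<\infty)\le 1-\epsilon_0<1$; then $P_z(S_k\le s)\le(1-\epsilon_0)^k$ and $\psi^*\le\Theta^*/\epsilon_0<\infty$, completing the proof. This transience estimate — that, started in $\bar L$, the quantity $g$ stays below $2\tilde{\upsilon}$ forever with probability bounded away from $0$ uniformly in $r$ — is the main obstacle, and is where the stability condition $\theta<0$ of Condition~\ref{cond:htc} enters; it would be proved by a standard exponential-supermartingale argument, the delicate point being to verify that the ``blocked-queue'' idleness of resource $i$ (which, on the $O(1)$ scale of $\hat W^r_i$, occurs only when some job-type served by $i$ has a near-empty queue) does not overcome the negative drift $\theta_i<0$ that prevails whenever resource $i$ runs at full capacity, and that the resulting $\epsilon_0$ does not degrade as $r\to\infty$. (By contrast, the Markov reduction of the first paragraph, the containment $\hat Y^r(\sigma)\in\bar L$, and the estimate $\Theta^*<\infty$ are routine consequences of Proposition~\ref{thm:waitTimeExpBnd}.)
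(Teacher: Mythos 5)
Your first paragraph --- split on $\{\sigma>t\}$ versus $\{\sigma\le t\}$, apply the strong Markov property at $\sigma=\tilde\gamma^{r,\tilde\upsilon}_{i,0}$, and use that $\hat Y^r(\sigma)\in\bar{L}$ --- is essentially the same idea as the paper's Proposition~\ref{thm:VMarkovShort}, and that reduction is sound. The decisive divergence is in what you reduce to. By allowing $t$ to be arbitrary, the constant you need is $\psi^*=\sup_{r\ge R}\sup_{z\in\bar{L}}\sup_{s\ge 0}E_z[V^{r,\tilde\upsilon}_i(\hat Y^r(s))]$, with the supremum over \emph{all} $s\ge 0$; this is essentially the content of the lemma itself (a bound uniform in $t$), so the reduction by itself gains little, and the whole burden shifts to proving $\psi^*<\infty$ by a separate argument. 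The paper instead first proves the drift inequality only for $s\in[0,1]$ (Proposition~\ref{thm:VMarkovShort}), where the constant that appears is $\sup E_y[e^{\tilde\delta\tilde\gamma^{r,\tilde\upsilon}_{i,z}}]$ over $y$ with $\hat w_i+|\hat\Upsilon|\le\tilde\upsilon$ and $z\in[0,1]$, and this is immediately finite from Proposition~\ref{thm:waitTimeExpBnd} because $\xi\le 1$ enters only linearly in the exponent; the full statement for $T>1$ then follows by writing $T=nt$ with $t\in[\tfrac12,1]$ and iterating the short-time drift inequality, summing the resulting geometric series. No supremum over all $s$ is ever required.

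The genuine gap in your route to $\psi^*<\infty$ is the transience estimate $\sup_{r\ge R}\sup_{z'\in\bar{L}}P_{z'}(S_1'<\infty)\le 1-\epsilon_0<1$, which is false. The diffusion-scaled workload $\hat W^r_i$, and hence $g$, is positive recurrent, not transient: by Propositions~\ref{thm:skorokApprox} and~\ref{thm:XhatLimit} it is close to a one-dimensional reflected Brownian motion with negative drift $\theta_i<0$, and such a process visits every level in $[0,\infty)$ almost surely. So $P_{z'}(S_1'<\infty)=1$ (at least for $r$ large), not bounded away from one. The supermartingale argument you have in mind would show that, starting near level $\tilde\upsilon$, the process hits $2\tilde\upsilon$ before $0$ with probability bounded away from one; but the reflection at zero re-injects the process, and after finitely many such attempts it reaches $2\tilde\upsilon$ almost surely --- the negative drift yields light stationary tails, not escape to $+\infty$. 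In consequence $\sum_{k\ge 0}P_z(S_k\le s)$ is the expected number of excursion cycles completed by time $s$, which grows roughly linearly in $s$, and your bound on $\psi^*$ diverges as $s\to\infty$. This is exactly the obstacle the paper's two-step structure (short interval via Proposition~\ref{thm:VMarkovShort}, then iterate) is designed to avoid.
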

\begin{proposition}[Proof in Section \ref{sec:tildefammalow}]
\label{thm:TildeGammaLower}
For any $\tilde{\upsilon}>0$ there exist constants $B_{1},B_{2},R\in(0,\infty)$ such that for all $y=(\hat{q},\hat{\Upsilon},\tilde{\mathcal{E}})\in \cly^r$, $i\in\AAA_I$, and $r\geq R$ we have, with $\hat{w}^r =KM^r\hat{q}$,
\begin{equation*}
V^{r,\tilde{\upsilon}}_{i}(y) \geq B_{1}e^{B_{2}\left(\hat{w}^r_{i}-C_{i}\max_{j}\{\hat{\Upsilon}^{A}_{j}\}\right)^{+}}.
\end{equation*}
\end{proposition}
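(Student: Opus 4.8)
The plan is to bound $\tilde\gamma^{r,\tilde\upsilon}_{i,0}$ from below by the first time the $i$-th workload coordinate falls below $\tilde\upsilon$, establish a pathwise lower bound for that coordinate, and then run a negative-drift first-passage estimate. First, since $\hat\Upsilon^{A,r}_j(t)\ge0$ and $\hat\Upsilon^{S,r}_j(t)\ge0$ for all $t$ — immediate from Definitions~\ref{def:xi}--\ref{def:Upsilon}, because the defining stopping times $\tau^{r,A}_j(t),\tau^{r,S}_j(t)$ force $\bar\xi^{A,r}_j(t)\ge(t-\bar\Upsilon^{A,r}_j)^+$ and $\bar\xi^{S,r}_j(t)\ge(\bar B^r_j(t)-\bar\Upsilon^{S,r}_j)^+$ — we get
\[ \tilde\gamma^{r,\tilde\upsilon}_{i,0}\ \ge\ \sigma\ :=\ \inf\{t\ge0:\hat W^r_i(t)<\tilde\upsilon\},\qquad\text{hence}\qquad V^{r,\tilde\upsilon}_i(y)\ \ge\ E_y\!\left[e^{\tilde\delta\sigma}\right], \]
and it suffices to bound $E_y[e^{\tilde\delta\sigma}]$ from below.

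The crux is a deterministic pathwise lower bound for $\hat W^r_i$. Starting from $\hat W^r_i=KM^r\hat Q^r$ and the displayed expression for $\hat Q^r$ following \eqref{eq:303n}, the linear-in-$t$ terms cancel and, with $\epsilon_r:=r^{-1}\sum_j K_{i,j}/\beta^r_j\to0$, one obtains
\[ \hat W^r_i(t)\ \ge\ \hat w^r_i\;+\;\sum_j\tfrac{K_{i,j}}{r\beta^r_j}A^r_j\!\big(r^2(t-\bar\Upsilon^{A,r}_j)^+\big)\;-\;\sum_j\tfrac{K_{i,j}}{r\beta^r_j}S^r_j\!\big(r^2(\bar B^r_j(t)-\bar\Upsilon^{S,r}_j)^+\big)\;-\;\epsilon_r . \]
I would then lower-bound the arrival sum using $A^r_j(v)\ge\alpha^r_j v-\sup_{v'\le v}|A^r_j(v')-\alpha^r_j v'|$ together with $\alpha^r_j/\beta^r_j=\rho^r_j$, $(t-\bar\Upsilon^{A,r}_j)^+\ge t-\bar\Upsilon^{A,r}_j$, $r\bar\Upsilon^{A,r}_j=\hat\Upsilon^{A,r}_j$ and $\sum_j K_{i,j}\rho^r_j=:C^r_i=(K\rho^r)_i$; and upper-bound the departure sum using $S^r_j(v)\le\beta^r_j v+\sup_{v'\le v}|S^r_j(v')-\beta^r_j v'|$ together with the per-queue rate bound $\bar B^r_j(t)\le\bar C_jt$ (where $\bar C_j:=\min_{i:K_{i,j}=1}C_i$, which follows from $K\tfrac{d}{dt}B^r\le C$) and the capacity bound $(K\bar B^r(t))_i\le C_it$. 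Writing $\theta^r_i:=r(C^r_i-C_i)=(K\,r(\rho^r-\rho))_i\to(K\eta)_i=\theta_i<0$, this yields, for \emph{all} $t\ge0$,
\[ \hat W^r_i(t)\ \ge\ a'_i+\theta^r_i t-\Phi^r_i(t)-\epsilon_r,\qquad a'_i:=\hat w^r_i-C^r_i\max_j\hat\Upsilon^{A,r}_j, \]
where $\Phi^r_i(t):=\sum_j\tfrac{K_{i,j}}{\beta^r_j}\big(\sup_{u\le t}|\hat A^r_j(u)|+\sup_{u\le(\max_l C_l)t}|\hat S^r_j(u)|\big)$ is nondecreasing in $t$ and — this is the essential feature — depends only on the driving renewal sequences and on the deterministic horizons $t$ and $(\max_l C_l)t$, not on $y$ or on the control policy. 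The bulk of the work is in this bookkeeping; after it the rest is routine.

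Fix $y$ and set $a_i:=(a'_i)^+$; for $r$ large $\theta^r_i<0$, so $C^r_i<C_i$, whence $a_i\ge(\hat w^r_i-C_i\max_j\hat\Upsilon^{A,r}_j)^+$ and it is enough to bound $V^{r,\tilde\upsilon}_i(y)$ below by a constant times $e^{B_2a_i}$. Fix $\varepsilon''\le|\theta_i|/4$ (so $\varepsilon''/|\theta^r_i|\le\tfrac12$ for $r$ large, since $|\theta^r_i|\to|\theta_i|$) and set $a_2:=\max\{4\tilde\upsilon+4,\,4|\theta_i|t_0\}$, where $t_0$ is chosen below. If $a'_i<a_2$ then $V^{r,\tilde\upsilon}_i(y)\ge1\ge e^{-B_2a_2}e^{B_2a_i}$. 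If $a'_i\ge a_2$, put $t^\ast:=a'_i/(2|\theta^r_i|)$, so $\theta^r_it^\ast=-a'_i/2$ and $t^\ast\ge a'_i/(4|\theta_i|)$ for $r$ large; on the event $G:=\{\Phi^r_i(t^\ast)\le\varepsilon''t^\ast\}$, the displayed bound, the monotonicity of $\Phi^r_i$, and $\theta^r_it\ge\theta^r_it^\ast$ for $t\le t^\ast$ give $\hat W^r_i(t)\ge a'_i/4-\epsilon_r\ge\tilde\upsilon$ for all $t\le t^\ast$ (using $a'_i\ge a_2\ge4\tilde\upsilon+4$, $\epsilon_r\le1$), hence $\sigma\ge t^\ast$ on $G$ and $V^{r,\tilde\upsilon}_i(y)\ge e^{\tilde\delta t^\ast}P(G)$. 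Finally, choosing $\delta>0$ with $2\delta\sum_j K_{i,j}/\beta^r_j\le\varepsilon''$ for $r\ge R$, Proposition~\ref{thm:expTailBnd} (its forms \eqref{eq:31.d} and \eqref{eq:31.c} with $c_1=1$, $c_2=0$, $T=t^\ast$, $\epsilon=\delta$) bounds $P(\sup_{u\le t^\ast}|\hat A^r_j(u)|>\delta t^\ast)$ and $P(\sup_{u\le(\max_l C_l)t^\ast}|\hat S^r_j(u)|>\delta t^\ast)$ by $\tilde B_1e^{-\tilde B_2t^\ast}$, so $P(G)\ge\tfrac12$ once $t^\ast\ge t_0$ (equivalently $a'_i\ge a_2$). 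Thus $V^{r,\tilde\upsilon}_i(y)\ge\tfrac12e^{\tilde\delta t^\ast}\ge\tfrac12e^{\tilde\delta a_i/(4|\theta_i|)}$ for $r$ large. Taking $B_2:=\tilde\delta/(4|\theta_i|)$, $B_1:=\min\{\tfrac12,e^{-B_2a_2}\}$, combining the two cases, and minimizing the resulting constants over the finitely many $i\in\AAA_I$ gives the claim.

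\emph{Main obstacle.} The only genuinely delicate step is the pathwise workload lower bound in the second paragraph: one must combine the arrival and departure terms so that the $O(r)$ contributions cancel and only the drift $\theta^r_it$ survives, while arranging the residual-time and $r^{-1}$-indicator corrections (the $t\wedge\bar\Upsilon^{A,r}$, $\bar B^r\wedge\bar\Upsilon^{S,r}$ and $\tfrac1rKM^r\mathcal{I}_{\{\cdot\}}$ terms) so that the leftover error $\Phi^r_i(t)$ is a supremum of \emph{centered} renewal processes over \emph{deterministic} time windows. That feature is exactly what lets Proposition~\ref{thm:expTailBnd} be invoked uniformly in the initial state and in the history-dependent control; everything after it is a standard negative-drift first-passage estimate.
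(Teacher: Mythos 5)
Your argument is correct and follows the same route as the paper's proof: lower-bound the workload coordinate pathwise by a drift term plus centered renewal fluctuations over a deterministic horizon, define the escape time proportional to the ``excess'' $a'_i$, invoke Proposition~\ref{thm:expTailBnd} with $c_1=1,c_2=0$ to keep the bad-fluctuation probability below $1/2$, and split into the two cases $a'_i$ small (use $V\geq 1$) or $a'_i$ large. The paper's version phrases the event as $\mathcal{A}^{r,\tilde{T}}$ and uses $\theta_i$ in place of your $\theta^r_i$ and $C_i$ in place of your $C^r_i$, but the content is identical; your explicit choice $2\delta\sum_j K_{i,j}/\beta^r_j\le\varepsilon''$ is, if anything, a cleaner way to track the $\beta^r$-weights than the paper's choice of $\epsilon=|\theta_i|/(4J)$.
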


We now proceed to the proof of Theorem \ref{thm:workloadExpBnd}.

Fix $\tilde{\upsilon}>0$ and note that from Proposition \ref{thm:TildeGammaLower} there exist constants $B_{1},B_{2},R_{1}\in(0,\infty)$ such that for all $y=(\hat{q},\hat{\Upsilon},\tilde{\mathcal{E}})\in \cly^r$, $i\in\AAA_I$, and $r\geq R_{1}$ we have
\begin{equation*}
B_{1}e^{B_{2}\left(\hat{w}^r_{i}-C_{i}\max_{j}\{\hat{\Upsilon}^{A}_{j}\}\right)^{+}}\leq V^{r,\tilde{\upsilon}}_{i}(y),
\end{equation*}
where $\hat{w}^r=KM^r\hat{q}$.
From Lemma \ref{thm:VMarkovLong} there exist constants $B_{3},\tilde{\delta}\in(0,\infty)$, and $R_{2}\in [R_{1},\infty)$ such that for any $y=(\hat{q},\hat{\Upsilon},\tilde{\mathcal{E}})\in \cly^r$, $t\geq 0$, $i\in\AAA_I$, and $r\geq R_{2}$ we have
\begin{equation*}
E_{y}\left[ V^{r,\tilde{\upsilon}}_{i}\left(\hat{Y}^{r}(t)\right)\right]\leq e^{-\tilde{\delta}t}V^{r,\tilde{\upsilon}}_{i}\left(y\right)+B_{3}.
\end{equation*}
In addition, Proposition \ref{thm:waitTimeExpBnd} implies that there exist constants $B_{4},B_{5},B_{6}\in(0,\infty)$ and $R_{3}\in [R_{2},\infty)$ such that for all $i\in\AAA_I$, $r\geq R_{3}$, and $y=(\hat{q},\hat{\Upsilon},\tilde{\mathcal{E}})\in \cly^r$ we have
\begin{equation*}
V^{r,\tilde{\upsilon}}_{i}(y)\leq B_{4}e^{B_{5}(|\hat{q}|+|\hat{\Upsilon}|)}+B_{6}.
\end{equation*}
Combining these three inequalities, we have, for all $t\geq 0$
\begin{equation}\label{eq:VBndW}
\begin{aligned}
E_{y}\left[e^{B_{2}\left(\hat{W}^{r}_{i}(t)-C_{i}\max_{j}\{\hat{\Upsilon}^{A,r}_{j}(t)\}\right)^{+}}\right]&\leq\frac{1}{B_{1}}
E_{y}\left[ V^{r,\tilde{\upsilon}}_{i}\left( \hat{Y}^{r}(t)\right)\right]\\
&\leq \frac{e^{-\tilde{\delta}t}}{B_{1}}V^{r,\tilde{\upsilon}}_{i}\left(y\right)+\frac{B_{3}}{B_{1}}
\leq \frac{B_{4}}{B_{1}}e^{-\tilde{\delta}t+B_{5}(|\hat{q}|+|\hat{\Upsilon}|)}+\frac{B_{6}+B_{3}}{B_{1}}.\\
\end{aligned}
\end{equation}
Next, from Proposition \ref{thm:nextArrTimeBnd}, there exist constants $B_{7}\in(0,\infty)$ and $R_{4}\in [R_{3}\vee1,\infty)$, such that, with $\delta>0$ as in Condition \ref{eqn:mgfBnd},  for all $j\in\AAA_J$,   and $c\in (0,\delta)$ we have 
\begin{equation*}
\sup_{r\geq R_{4}, y \in \cly^r, t\in [0,\infty )} E_{y}\left[e^{c u^{r}_{j}(\tau^{A, r}_{j}(t))}\right]\leq B_{7}.
\end{equation*}
Then for alll $j\in\AAA_J$, $t\geq 0$, $c\in(0,\delta)$, and $r\geq R_{4}$ we have 
\begin{equation}\label{eq:UpAShortBnd}
\begin{aligned}
E_{y}\left[e^{c\hat{\Upsilon}^{A,r}_{j}(t)} \right] &= E_{y}\left[e^{c\mathcal{I}_{\left\{\tau^{r,A}_{j}(t)=0\right\}}\hat{\Upsilon}^{A}_{j}+c\mathcal{I}_{\left\{\tau^{r,A}_{j}(t)>0\right\}}\hat{\Upsilon}^{A,r}_{j}(t)}\right]\\
&\le e^{c\mathcal{I}_{\left\{\tau^{r,A}_{j}(t)=0\right\}}\hat{\Upsilon}^{A}_{j}}E_{y}\left[e^{\frac{c}{r}\mathcal{I}_{\left\{\tau^{r,A}_{j}(t)>0\right\}}u^{r}_{j}(\tau^{r,A}_{j}(t))} \right]\\
&\leq e^{c\hat{\Upsilon}^{A}_{j}} E_{y}\left[e^{c\mathcal{I}_{\left\{\tau^{r,A}_{j}(t)>0\right\}}u^{r}_{j}(\tau^{r,A}_{j}(t))} \right]
\leq e^{c\hat{\Upsilon}^{A}_{j}}B_{7}.
\end{aligned}
\end{equation}
Since $\left\{\hat{\Upsilon}^{A,r}_{j}(t)\right\}_{j=1}^{J}$ are independent, for all $c\in(0,\delta)$, $t\geq 0$, and $r\geq R_{4}$  we then have
\begin{align}\label{eq:514n}
E_{y}\left[e^{c\max_{j}\{\hat{\Upsilon}^{A,r}_{j}(t)\}} \right] \leq \sum_{j=1}^{J}E_{y}\left[e^{c\hat{\Upsilon}^{A,r}_{j}(t)} \right]
\leq e^{c\max_{j}\{\hat{\Upsilon}^{A}_{j}\}} J B_{7}.
\end{align}
Note that if $t>\max_{j}\{\hat{\Upsilon}^{A}_{j}\}$ and $r\geq R_{4}\geq 1$ then $\tau^{r,A}_{j}(t)>0$ for all $j\in\AAA_J$.  Consequently, a similar estimate as for \eqref{eq:514n} shows that, for all $t>\max_{j}\{\hat{\Upsilon}^{A}_{j}\}$, $c\in(0,\delta)$, and $r\geq R_{4}$ we have 
\begin{equation}\label{eq:UpALongBnd}
\begin{aligned}
E_{y}\left[e^{c\max_{j}\{\hat{\Upsilon}^{A,r}_{j}(t)\}} \right] \leq \sum_{j=1}^{J}E_{y}\left[e^{c\hat{\Upsilon}^{A,r}_{j}(t)} \right]
\leq J B_{7}.
\end{aligned}
\end{equation}
Let
$
\delta_{1}\doteq\min\left\{B_{2}/2,\delta/(2C_{i})\right\}
$
and note that for all $c\in(0,\delta_{1})$, $t\geq 0$, $r\geq R_{4}$, and $i\in\AAA_I$ we have
\begin{align*}
E_{y}\Big[ e^{c\hat{W}^{r}_{i}(t)}\Big] &\leq E_{y}\Big[\mathcal{I}_{\{\hat{W}^{r}_{i}(t)> 2C_{i}\max_{j}\{\hat{\Upsilon}^{A,r}_{j}(t)\}\}}e^{2c\left(\hat{W}^{r}_{i}(t)-C_{i}\max_{j}\{\hat{\Upsilon}^{A,r}_{j}(t)\}\right)}\\
&\quad +\mathcal{I}_{\{\hat{W}^{r}_{i}(t)\leq 2C_{i}\max_{j}\{\hat{\Upsilon}^{A,r}_{j}(t)\}\}}e^{2cC_{i}\max_{j}\{\hat{\Upsilon}^{A,r}_{j}(t)\}}\Big]\\
&\leq E_{y}\left[e^{2c\left(\hat{W}^{r}_{i}(t)-C_{i}\max_{j}\{\hat{\Upsilon}^{A,r}_{j}(t)\}\right)^{+}}\right]+E_{y}\left[e^{2cC_{i}\max_{j}\{\hat{\Upsilon}^{A,r}_{j}(t)\}}\right]\\
&\leq E_{y}\left[e^{B_{2}\left(\hat{W}^{r}_{i}(t)-C_{i}\max_{j}\{\hat{\Upsilon}^{A,r}_{j}(t)\}\right)^{+}}\right]+E_{y}\left[e^{2cC_{i}\max_{j}\{\hat{\Upsilon}^{A,r}_{j}(t)\}}\right]\\
&\leq \frac{B_{4}}{B_{1}}e^{-\tilde{\delta}t+B_{5}(|\hat{q}|+|\hat{\Upsilon}|)}+\frac{B_{6}+B_{3}}{B_{1}}+E_{y}\left[e^{2cC_{i}\max_{j}\{\hat{\Upsilon}^{A,r}_{j}(t)\}}\right]\\
\end{align*}
where the last line used equation (\refeq{eq:VBndW}).
Combining this with equations \eqref{eq:514n} and \eqref{eq:UpALongBnd} implies that for any $c\in(0,\delta_{1})$, $r\geq R_{4}$, and $i\in\AAA_I$ if $t\in [0,\max_{j}\{\hat{\Upsilon}^{A}_{j}\}]$ we have
\begin{align*}
E_{y}\Big[ e^{c\hat{W}^{r}_{i}(t)}\Big]\leq \frac{B_{4}}{B_{1}}e^{-\tilde{\delta}t+B_{5}(|\hat{q}|+|\hat{\Upsilon}|)}+\frac{B_{6}+B_{3}}{B_{1}}+e^{\delta_{1}\max_{j}\{\hat{\Upsilon}^{A}_{j}\}} J B_{7}
\end{align*}
and if $t>\max_{j}\{\hat{\Upsilon}^{A}_{j}\}$ we have 
\begin{align*}
E_{y}\Big[ e^{c\hat{W}^{r}_{i}(t)}\Big]\leq \frac{B_{4}}{B_{1}}e^{-\tilde{\delta}t+B_{5}(|\hat{q}|+|\hat{\Upsilon}|)}+\frac{B_{6}+B_{3}}{B_{1}}+ J B_{7}.
\end{align*}
Note that since $|\hat{\Upsilon}|\geq \max_{j}\{\hat{\Upsilon}^{A}_{j}\}$ this implies that for any $c\in(0,\delta_{1})$, $r\geq R_{4}$, $i\in\AAA_I$, and $t\geq 0$ we have
\begin{align*}
E_{y}\Big[ e^{c\hat{W}^{r}_{i}(t)}\Big]\leq \left(\frac{B_{4}}{B_{1}}+J B_{7}\right)e^{-\tilde{\delta}t+(B_{5}+\delta_{1}+\tilde{\delta})(|\hat{q}|+|\hat{\Upsilon}|)}+\frac{B_{6}+B_{3}}{B_{1} }+ JB_7.
\end{align*}

Define
$
\delta_{2}\doteq I^{-\frac{1}{2}}\delta_{1}
$
and note that for all $c\in (0,\delta_{2})$, $t\geq 0$, and $r\geq R_{4}$ we have
\begin{align*}
E_{y}\left[ e^{c|\hat{W}^{r}(t)|_{2}}\right] &\leq E_{y}\left[ e^{cI^{\frac{1}{2}}\max_{i}\{\hat{W}^{r}_{i}(t)\}}\right]
\leq \sum_{i=1}^{I} E_{y}\left[e^{cI^{\frac{1}{2}}\hat{W}^{r}_{i}(t)}\right]\\
&\leq \sum_{i=1}^{I}\left(\left(\frac{B_{4}}{B_{1}}+J B_{7}\right)e^{-\tilde{\delta}t+(B_{5}+\delta_{1}+\tilde{\delta})(|\hat{q}|+|\hat{\Upsilon}|)}+
\frac{B_{6}+B_{3}}{B_{1}}+ JB_7\right)\\
&\leq I\left(\frac{B_{4}}{B_{1}}+J B_{7}\right)e^{-\tilde{\delta}t+(B_{5}+\delta_{1}+\tilde{\delta})(|\hat{q}|+|\hat{\Upsilon}|)}+I\left(\frac{B_{6}+B_{3}}{B_{1}}+ JB_7\right).\\
\end{align*}
Since $y\in\mathcal{Y}^{r}$ was arbitrary this completes the proof.
\hfill \qed

\subsection{Proof of Proposition \ref{thm:costDiffResult}}
\label{sec:costdiffres}
We begin with some auxiliary results.

\begin{lemma}
\label{thm:nearBndZ}
For all $s\geq 0$ if $\mathcal{Z}^{r}(s)\not\in \mathcal{M}$ then $\tilde{d}\left(\left(\hat{Q}^{r}(s)-\tilde{c}_{2}r^{\kappa-1}\right)\vee 0\right)=0$.  
\end{lemma}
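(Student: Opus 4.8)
The plan is to deduce Lemma~\ref{thm:nearBndZ} from Proposition~\ref{thm:notMprovesOptimal} by exhibiting the right non-negative point of the affine set $\Lambda(KM(\hat Q^r(s)))$ and transporting the hypothesis $\mathcal{Z}^r(s)\notin\mathcal M$ to a statement about $q'\doteq(\hat Q^r(s)-\tilde c_2 r^{\kappa-1})\vee 0$. Recall that $\mathcal{Z}^r(s)=\II_{\{Q^r(s)<\tilde c_2 r^\kappa\}}$, so in diffusion scaling $\mathcal{Z}^r(s)_j=1$ precisely when $\hat Q^r_j(s)<\tilde c_2 r^{\kappa-1}$, which is exactly the set of coordinates on which $q'_j=0$. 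Hence $z^{q'}=\mathcal Z^r(s)$. Since Proposition~\ref{thm:notMprovesOptimal} tells us that $z^{q'}\notin\mathcal M$ implies $\tilde d(q')=0$, the lemma follows immediately. So the only thing to verify carefully is the coordinate identity $z^{q'}=\mathcal Z^r(s)$ together with checking that $q'\in\mathbb R^J_+$ so that Proposition~\ref{thm:notMprovesOptimal} applies.

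First I would unwind the scaling: $\hat Q^r_j(s)=Q^r_j(r^2 s)/r$, so $\hat Q^r_j(s)<\tilde c_2 r^{\kappa-1}$ iff $Q^r_j(r^2s)<\tilde c_2 r^\kappa$, which is by definition the condition $\mathcal Z^r_j(r^2 s)=1$; being slightly careful about whether the statement is meant at process-time $s$ or scaled, I note that the lemma as written uses $\mathcal Z^r(s)$, $\hat Q^r(s)$, so everything is consistent under the diffusion scaling. Next, set $q'\doteq(\hat Q^r(s)-\tilde c_2 r^{\kappa-1})\vee 0$, interpreted coordinatewise; by construction $q'\ge 0$, so $q'\in\mathbb R^J_+$ and $z^{q'}=\II_{\{q'=0\}}$. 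For a coordinate $j$, $q'_j=0$ exactly when $\hat Q^r_j(s)\le \tilde c_2 r^{\kappa-1}$, which, up to the boundary case of equality, coincides with $\mathcal Z^r_j(s)=1$. (The set $\{\hat Q^r_j(s)=\tilde c_2 r^{\kappa-1}\}$ is exactly the ambiguous case; since $Q^r_j$ takes values in $\NN_{1/r}$ and $\tilde c_2 r^\kappa$ need not be an integer, generically this case does not occur, and in any event one can absorb it by noting that the statement we want is an implication conditioned on $\mathcal Z^r(s)\notin\mathcal M$.) Thus $z^{q'}=\mathcal Z^r(s)$, or at worst $z^{q'}\le \mathcal Z^r(s)$ coordinatewise, which is what we need: fewer empty coordinates in $q'$ than the support of $\mathcal Z^r(s)$ only makes the hypothesis of Proposition~\ref{thm:notMprovesOptimal} easier to apply.

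Now the conclusion: if $\mathcal Z^r(s)\notin\mathcal M$, I claim $z^{q'}\notin\mathcal M$ as well. When $\mathrm{dim}(\mathcal C_K^h)=\mathrm{dim}(\ker K)$ the set $\mathcal M$ is empty and there is nothing to prove; $\tilde d\equiv 0$ by definition. Otherwise, recall $\mathcal M=\{z:\exists v\in\ker K,\ v_j\ge 0\text{ if }z_j=1,\ v\cdot u_{J-I}>0\}$. If $z^{q'}\in\mathcal M$, the witnessing vector $v$ satisfies $v_j\ge 0$ on $\{z^{q'}_j=1\}\supseteq\{\mathcal Z^r_j(s)=1\}$ is \emph{not} automatic — rather $\{z^{q'}_j=1\}\subseteq\{\mathcal Z^r_j(s)=1\}$ — so I need the containment the other way. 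Reconsider: $q'_j=0$ means $\hat Q^r_j(s)\le \tilde c_2 r^{\kappa-1}$, i.e. $\mathcal Z^r_j(s)=1$; conversely $\mathcal Z^r_j(s)=1$ means $\hat Q^r_j(s)<\tilde c_2 r^{\kappa-1}$, hence $q'_j=0$. So in fact $\{q'_j=0\}=\{\mathcal Z^r_j(s)=1\}$ exactly (the strict-vs-nonstrict discrepancy is harmless: $\hat Q^r_j(s)<\tilde c_2 r^{\kappa-1}\Rightarrow q'_j=0$ and $q'_j=0\Rightarrow \hat Q^r_j(s)\le \tilde c_2 r^{\kappa-1}$, but $q'_j=0$ with $\hat Q^r_j(s)=\tilde c_2 r^{\kappa-1}$ still satisfies $\hat Q^r_j(s)-\tilde c_2 r^{\kappa-1}=0$, consistent with $\mathcal Z^r_j(s)$ possibly being $0$; however this only \emph{adds} coordinates to $\{q'_j=0\}$, so $\{q'_j=0\}\supseteq\{\mathcal Z^r_j(s)=1\}$ and the membership $z^{q'}\in\mathcal M$ would require $v_j\ge0$ on a \emph{larger} index set, which is a \emph{stronger} requirement and hence implies $\mathcal Z^r(s)\in\mathcal M$, contradiction). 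Therefore $z^{q'}\notin\mathcal M$, and Proposition~\ref{thm:notMprovesOptimal} gives $\tilde d(q')=0$, which is the claim.

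The main obstacle, such as it is, is the bookkeeping around the boundary case $\hat Q^r_j(s)=\tilde c_2 r^{\kappa-1}$ and getting the direction of the index-set inclusion right so that the monotonicity built into the definition of $\mathcal M$ (adding coordinates to the ``$\ge 0$'' constraint set can only shrink $\mathcal M$) works in our favor; once that is sorted the proof is a one-line application of Proposition~\ref{thm:notMprovesOptimal}. I would write it as: \emph{Let $q'=(\hat Q^r(s)-\tilde c_2 r^{\kappa-1})\vee 0\ge 0$. For each $j$, $q'_j=0$ iff $\hat Q^r_j(s)\le \tilde c_2 r^{\kappa-1}$, and $\mathcal Z^r_j(s)=1$ iff $\hat Q^r_j(s)<\tilde c_2 r^{\kappa-1}$; thus $\{j:\mathcal Z^r_j(s)=1\}\subseteq\{j:z^{q'}_j=1\}$. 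If $z^{q'}\in\mathcal M$ with witness $v\in\ker K$, then $v_j\ge0$ for all $j$ with $z^{q'}_j=1$, in particular for all $j$ with $\mathcal Z^r_j(s)=1$, and $v\cdot u_{J-I}>0$, so $\mathcal Z^r(s)\in\mathcal M$, a contradiction. Hence $z^{q'}\notin\mathcal M$ and Proposition~\ref{thm:notMprovesOptimal} yields $\tilde d(q')=0$.}
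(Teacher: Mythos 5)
Your proof is correct and follows the same route as the paper: apply Proposition \ref{thm:notMprovesOptimal} to $q' = \left(\hat{Q}^{r}(s)-\tilde{c}_{2}r^{\kappa-1}\right)^{+}$. You handle the boundary case $\hat{Q}^{r}_{j}(s) = \tilde{c}_{2}r^{\kappa-1}$ more carefully than the paper (which simply asserts $z^{q'} = \mathcal{Z}^{r}(s)$), using the one-sided inclusion $\{j:\mathcal{Z}^{r}_{j}(s)=1\}\subseteq\{j:z^{q'}_{j}=1\}$ together with the monotonicity of $\mathcal{M}$ membership under enlarging the constraint set.
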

\begin{proof}
Let $s\geq 0$ be arbitrary and assume $\mathcal{Z}^{r}(s)\not\in \mathcal{M}$.  Note that for all $j\in\AAA_J$ we have 
$
\left(\hat{Q}_{j}^{r}(s)-\tilde{c}_{2}r^{\kappa-1}\right)^+ =0$
if $\mathcal{Z}^{r}_{j}(s)=1$.  Consequently the result follows on applying Proposition\ref{thm:notMprovesOptimal}
 with $q= \left(\hat{Q}^{r}(s)-\tilde{c}_{2}r^{\kappa-1}\right)^+$ and noting that $z^q=\clz^r(s)$.
\end{proof}
The next result, which is analogous to Proposition \ref{thm:nextArrTimeBnd},  is proved in Section \ref{sec:propnextser}. Recall that, by convention, we take $v_j^r(0) =0$ for $r>0$ and $j \in \AAA_J$.
\begin{proposition}[Proof in Section \ref{sec:propnextser}]
\label{thm:nextSerTimeBnd}
Let $\delta$ be as in Condition \ref{eqn:mgfBnd}.  There exists $R<\infty$ and for any $c<\delta$ a corresponding $K(c)<\infty$ such that for any $j\in\AAA_J$, $y^r\in\mathcal{Y}^{r}$ and $t\geq 0$,
we have 
\begin{equation*}
\sup_{r\geq R} E_{y^r}\left[e^{c v^{r}_{j}(\tau^{r,S}_{j}(t))}\right] <K(c).
\end{equation*}
\end{proposition}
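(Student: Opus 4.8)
\emph{Plan.} Write $S^{r}_{n}\doteq\sum_{l=1}^{n}v^{r}_{j}(l)$ (with $S^{r}_{0}=0$), $x^{r}\doteq r^{2}(\bar{B}^{r}_{j}(t)-\bar{\Upsilon}^{S,r}_{j})^{+}$, and $N\doteq\tau^{r,S}_{j}(t)=\min\{n\ge0:S^{r}_{n}\ge x^{r}\}$, so that $v^{r}_{j}(N)$ is the service time \emph{straddling} the processing level $x^{r}$: on $\{N\ge1\}$ we have $S^{r}_{N-1}<x^{r}\le S^{r}_{N}$, while on $\{N=0\}$ (equivalently $x^{r}=0$) we have $v^{r}_{j}(N)=v^{r}_{j}(0)=0$. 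Since $e^{cV}=1+c\int_{0}^{\infty}e^{ca}\mathcal{I}_{\{V>a\}}\,da$ for $V\doteq v^{r}_{j}(N)\ge0$, the statement reduces, via Tonelli, to a tail bound
\begin{equation*}
P_{y^{r}}\big(N\ge1,\ v^{r}_{j}(N)>a\big)\ \le\ \bar{\beta}\,E\big[(1+v^{r}_{j}(1))\,\mathcal{I}_{\{v^{r}_{j}(1)>a\}}\big],\qquad a\ge0,
\end{equation*}
with a constant $\bar{\beta}<\infty$ uniform over $r\ge R$, $t\ge0$, $y^{r}\in\cly^{r}$ and $j\in\AAA_{J}$: it then follows that $E_{y^{r}}[e^{cV}]\le1+\bar{\beta}\,E[(1+v^{r}_{j}(1))e^{cv^{r}_{j}(1)}]$, and for $c<\delta$ one chooses $c'\in(c,\delta)$, bounds $1+v\le C(c,c')e^{(c'-c)v}$, and invokes $\sup_{r}E[e^{c'v^{r}_{j}(1)}]<\infty$ (Condition~\ref{eqn:mgfBnd}) to obtain the desired $K(c)$, uniform in all the above parameters.

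The crucial input — and the step I expect to be the main obstacle — is an ``inspection paradox'' statement that must be established \emph{despite} $x^{r}$ being produced by the control and so not obviously independent of the straddling service time. Let $\{\mathcal{F}^{r,S}_{j}(k)\}_{k}$ be the filtration from the proof of \eqref{eq:512} (all interarrival times, all queue-$i'$ service times for $i'\ne j$, and the first $k$ queue-$j$ service times), so that $v^{r}_{j}(n)$ is independent of $\mathcal{F}^{r,S}_{j}(n-1)$. Unwinding Definition~\ref{def:AdmissableCntrl}(d)--(e) and the recursion \eqref{eq:328}, one checks that $\tau^{r,S}_{j}(t)$ is a stopping time for $\{\mathcal{F}^{r,S}_{j}(k)\}_{k}$ — whence $\{N\ge n\}=\{S^{r}_{n-1}<x^{r}\}\in\mathcal{F}^{r,S}_{j}(n-1)$ — and, more delicately, that on the event $\{N\ge n\}$ (on which job $n$ has not been fully processed by time $r^{2}t$, so $v^{r}_{j}(n)$ has not been observed) the level $x^{r}$ is $\mathcal{F}^{r,S}_{j}(n-1)$-measurable. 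This ``predictability'' of $\bar{B}^{r}_{j}(t)$ relative to the queue-$j$ service times is the exact counterpart, for service times, of the property used for Proposition~\ref{thm:nextArrTimeBnd}; verifying it carefully from the non-anticipativity definitions is the technical heart of the argument.

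Granting this, decompose over the value of $N$ and condition: since $\{N=n\}=\{S^{r}_{n-1}<x^{r}\}\cap\{v^{r}_{j}(n)\ge x^{r}-S^{r}_{n-1}\}$ with $D_{n}\doteq(x^{r}-S^{r}_{n-1})$ being $\mathcal{F}^{r,S}_{j}(n-1)$-measurable on $\{N\ge n\}$, and $v^{r}_{j}(n)\perp\mathcal{F}^{r,S}_{j}(n-1)$, one gets (writing $F^{r}_{j}$ for the law of $v^{r}_{j}(1)$)
\begin{equation*}
P_{y^{r}}\big(N\ge1,\ v^{r}_{j}(N)>a\big)=\int_{(a,\infty)}E\big[R^{r}(s)\big]\,dF^{r}_{j}(s),\qquad R^{r}(s)\doteq\#\{m\ge0:S^{r}_{m}\in[x^{r}-s,x^{r})\}.
\end{equation*}
It remains to show $\sup_{r\ge R}\sup_{s\ge0}E[R^{r}(s)]/(1+s)<\infty$. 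Writing $R^{r}(s)=N-\nu^{r}(s)$ with $\nu^{r}(s)\doteq\min\{m:S^{r}_{m}\ge x^{r}-s\}\ (\le N)$, the same predictability shows $\nu^{r}(s)$ is a $\{\mathcal{F}^{r,S}_{j}(k)\}_{k}$-stopping time with $x^{r}$ being $\mathcal{F}^{r,S}_{j}(\nu^{r}(s))$-measurable, so that by the strong Markov property of the i.i.d.\ sequence $\{v^{r}_{j}(l)\}_{l}$ at $\nu^{r}(s)$ together with $S^{r}_{N-1}-S^{r}_{\nu^{r}(s)}<s$ one obtains $E[R^{r}(s)\mid\mathcal{F}^{r,S}_{j}(\nu^{r}(s))]\le1+U^{r}_{j}(s)$, where $U^{r}_{j}$ is the renewal function of the queue-$j$ service process. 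By Lorden's inequality $U^{r}_{j}(s)\le\beta^{r}_{j}s+\beta^{r}_{j}E[(v^{r}_{j}(1))^{2}]/E[v^{r}_{j}(1)]$, and since $\sup_{r}E[(v^{r}_{j}(1))^{2}]<\infty$ by Condition~\ref{eqn:mgfBnd} and $\beta^{r}_{j}\to\beta_{j}\in(0,\infty)$ by Condition~\ref{cond:htc}, there is $\bar{\beta}<\infty$ with $1+U^{r}_{j}(s)\le\bar{\beta}(1+s)$ for all $r\ge R$, $j\in\AAA_{J}$ and $s\ge0$ (one may instead deduce such a uniform renewal bound from Proposition~\ref{thm:expTailBnd}). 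This yields the tail bound of the first paragraph and completes the proof.
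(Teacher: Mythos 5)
Your proposal is correct in its essentials, but it follows a genuinely different route from the paper, so let me compare.

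The paper's proof does not use the straddling-renewal/inspection-paradox decomposition at all. Instead it exploits the lower bound $\yr_j\geq\tilde{a}=\rho^*/4$ on the service rate (Proposition~\ref{thm:schemeSum}(c)): on the event $\{\tau^{r,S}_j(t)=n\}$, job $n$ must not yet have become the ``head'' job at time $(t-v^r_j(n)/(\tilde a r^2))^+$, because otherwise the server, working at rate $\geq\tilde a$, would have finished it by time $t$. This gives the containment
\[
\{\tau^{r,S}_j(t)=n\}\subset\{\tau^{r,S}_j(t)>n-1\}\cap\{\tau^{r,S}_j((t-v^r_j(n)/(\tilde a r^2))^+)\leq n-1\},
\]
from which, after integrating against the law $\gamma^r_j$ of $v^r_j(1)$ and invoking independence of $v^r_j(n)$ from $\mathcal{F}^{r,S}_j(n-1)$, the bound reduces to controlling $E[\hat\tau^{r,S}_j(z)]$, the expected number of renewals needed to cover work $\max_i\{C_i\}z/\tilde a$. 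The paper then bounds this linearly in $z$ by an elementary, self-contained geometric-counting argument (the $K$ and $P(v^r_j(1)\geq 1/(4\beta_j))\geq 1/(4K\beta_j)$ computation), never invoking Lorden's inequality. Your proof skips the rate bound entirely, working directly with the level $x^r=r^2(\bar B^r_j(t)-\bar\Upsilon^{S,r}_j)^+$ and the straddling interval $[x^r-s,x^r)$, and then outsources the linear renewal bound to Lorden. What the paper's rate-bound trick buys is that it avoids having to assert that $x^r$ itself is a predictable quantity: it only uses that $\{\tau^{r,S}_j(s')\leq n-1\}\in\mathcal{F}^{r,S}_j(n-1)$ for $s'\le t$, i.e.\ the stopping-time property applied at various times. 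Your approach requires the slightly stronger (though essentially equivalent) statement that $x^r$ is $\mathcal{F}^{r,S}_j(n-1)$-measurable on $\{N\geq n\}$, and the further measurability of $x^r$ with respect to the stopped $\sigma$-field $\mathcal{F}^{r,S}_j(\nu^r(s))$ to justify the strong Markov step; you correctly flag this as the technical heart. One small correction: you describe this predictability as ``the exact counterpart, for service times, of the property used for Proposition~\ref{thm:nextArrTimeBnd},'' but the arrival case is strictly easier — there the level $r^2(t-\bar\Upsilon^{A,r}_j)^+$ is a deterministic constant rather than a control-dependent random variable, which is precisely why the paper treats the service case as the harder of the two. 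Both your argument and the paper's stop short of writing out a fully rigorous verification of the stopping-time/predictability property from Definition~\ref{def:AdmissableCntrl}, so you are on equal footing there. On the whole your route is cleaner conceptually (a pure renewal-theory bound) at the cost of relying on Lorden and a somewhat more delicate measurability claim, while the paper's route is more elementary and self-contained by leaning on the structural rate bound that is specific to the control policy.
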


The next two results are proved in Section \ref{sec:proofsDTildeCostDiffBnds}. Recall the constant $\lambda$ from Section \ref{sec:propcon}.
\begin{proposition}[Proof in Section \ref{sec:proofsDTildeCostDiffBnds}]
\label{thm:dTildeDiffBnd}
There exists a constant $B_{\hat{h}}\in(0,\infty)$ and $R \in (0,\infty)$ such that for all $q^{1},q^{2}\in\mathbb{R}^{J}_{+}$ and $r \ge R$, we have 
\begin{equation*}
|\lambda|(\tilde{d}(q^{2})-\tilde{d}(q^{1}))\leq h\cdot \left(q^{2}-q^{1}\right)+B_{\hat{h}}|KM^rq^{2}-KM^rq^{1}|_{2}.
\end{equation*}
\end{proposition}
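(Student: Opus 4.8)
The plan is to convert the statement into a Lipschitz estimate for $\hat h$ using Proposition \ref{thm:lambdaEqu}, and then to absorb the discrepancy between the limiting matrix $M$ (which sits inside $\hat h$) and the prelimit matrix $M^r$ (which appears on the right-hand side). By Proposition \ref{thm:lambdaEqu} we have $|\lambda|\tilde{d}(q^i)=h\cdot q^i-\hat{h}(KMq^i)$ for $i=1,2$, hence
\begin{equation*}
|\lambda|\bigl(\tilde{d}(q^2)-\tilde{d}(q^1)\bigr)=h\cdot(q^2-q^1)+\hat{h}(KMq^1)-\hat{h}(KMq^2),
\end{equation*}
so the assertion is equivalent to producing $B_{\hat h},R\in(0,\infty)$ with
\begin{equation*}
\hat{h}(KMq^1)-\hat{h}(KMq^2)\le B_{\hat h}\,|KM^rq^2-KM^rq^1|_2\qquad(r\ge R).
\end{equation*}

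\emph{Lipschitz continuity of $\hat h$.} Substituting $p=Mq$ gives $\hat{h}(w)=\inf\{(h\beta)\cdot p:\,Kp=w,\ p\ge 0\}$. By Condition \ref{cond:locTraffic} the columns of $K$ contain every standard basis vector of $\mathbb{R}^I$, so this linear program is feasible and, since $h\beta>0$, bounded for each $w\in\mathbb{R}^I_+$; LP duality then gives $\hat{h}(w)=\max\{y\cdot w:\,K^{\mathsf T}y\le h\beta\}$ on $\mathbb{R}^I_+$. As the columns of $K$ span $\mathbb{R}^I$, the polyhedron $P:=\{y:\,K^{\mathsf T}y\le h\beta\}$ is pointed, and a standard argument shows the maximum over $\mathbb{R}^I_+$ is attained at a vertex of $P$; thus $\hat h$ is a maximum of finitely many linear functionals on $\mathbb{R}^I_+$, hence globally Lipschitz there with some constant $L_{\hat h}\in(0,\infty)$, and in particular $\hat{h}(KMq^1)-\hat{h}(KMq^2)\le L_{\hat h}\,|KM(q^1-q^2)|_2$.

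\emph{From $M$ to $M^r$.} This is the delicate step, because the naive bound $|KM(q^1-q^2)|_2\le C\,|KM^r(q^1-q^2)|_2$ fails: $\ker(KM)$ and $\ker(KM^r)$ are distinct subspaces, each containing vectors on which the other norm vanishes. Instead I would work directly with competitors. Starting from a cost-minimizing $q^{*}\in\Lambda(KMq^2)$ for $\hat h(KMq^2)$, I would build a feasible point for $\Lambda(KMq^1)$ by altering $q^{*}$ only in the coordinates indexed by the local-traffic job types $j(i)$ of Condition \ref{cond:locTraffic} (whose $K$-columns are the basis vectors $e_i$): this realigns the workload from $KMq^2$ to $KMq^1$ while changing the cost by at most a fixed multiple of $|KMq^1-KMq^2|$, and the same local-traffic coordinates allow a comparison of $|KMq^1-KMq^2|$ with $|KM^rq^1-KM^rq^2|$ up to a factor that, thanks to $M^r\to M$, is bounded uniformly for $r\ge R$. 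Combining this with the reduction and taking $B_{\hat h}$ a suitable multiple of $L_{\hat h}$ would yield the claim. I expect the genuine obstacle to lie exactly here, in making the comparison uniform over \emph{all} $q^1,q^2\in\mathbb{R}^J_+$: one must use that $\tilde d(q^2)-\tilde d(q^1)$ depends on $q^1-q^2$ only through one fixed direction (the $u_{J-I}$, equivalently $\ker K$, structure) --- or equivalently that $\tilde d$ is the nondecreasing concave value function of a parametric maximization LP --- so that the mismatch between $KM$ and $KM^r$ enters only at order $1/r$ and can be absorbed into the right-hand side for $r$ large.
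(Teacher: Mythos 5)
Your first two steps are exactly the paper's route: the identity from Proposition~\ref{thm:lambdaEqu} reduces the claim to a bound on $\hat h(KMq^1)-\hat h(KMq^2)$, and the paper establishes the Lipschitz property of $\hat h$ as Lemma~\ref{thm:costDiffBndW} via the exchange argument of Lemma~\ref{thm:bndCostDifNeg} rather than your LP-duality argument (both are fine). You are also right to be suspicious of the last step. The paper closes the proof with the single assertion that $|KMx|_2\le 2|KM^rx|_2$ for all $x\in\mathbb{R}^J$ once $r$ is large, and this is precisely the norm comparison you observe cannot hold when $\ker K\neq\{0\}$: $\ker(KM^r)=(M^r)^{-1}\ker K$ is generically a $(J-I)$-dimensional subspace different from $\ker(KM)=M^{-1}\ker K$, so there exist nonzero $x$ with $KM^rx=0$ but $KMx\neq 0$.

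Your proposed repair does not close this gap either, because the mismatch between $KM$ and $KM^r$ is $O(1/r)$ only \emph{per unit length} of $q^1-q^2$, and $|q^1-q^2|$ ranges over all of $\mathbb{R}^J_+$. In fact the inequality as stated is false. In the 2LLN setting of Example~\ref{ex:nontrivial}, one checks $|\lambda|\tilde d(q)=\min(q_1,q_2)$ for $q\ge 0$. Take $\beta^r=(1+r^{-1},1,1)$, $q^1=(t(1+r^{-1}),t,0)$, $q^2=(0,0,t)$. Then $KM^rq^1=KM^rq^2=(t,t)$, so the second term on the right vanishes; $h\cdot(q^2-q^1)=t-t(2+r^{-1})=-t(1+r^{-1})$; and $|\lambda|\bigl(\tilde d(q^2)-\tilde d(q^1)\bigr)=0-t=-t$. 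The claimed inequality $-t\le -t(1+r^{-1})$ fails by $t/r$, which is unbounded in $t$ for each fixed $r$, so no $B_{\hat h}$, $R$ can work. What \emph{is} true, and what your argument and the paper's both actually establish, is the version with $KM$ in place of $KM^r$ on the right. To recover a $KM^r$ version one must add a correction of order $r^{-1}|q^1-q^2|_2$, which is harmless at the point where the proposition is invoked in the proof of Proposition~\ref{thm:costDiffResult} (the increments of $\hat Q^r$ are controlled there) but cannot be dropped from a statement meant to hold uniformly over all $q^1,q^2\in\mathbb{R}^J_+$.
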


\begin{proposition}[Proof in Section \ref{sec:proofsDTildeCostDiffBnds}]
\label{thm:costDiffBndQ}
There exists a constant $B_{\tilde{d}}<\infty$ such that for all $q^{1},q^{2}\in\mathbb{R}^{J}_{+}$ we have
\begin{equation*}
|\tilde{d}(q^{1})-\tilde{d}(q^{2})|\leq B_{\tilde{d}}|q^{1}-q^{2}|_{2}
\end{equation*}
\end{proposition}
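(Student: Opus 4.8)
Throughout I assume the nontrivial case $\dim(\mathcal{C}_K^h)=J-I-1$ (otherwise $\tilde d\equiv 0$ and the estimate is vacuous with any $B_{\tilde d}\ge 0$). In this case $\tilde d(q)=\max_{v\in\Xi(q)}(v\cdot u_{J-I})$, the maximum being attained because $\Xi(q)$ is a nonempty compact polyhedron ($0\in\Xi(q)$ for $q\ge 0$, and compactness was already noted in the text). The first elementary observation is that $\tilde d$ is nondecreasing: if $q^1\le q^2$ then $\Xi(q^1)\subseteq\Xi(q^2)$, so $\tilde d(q^1)\le\tilde d(q^2)$. The plan is: (i) prove a one-sided Lipschitz bound when $q^1\le q^2$, and (ii) bootstrap to arbitrary $q^1,q^2$ using the coordinatewise maximum.

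For step (i), I would encode $\Xi(q)$ as a parametric polyhedron with fixed coefficient matrix and $q$-dependent right-hand side: write $\Xi(q)=\{v\in\mathbb{R}^J:Av\le b(q)\}$, where $A$ stacks the rows $\pm K$ (forcing $Kv=0$) together with the rows $-\beta_je_j^{\mathsf T}$, $j\in\AAA_J$, and $b(q)$ has its first $2I$ coordinates equal to $0$ and its last $J$ coordinates equal to $q_1,\dots,q_J$. By Hoffman's lemma there is a constant $H=H(A)<\infty$, depending only on $K$ and $\beta$, such that for every $b$ with $\{v:Av\le b\}\neq\emptyset$ and every $v\in\mathbb{R}^J$ one has $\mathrm{dist}(v,\{v':Av'\le b\})\le H\,|(Av-b)^+|_2$. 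Now fix $q^1\le q^2$ in $\mathbb{R}^J_+$ and let $v^2\in\Xi(q^2)$ attain $\tilde d(q^2)$. Since $Kv^2=0$ and $-\beta_jv^2_j\le q^2_j$, the componentwise inequality $(Av^2-b(q^1))^+\le (b(q^2)-b(q^1))^+$ holds, and the $\ell_2$ norm of the right-hand side is $|q^2-q^1|_2$. Applying Hoffman's lemma with $b=b(q^1)$ (note $0\in\Xi(q^1)$, so this polyhedron is nonempty), there exists $\tilde v\in\Xi(q^1)$ with $|v^2-\tilde v|_2\le H|q^2-q^1|_2$, hence by Cauchy--Schwarz and $|u_{J-I}|_2=1$,
\[
0\le\tilde d(q^2)-\tilde d(q^1)\le v^2\cdot u_{J-I}-\tilde v\cdot u_{J-I}\le |v^2-\tilde v|_2\le H|q^2-q^1|_2.
\]

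For step (ii), given arbitrary $q^1,q^2\in\mathbb{R}^J_+$, let $q^1\vee q^2$ denote the coordinatewise maximum. Then $q^i\le q^1\vee q^2$ with $|q^1\vee q^2-q^i|_2\le |q^1-q^2|_2$, so by monotonicity and step (i), $\tilde d(q^1)-\tilde d(q^2)\le\tilde d(q^1\vee q^2)-\tilde d(q^2)\le H|q^1-q^2|_2$, and symmetrically $\tilde d(q^2)-\tilde d(q^1)\le H|q^1-q^2|_2$; the proposition follows with $B_{\tilde d}=H$. The only point requiring care is the uniformity of the Hoffman constant: it depends solely on the fixed matrix $A$ (equivalently on $K$ and $\beta$) and not on the right-hand side $b(q)$, which is precisely what makes the estimate hold globally in $q^1,q^2$; if one prefers not to quote Hoffman's lemma, the same constant can be extracted directly by a standard finite argument over the (finitely many) vertex/active-constraint configurations of $A$, but I would not belabor this. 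I expect this uniform Hausdorff-Lipschitz control of the set-valued map $q\mapsto\Xi(q)$ to be the substantive step; everything else is bookkeeping.
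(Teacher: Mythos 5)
Your proof is correct, and it takes a genuinely different route from the paper's. The paper derives the Lipschitz estimate for $\tilde d$ as a corollary of the identity $|\lambda|\tilde d(q)=h\cdot q-\hat h(KMq)$ (Proposition \ref{thm:lambdaEqu}) together with a Lipschitz bound for the workload-cost function $\hat h$ (Lemma \ref{thm:costDiffBndW}, fed through Proposition \ref{thm:dTildeDiffBnd}); that Lipschitz bound for $\hat h$ is itself established by an explicit combinatorial construction (Lemma \ref{thm:bndCostDifNeg}) that swaps local-traffic job types in and out of the queue-length vector, resource by resource, and therefore leans on the local traffic condition. You instead work directly with the variational definition $\tilde d(q)=\max_{v\in\Xi(q)}v\cdot u_{J-I}$ and establish uniform Hausdorff--Lipschitz continuity of the parametric polyhedron $q\mapsto\Xi(q)$ via Hoffman's lemma, combined with the monotonicity of $\tilde d$ and the $q^1\vee q^2$ trick to pass from the ordered case $q^1\le q^2$ to the general one. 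Your route is more self-contained for this particular proposition (the Hoffman constant depends only on $K$ and $\beta$, and you never invoke $\hat h$ or the local traffic condition), at the cost of quoting Hoffman's lemma; the paper's route is shorter in context because Lemma \ref{thm:costDiffBndW} and Proposition \ref{thm:dTildeDiffBnd} are needed elsewhere anyway, and it gives the additional one-sided structure of Proposition \ref{thm:dTildeDiffBnd} that the paper uses in the proof of Proposition \ref{thm:costDiffResult}. One very small stylistic point: where you write $\mathrm{dist}(v,\{v':Av'\le b\})\le H\,|(Av-b)^+|_2$, you could note explicitly (as you do implicitly) that for the $\pm K$ rows the positive part vanishes because $Kv^2=0$, so only the $J$ box-constraint rows contribute; this is what makes the right-hand side $|q^2-q^1|_2$ rather than a larger expression.
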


We now proceed to the proof of Proposition \ref{thm:costDiffResult}.\\

{\bf Proof of Proposition \ref{thm:costDiffResult}.}
By adding and subtracting $\hat h(KM \hat Q^r)$, and using Proposition \ref{thm:lambdaEqu}, Proposition \ref{thm:workloadExpBnd}, and the Lipschitz property of the map $\hat h$,  it is sufficient to show that for any $\epsilon_0 \in (0,1)$ and $M<\infty$ there exists $T^{*},R\in(0,\infty)$ such that for all $r\geq R$, $T\geq T^{*}$, $y^{r}\in \mathcal{Y}^{r}$ satisfying $\hat{q}^{r}\leq M$ and $\hat{\Upsilon}^{r}\leq r^{-1}M$, and $t\geq 0$ we have
\begin{equation*}
E_{y^{r}}\left[\frac{1}{T}\int_{0}^{T}\tilde{d}\left( \hat{Q}^{r}(t+s)\right)ds\right] \leq \epsilon_0, \;\; 
E_{y^{r}}\left[\int_{0}^{\infty}e^{-\vsg  s}\tilde{d}\left( \hat{Q}^{r}(t+s)\right)ds\right] \leq \epsilon_0.
\end{equation*}
We now fix $\epsilon_0 \in (0,1)$ and let $\epsilon>0$ be arbitrary which will be chosen suitably later depending on $\epsilon_0$.

The main idea behind the proof is that, on average, the vectors $v^{c}(\mathcal{Z}^{r}(t))$ reduce $\tilde{d}(\hat{Q}^{r}(t))$ faster than other factors like randomness and the vectors $v^{b}(\mathcal{Z}^{r}(t))$ (which are used to keep the queues  nonempty) increase $\tilde{d}(\hat{Q}^{r}(t))$.  This is used to show that for  a given $\epsilon>0$ there exists $k<\infty$ such that for all sufficiently large $r$ and all $t\geq 0$ we have {
\begin{equation}
\label{eq:shortIntTildeDBnd}
E_{y^r}\left[\int_{t}^{t+kr^{-1}}\tilde{d}\left( \hat{Q}^{r}(s)\right)ds \right]\leq \epsilon 3 k r^{-1},
\end{equation}
}
and from this the proposition follows readily.
Fix $\epsilon\in(0,1)$ and $M<\infty$.
Proposition \ref{thm:workloadExpBnd} implies that there exist constants $B_{1}<\infty$ and $c>0$ such that for sufficiently large $r$, all $t\geq 0$, and any $y^{r}\in \mathcal{Y}^{r}$ satisfying $\hat{q}^{r}\leq M$ and $\hat{\Upsilon}^{r}\leq r^{-1}M$ we have
$E_{y^{r}}\left[e^{c|\hat{W}^{r}(t)|} \right]\leq B_{1}$.
From the definition of $\tilde{d}$ there exists a constant $B_{2}>0$ such that for all $q\in\mathbb{R}^{J}_{+}$  and $r$ we have
$
\tilde{d}(q)\leq B_{2}e^{\frac{c}{2}|KM^{r}q|}$
and
$
\tilde{d}(q)\leq B_{2}|KM^{r}q|$.
For notational convenience let $\xi=\frac{\epsilon }{\left(1+4|\lambda| |\tilde{\lambda}|^{-1}\right)B_{1}B_{2}}\wedge 1$, where $\tilde \lambda$ was defined in \eqref{def:tillam}, and define for $n \in \NN$,
\begin{eqnarray*}
\mathcal{A}^{r,n,t,j}&=&\left\{\sup_{0\leq s \leq rn}|A_{j}^{r,t}(s)-s\alpha^{r}_{j}|\geq \frac{1}{5} n r^{\frac{1}{2}} \right\}\cup\left\{\sup_{0\leq s \leq r\max_{i}\{C_{i}\}n}|S_{j}^{r,t}(s)-s\beta^{r}_{j}|\geq \frac{1}{5}nr^{\frac{1}{2}} \right\}\\
&&\cup\left\{r\bar{\Upsilon}^{A,r}_{j}(t) \geq  \xi n \right\}\cup\left\{\beta^{r}_{j}r^{\frac{3}{2}}\bar{\Upsilon}^{S,r}_{j}(t) \geq \frac{1}{5}\xi n \right\},\\
\end{eqnarray*}
\begin{equation*}
\mathcal{A}^{r,n,t}=\cup_{j=1}^{J}\mathcal{A}^{r,n,t,j},
\end{equation*}
and
\begin{equation*}
\mathcal{B}^{r,n,t}\doteq \mathcal{A}^{r,n,t}\cup \left\{ \tilde{d}\left(\hat{Q}^{r}(t+ \xi n r^{-1})\right)> \xi n\right\}\cup\Big\{\int_{r^{2}t}^{r^{2}t+r n}\sum_{j=1}^{J}\mathcal{I}_{\{ \mathcal{E}_{j}^{r}(s)= 1\}}ds\geq \xi n r^{\frac{7}{4}\kappa}\Big\}.
\end{equation*}

{\bf Main proof idea.} We will choose $k$ large enough that $P(\mathcal{B}^{r,k,t})$ is small and we will show that on $(\mathcal{B}^{r,k,t})^c$, because $\hat{\Upsilon}^r(t)$ has a small impact after $t+\xi k r^{-1}$, $\hat{Q}^r(s)$ has favorable behavior for $s\in[t+\xi k r^{-1},t+k r^{-1}]$.  When proving the key estimate (\ref{eq:shortIntTildeDBnd}) we break up the interval $[t,t+kr^{-1}]$ into the subintervals $[t,t+\xi k (1+4|\lambda||\tilde{\lambda}|^{-1})]$ and $[t+\xi k (1+4|\lambda||\tilde{\lambda}|^{-1}),t+kr^{-1}]$.  We can think of $\xi$ as determining the size of the first subinterval over which we rely on a crude bound on  $E[\tilde{d}( \hat{Q}^{r}(s))]$ based on Proposition \ref{thm:workloadExpBnd}, but because $\xi$ is small (recall its definition in terms of $\epsilon$) we can use the length of the subinterval to show that
\begin{equation*}
E_{y^r}\left[\int_{t}^{t+\xi k (1+4|\lambda||\tilde{\lambda}|^{-1})}\tilde{d}\left( \hat{Q}^{r}(s)\right)ds \right]\leq \epsilon k r^{-1}.
\end{equation*}
For the second subinterval we demonstrate that on the set $(\mathcal{B}^{r,k,t})^c$ $\tilde{d}\left( \hat{Q}^{r}(s)\right)$ is small for all $s\in [t+\xi k (1+4|\lambda||\tilde{\lambda}|^{-1}),t+kr^{-1}]$, which combined with the fact that  $(\mathcal{B}^{r,k,t})^c$ occurs with high probability, allows us to show
\begin{equation*}
E_{y^r}\left[\int_{t+\xi k (1+4|\lambda||\tilde{\lambda}|^{-1})}^{t+kr^{-1}}\tilde{d}\left( \hat{Q}^{r}(s)\right)ds \right]\leq 2\epsilon k r^{-1}
\end{equation*}
and complete the proof of (\ref{eq:shortIntTildeDBnd}) from which the Proposition follows easily by appealing to Proposition \ref{thm:lambdaEqu}.

Note that for any $j\in\AAA_J$ we have 
\begin{align*}
&r\bar{\Upsilon}^{A,r}_{j}(t)\leq \mathcal{I}_{\{\tau^{r,A}_{j}(t)=0\}}\hat{\Upsilon}^{A,r}_{j}+r^{-1}\mathcal{I}_{\{\tau^{r,A}_{j}(t)>0\}}u^{r}_{j}(\tau^{r,A}_{j}(t))\\
&\leq \mathcal{I}_{\{\tau^{r,A}_{j}(t)=0\}}r^{-1}M+r^{-1}\mathcal{I}_{\{\tau^{r,A}_{j}(t)>0\}}u^{r}_{j}(\tau^{r,A}_{j}(t))
\end{align*}
and, similarly
\begin{equation*}
\beta^{r}_{j}r^{\frac{3}{2}}\bar{\Upsilon}^{S,r}_{j}(t)\leq 
\mathcal{I}_{\{\tau^{r,S}_{j}(t)=0\}}\beta^{r}_{j}r^{-\frac{1}{2}}M+\beta^{r}_{j}r^{-\frac{1}{2}}\mathcal{I}_{\{\tau^{r,S}_{j}(t)>0\}} v^{r}_{j}(\tau^{r,S}_{j}(t)).
\end{equation*}
This (recall that $\sup_{j}\beta^{r}_{j}<\infty$), combined with Propositions \ref{thm:expTailBnd} (\eqref{eq:31.d}, \eqref{eq:31.c}, with $c_1=1/2$, $c_2=0$), \ref{thm:nextSerTimeBnd},  and \ref{thm:nextArrTimeBnd} implies that there exist constants $\tilde{B}_{1},\tilde{B}_{2}<\infty$
 such that for all $t\geq 0$ and all sufficiently large $r$ we have
$
P_{y^r}\left( \mathcal{A}^{r,n,t}\right) \leq \tilde{B}_{1}e^{- n \tilde{B}_{2}}$.
In addition, for $r\geq \frac{1}{\xi}$ we have

\begin{align*}
 &P_{y^r}\left(\int_{r^{2}t}^{r^{2}t+rn}\sum_{j=1}^{J}\mathcal{I}_{\{ \mathcal{E}_{j}^{r}(s)= 1\}}ds\geq \xi nr^{\frac{7}{4}\kappa} \right)
\leq P_{y^r}\left(\int_{r^{2}t}^{r^{2}(t+\xi n)}\sum_{j=1}^{J}\mathcal{I}_{\{ \mathcal{E}_{j}^{r}(s)= 1\}}ds\geq \xi n r^{\frac{7}{4}\kappa} \right)\\
&\leq\sum_{j=1}^{J}P_{y^r}\left(\int_{r^{2}t}^{r^{2}(t+\xi n)}\mathcal{I}_{\{ \mathcal{E}_{j}^{r}(s)= 1\}}ds\geq r\hat{\Upsilon}^{A,r}_{j}(t)+\frac{\xi n}{2J} r^{\frac{7}{4}\kappa} \right)
+\sum_{j=1}^{J}P_{y^r}\left(r\hat{\Upsilon}^{A,r}_{j}(t)\geq\frac{n \xi}{2J}  r^{\frac{7}{4}\kappa} \right).\\
\end{align*}
This, combined with Proposition \ref{thm:idleTimeBndMark}, Proposition \ref{thm:nextArrTimeBnd}, and the fact that for any $j\in\AAA_J$
\begin{eqnarray*}
r\hat{\Upsilon}^{A,r}_{j}(t)&\leq& \mathcal{I}_{\{\tau^{r,A}_{j}(t)=0\}}r\hat{\Upsilon}^{A,r}_{j}+\mathcal{I}_{\{\tau^{r,A}_{j}(t)>0\}}u^{r}_{j}(\tau^{r,A}_{j}(t))\\
&\leq& \mathcal{I}_{\{\tau^{r,A}_{j}(t)=0\}}M+\mathcal{I}_{\{\tau^{r,A}_{j}(t)>0\}}u^{r}_{j}(\tau^{r,A}_{j}(t))
\end{eqnarray*}
implies that there exist constant $\tilde{B}_{3},\tilde{B}_{4}<\infty$ such that for sufficiently large $r$ and for all $t \geq 0$ and $n\geq 0$ we have 
\begin{equation*}
 P_{y^r}\left(\int_{r^{2}t}^{r^{2}t+rn}\sum_{j=1}^{J}\mathcal{I}_{\{ \mathcal{E}_{j}^{r}(s)= 1\}}ds\geq \xi nr^{\frac{7}{4}\kappa} \right)\leq \tilde{B}_{3}e^{-n\tilde{B}_{4}}.
\end{equation*}

Furthermore, since $\tilde{d}(\hat{Q}^{r}(t+ \xi n r^{-1}))\leq B_{2}|\hat{W}^{r}(t+ \xi n r^{-1})|$ and $E_{y^{r}}\left[e^{c|\hat{W}^{r}(t+ \xi n r^{-1})|} \right]\leq B_{1}$ for sufficiently large $r$ we have
\begin{equation*}
P_{y^r}\left(\tilde{d}\left(\hat{Q}^{r}(t+ \xi n r^{-1})\right)> \xi n \right)\leq P_{y^r}\left(|\hat{W}^{r}(t+ \xi n r^{-1})|> \frac{\xi n}{B_{2}}\right)\leq B_{1}e^{-\frac{c\xi}{B_{2}}n}.
\end{equation*}

Consequently we know there exist constants $B_{3},B_{4}<\infty$ such that for sufficiently large $r$ we have for all $t\geq 0$ and $n\geq 0$
$
P_{y^r}\left( \mathcal{B}^{r,n,t}\right)\leq B_{3}e^{- n B_{4}}$.

 Our goal is to show that for any $\epsilon>0$ we can choose corresponding $k,R<\infty$ sufficiently large such that for all $r\geq R$ (\ref{eq:shortIntTildeDBnd}) holds.  Our choices of $k,R<\infty$ will have to satisfy a large number of inequalities used throughout the remainder of the proof, so for the ease of the reader we list them all in one place below and reference them later when needed.

\begin{summary}
Recall the definitions of $\mathcal{B}^{r,n,t}$,  $B_{1}$, $B_{2}$, $B_{3}$, $B_{4}$, $\xi$, and $c$.  We choose the constants $k\geq \frac{1}{\xi}$ and $R<\infty$ such that, $B^{\frac{1}{2}}_{3}e^{-k \frac{B_{4}}{2}}B_{2}B^{\frac{1}{2}}_{1}\leq \epsilon$ and, for all $r\geq R$,   the following hold:
\label{sum:costDiffResultSum}
\begin{enumerate}[(a)]
\item  For all $t\geq 0$ we have $E_{y^{r}}\left[ e^{c|\hat{W}^{r}(t)|}\right]\leq B_{1}$
\item For all $q\in\mathbb{R}^{J}_{+}$, we have $\tilde{d}(q)\leq B_{2}e^{\frac{c}{2}|KM^{r}q|}$
  and  $\tilde{d}(q)\leq B_{2}|KM^{r}q|$
\item For all $t\geq 0$ we have $P_{y^r}\left( \mathcal{B}^{r,k,t}\right)\leq B_{3}e^{- k B_{4}}$.
\item  $\epsilon r^{\frac{1}{8}}\geq k$, $r\geq \frac{1}{\xi}$,  $r\geq 25$, and  $\epsilon r^{\frac{1}{16}}\geq \xi k$.
\item  $2|\theta|+|\frac{1}{\beta^{r}}|_{1}\leq r^{\frac{1}{8}}$
\item For all $i\in\AAA_I$ we have $r\left(\sum_{j=1}^{J}K_{i,j}\rho^{r}_{j}-C_{i}\right)\geq 2\theta_{i}$
\item $J (c_{2}+1)r^{\kappa-1}+C_{i} \epsilon r^{\frac{7}{4}\kappa-\frac{15}{16}}+|\frac{1}{\beta^{r}}|_{1}\epsilon r^{-\frac{3}{8}} \leq \epsilon r^{-\frac{1}{4}}$
\item For all $i\in\AAA_I$ we have $\left(\sum_{j=1}^{J}K_{i,j}\rho^{r}_{j}-C_{i}\right)<0$
\item  $|h||\beta^{r}||\rho -\rho^{r} |+|h||\beta^{r}-\beta|\left(\max_{z\in\mathcal{M}}|v^{c}(z)|+\max_{z\in\left\{0,1\right\}^{J}}|v^{b}(z)|\right)\leq \frac{|\tilde{\lambda}|}{4}$
\item $|\alpha^{r}|\leq 2|\alpha|$ 
\item  $\left(\frac{|\tilde{\lambda}|}{2}+2|h||\alpha|\right)\epsilon r^{\frac{7}{4}\kappa-\frac{15}{16}}+|h|\epsilon r^{-\frac{3}{8}}\leq \epsilon r^{-\frac{1}{4}}$
\item  $\frac{\epsilon}{|\lambda|} r^{-\frac{1}{4}}+\frac{B_{\hat{h}}}{|\lambda|}I^{\frac{1}{2}}\epsilon r^{-\frac{1}{4}}\leq \xi k $
\item  $B_{\tilde{d}} J^{\frac{1}{2}}\tilde{c}_{2} r^{\kappa-1}+J^{\frac{1}{2}}B_{\tilde{d}}r^{-1} +\frac{\epsilon}{|\lambda|} r^{-\frac{1}{4}}+2B_{\hat{h}}I^{\frac{1}{2}}\frac{\epsilon}{|\lambda|} r^{-\frac{1}{4}}\leq \epsilon r^{-\frac{1}{8}}$.
\end{enumerate}
\end{summary}

We now  bound $\tilde{d}\left(\hat{Q}^{r}(\cdot)\right)$ from above on the set $\left(\mathcal{B}^{r,k,t}\right)^{c}$.
Using \eqref{eq:328},
on the set $\left( \mathcal{B}^{r,k,t}\right)^{c}$ for all $r\geq R$, any $t+ \xi k r^{-1}\leq s_{1}<s_{2}\leq t+k r^{-1}$, and any $j\in \AAA_J$, we have 
\begin{eqnarray*}
&&\hat{Q}^{r}_{j}(s_{2})-\hat{Q}^{r}_{j}(s_{1})\\
&=&\frac{1}{r}\mathcal{I}_{\{s_{2}-t\geq \bar{\Upsilon}^{A,r}_{j}(t)>s_{1}-t\}}- \frac{1}{r}\mathcal{I}_{\{\bar{B}^{r}_{j}(s_{2})-\bar{B}^{r}_{j}(t)\geq \bar{\Upsilon}^{S,r}_{j}(t)>\bar{B}^{r}_{j}(s_{1})-\bar{B}^{r}_{j}(t)\}}\\
&&+ \frac{1}{r}A_{j}^{r,t}\left(r^2\left(s_{2}-t-\bar{\Upsilon}^{A,r}_{j}(t)\right)^{+}\right)-\frac{1}{r}A_{j}^{r,t}\left(r^2\left(s_{1}-t-\bar{\Upsilon}^{A,r}_{j}(t)\right)^{+}\right)\\
&&-\frac{1}{r}S_{j}^{r,t}\left(r^2\left(\bar{B}^{r}_{j}(s_{2})-\bar{B}^{r}_{j}(t)-\bar{\Upsilon}^{S,r}_{j}(t)\right)^{+}\right)+\frac{1}{r}S_{j}^{r,t}\left(r^2\left(\bar{B}^{r}_{j}(s_{1})-\bar{B}^{r}_{j}(t)-\bar{\Upsilon}^{S,r}_{j}(t)\right)^{+}\right)\\
&\leq& \frac{1}{r}A_{j}^{r,t}\left(r^2\left(s_{2}-t-\bar{\Upsilon}^{A,r}_{j}(t)\right)\right)-\frac{1}{r}A_{j}^{r,t}\left(r^2\left(s_{1}-t-\bar{\Upsilon}^{A,r}_{j}(t)\right)\right)\\
&&-\frac{1}{r}S_{j}^{r,t}\left(r^2\left(\bar{B}^{r}_{j}(s_{2})-\bar{B}^{r}_{j}(t)-\bar{\Upsilon}^{S,r}_{j}(t)\right)^{+}\right)+\frac{1}{r}S_{j}^{r,t}\left(r^2\left(\bar{B}^{r}_{j}(s_{1})-\bar{B}^{r}_{j}(t)-\bar{\Upsilon}^{S,r}_{j}(t)\right)^{+}\right),
\end{eqnarray*}
since on the set $\left( \mathcal{B}^{r,k,t}\right)^{c}$ we have $ \bar{\Upsilon}^{A,r}_{j}(t)\leq \xi k r^{-1}$ meaning $\mathcal{I}_{\{s_{2}-t\geq \bar{\Upsilon}^{A,r}_{j}(t)>s_{1}-t\}}=0$,
and so
\begin{eqnarray*}
\hat{Q}^{r}_{j}(s_{2})-\hat{Q}^{r}_{j}(s_{1})
&\leq& \alpha^{r}_{j}r\left(s_{2}-s_{1}\right)+\frac{4}{5}k r^{-\frac{1}{2}}-\beta^{r}_{j}r\left(\left(\bar{B}^{r}_{j}(s_{2})-\bar{B}^{r}_{j}(t)-\bar{\Upsilon}^{S,r}_{j}(t)\right)^{+}\right.\\
&&\left.-\left(\bar{B}^{r}_{j}(s_{1})-\bar{B}^{r}_{j}(t)-\bar{\Upsilon}^{S,r}_{j}(t)\right)^{+} \right)\\
&\leq& \alpha^{r}_{j}r\left(s_{2}-s_{1}\right)-\beta^{r}_{j}r\left(\bar{B}^{r}_{j}(s_{2})-\bar{B}^{r}_{j}(s_{1})\right)+\beta^{r}_{j}r\bar{\Upsilon}^{S,r}_{j}(t)+\frac{4}{5}k r^{-\frac{1}{2}}\\
&\leq& \alpha^{r}_{j}r\left(s_{2}-s_{1}\right)-\beta^{r}_{j}r\left(\bar{B}^{r}_{j}(s_{2})-\bar{B}^{r}_{j}(s_{1})\right)+kr^{-\frac{1}{2}}\\
&\leq& \alpha^{r}_{j}r\left(s_{2}-s_{1}\right)-\beta^{r}_{j}r\left(\bar{B}^{r}_{j}(s_{2})-\bar{B}^{r}_{j}(s_{1})\right)+\epsilon r^{-\frac{3}{8}}\\
\end{eqnarray*}
 due to Summary \ref{sum:costDiffResultSum} part (d) and the fact that 
 on the set $\left( \mathcal{B}^{r,k,t}\right)^{c}$ we have $\beta^{r}_{j}r^{\frac{3}{2}}\bar{\Upsilon}^{S,r}_{j}(t)\leq \frac{1}{5}\xi k \leq \frac{1}{5}k $.
 
 A similar calculation, using the inequalities,  $$\left(\bar{B}^{r}_{j}(s_{2})-\bar{B}^{r}_{j}(t)-\bar{\Upsilon}^{S,r}_{j}(t)\right)^{+}-\left(\bar{B}^{r}_{j}(s_{1})-\bar{B}^{r}_{j}(t)-\bar{\Upsilon}^{S,r}_{j}(t)\right)^{+}\leq \bar{B}^{r}_{j}(s_{2})-\bar{B}^{r}_{j}(s_{1}),$$
  $- \frac{1}{r}\mathcal{I}_{\{\bar{B}^{r}_{j}(s_{2})-\bar{B}^{r}_{j}(t)\geq \bar{\Upsilon}^{S,r}_{j}(t)>\bar{B}^{r}_{j}(s_{1})-\bar{B}^{r}_{j}(t)\}}\geq -\frac{1}{r}$,   properties in Summary \ref{sum:costDiffResultSum} {part} (d), and the fact that on the set $\left( \mathcal{B}^{r,k,t}\right)^{c}$ we have $ \bar{\Upsilon}^{A,r}_{j}(t)\leq \xi k r^{-1}$ shows that
 on the set $\left( \mathcal{B}^{r,k,t}\right)^{c}$ for any $t+ \xi k r^{-1}\leq s_{1}<s_{2}\leq t+k r^{-1}$, $r\geq R$, and $j\in \AAA_J$ we have
\begin{eqnarray*}
\hat{Q}^{r}_{j}(s_{2})-\hat{Q}^{r}_{j}(s_{1})
&\geq& \alpha^{r}_{j}r\left(s_{2}-s_{1}\right)-\beta^{r}_{j}r\left(\bar{B}^{r}_{j}(s_{2})-\bar{B}^{r}_{j}(s_{1})\right)-\epsilon r^{-\frac{3}{8}}.
\end{eqnarray*}
Consequently on the set $\left( \mathcal{B}^{r,k,t}\right)^{c}$ for any $t+\xi k r^{-1}\leq s_{1}<s_{2}\leq t+k r^{-1}$, $r\geq R$, and $j\in \AAA_J$ we have
\begin{equation}
\label{eq:queueBnd}
|\hat{Q}^{r}_{j}(s_{2})-\hat{Q}^{r}_{j}(s_{1})-r\left( \alpha^{r}_{j}(s_{2}-s_{1})-\beta^{r}_{j}\left(\bar{B}^{r}_{j}(s_{2})-\bar{B}^{r}_{j}(s_{1}) \right)\right)|\leq \epsilon r^{-\frac{3}{8}}.
\end{equation}
We will next use Proposition \ref{thm:dTildeDiffBnd} to bound $\tilde{d}(\hat{Q}^{r}(\cdot))$ from above on the interval $[t+\xi k r^{-1},t+k r^{-1}]$. For this  we will need to bound
$
\sup_{s\in[t+\xi k r^{-1},t+k r^{-1}]}|\hat{W}^{r}_{i}(s)-\hat{W}^{r}_{i}(t+\xi k r^{-1})|
$
for all $i\in\AAA_I$.  Let $i\in\AAA_I$ be arbitrary.  We will first bound\\
 $\inf_{s\in[t+\xi k r^{-1},t+k r^{-1}]}\left(\hat{W}^{r}_{i}(s)-\hat{W}^{r}_{i}(t+\xi k r^{-1})\right)$ from below.  
On the set $\left(\mathcal{B}^{r,k,t}\right)^{c}$ for all $r\geq R$ and $s\in[t+\xi k r^{-1},t+k r^{-1}]$ we have from \eqref{eq:queueBnd}
\begin{eqnarray*}
\hat{W}^{r}_{i}(s)&\geq&\hat{W}^{r}_{i}(t+\xi k r^{-1})- |\frac{1}{\beta^{r}}|_{1}\epsilon r^{-\frac{3}{8}}\\
&&+r\sum_{j=1}^{J}K_{i,j}\frac{1}{\beta^{r}_{j}}\left(\alpha^{r}_{j}(s-t-\xi k r^{-1})-\beta^{r}_{j}\left(\bar{B}^{r}_j(s)-\bar{B}^{r}_j(t+\xi k r^{-1}) \right) \right)\\
&\geq&\hat{W}^{r}_{i}(t+\xi k r^{-1})+r(s-t-\xi k r^{-1})\left(\sum_{j=1}^{J}K_{i,j}\rho^{r}_{j}-C_{i}\right)-|\frac{1}{\beta^{r}}|_{1}\epsilon r^{-\frac{3}{8}}\\
&\geq&\hat{W}^{r}_{i}(t+\xi k r^{-1})+2\theta_{i}(s-t-\xi k r^{-1})-|\frac{1}{\beta^{r}}|_{1}\epsilon r^{-\frac{3}{8}}\\
&\geq&\hat{W}^{r}_{i}(t+\xi k r^{-1})+2\theta_{i} k r^{-1}-|\frac{1}{\beta^{r}}|_{1}\epsilon r^{-\frac{3}{8}}\\
&\geq&\hat{W}^{r}_{i}(t+\xi k r^{-1})+2\theta_{i}\epsilon r^{-\frac{7}{8}}-|\frac{1}{\beta^{r}}|_{1}\epsilon r^{-\frac{3}{8}}\\
&\geq&\hat{W}^{r}_{i}(t+\xi k r^{-1})-\left(2|\theta_{i}|+|\frac{1}{\beta^{r}}|_{1}\right)\epsilon r^{-\frac{3}{8}}\geq\hat{W}^{r}_{i}(t+\xi k r^{-1})-\epsilon r^{-\frac{1}{4}}\\
\end{eqnarray*}
due to equation (\ref{eq:queueBnd}) and Summary \ref{sum:costDiffResultSum} parts (d), (e), and (f).  Therefore on the set $\left(\mathcal{B}^{r,k,t}\right)^{c}$ for all $r\geq R$ we have
\begin{equation*}
\inf_{s\in[t+\xi k r^{-1},t+k r^{-1}]}\left(\hat{W}^{r}_{i}(s)-\hat{W}^{r}_{i}(t+\xi k r^{-1})\right)\geq -\epsilon r^{-\frac{1}{4}}.
\end{equation*}
Now we  bound $\sup_{s\in[t+\xi k r^{-1},t+ k r^{-1}]}\left(\hat{W}^{r}_{i}(s)-\hat{W}^{r}_{i}(t+\xi k r^{-1})\right)$ from above. 
For all $i\in\AAA_I$ define
\begin{equation*}
\gamma^{r,i}_{0}\doteq\inf\left\{s\geq t+\xi k r^{-1}:\hat{W}^{r}_{i}(s)\geq J c_{2}r^{\kappa-1} \right\}
\end{equation*}
and for $l\geq 0$ define {
\begin{equation*}
\gamma^{r,i}_{2l+1}\doteq\inf\left\{s\geq \gamma^{r,i}_{2l}: \hat{W}^{r}_{i}(s)< J c_{2}r^{\kappa-1} \right\},\;\;
\gamma^{r,i}_{2l+2}\doteq\inf\left\{s\geq\gamma^{r,i}_{2l+1}:  \hat{W}^{r}_{i}(s)\geq J c_{2}r^{\kappa-1} \right\}.
\end{equation*}
}
Using  Proposition \ref{thm:schemeSum} part (d) and \eqref{eq:queueBnd},  for $r\geq R$, on the set $\left(\mathcal{B}^{r,k,t}\right)^{c}$, for any $l\geq 0$ and all
 $s\in [\gamma^{r,i}_{2l},\gamma^{r,i}_{2l+1})\cap [\gamma^{r,i}_{2l},t+ k r^{-1}]$, 
\begin{align*}
&\hat{W}^{r}_{i}(s)\\
&\leq \hat{W}^{r}_{i}(\gamma^{r,i}_{2l})+r\sum_{j=1}^{J}K_{i,j}\frac{1}{\beta^{r}_{j}}\left(\alpha^{r}_{j}(s-\gamma^{r,i}_{2l})-\beta^{r}_{j}\left(\bar{B}^{r}(s)-\bar{B}^{r}(\gamma^{r,i}_{2l}) \right) \right)+|\frac{1}{\beta^{r}}|_{1}\epsilon r^{-\frac{3}{8}}\\
&\leq \hat{W}^{r}_{i}(t+\xi k r^{-1})+ J( c_{2}+1)r^{\kappa-1}+r(s-\gamma^{r,i}_{2l})\left(\sum_{j=1}^{J}K_{i,j}\rho^{r}_{j}-C_{i}\right)\\
&\quad+r^{-1}C_{i}\int_{r^{2}t}^{r^{2}t+rk}\sum_{j=1}^{J}\mathcal{I}_{\{ \mathcal{E}_{j}^{r}(u)= 1\}}du+|\frac{1}{\beta^{r}}|_{1}\epsilon r^{-\frac{3}{8}}\\
&\leq \hat{W}^{r}_{i}(t+\xi k r^{-1})+ J (c_{2}+1)r^{\kappa-1}+r(s-\gamma^{r,i}_{2l})\left(\sum_{j=1}^{J}K_{i,j}\rho^{r}_{j}-C_{i}\right)\\
& \quad+ C_{i}\xi k r^{\frac{7}{4}\kappa-1}+|\frac{1}{\beta^{r}}|_{1}\epsilon r^{-\frac{3}{8}}\\
&\leq \hat{W}^{r}_{i}(t+\xi k r^{-1})+J (c_{2}+1)r^{\kappa-1}+r(s-\gamma^{r,i}_{2l})\left(\sum_{j=1}^{J}K_{i,j}\rho^{r}_{j}-C_{i}\right)\\
& \quad+ C_{i}\epsilon r^{\frac{7}{4}\kappa-\frac{15}{16}}+|\frac{1}{\beta^{r}}|_{1}\epsilon r^{-\frac{3}{8}}\\
&\leq \hat{W}^{r}_{i}(t+\xi k r^{-1})+r(s-\gamma^{r,i}_{2l})\left(\sum_{j=1}^{J}K_{i,j}\rho^{r}_{j}-C_{i}\right) +\epsilon r^{-\frac{1}{4}}
\leq \hat{W}^{r}_{i}(t+\xi k r^{-1}) +\epsilon r^{-\frac{1}{4}}
\end{align*}
due to  Summary \ref{sum:costDiffResultSum} parts (d), (g) and (h), and the fact that on the set $\left(\mathcal{B}^{r,k,t}\right)^{c}$ we have $r^{-1}\int_{r^{2}t}^{r^{2}t+rk}\sum_{j=1}^{J}\mathcal{I}_{\{ \mathcal{E}_{j}^{r}(s)= 1\}}ds\leq \xi k r^{\frac{7}{4}\kappa-1}$.  
In addition, for $r\geq R$ on the set $\left(\mathcal{B}^{r,k,t}\right)^{c}$, for all $s\in [t+\xi k r^{-1},\gamma^{r,i}_{0})\cap [t+\xi k r^{-1}, t+kr^{-1}]$ and $s\in [\gamma^{r,i}_{2l+1},\gamma^{r,i}_{2l+2})\cap  [\gamma^{r,i}_{2l+1}, t+kr^{-1}]$ for any $l\geq 0$ we have
\begin{eqnarray*}
\hat{W}^{r}_{i}(s)&\leq& J(c_{2}+1)r^{\kappa-1}\leq J(c_{2}+1)r^{\kappa-1}+ \hat{W}^{r}_{i}(t+\xi k r^{-1})\leq  \hat{W}^{r}_{i}(t+\xi k r^{-1})+\epsilon r^{-\frac{1}{4}}
\end{eqnarray*}
due to Summary \ref{sum:costDiffResultSum} part (g). Since $i\in\AAA_I$ was arbitrary we have shown that on the set $\left(\mathcal{B}^{r,k,t}\right)^{c}$ for all $r\geq R$ and $i\in\AAA_I$  we have
\begin{equation}
\label{eq:workloadBndComp}
\sup_{s\in[t+\xi k r^{-1},t+k r^{-1}]}|\hat{W}^{r}_{i}(s)-\hat{W}^{r}_{i}(t+\xi k r^{-1})|\leq \epsilon r^{-\frac{1}{4}}.
\end{equation}
Now we will apply Proposition \ref{thm:dTildeDiffBnd} to bound $\tilde{d}(\hat{Q}^{r}(\cdot))$ from above on the interval $[t+\xi k r^{-1},t+k r^{-1}]$.
From Proposition \ref{thm:costDiffBndQ} ,
for any $q\in\mathbb{R}^{J}_{+}$,
\begin{equation}\label{eq:tilddiff}
|\tilde{d}(q)-\tilde{d}(\left(q- \tilde{c}_{2}r^{\kappa}\right) \vee 0)|\leq B_{\tilde{d}}|q-\left(q- \tilde{c}_{2}r^{\kappa}\right) \vee 0|_{2}\leq B_{\tilde{d}}J^{\frac{1}{2}}\tilde{c}_{2}r^{\kappa}.
\end{equation}
Define
\begin{equation*}
\zeta^{r}_{0}\doteq \inf\left\{ s\geq t+\xi k r^{-1}: \tilde{d}\left(Q^{r}(r^{2}s) \right)\leq B_{\tilde{d}} J^{\frac{1}{2}}\tilde{c}_{2} r^{\kappa} \right\}
\end{equation*}
and for $l\geq 0$ define 
\begin{align*}
\zeta^{r}_{2l+1}&\doteq \inf\left\{ s\geq \zeta^{r}_{2l}: \tilde{d}\left(Q^{r}(r^{2}s) \right)>B_{\tilde{d}}J^{\frac{1}{2}}\tilde{c}_{2} r^{\kappa} \right\},\\
\zeta^{r}_{2l+2}&\doteq \inf\left\{ s\geq \zeta^{r}_{2l+1}: \tilde{d}\left(Q^{r}(r^{2}s) \right)\leq  B_{\tilde{d}} J^{\frac{1}{2}}\tilde{c}_{2} r^{\kappa} \right\}.
\end{align*}
Note that if $\mathcal{Z}^{r}(s)\not\in\mathcal{M}$ then due to Lemma \ref{thm:nearBndZ} we have $\tilde{d}\left(\left(Q^{r}(r^{2}s)-\tilde{c}_{2}r^{\kappa}\right)\vee 0\right)=0$ so, from \eqref{eq:tilddiff},
\begin{eqnarray*}
\tilde{d}\left(Q^{r}(r^{2}s)\right)&\leq& \tilde{d}\left(\left(Q^{r}(r^{2}s)-\tilde{c}_{2}r^{\kappa}\right)\vee 0\right)+|\tilde{d}(Q^{r}(r^{2}s))-\tilde{d}(\left(Q^{r}(r^{2}s)- \tilde{c}_{2}r^{\kappa}\right) \vee 0)|\\
&\leq& B_{\tilde{d}}J^{\frac{1}{2}}\tilde{c}_{2}r^{\kappa}.
\end{eqnarray*}
Consequently if $\tilde{d}\left(\hat{Q}^{r}(s)\right)>B_{\tilde{d}}J^{\frac{1}{2}}\tilde{c}_{2}r^{\kappa-1}$ then $\mathcal{Z}^{r}(s)\in\mathcal{M}$ (here we have used the fact that for any $r> 0$ and $q\in\mathbb{R}^{J}_{+}$ we have $\tilde{d}(\frac{1}{r}q)=\frac{1}{r}\tilde{d}(q)$).  In particular, $\mathcal{Z}^{r}(s)\in\mathcal{M}$ for all $s\in[t+\xi k r^{-1},\zeta^{r}_{0})\cap [t+\xi k r^{-1}, t+k r^{-1}]$ and all $s\in[\zeta^{r}_{2l+1},\zeta^{r}_{2l+2})\cap [\zeta^{r}_{2l+1}, t+k r^{-1}]$ for any $l\geq 0$.

 Next note that from Definition \ref{def:scheme} for all $r\geq R$ if $\mathcal{Z}^{r}(s)\in\mathcal{M}$ and $\sum_{j=1}^{J} \mathcal{E}_{j}^{r}(s)=0$ we have
$
\yr(s)=\rho-v^{c}(\mathcal{Z}^{r}(s))-v^{b}(\mathcal{Z}^{r}(s))
$
and so
\begin{align}
&h\cdot \left(\alpha^{r}-\beta^{r}\yr(s) \right)=h\cdot \left(\alpha^{r}-\beta^{r}\left( \rho-v^{c}(\mathcal{Z}^{r}(s))-v^{b}(\mathcal{Z}^{r}(s))\right) \right)\nonumber\\
&= h\cdot \left(- \beta^{r}\left(\rho -\rho^{r} \right)+\beta v^{c}(\mathcal{Z}^{r}(s))+\beta v^{b}(\mathcal{Z}^{r}(s)) \right. \nonumber \\
&\quad \left.+(\beta^{r}-\beta) v^{c}(\mathcal{Z}^{r}(s))+(\beta^{r}-\beta)v^{b}(\mathcal{Z}^{r}(s))\right)\nonumber\\
&\leq |h||\beta^{r}||\rho -\rho^{r} |+|h||\beta^{r}-\beta|\left(\max_{z\in\mathcal{M}}|v^{c}(z)|+\max_{z\in\left\{0,1\right\}^{J}}|v^{b}(z)|\right)\nonumber\\
 &\quad +h\beta\cdot v^{c}(\mathcal{Z}^{r}(s))+h\beta\cdot v^{b}(\mathcal{Z}^{r}(s))\nonumber \\
&\leq\frac{|\tilde{\lambda}|}{4}-|\tilde{\lambda}|+|h||\beta|\frac{|\tilde{\lambda}|}{4|h|\beta|} \leq -\frac{|\tilde{\lambda}|}{2} \label{eq:453}
\end{align}
where we have used Summary \ref{sum:costDiffResultSum} part (i) and the fact that due to Definition \ref{def:scheme} we have $\max_{z\in \mathcal{Z}}\left\{h\beta\cdot v^{c}(z)\right\}\leq -|\tilde{\lambda}|$ and $\max_{z\in\left\{0,1\right\}}|v^{b}(z)|\leq \frac{|\tilde{\lambda}|}{4|\beta||h|}$.
Next, on the set $\left(\mathcal{B}^{r,k,t}\right)^{c}$ for all $r\geq R$ and $s\in[t+\xi k r^{-1},\zeta^{r}_{0})\cap [t+\xi k r^{-1},t+k r^{-1}]$ we have from \eqref{eq:queueBnd}
\begin{align*}
&h\cdot \left(\hat{Q}^{r}(s)-\hat{Q}^{r}(t+\xi k r^{-1}) \right)- |h|\epsilon r^{-\frac{3}{8}}\\
&\leq rh\cdot \left(\alpha^{r}\left(s-t-\xi k r^{-1} \right)-\beta^{r}\left(\bar{B}^{r}(s)-\bar{B}^{r}(t+\xi k r^{-1}) \right) \right)\\
&\leq \int_{t+\xi k r^{-1}}^{s}\mathcal{I}_{\{\sum_{j=1}^{J} \mathcal{E}_{j}^{r}(r^{2}z)=0\}} rh\cdot \left(\alpha^{r}-\beta^{r}\yr(r^{2}z)\right)dz\\
&\quad +\int_{t+\xi k r^{-1}}^{s}\mathcal{I}_{\{\sum_{j=1}^{J} \mathcal{E}_{j}^{r}(r^{2}z)>0\}} rh\cdot \left(\alpha^{r}-\beta^{r}\yr(r^{2}z)\right)dz\\
&\leq  -r\left(s-t-\xi k r^{-1} -\int_{t+\xi k r^{-1}}^{s}\mathcal{I}_{\{\sum_{j=1}^{J} \mathcal{E}_{j}^{r}(r^{2}z)>0\}}dz \right)\frac{|\tilde{\lambda}|}{2}\\
&\quad +r(h\cdot \alpha^{r})\int_{t+\xi k r^{-1}}^{s}\mathcal{I}_{\{\sum_{j=1}^{J} \mathcal{E}_{j}^{r}(r^{2}z)>0\}}dz
\end{align*}
where the last line is from \eqref{eq:453}. Thus
\begin{align}
&h\cdot \left(\hat{Q}^{r}(s)-\hat{Q}^{r}(t+\xi k r^{-1}) \right)\nonumber\\
&\leq -r\left(s-t-\xi k r^{-1} \right)\frac{|\tilde{\lambda}|}{2}+\left(\frac{|\tilde{\lambda}|}{2}+2|h||\alpha|\right)\xi k r^{\frac{7}{4}\kappa-1}+|h|\epsilon r^{-\frac{3}{8}}\nonumber\\
&\leq -r\left(s-t-\xi k r^{-1} \right)\frac{|\tilde{\lambda}|}{2}+\left(\frac{|\tilde{\lambda}|}{2}+2|h||\alpha|\right)\epsilon r^{\frac{7}{4}\kappa-\frac{15}{16}}+|h|\epsilon r^{-\frac{3}{8}}\nonumber\\
&\leq -r\left(s-t-\xi k r^{-1} \right)\frac{|\tilde{\lambda}|}{2}+\epsilon r^{-\frac{1}{4}} \label{eq:500}
\end{align}
due to Summary \ref{sum:costDiffResultSum} parts (d), (j), and (k), and the fact that on the set $\left(\mathcal{B}^{r,k,t}\right)^{c}$ we have\\ $\int_{t}^{t+k r^{-1}}\mathcal{I}_{\{\sum_{j=1}^{J} \mathcal{E}_{j}^{r}(r^{2}z)>0\}}dz\leq \xi k r^{\frac{7}{4}\kappa-2}.$  An almost identical argument shows that on the set $\left(\mathcal{B}^{r,k,t}\right)^{c}$ for all $r\geq R$, any $l\geq 0$, and all $s\in [\zeta^{r}_{2l+1},\zeta^{r}_{2l+2})\cap[\zeta^{r}_{2l+1},t+k r^{-1}]$ we have
\begin{equation}
h\cdot \left(\hat{Q}^{r}(s)-\hat{Q}^{r}(\zeta^{r}_{2l+1}) \right) \leq -r\left(s-\zeta^{r}_{2l+1} \right)\frac{|\tilde{\lambda}|}{2}+\epsilon r^{-\frac{1}{4}}. \label{eq:503}
\end{equation}
From Proposition \ref{thm:dTildeDiffBnd}, 
for $r\geq R$, on the set $\left(\mathcal{B}^{r,k,t}\right)^{c}$ for all $s\in [t+\xi k r^{-1},\zeta^{r}_{0})\cap [t+\xi k r^{-1}, t+k r^{-1}]$  we have
\begin{eqnarray*}
\tilde{d}\left( \hat{Q}^{r}(s)\right)&\leq& \tilde{d}\left( \hat{Q}^{r}(t+\xi k r^{-1})\right) +\frac{1}{|\lambda|} h\cdot \left(\hat{Q}^{r}(s)-\hat{Q}^{r}(t+\xi k r^{-1})\right)\\
&&+\frac{B_{\hat{h}}}{|\lambda|}| \hat{W}^{r}(s)-\hat{W}^{r}(t+\xi k r^{-1}) |_{2}\\
&\leq& \xi k -r\left(s-t-\xi k r^{-1} \right)\frac{|\tilde{\lambda}|}{2|\lambda|}+\frac{\epsilon}{|\lambda|} r^{-\frac{1}{4}}+\frac{B_{\hat{h}}}{|\lambda|}I^{\frac{1}{2}}\epsilon r^{-\frac{1}{4}}\\
&\leq& 2\xi k -r\left(s-t-\xi k r^{-1} \right)\frac{|\tilde{\lambda}|}{2|\lambda|}
\end{eqnarray*}
where we have used \eqref{eq:500}, \eqref{eq:workloadBndComp},
Summary \ref{sum:costDiffResultSum} part (l), and the fact that on the set $\left(\mathcal{B}^{r,k,t}\right)^{c}$ we have  $\tilde{d}\left( \hat{Q}^{r}(t+\xi k r^{-1})\right) \leq \xi k$.   This implies that on the set $\left(\mathcal{B}^{r,k,t}\right)^{c}$ for $r\geq R$  if $\zeta^{r}_{0}-t-\xi k r^{-1}>\frac{4\xi k|\lambda| }{|\tilde{\lambda}|}r^{-1}$ we have
\begin{equation*}
\tilde{d}\left( \hat{Q}^{r}\left( t+\xi k r^{-1}+\frac{4\xi k|\lambda|}{|\tilde{\lambda}|}r^{-1}\right)\right)\leq2\xi k-r\left(\frac{4\xi k|\lambda|}{|\tilde{\lambda}|}r^{-1} \right)\frac{|\tilde{\lambda}|}{2|\lambda|}\leq 0
\end{equation*}
which contradicts the definition of $\zeta^{r}_{0}$.  Consequently, on the set $\left(\mathcal{B}^{r,k,t}\right)^{c}$ for $r\geq R$ we have 
\begin{equation}
	\zeta^{r}_{0}\leq t+\xi k r^{-1}+\frac{4\xi k|\lambda|}{|\tilde{\lambda}|}r^{-1}. \label{eq:516}
\end{equation}
Similarly, for $r\geq R$ on the set $\left(\mathcal{B}^{r,k,t}\right)^{c}$  for all $s\in [\zeta^{r}_{2l+1},\zeta^{r}_{2l+2})\cap [\zeta^{r}_{2l+1}, t+kr^{-1}]$ for any $l\geq 0$ we have 
\begin{eqnarray*}
\tilde{d}\left( \hat{Q}^{r}(s)\right)&\leq& \tilde{d}\left( \hat{Q}^{r}(\zeta^{r}_{2l+1})\right) + \frac{1}{|\lambda|}h\cdot \left(\hat{Q}^{r}(s)-\hat{Q}^{r}(\zeta^{r}_{2l+1})\right)+\frac{B_{\hat{h}}}{|\lambda|}| \hat{W}^{r}(s)-\hat{W}^{r}(\zeta^{r}_{2l+1}) |_{2}\\
&\leq& \tilde{d}\left( \hat{Q}^{r}(\zeta^{r}_{2l+1})\right) +\frac{1}{|\lambda|} h\cdot \left(\hat{Q}^{r}(s)-\hat{Q}^{r}(\zeta^{r}_{2l+1})\right)\\
&&+\frac{B_{\hat{h}}}{|\lambda|}| \hat{W}^{r}(s)-\hat{W}^{r}(t+\xi k r^{-1}) |_{2}+\frac{B_{\hat{h}}}{|\lambda|}| \hat{W}^{r}(\zeta^{r}_{2l+1}) -\hat{W}^{r}(t+\xi k r^{-1}) |_{2}\\
&\leq&  B_{\tilde{d}} J^{\frac{1}{2}}\tilde{c}_{2} r^{\kappa-1}+J^{\frac{1}{2}}B_{\tilde{d}}r^{-1} -r\left(s-\zeta^{r}_{2l+1} \right)\frac{|\tilde{\lambda}|}{2|\lambda|}+\frac{\epsilon}{|\lambda|} r^{-\frac{1}{4}}+2B_{\hat{h}}I^{\frac{1}{2}}\frac{\epsilon}{|\lambda|} r^{-\frac{1}{4}}\\
&\leq& \epsilon r^{-\frac{1}{8}} -r\left(s-\zeta^{r,\epsilon}_{2l+1} \right)\frac{|\tilde{\lambda}|}{2|\lambda|}\\
\end{eqnarray*}
where we have used \eqref{eq:workloadBndComp}, \eqref{eq:503}, 
Summary \ref{sum:costDiffResultSum} part (m), and the fact that by the definition of $\zeta^{r}_{2l+1}$ and Proposition \ref{thm:costDiffBndQ} we have $\tilde{d}\left( \hat{Q}^{r}(\zeta^{r}_{2l+1})\right) \leq B_{\tilde{d}} J^{\frac{1}{2}}\tilde{c}_{2} r^{\kappa-1}+J^{\frac{1}{2}}B_{\tilde{d}}r^{-1}$.
This, combined with the fact that for $r\geq R$, any $l\geq 0$, and all $s\in [\zeta^{r}_{2l},\zeta^{r}_{2l+1})$ we have $\tilde{d}\left( \hat{Q}^{r}(s)\right)\leq B_{\tilde{d}} J^{\frac{1}{2}}\tilde{c}_{2} r^{\kappa-1}\leq  \epsilon r^{-\frac{1}{8}}$ due to the definition of $\zeta^{r}_{2l+1}$ and Summary \ref{sum:costDiffResultSum} part (m), implies that on the set $\left(\mathcal{B}^{r,k,t}\right)^{c}$ for $r\geq R$ and all $s\in [\zeta^{r}_{0},t+kr^{-1}]$ we have
$
\tilde{d}\left( \hat{Q}^{r}(s)\right) \leq \epsilon r^{-\frac{1}{8}}$.
Therefore, from \eqref{eq:516}, on the set $\left(\mathcal{B}^{r,k,t}\right)^{c}$ for $r\geq R$ we have
\begin{equation}\label{eq:526}
\tilde{d}\left( \hat{Q}^{r}(s)\right) \leq  \epsilon r^{-\frac{1}{8}}
\mbox{ for all } s\in[ t+(\xi k+4\xi k|\lambda||\tilde{\lambda}|^{-1})r^{-1},t+k r^{-1}].
\end{equation}
Note that for $r\geq R$ we have
\begin{align}
E_{y^r} \int_{t}^{t+k r^{-1}}\tilde{d}\left( \hat{Q}^{r}(s)\right)ds &\leq E_{y^r} \int_{t}^{ t+\left(\xi k+\frac{4\xi k|\lambda|}{|\tilde{\lambda}|}\right)r^{-1}}\tilde{d}\left( \hat{Q}^{r}(s)\right)ds \nonumber \\
& +E_{y^r} \int_{ t+\left(\xi k+\frac{4\xi k|\lambda|}{|\tilde{\lambda}|}\right)r^{-1}}^{t+k r^{-1}}\tilde{d}\left( \hat{Q}^{r}(s)\right)ds. \label{eq:527}
\end{align}
 and
\begin{align}
E_{y^r}\int_{t}^{ t+\left(\xi k+\frac{4\xi k|\lambda|}{|\tilde{\lambda}|}\right)r^{-1}}\tilde{d}\left( \hat{Q}^{r}(s)\right)ds &\leq \int_{t}^{ t+\left(\xi k+\frac{4\xi k|\lambda|}{|\tilde{\lambda}|}\right)r^{-1}}B_{2}E_{y^r}\left[e^{c |\hat{W}^{r}(s)|}\right]ds\\
&\leq r^{-1}\left(\xi k+\frac{4\xi k|\lambda|}{|\tilde{\lambda}|}\right)B_{2}B_{1}
\leq  r^{-1}k\epsilon \label{eq:529}
\end{align}
due to our choice of $\xi$ and Summary \ref{sum:costDiffResultSum} parts (a) and  (b). 

In addition, due to Summary \ref{sum:costDiffResultSum} parts (a), (b), (c), and our choice of $k$, for all $r\geq R$, $t\geq 0$, and $s\geq 0$ we have 
\begin{eqnarray*}
E_{y^r}\left[\mathcal{I}_{\mathcal{B}^{r,k,t}}\tilde{d}\left(\hat{Q}^{r}(s)\right) \right]&\leq& E_{y^r}\left[\mathcal{I}_{\mathcal{B}^{r,k,t}}B_{2}e^{\frac{c}{2}|\hat{W}^{r}(s)|} \right]\\
&\leq& P_{y^r}(\mathcal{B}^{r,k,t})^{\frac{1}{2}}B_{2}E_{y^r}\left[e^{c|\hat{W}^{r}(s)|} \right]^{\frac{1}{2}}\leq  B^{\frac{1}{2}}_{3}e^{-k\frac{B_{4}}{2}}B_{2}B^{\frac{1}{2}}_{1}
\leq \epsilon.
\end{eqnarray*}
So for $r\geq R$ we have
\begin{align}
&E_{y^r}\left[\int_{ t+\left(\xi k+\frac{4\xi k|\lambda|}{|\tilde{\lambda}|}\right)r^{-1}}^{t+k r^{-1}}\tilde{d}\left( \hat{Q}^{r}(s)\right)ds\right] \nonumber \\
&\leq  E_{y^r}\left[\mathcal{I}_{\left(\mathcal{B}^{r,k,t}\right)^{c}}\int_{ t+\left(\xi k+\frac{4\xi k|\lambda|}{|\tilde{\lambda}|}\right)r^{-1}}^{t+k r^{-1}}\tilde{d}\left( \hat{Q}^{r}(s)\right)ds\right]\nonumber\\
&\quad + E_{y^r}\left[\mathcal{I}_{\mathcal{B}^{r,k,t}}\int_{ t+\left(\xi k+\frac{4\xi k|\lambda|}{|\tilde{\lambda}|}\right)r^{-1}}^{t+k r^{-1}}\tilde{d}\left( \hat{Q}^{r}(s)\right)ds\right]\nonumber\\
&\leq r^{-1}\left(( 1-\xi)k-\frac{4\xi k|\lambda|}{|\tilde{\lambda}|}\right)\epsilon r^{-\frac{1}{8}}+ r^{-1}\left(( 1-\xi)k-\frac{4\xi k|\lambda|}{|\tilde{\lambda}|}\right)\epsilon
\leq r^{-1}k2\epsilon. \label{eq:533}
\end{align}
Combining  \eqref{eq:527}, \eqref{eq:529}, and \eqref{eq:533}, we have
 that for $r\geq R$ and any $t\geq 0$
\begin{equation*}
E_{y^r}\left[\int_{t}^{t+k r^{-1}}\tilde{d}\left( \hat{Q}^{r}(s)\right)ds\right]\leq r^{-1}3 k \epsilon.
\end{equation*}

We can now complete the proof. Consider first the discounted case.
For $r\geq R$ we have
\begin{multline*}
E_{y^r}\left[\int_{0}^{\infty}e^{-s\vsg}\tilde{d}\left( \hat{Q}^{r}(s)\right) ds\right]
 \leq\sum_{l=0}^{\infty}e^{-l\vsg\left( k r^{-1}\right)}E_{y^r}\left[\int_{lk r^{-1}}^{(l+1)k r^{-1}}\tilde{d}\left( \hat{Q}^{r}(s)\right)ds\right]\\
 \leq \sum_{l=0}^{\infty}e^{-l\vsg\left(k r^{-1}\right)}r^{-1}3k\epsilon 
  \leq \frac{1}{1-e^{-\vsg\left(k r^{-1}\right)}}r^{-1}3k \epsilon  \\
  \leq \frac{1}{e^{-\vsg\left( k r^{-1}\right)}\vsg\left( k r^{-1}\right)}3r^{-1}k\epsilon\leq \frac{3\epsilon e^{\vsg\left(k r^{-1}\right)}}{\vsg} 
  \leq\frac{3\epsilon e^{\vsg\epsilon r^{-\frac{7}{8}}}}{\vsg} 
  \leq \frac{3\epsilon e^{\vsg\epsilon}}{\vsg} 
\end{multline*}
due to Summary \ref{sum:costDiffResultSum} part (d). Taking $\epsilon = \frac{\vsg e^{-\vsg}}{3}\epsilon_0 \wedge 1$  proves the first statement in the proposition. 

Consider now the ergodic cost. Let $T^{*}=kR^{-1}$.  For any $t\geq 0$, $T\geq T^{*}$, and $r\geq R$ we have
\begin{multline*}
E_{y^r}\left[\frac{1}{T}\int_{t}^{t+T}\tilde{d}\left( \hat{Q}^{r}(s)\right) ds \right]  \leq  \frac{1}{T}\sum_{l=1}^{\lceil \frac{T}{kr^{-1}} \rceil }E_{y^r}\left[\int_{t+(l-1)kr^{-1}}^{t+l kr^{-1}}\tilde{d}\left( \hat{Q}^{r}(s)\right)ds\right]\\
\leq \frac{1}{T}\sum_{l=1}^{\lceil \frac{T}{kr^{-1}} \rceil }r^{-1}3k \epsilon 
\leq \frac{1}{T}\left( \frac{T}{kr^{-1}}+1 \right) r^{-1}3k \epsilon 
\leq 3\epsilon+\frac{3kR^{-1}\epsilon}{T^{*} }
\leq 6\epsilon.
\end{multline*}
Taking $\epsilon= \epsilon_0/6$ completes the proof of the second statement in the proposition. \hfill \qed

\section{Proofs of Some Stability Results}
\label{sec:stabproofs}
In this section we prove the key stability results, namely Proposition \ref{thm:waitTimeExpBnd}, Lemma \ref{thm:VMarkovLong}, and Proposition \ref{thm:TildeGammaLower}, that were used in the proof of Proposition \ref{thm:workloadExpBnd} in Section \ref{sec:secworkloadexp}.

\subsection{Proof of Proposition \ref{thm:waitTimeExpBnd}}
\label{sec:thmwaittimeex}

We begin with the following  auxiliary result which is proved in Section \ref{sec:proofLargeWaitBnd}. 
\begin{proposition}
\label{thm:largeWaitBnd}
For any $c>0$ and $\epsilon>0$ there exist constants $B,R\in(0,\infty)$ such that for all $T\geq 1$, $j\in\AAA_J$, and $r\geq R$ we have
\begin{equation}
P\left(\sum_{l=1}^{\lceil r^{2}T \rceil}\mathcal{I}_{\{v^{r}_{j}(l)\geq r c\}}v^{r}_{j}(l) \geq \epsilon T \right) \leq e^{-BT} \label{eq:eq706a}
\end{equation}
and
\begin{equation}
P\left(\sum_{l=1}^{\lceil r^{2}T \rceil}\mathcal{I}_{\{u^{r}_{j}(l)\geq r c\}}u^{r}_{j}(l) \geq \epsilon T \right) \leq e^{-BT}. \label{eq:eq706b}
\end{equation}
\end{proposition}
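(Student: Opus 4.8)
\textbf{Proof proposal for Proposition \ref{thm:largeWaitBnd}.}

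The plan is to prove \eqref{eq:eq706a} by a standard exponential Chernoff/Markov argument based on the moment generating function bound in Condition \ref{eqn:mgfBnd}; the proof of \eqref{eq:eq706b} is identical with $u^r_j$ in place of $v^r_j$, so I would only write one of them. Fix $c>0$, $\epsilon>0$, and $j\in\AAA_J$. The key observation is that for each $l$ the truncated summand $\mathcal I_{\{v^r_j(l)\ge rc\}}v^r_j(l)$ is a nonnegative random variable that is either $0$ or at least $rc$, and whose contribution is controlled by the exponential moment of $v^r_j(l)$. Write $N_T \doteq \lceil r^2 T\rceil$ and $Z^r_{j,l}\doteq \mathcal I_{\{v^r_j(l)\ge rc\}}v^r_j(l)$; the $Z^r_{j,l}$ are i.i.d.\ over $l$.

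First I would fix $\vartheta\in(0,\delta)$ with $\delta$ as in Condition \ref{eqn:mgfBnd}, and observe that for any $\mu\in(0,\vartheta]$,
\begin{equation*}
E\left[e^{\mu Z^r_{j,1}}\right] = P(v^r_j(1)<rc) + E\left[\mathcal I_{\{v^r_j(1)\ge rc\}}e^{\mu v^r_j(1)}\right] \le 1 + E\left[\mathcal I_{\{v^r_j(1)\ge rc\}}e^{\vartheta v^r_j(1)}\right],
\end{equation*}
where in the last step I used $\mu\le\vartheta$ and $v^r_j(1)\ge 0$. The tail term is small: by Markov's inequality applied with the exponent $(\vartheta+\mu)/2 \le \vartheta$ (or simply by dominated-type reasoning), $E[\mathcal I_{\{v^r_j(1)\ge rc\}}e^{\vartheta v^r_j(1)}] \le e^{-\vartheta' rc}\sup_{r}E[e^{(\vartheta+\vartheta')v^r_j(1)}]$ for a suitable $\vartheta'>0$ with $\vartheta+\vartheta'<\delta$, which is $\le \Lambda e^{-\vartheta' r c}$ for a finite constant $\Lambda = \sup_r E[e^{(\vartheta+\vartheta')v^r_j(1)}]<\infty$ (finite by Condition \ref{eqn:mgfBnd}, for $r\ge R_0$ say). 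Hence, using $1+x\le e^x$,
\begin{equation*}
E\left[e^{\mu Z^r_{j,1}}\right]\le 1 + \Lambda e^{-\vartheta' r c}\le \exp\left(\Lambda e^{-\vartheta' r c}\right).
\end{equation*}
I will fix $\mu=\vartheta$ in what follows.

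Now the Chernoff bound gives, for all $r\ge R_0$ and $T\ge 1$,
\begin{equation*}
P\left(\sum_{l=1}^{N_T}Z^r_{j,l}\ge \epsilon T\right)\le e^{-\vartheta\epsilon T}\prod_{l=1}^{N_T}E\left[e^{\vartheta Z^r_{j,l}}\right]\le \exp\left(-\vartheta\epsilon T + N_T\,\Lambda e^{-\vartheta' r c}\right).
\end{equation*}
Since $N_T=\lceil r^2 T\rceil\le r^2 T + 1 \le 2r^2 T$ for $T\ge 1$, the exponent is at most $-\vartheta\epsilon T + 2\Lambda r^2 e^{-\vartheta' r c}T$. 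Because $r^2 e^{-\vartheta' r c}\to 0$ as $r\to\infty$, there exists $R\ge R_0$ such that $2\Lambda r^2 e^{-\vartheta' r c}\le \vartheta\epsilon/2$ for all $r\ge R$. Setting $B\doteq \vartheta\epsilon/2$, we get $P(\sum_{l=1}^{N_T}Z^r_{j,l}\ge \epsilon T)\le e^{-BT}$ for all $r\ge R$ and $T\ge 1$, which is \eqref{eq:eq706a}. The proof of \eqref{eq:eq706b} is verbatim the same with $u^r_j$, $\alpha^r_j$ in place of $v^r_j$, $\beta^r_j$, using the second half of Condition \ref{eqn:mgfBnd}; one may take the maximum of the two constants $R$ and the minimum of the two constants $B$ over the finitely many $j$ to obtain uniform constants.

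\textbf{Expected main obstacle.} There is no deep obstacle here — the only thing to be careful about is the interplay of the three exponential parameters ($\mu$ used in Chernoff, $\vartheta$ used to dominate the truncated term, and $\vartheta'$ used to extract the $e^{-\vartheta' rc}$ decay), all of which must stay strictly below $\delta$; and the uniformity in $r$ of the moment generating functions, which is exactly what the ``$\sup_{r>0}$'' in Condition \ref{eqn:mgfBnd} provides. The essential point that makes the bound work is that the truncation level $rc$ grows linearly in $r$ while the number of terms grows like $r^2 T$, so the per-term tail mass $e^{-\vartheta' rc}$ decays fast enough to kill the factor $r^2$; this is what forces the cutoff $R$ and is the one place where the argument genuinely uses $c>0$.
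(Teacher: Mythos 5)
Your proof is correct and follows essentially the same route as the paper: both bound $E[e^{\mu\mathcal I_{\{v^r_j(1)\ge rc\}}v^r_j(1)}]$ by splitting off a factor $e^{-\vartheta'rc}$ (the paper uses the concrete choices $\mu=\delta/2$, $\vartheta'=\delta/4$, $\vartheta+\vartheta'=3\delta/4$), apply $1+x\le e^x$, and then use a Chernoff bound together with the observation that $r^2e^{-\vartheta'rc}\to 0$ kills the $\lceil r^2T\rceil$ factor. The only difference is that the paper fixes the exponential parameters explicitly rather than leaving them symbolic.
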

We now complete the proof of Proposition \ref{thm:waitTimeExpBnd}.\\

\noindent 
{\bf Proof of Proposition \ref{thm:waitTimeExpBnd}.} 
Recall $\rho^*$ introduced below Proposition \ref{thm:fullCapVect} and $\theta$ from Condition \ref{cond:htc}.
Fix $i\in\AAA_I$ and $y^r = (\hat{q}^r,\hat{\Upsilon}^r,\tilde{\mathcal{E}}^r)\in \cly^r$.  Let $\tilde{\upsilon}>0$, $\xi \ge 0$ be given. 
Let $\epsilon_{1}=\frac{\frac{|\theta_{i}|}{12}}{JC_{i}\left(1+\frac{4}{\rho^*}\right)}$, $\epsilon_{2}=\frac{|\theta_{i}|}{24JC_{i} }$, and $\epsilon_{3}=\frac{|\theta_{i}|\min_{j}\{\beta_{j}\}}{96J}$.  Define the sets
\begin{eqnarray*}
\mathcal{A}^{r,T}_{j}&=&\left\{\sup_{0\leq t \leq r^{2}T}|A_{j}^{r}(t)-t\alpha^{r}_{j}|\leq \epsilon_{3} rT \right\}\cap\left\{\sup_{0\leq t \leq r^{2}\max_{i}\{C_{i}\}T}|S_{j}^{r}(t)-t\beta^{r}_{j}|\leq \epsilon_{3} rT \right\}\\
&&\cap\left\{ \int_{0}^{r^{2}T}\mathcal{I}_{\{ \mathcal{E}_{j}^{r}(s)=1\}}ds\leq r\hat{\Upsilon}^{A,r}_{j}+\epsilon_{2} T r \right\}\\
&&\cap\left\{ \sum_{l=1}^{\lceil 2r^{2}\max_{i}\{C_{i}\}\beta^{r}_{j}T \rceil}\mathcal{I}_{\{v^{r}_{j}(l)> r \frac{\tilde{\upsilon}}{2J+1}\}}v^{r}_{j}(l) \leq r \epsilon_{1} T\right\} \\
&&\cap\left\{ \sum_{l=1}^{\lceil 2r^{2}\alpha^{r}_{j}T \rceil}\mathcal{I}_{\{u^{r}_{j}(l)> r \frac{\tilde{\upsilon}}{2J+1}\}}u^{r}_{j}(l) \leq r \epsilon_{1} T\right\}\\
\end{eqnarray*}
and
$
\mathcal{A}^{r,T}=\cap_{j=1}^{J}\mathcal{A}^{r,T}_{j}$.
Due to Propositions \ref{thm:expTailBnd} (equations \eqref{eq:31.d} and \eqref{eq:31.c} with $c_1=1$ and $c_2=0$), \ref{thm:largeWaitBnd} , and \ref{thm:idleTimeBndMark} (and since $\kappa <1/4$) we know there exist constants $B_{1},B_{2},R\in (0,\infty)$ such that for all $r\geq R$, $y^r \in \cly^r$, and $T\geq 1$ we have
\begin{equation}
P_{y^r}\left( \left( \mathcal{A}^{r,T}\right)^{c}\right)\leq B_{1}e^{-B_{2}T},\;\;
\epsilon_{3} T\leq r\max_{i}\{C_{i}\}\beta^{r}_{j}T,\;\; \mbox{ and }
\epsilon_{3} T\leq r\alpha^{r}_{j}T. \label{eq:606}
\end{equation}
In addition, we assume $R$ is sufficiently large that for all $r\geq R$ we have $\frac{3\theta}{4}\geq r(K\rho^{r}-C)$, $\tilde{\upsilon}>Jr^{-1}$, $\frac{\tilde{\upsilon}}{2J+1}\geq Jr^{\kappa-1} c_{2}$, $2\min_{j}\{\beta^{r}_{j}\}\geq \min_{j}\{\beta_{j}\}$,and for all $j\in\AAA_J$ and $t\geq 0$ we have $x^{r}_{j}(t)\geq \frac{\rho^{*}}{4}$ (see Proposition \ref{thm:schemeSum} part (c)).  For the rest of the proof we will restrict ourselves to values of $r$ satisfying $r\geq R$.  Note that if $\hat{W}^{r}_{i}(s)\geq \frac{\tilde{\upsilon}}{2J+1}\geq Jc_{2}r^{\kappa-1}$ and $\sum_{j=1}^{J}\mathcal{I}_{\{\tilde{\mathcal{E}}^{r}_{j}(s)=1\}}=0$ then due to Proposition \ref{thm:schemeSum} part (d) we have
$
(K\yr)_{i}(r^{2}s)=C_{i}$.
This implies that if 
$
\sum_{j=1}^{J}\mathcal{I}_{\{\tilde{\mathcal{E}}^{r}_{j}(s)=1\}}=0$,
\begin{equation*}
\sum_{j=1}^{J}\mathcal{I}_{\{\hat{\Upsilon}^{S,r}_{j}(s)> \frac{\tilde{\upsilon}}{2J+1}\}}+\sum_{j=1}^{J}\mathcal{I}_{\{\hat{\Upsilon}^{A,r}_{j}(s)> \frac{\tilde{\upsilon}}{2J+1}\}}=0, \mbox{ and }
\hat{W}^{r}_{i}(s)+\sum_{j=1}^{J}\hat{\Upsilon}^{S,r}_{j}(s)+\sum_{j=1}^{J}\hat{\Upsilon}^{A,r}_{j}(s)\geq \tilde{\upsilon}
 \end{equation*}
  then
$(K\yr)_{i}(r^{2}s)=C_{i}$.
Define 
\begin{equation*}
\hat{t}\doteq\sup\{t\in [0,\xi]: \hat{W}^{r}_{i}(t)+\sum_{j=1}^{J}\hat{\Upsilon}^{S,r}_{j}(t)+\sum_{j=1}^{J}\hat{\Upsilon}^{A,r}_{j}(t)<\tilde{\upsilon}\}
\end{equation*}
with the convention that if 
\begin{equation*}
\hat{W}^{r}_{i}(t)+\sum_{j=1}^{J}\hat{\Upsilon}^{S,r}_{j}(t)+\sum_{j=1}^{J}\hat{\Upsilon}^{A,r}_{j}(s)\geq \tilde{\upsilon}
\mbox{ for all } t\in [0,\xi] \mbox{ then }
\hat{t}\doteq 0.
\end{equation*}
Define
\begin{equation*}
T^{*}\doteq 1+2\xi+\frac{4}{|\theta_{i}|}\left(\hat{w}^{r}_{i}+3\tilde{\upsilon}+ \left( C_{i}+1\right)\sum_{j=1}^{J}\hat{\Upsilon}^{S,r}_{j}+3 C_{i}\sum_{j=1}^{J}\hat{\Upsilon}^{A,r}_{j}\right) 
\end{equation*}
and let $T>T^{*}$.  Then on the set $\left\{\tilde{\gamma}^{r,\tilde{\upsilon}}_{i,\xi}> T\right\}\cap\mathcal{A}^{r,T}$,   for all $s\in [\hat{t},T]$, we have 
\begin{equation*}
\hat{W}^{r}_{i}(s)+\sum_{j=1}^{J}\hat{\Upsilon}^{S,r}_{j}(s)+\sum_{j=1}^{J}\hat{\Upsilon}^{A,r}_{j}(s)\geq \tilde{\upsilon}
\end{equation*}
so for any $s\in [\hat{t},T]$ satisfying
$
\sum_{j=1}^{J}\mathcal{I}_{\{\mathcal{E}^{r}_{j}(s)=1\}}=0,
$
and
\begin{equation*}
\sum_{j=1}^{J}\mathcal{I}_{\{\hat{\Upsilon}^{S,r}_{j}(s)> \frac{\tilde{\upsilon}}{2J+1}\}}+\sum_{j=1}^{J}\mathcal{I}_{\{\hat{\Upsilon}^{A,r}_{j}(s)> \frac{\tilde{\upsilon}}{2J+1}\}}=0,
\end{equation*}
we have
$
(K\yr)_{i}(s)=C_{i}$.
Consequently, on the set $\left\{\tilde{\gamma}^{r,\tilde{\upsilon}}_{i,\xi}> T\right\}\cap\mathcal{A}^{r,T}$,
\begin{align}
&\sum_{j=1}^{J}K_{i,j}\left(\bar{B}^{r}_{j}(T)-\bar{B}^{r}_{j}(\hat{t}) \right) = \int_{ \hat{t}}^{T}\sum_{j=1}^{J}K_{i,j}\yr_{j}(r^{2}s)ds \nonumber\\
&\geq C_{i}(T- \hat{t})- C_{i}\sum_{j=1}^{J}\int_{\hat{t}}^{T}\mathcal{I}_{\{\tilde{\mathcal{E}}^{r}_{j}(s)=1\}}ds\nonumber \\
&\quad - C_{i}\sum_{j=1}^{J}\int_{\hat{t}}^{T}\mathcal{I}_{\{\hat{\Upsilon}^{S,r}_{j}(s)> \frac{\tilde{\upsilon}}{2J+1}\}}ds- C_{i}\sum_{j=1}^{J}\int_{\hat{t}}^{T}\mathcal{I}_{\{\hat{\Upsilon}^{A,r}_{j}(s)> \frac{\tilde{\upsilon}}{2J+1}\}}ds\nonumber \\
&\geq C_{i}(T-\hat{t})- C_{i}\sum_{j=1}^{J}\int_{0}^{T}\mathcal{I}_{\{\tilde{\mathcal{E}}^{r}_{j}(s)=1\}}ds\nonumber \\
&\quad - C_{i}\sum_{j=1}^{J}\int_{0}^{T}\mathcal{I}_{\{\hat{\Upsilon}^{S,r}_{j}(s)> \frac{\tilde{\upsilon}}{2J+1}\}}ds- C_{i}\sum_{j=1}^{J}\int_{0}^{T}\mathcal{I}_{\{\hat{\Upsilon}^{A,r}_{j}(s)> \frac{\tilde{\upsilon}}{2J+1}\}}ds. \label{eq:609}
\end{align}
For all $j\in\AAA_J$ we have 
\begin{align*}
&\int_{0}^{T}\mathcal{I}_{\left\{\hat{\Upsilon}^{S,r}_{j}(s)> \frac{\tilde\upsilon}{2J+1}r\right\}}ds\\
&\leq r^{-1}\hat{\Upsilon}^{S,r}_{j}+\sum_{l=1}^{\tau^{r,S}_{j}(T)}\mathcal{I}_{\{ v^{r}_{j}(l)> \frac{\tilde{\upsilon}}{2J+1}r \}}\frac{1}{r^{2}}\int_{0}^{r^{2}T}\mathcal{I}_{\left\{ \sum_{k=1}^{l-1}v^{r}_{j}(k)\leq B^{r}_{j}(s) \leq \sum_{k=1}^{l}v^{r}_{j}(k) \right\}}ds.
\end{align*}
Recall that due to Proposition \ref{thm:schemeSum} part (c) and our assumption on the size of $r$ we have $\yr_{j}(s)\geq \frac{\rho^*}{4}$ unless $\mathcal{E}^{r}_{j}(s)=1$ so for all $l\in\{1,...,\tau^{r,S}_{j}(T)\}$ we have
\begin{eqnarray*}
&&\int_{0}^{r^{2}T}\mathcal{I}_{\left\{ \sum_{k=1}^{l-1}v^{r}_{j}(k)\leq B^{r}_{j}(s) \leq \sum_{k=1}^{l}v^{r}_{j}(k) \right\}}ds\\
 &\leq &\int_{0}^{r^{2}T}\mathcal{I}_{\{\mathcal{E}^{r}_{j}(s)=0\}}\mathcal{I}_{\left\{ \sum_{k=1}^{l-1}v^{r}_{j}(k)\leq B^{r}_{j}(s) \leq \sum_{k=1}^{l}v^{r}_{j}(k) \right\}}ds\\
&&+\int_{0}^{r^{2}T}\mathcal{I}_{\{\mathcal{E}^{r}_{j}(s)=1\}}\mathcal{I}_{\left\{ \sum_{k=1}^{l-1}v^{r}_{j}(k)\leq B^{r}_{j}(s) \leq \sum_{k=1}^{l}v^{r}_{j}(k) \right\}}ds\\
&\leq& \frac{4}{\rho^*}v^{r}_{j}(l)+\int_{0}^{r^{2}T}\mathcal{I}_{\{\mathcal{E}^{r}_{j}(s)=1\}}\mathcal{I}_{\left\{ \sum_{k=1}^{l-1}v^{r}_{j}(k)\leq B^{r}_{j}(s) \leq \sum_{k=1}^{l}v^{r}_{j}(k) \right\}}ds.\\
\end{eqnarray*}
Combining these last two inequalities gives
\begin{align*}
&\int_{0}^{T}\mathcal{I}_{\left\{\hat{\Upsilon}^{S,r}_{j}(s)> \frac{\tilde{\upsilon}}{2J+1}r\right\}}ds
\leq r^{-1}\hat{\Upsilon}^{S,r}_{j}+\sum_{l=1}^{\tau^{r,S}_{j}(T)}\mathcal{I}_{\{ v^{r}_{j}(l)> \frac{\tilde{\upsilon}}{2J+1}r \}}\frac{1}{r^{2}}\frac{4}{\rho^*}v^{r}_{j}(l)\\
&\quad+\sum_{l=1}^{\tau^{r,S}_{j}(T)}\mathcal{I}_{\{ v^{r}_{j}(l)> \frac{\tilde{\upsilon}}{2J+1}r \}}\frac{1}{r^{2}}\int_{0}^{r^{2}T}\mathcal{I}_{\{\mathcal{E}^{r}_{j}(s)=1\}}\mathcal{I}_{\left\{ \sum_{k=1}^{l-1}v^{r}_{j}(k)\leq B^{r}_{j}(s) \leq \sum_{k=1}^{l}v^{r}_{j}(k) \right\}}ds\\
&\leq r^{-1}\hat{\Upsilon}^{S,r}_{j}+\frac{4}{\rho^*}\frac{1}{r^{2}}\sum_{l=1}^{\tau^{r,S}_{j}(T)}\mathcal{I}_{\{ v^{r}_{j}(l)> \frac{\tilde{\upsilon}}{2J+1}r \}}v^{r}_{j}(l)+\int_{0}^{T}\mathcal{I}_{\{\tilde{\mathcal{E}}^{r}_{j}(s)=1\}}ds.
\end{align*}
Note that on the set $\mathcal{A}^{r,T}$ we have $\tau^{r,S}_{j}(T)\leq \lceil 2 r^{2}\max_{i}\{C_{i}\}\beta^{r}_{j}T \rceil$ and
\begin{equation*}
\frac{1}{r^{2}}\sum_{l=1}^{\lceil 2 r^{2}\max_{i}\{C_{i}\}\beta^{r}_{j}T \rceil}\mathcal{I}_{\{ v^{r}_{j}(l)> \frac{\tilde{\upsilon}}{2J+1}r \}}v^{r}_{j}(l)\leq r^{-1}\epsilon_{1} T
\end{equation*}
which gives
\begin{eqnarray*}
\int_{0}^{T}\mathcal{I}_{\left\{\hat{\Upsilon}^{S,r}_{j}(s)\geq \frac{\tilde{\upsilon}}{2J+1}r\right\}}ds=r^{-1}\hat{\Upsilon}^{S,r}_{j}+r^{-1}\frac{4\epsilon_{1}}{\rho^*}T+\int_{0}^{T}\mathcal{I}_{\{\tilde{\mathcal{E}}^{r}_{j}(s)=1\}}ds.
\end{eqnarray*}
Also, on the set $\mathcal{A}^{r,T}$, we have $\tau^{r,A}_{j}(T)\leq \lceil 2 r^{2}\alpha^{r}_{j}T \rceil$ and
\begin{equation*}
\frac{1}{r^{2}}\sum_{l=1}^{\lceil 2 r^{2}\alpha^{r}_{j}T \rceil}\mathcal{I}_{\{ u^{r}_{j}(l)> \frac{\tilde{\upsilon}}{2J+1}r \}}u^{r}_{j}(l)\leq r^{-1}\epsilon_{1} T
\end{equation*}
so
\begin{align*}
&\int_{0}^{T}\mathcal{I}_{\left\{\hat{\Upsilon}^{A,r}_{j}(s)> \frac{\tilde{\upsilon}}{2J+1}r\right\}}ds\\
&\leq r^{-1}\hat{\Upsilon}^{A,r}_{j}+\sum_{l=1}^{\tau^{r,A}_{j}(T)}\mathcal{I}_{\{ u^{r}_{j}(l)> \frac{\tilde{\upsilon}}{2J+1}r \}}\frac{1}{r^{2}}\int_{0}^{r^{2}T}\mathcal{I}_{\left\{ \sum_{k=1}^{l-1}u^{r}_{j}(k)\leq s \leq \sum_{k=1}^{l}u^{r}_{j}(k) \right\}}ds\\
&\leq r^{-1}\hat{\Upsilon}^{A,r}_{j}+\frac{1}{r^{2}}\sum_{l=1}^{\tau^{r,A}_{j}(T)}\mathcal{I}_{\{ u^{r}_{j}(l)> \frac{\tilde{\upsilon}}{2J+1}r \}}u^{r}_{j}(l)
\leq r^{-1}\hat{\Upsilon}^{A,r}_{j}+r^{-1}\epsilon_{1} T.
\end{align*}
Thus from \eqref{eq:609}, on $\left\{\tilde{\gamma}^{r,\tilde{\upsilon}}_{i,\xi}> T\right\}\cap\mathcal{A}^{r,T}$,
\begin{align}
&\sum_{j=1}^{J}K_{i,j}\left(\bar{B}^{r}_{j}(T)-\bar{B}^{r}_{j}( \hat{t}) \right)\nonumber\\
&\geq C_{i}(T- \hat{t})- 2C_{i}\sum_{j=1}^{J}\int_{0}^{T}\mathcal{I}_{\{\mathcal{E}^{r}_{j}(r^{2}s)=1\}}ds-r^{-1}JC_{i}\epsilon_{1} T\left(1+\frac{4}{\rho^*}\right) \nonumber\\
&\quad - r^{-1}C_{i}\sum_{j=1}^{J}\hat{\Upsilon}^{S,r}_{j}- r^{-1}C_{i}\sum_{j=1}^{J}\hat{\Upsilon}^{A,r}_{j}\nonumber \\
&\geq C_{i}(T- \hat{t})- r^{-1}C_{i}J2\epsilon_{2} T-r^{-1}JC_{i}\epsilon_{1} T\left(1+\frac{4}{\rho^*}\right)\nonumber \\
&\quad - r^{-1}C_{i}\sum_{j=1}^{J}\hat{\Upsilon}^{S,r}_{j}-3 r^{-1}C_{i}\sum_{j=1}^{J}\hat{\Upsilon}^{A,r}_{j}. \label{eq:615}
\end{align}
Consequently on$\left\{\tilde{\gamma}^{r,\tilde{\upsilon}}_{i,\xi}> T\right\}\cap\mathcal{A}^{r,T}$ we have
\begin{align*}
&\hat{W}^{r}_{i}(T)\\
&\leq\hat{W}^{r}_{i}(\hat{t})+Jr^{-1}+\sum_{j=1}^{J}K_{i,j}\frac{1}{\beta^{r}_{j}}\left(\hat{A}^{r}_{j}\left(\left(T-\bar{\Upsilon}^{A,r}_{j}\right)^{+}\right)-\hat{A}^{r}_{j}\left(\left( \hat{t}-\bar{\Upsilon}^{A,r}_{j}\right)^{+}\right) \right)\\
& \quad -\sum_{j=1}^{J}K_{i,j}\frac{1}{\beta^{r}_{j}}\left(\hat{S}^{r}_{j}\left(\left(\bar{B}^{r}_{j}(T)-\bar{\Upsilon}^{S,r}_{j}\right)^{+}\right)-\hat{S}^{r}_{j}\left(\left(\bar{B}^{r}_{j}( \hat{t})-\bar{\Upsilon}^{S,r}_{j}\right)^{+}\right)\right)+\sum_{j=1}^{J}K_{i,j}\hat{\Upsilon}^{S,r}\\
&\quad+(T-\hat{t})r((K\rho^{r})_{i}-C_{i}) +r\left(C_{i}(T-\hat{t})-\sum_{j=1}^{J}K_{i,j}(\bar{B}^{r}(T)-\bar{B}^{r}(\hat{t}))\right)\\
&\leq \hat{W}^{r}_{i}(\hat{t})+\tilde{\upsilon}+\frac{4J}{\min_{j}\{\beta^{r}_{j}\}}\epsilon_{3}T+\frac{3\theta_i}{4}(T-\hat{t})+ C_{i}J2\epsilon_{2} T\\
&\quad+JC_{i}\epsilon_{1} T\left(1+\frac{4}{\rho^*}\right)+(C_{i}+1)\sum_{j=1}^{J}\hat{\Upsilon}^{S,r}_{j}+3 C_{i}\sum_{j=1}^{J}\hat{\Upsilon}^{A,r}_{j}.\\
\end{align*}
Note that $\hat{W}^{r}_{i}(\hat{t})\leq \max\{\hat{w}^r_i,\tilde{\upsilon}+r^{-1}J\}\leq \hat{w}^r_i +2\tilde{\upsilon}$ which, combined with our choices of $\epsilon_i$, gives
\begin{eqnarray*}
\hat{W}^{r}_{i}(T)
&\leq& \hat{w}^r_i+3\tilde{\upsilon}-\frac{\theta_{i}}{12}T+\frac{3\theta_i}{4}(T-\hat{t})-\frac{\theta_{i}}{12}T-\frac{\theta_{i}}{12}T+(C_{i}+1)\sum_{j=1}^{J}\hat{\Upsilon}^{S,r}_{j}+3 C_{i}\sum_{j=1}^{J}\hat{\Upsilon}^{A,r}_{j}\\
&\leq& \hat{w}^r_i+3\tilde{\upsilon}+\frac{3\theta_i}{4}(T-\xi)-\frac{\theta_{i}}{4}T+(C_{i}+1)\sum_{j=1}^{J}\hat{\Upsilon}^{S,r}_{j}+3 C_{i}\sum_{j=1}^{J}\hat{\Upsilon}^{A,r}_{j}\\
&\leq& \hat{w}^r_i+3\tilde{\upsilon}+(C_{i}+1)\sum_{j=1}^{J}\hat{\Upsilon}^{S,r}_{j}+3 C_{i}\sum_{j=1}^{J}\hat{\Upsilon}^{A,r}_{j}+\frac{\theta_i}{4}(T-\xi),\\
\end{eqnarray*}
where in the last inequality we have used $T< 2(T-\xi)$ which follows from our choice of $T$.
Since $T>\xi+\frac{4}{|\theta_{i}|}\left(\hat{w}^r_i+3\tilde{\upsilon}+ \left( C_{i}+1\right)\sum_{j=1}^{J}\hat{\Upsilon}^{S}_{j}+3 C_{i}\sum_{j=1}^{J}\hat{\Upsilon}^{A}_{j}\right)$
and $\theta_{i}<0$ this implies
\begin{eqnarray*}
\hat{W}^{r}_{i}(T)&<&\hat{w}^r_i+3\tilde{\upsilon}+ \left( C_{i}+1\right)\sum_{j=1}^{J}\hat{\Upsilon}^{S}_{j}+3 C_{i}\sum_{j=1}^{J}\hat{\Upsilon}^{A}_{j}\\
&&+\frac{1}{4}\theta_{i}\left(-\frac{4}{\theta_{i}}\left(w^r_i+3\tilde{\upsilon}+ \left( C_{i}+1\right)\sum_{j=1}^{J}\hat{\Upsilon}^{S}_{j}+3 C_{i}\sum_{j=1}^{J}\hat{\Upsilon}^{A}_{j}\right)\right)
=0
\end{eqnarray*}
which contradicts the fact that $\hat{W}^{r}_{i}(t)\geq 0$ for all $t\geq 0$.  Therefore for all $T>T^{*}$ we have $\left\{\tilde{\gamma}^{r,\tilde{\upsilon}}_{i,\xi}> T\right\}\cap\mathcal{A}^{r,T}=\emptyset$ which says that $\left\{\tilde{\gamma}^{r,\tilde{\upsilon}}_{i,\xi}> T\right\}\subset \left(\mathcal{A}^{r,T}\right)^{c}$ and consequently from \eqref{eq:606}
$
 P_{y^r}\left(\tilde{\gamma}^{r,\tilde{\upsilon}}_{i,\xi}> T \right)\leq P_{y^r}\left( \left(\mathcal{A}^{r,T}\right)^{c} \right)\leq B_{1}e^{-B_{2}T}$.
Finally, for any $c\in (0,\frac{1}{2}B_{2}]$ and $r\geq R$ we have
\begin{eqnarray*}
E_{y^r}[e^{c\tilde{\gamma}^{r,\tilde{\upsilon}}_{i,\xi}}]
&\leq & e^{c T^{*}}+\int_{\left(e^{c T^{*}},\infty\right)}P_{y^r}\left(\tilde{\gamma}^{r,\tilde{\upsilon}}_{i,\xi}> \frac{\ln(x)}{c} \right)dx\\
&\leq& e^{c T^{*}}+\int_{\left(e^{c T^{*}},\infty\right)}B_{1}e^{-\frac{B_{2}\ln(x)}{c}}dx
\\
&\leq&e^{\frac{1}{2}B_{2} T^{*}}+B_{1}.
\end{eqnarray*}
The result follows on recalling the definition of $T^*$.
\hfill \qed

\subsection{Proof of Lemma 
\ref{thm:VMarkovLong}}
\label{sec:vmarkovlong}
Since $\hat{Y}^{r}(\cdot)$ is a $\clg^{r}(t)$ Markov process, we have,  for $0\leq s<t$, 
\begin{eqnarray*}
E_{y}\left[ V^{r,\tilde{\upsilon}}_{i}\left(\hat{Y}^{r}(t)\right)\right]&=&E_{y}\left[e^{\tilde{\delta}( \tilde{\gamma}^{r,\tilde{\upsilon}}_{i,t}-t)}\right]
=E_{y}\left[E_{y}\left[ e^{\tilde{\delta}(\tilde{\gamma}^{r,\tilde{\upsilon}}_{i,t}-t)}|\clg^{r}(s) \right]\right]\\
&=&E_{y}\left[E_{\hat{Y}^{r}(s)}\left[ e^{\tilde{\delta}(\tilde{\gamma}^{r,\tilde{\upsilon}}_{i,t-s}-(t-s))} \right]\right]
= E_{y}\left[E_{\hat{Y}^{r}(s)}\left[ V^{r,\tilde{\upsilon}}_{i}\left(\hat{Y}^{r}(t-s)\right)\right]\right].
\end{eqnarray*}
\begin{proposition}
\label{thm:VMarkovShort}
For $\tilde{\upsilon}>0$ let $\tilde{\delta}$ be as in Proposition \ref{thm:waitTimeExpBnd}. There exist constants $\tilde{R},\tilde{B}\in (0,\infty)$ such that for any $y=(\hat{q},\hat{\Upsilon},\tilde{\mathcal{E}})\in \cly^r$, $i\in\AAA_I$, $s\in [0,1]$, and $r\geq\tilde{R}$ we have
\begin{equation*}
E_{y}\left[ V^{r,\tilde{\upsilon}}_{i}\left(\hat{Y}^{r}(s)\right)\right]\leq e^{-\tilde{\delta}s}V^{r,\tilde{\upsilon}}_{i}\left(y\right)+\tilde{B}.
\end{equation*}
\end{proposition}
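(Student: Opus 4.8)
The plan is to combine the strong Markov property of $\{\hat{Y}^{r}(t)\}$ with the uniform-in-initial-condition exponential moment estimate of Proposition \ref{thm:waitTimeExpBnd}. The starting point is the identity displayed just before the statement, which for $t=s$ reads
\begin{equation*}
E_{y}\left[V^{r,\tilde{\upsilon}}_{i}(\hat{Y}^{r}(s))\right]
= E_{y}\left[e^{\tilde{\delta}(\tilde{\gamma}^{r,\tilde{\upsilon}}_{i,s}-s)}\right]
= e^{-\tilde{\delta}s}\,E_{y}\left[e^{\tilde{\delta}\tilde{\gamma}^{r,\tilde{\upsilon}}_{i,s}}\right],
\end{equation*}
valid because the first instant after $s$ at which the congestion functional $g(t)\doteq\hat{W}^{r}_{i}(t)+\sum_{j=1}^{J}\hat{\Upsilon}^{S,r}_{j}(t)+\sum_{j=1}^{J}\hat{\Upsilon}^{A,r}_{j}(t)$ drops below $\tilde{\upsilon}$ is $s$ plus the analogous passage time for the chain restarted from $\hat{Y}^{r}(s)$. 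Since $e^{-\tilde{\delta}s}\le 1$, it suffices to produce a constant $\tilde{B}_{0}\in(0,\infty)$, independent of $y$ and of $s\in[0,1]$, such that $E_{y}[e^{\tilde{\delta}\tilde{\gamma}^{r,\tilde{\upsilon}}_{i,s}}]\le V^{r,\tilde{\upsilon}}_{i}(y)+e^{\tilde{\delta}}\tilde{B}_{0}$ for all large $r$, and then take $\tilde{B}=e^{\tilde{\delta}}\tilde{B}_{0}$.

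To get this bound I would split on the value of the stopping time $\tau\doteq\tilde{\gamma}^{r,\tilde{\upsilon}}_{i,0}$. On $\{\tau\ge s\}$ no $t\in[0,s)$ lies in the (open) sublevel set $\{g<\tilde{\upsilon}\}$, so $\tilde{\gamma}^{r,\tilde{\upsilon}}_{i,s}=\tilde{\gamma}^{r,\tilde{\upsilon}}_{i,0}$ there and the corresponding contribution is at most $E_{y}[e^{\tilde{\delta}\tilde{\gamma}^{r,\tilde{\upsilon}}_{i,0}}]=V^{r,\tilde{\upsilon}}_{i}(y)$. On the complementary event $\{\tau<s\}\subseteq\{\tau<1\}$, I would use that $\xi\mapsto\tilde{\gamma}^{r,\tilde{\upsilon}}_{i,\xi}$ is nondecreasing and $s\le 1$ to pass to $E_{y}[e^{\tilde{\delta}\tilde{\gamma}^{r,\tilde{\upsilon}}_{i,1}}\mathcal{I}_{\{\tau<1\}}]$, and then apply the strong Markov property at $\tau$: on $\{\tau<1\}$ one has $\tilde{\gamma}^{r,\tilde{\upsilon}}_{i,1}=\tau+\sigma$, where $\sigma$ is the first time the post-$\tau$ congestion functional falls below $\tilde{\upsilon}$ after a delay of $1-\tau\in(0,1]$, so that $E_{y}[e^{\tilde{\delta}\sigma}\mid\clg^{r}(\tau)]=E_{\hat{Y}^{r}(\tau)}[e^{\tilde{\delta}\tilde{\gamma}^{r,\tilde{\upsilon}}_{i,1-\tau}}]$ and, using $e^{\tilde{\delta}\tau}\le e^{\tilde{\delta}}$,
\begin{equation*}
E_{y}\left[e^{\tilde{\delta}\tilde{\gamma}^{r,\tilde{\upsilon}}_{i,1}}\mathcal{I}_{\{\tau<1\}}\right]
\le e^{\tilde{\delta}}\,E_{y}\left[\mathcal{I}_{\{\tau<1\}}\,E_{\hat{Y}^{r}(\tau)}\!\left[e^{\tilde{\delta}\tilde{\gamma}^{r,\tilde{\upsilon}}_{i,1-\tau}}\right]\right].
\end{equation*}
Since $g$ is RCLL and decreases continuously between renewals while its residual–time coordinates jump only upward, the first passage into $\{g<\tilde{\upsilon}\}$ cannot occur "from above by an upward jump", hence $g(\hat{Y}^{r}(\tau))\le\tilde{\upsilon}$, i.e. $\hat{W}^{r}_{i}(\tau)+\sum_{j}\hat{\Upsilon}^{S,r}_{j}(\tau)+\sum_{j}\hat{\Upsilon}^{A,r}_{j}(\tau)\le\tilde{\upsilon}$. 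Applying Proposition \ref{thm:waitTimeExpBnd} with starting state $\hat{Y}^{r}(\tau)$, parameter $\xi=1-\tau\le1$, and $c=\tilde{\delta}$ then gives $E_{\hat{Y}^{r}(\tau)}[e^{\tilde{\delta}\tilde{\gamma}^{r,\tilde{\upsilon}}_{i,1-\tau}}]\le B_{1}e^{B_{2}(1+2\tilde{\upsilon})}+B_{3}=:\tilde{B}_{0}$ uniformly over $r\ge R$. Combining the two contributions yields the claim with $\tilde{R}=R$ and $\tilde{B}=e^{\tilde{\delta}}\tilde{B}_{0}$.

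I expect the only delicate point to be the bound $g(\hat{Y}^{r}(\tau))\le\tilde{\upsilon}$ at the first passage time, since this is exactly what renders the estimate from Proposition \ref{thm:waitTimeExpBnd} independent of the initial state $y$; the argument sketched above (continuity of $\hat{W}^{r}_{i}$ together with the upward–only jumps of the residual–time processes) handles it. The remaining ingredients — that $\tau$ is a $\{\clg^{r}(t)\}$–stopping time (the début of an open set for the RCLL strong Markov process $\hat{Y}^{r}$), the decomposition $\tilde{\gamma}^{r,\tilde{\upsilon}}_{i,1}=\tau+\sigma$, the version of the strong Markov property with a $\clg^{r}(\tau)$–measurable parameter, and the monotonicity of $\xi\mapsto\tilde{\gamma}^{r,\tilde{\upsilon}}_{i,\xi}$ — are routine.
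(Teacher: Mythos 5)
Your proposal follows essentially the same route as the paper: split on $\{\tilde{\gamma}^{r,\tilde{\upsilon}}_{i,0}\ge s\}$ versus $\{\tilde{\gamma}^{r,\tilde{\upsilon}}_{i,0}<s\}$, identify the first contribution with $V^{r,\tilde\upsilon}_i(y)$, and on the second event apply the strong Markov property at $\tau=\tilde{\gamma}^{r,\tilde{\upsilon}}_{i,0}$ together with the uniform bound of Proposition \ref{thm:waitTimeExpBnd}, using that the state at $\tau$ satisfies $\hat{W}^{r}_{i}(\tau)+\sum_j\hat\Upsilon^{S,r}_j(\tau)+\sum_j\hat\Upsilon^{A,r}_j(\tau)\le\tilde\upsilon$. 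Replacing $\tilde{\gamma}^{r,\tilde{\upsilon}}_{i,s}$ by $\tilde{\gamma}^{r,\tilde{\upsilon}}_{i,1}$ and using $e^{\tilde\delta\tau}\le e^{\tilde\delta}$ is a slightly cruder bookkeeping than the paper's direct estimate with parameter $\xi=s-\tau\in[0,s]$, but this only inflates the constant $\tilde B$ by a fixed factor $e^{\tilde\delta}$ and is harmless.

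One imprecision worth flagging, precisely at the step you called delicate: your justification of $g(\hat{Y}^{r}(\tau))\le\tilde\upsilon$ invokes ``continuity of $\hat{W}^{r}_{i}$,'' but $\hat{W}^{r}_{i}$ is piecewise constant with jumps (at arrivals and departures), and in fact $g$ itself can jump \emph{down} at a departure when the next drawn service time is smaller than its mean, since $\hat{W}^{r}_{i}$ drops by $1/(r\beta^r_j)$ while $\hat\Upsilon^{S,r}_j$ jumps up by $v^r_j(\cdot)/r$. The needed conclusion survives for a slightly different reason: between jump times $g$ decreases strictly (the workload term is flat and the residual-time terms decrease at positive rate), so the first entrance into $\{g<\tilde\upsilon\}$ occurs either by continuous decrease (giving $g(\tau)=\tilde\upsilon$) or by a downward jump (giving $g(\tau)<\tilde\upsilon$); either way $g(\tau)\le\tilde\upsilon$, which is exactly the condition appearing implicitly in the paper's supremum $\sup_{(y,z):w_i+|\hat\Upsilon|\le\tilde\upsilon,\,z\in[0,s]}$.
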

\begin{proof}

Let $\tilde{\upsilon}>0$ be fixed, let $B_{1},B_{2},B_{3},R$ be as in Theorem \ref{thm:waitTimeExpBnd}, and let $y\in \cly^r$, $i\in\AAA_I$, and $s\in [0,1]$ be arbitrary.  Due to the fact that $\hat{Y}^{r}(t)$ is a $\mathcal{G}^{r}(t)$ strong Markov process  and $\tilde{\gamma}^{r,\tilde{\upsilon}}_{i,0}\wedge s$ is a bounded $\mathcal{G}^{r}(t)$ stopping time, for all $r\geq R$ we have,
\begin{eqnarray*}
E_{y}\left[ V^{r,\tilde{\upsilon}}_{i}\left(\hat{Y}^{r}(s)\right)\right]&\leq&E_{y}\left[e^{\tilde{\delta}( \tilde{\gamma}^{r,\tilde{\upsilon}}_{i,s}-s)}\right]\\
&=&E_{y}\left[\mathcal{I}_{\{\tilde{\gamma}^{r,\tilde{\upsilon}}_{i,0}\geq s\}}e^{\tilde{\delta}( \tilde{\gamma}^{r,\tilde{\upsilon}}_{i,s}-s)}\right]+E_{y}\left[\mathcal{I}_{\{\tilde{\gamma}^{r,\tilde{\upsilon}}_{i,0}< s\}}e^{\tilde{\delta}( \tilde{\gamma}^{r,\tilde{\upsilon}}_{i,s}-s)}\right]\\
&=&E_{y}\left[e^{\tilde{\delta}( \tilde{\gamma}^{r,\tilde{\upsilon}}_{i,0}-s)}\right]+E_{y}\left[E_{y}\left[ \mathcal{I}_{\{\tilde{\gamma}^{r,\tilde{\upsilon}}_{i,0}< s\}}e^{\tilde{\delta}( \tilde{\gamma}^{r,\tilde{\upsilon}}_{i,s}-s)}|\mathcal{G}^{r}(\tilde{\gamma}^{r,\tilde{\upsilon}}_{i,0}\wedge s) \right]\right]\\
&=&e^{-\tilde{\delta}s}E_{y}\left[e^{\tilde{\gamma}^{r,\tilde{\upsilon}}_{i,0}}\right]+\sup_{(y,z):w_{i}+|\hat{\Upsilon}|\leq \tilde{\upsilon},z\in[0, s]}\left\{E_{y}\left[e^{\tilde{\delta} \tilde{\gamma}^{r,\tilde{\upsilon}}_{i,z}}\right]\right\}\\
&\leq&e^{-\tilde{\delta}s}V^{r,\tilde{\upsilon}}_{i}(y)+B_{1}e^{B_{2}\left(1+2\tilde{\upsilon}\right)}+B_{3}\\
\end{eqnarray*}
where the last line uses Proposition \ref{thm:waitTimeExpBnd}.  This completes the proof.
\end{proof}

We now complete the proof of Lemma \ref{thm:VMarkovLong}.\\

\noindent {\bf Proof of Lemma \ref{thm:VMarkovLong}.} Let $\tilde{\upsilon}>0$ be fixed and note that, with $\tilde \delta$ as in Proposition \ref{thm:waitTimeExpBnd},  due to Proposition \ref{thm:VMarkovShort}, there exist constants $B_{1},R\in (0,\infty)$ such that for all $y\in \cly^r$, $r\geq R$, $i\in\AAA_I$, and $t\in [0,1]$ we have
\begin{equation*}
E_{y}\left[ V^{r,\tilde{\upsilon}}_{i}\left(\hat{Y}^{r}(t)\right)\right]\leq e^{-\tilde{\delta}t}V^{r,\tilde{\upsilon}}_{i}\left(y\right)+B_{1}.
\end{equation*}
Consequently for $T\in [0,1]$ we already have the result (with $B=B_1$).  Let $T>1$ be arbitrary and note that there exists $t\in \left[\frac{1}{2},1\right]$ and $n\in\mathbb{N}$ such that $T=nt$.  Using the Markov property we have 
\begin{align*}
&E_{y}\left[V^{r,\tilde{\upsilon}}_{i}\left(\hat{Y}^{r}(T)\right)  \right]\\
&=E_{y}\left[V^{r,\tilde{\upsilon}}_{i}\left(\hat{Y}^{r}(nt)\right) \right]
= E_{y}\left[E\left[ V^{r,\tilde{\upsilon}}_{i}\left(\hat{Y}^{r}(nt)\right) | \mathcal{G}^{r}\left((n-1)t\right)\right]\right]\\
&=E_{y}\left[E_{\hat{Y}^{r}((n-1)t)}\left[V^{r,\tilde{\upsilon}}_{i}\left(\hat{Y}^{r}(t)\right) \right]\right]\leq e^{-\tilde{\delta}t}E_{y}\left[V^{r,\tilde{\upsilon}}_{i}\left( \hat{Y}^{r}((n-1)t)\right) \right]+B_{1}.
\end{align*}
Now a standard recursive argument shows that
\begin{eqnarray*}
E_{y}\left[V^{r,\tilde{\upsilon}}_{i}\left(\hat{Y}^{r}(T)\right)  \right]&\leq &e^{-\tilde{\delta}nt} E_{y}\left[V^{r,\tilde{\upsilon}}_{i}\left( \hat{Y}^{r}(0)\right) \right]+B_{1}\sum_{l=0}^{n-1}e^{-\tilde{\delta}l t}\\
&\leq &e^{-\tilde{\delta}T}V^{r,\tilde{\upsilon}}_{i}\left( y\right) +B_{1}\sum_{l=0}^{\infty}e^{-\tilde{\delta}l \frac{1}{2}}
\leq  e^{-\tilde{\delta}T}V^{r,\tilde{\upsilon}}_{i}\left( y\right) +\frac{B_{1}}{1-e^{-\frac{1}{2}\tilde{\delta}}}.
\end{eqnarray*}
This completes the proof on taking $B= B_1/(1-e^{-\frac{1}{2}\tilde{\delta}})$.
\hfill \qed

\subsection{Proof of Proposition \ref{thm:TildeGammaLower}}
\label{sec:tildefammalow}

Let $\tilde{\upsilon}>0$ be fixed and let $y=(\hat{q},\hat{\Upsilon},\tilde{\mathcal{E}})\in \cly^r$ and $i\in\AAA_I$ be arbitrary.  For $T\geq 1$ define
\begin{eqnarray*}
\mathcal{A}^{r,T}_{j}&=&\left\{\sup_{0\leq t \leq r^{2}T}|A_{j}^{r}(t)-t\alpha^{r}_{j}|\leq \frac{|\theta_{i}|}{4J} rT \right\}\cap\left\{\sup_{0\leq t \leq r^{2}\max_{i}\{C_{i}\}T}|S_{j}^{r}(t)-t\beta^{r}_{j}|\leq \frac{|\theta_{i}|}{4J} rT \right\}\\
\end{eqnarray*}
and
$
\mathcal{A}^{r,T}=\cap_{j=1}^{J}\mathcal{A}^{r,T}_{j}$.
From Proposition  \ref{thm:expTailBnd} (equations \eqref{eq:31.d} and \eqref{eq:31.c} with $c_1=1$, $c_2=0$) we know there exist constants $B_{1},B_{2},R\in (0,\infty)$ such that for all $r\geq R$ and $T\geq 1$ we have
\begin{equation*}
P\left( \mathcal{A}^{r,T}\right)\geq 1- B_{1}e^{-B_{2}T},\;\;
r^{-1}\sum_{j=1}^{J}\frac{1}{\beta^{r}_{j}}\leq\tilde{\upsilon},
\;\;\mbox{ and, }
r \left( \sum_{j=1}^{J} K_{i,j}\rho^{r}_{j} - C_{i} \right) \geq \frac{3 \theta_{i} }{2}.
\end{equation*}
Choose $L\in[1,\infty)$ such that $1- B_{1}e^{-B_{2}L}\geq\frac{1}{2}$.
If 
\begin{equation*}
\hat{w}_{i}-C_{i}\max_{j}\{\hat{\Upsilon}^{A}_{j}\}\geq 2|\theta_{i}|L+2\tilde{\upsilon},
\mbox{ let }
\tilde{T} \doteq \frac{1}{2|\theta_{i}|}\left(\hat{w}_{i}-C_{i}\max_{j}\{\hat{\Upsilon}^{A}_{j}\}-2\tilde{\upsilon}\right)
\end{equation*}
and note that $\tilde{T}\geq L$.
For all $r\geq R$ and $s\in[0,\tilde{T})$ on the set $\mathcal{A}^{r,\tilde{T}}$ we have
\begin{eqnarray*}
\hat{W}^{r}_{i}(s)&=&\hat{w}_{i}+ \sum_{j=1}^{J}K_{i,j}\frac{1}{r\beta^{r}_{j}}A^{r}_{j}\left(r^{2}\left(s-\bar{\Upsilon}_{j}^{A}\right)^{+} \right) +\sum_{j=1}^{J}K_{i,j}\frac{1}{r\beta^{r}_{j}}\mathcal{I}_{\{s\geq \bar{\Upsilon}_{j}^{A}>0 \}}\\
&&-\sum_{j=1}^{J}K_{i,j}\frac{1}{r\beta^{r}_{j}}S^{r}_{j}\left(r^{2}\left(\bar{B}^{r}(s)-\bar{\Upsilon}_{j}^{S}\right)^{+} \right) -\sum_{j=1}^{J}K_{i,j}\frac{1}{r\beta^{r}_{j}}\mathcal{I}_{\{\bar{B}^{r}(s)\geq \bar{\Upsilon}_{j}^{S} >0\}}\\
&\geq& \hat{w}_{i}+\sum_{j=1}^{J}K_{i,j}\frac{1}{r\beta^{r}_{j}}A^{r}_{j}\left(r^{2}\left(s-\bar{\Upsilon}^{A}_{j}\right)^{+} \right)-\sum_{j=1}^{J}K_{i,j}\frac{1}{r\beta^{r}_{j}}S^{r}_{j}\left(r^{2}\bar{B}^{r}_{j}(s) \right)-r^{-1}\sum_{j=1}^{J}\frac{1}{\beta^{r}_{j}}\\
&\geq& \hat{w}_{i}+r\sum_{j=1}^{J}K_{i,j}\rho^{r}_{j}\left(s-\bar{\Upsilon}^{A}_{j}\right)^{+}-r\sum_{j=1}^{J}K_{i,j}\bar{B}^{r}_{j}(s)+\frac{\theta_{i}}{2} \tilde{T}-\tilde{\upsilon}\\
&\geq& \hat{w}_{i}+r\sum_{j=1}^{J}K_{i,j}\rho^{r}_{j}\left(s-\max_{j}\{\bar{\Upsilon}^{A}_{j}\}\right)^{+}-rsC_{i}+\frac{\theta_{i}}{2} \tilde{T}-\tilde{\upsilon}\\
&\geq& \hat{w}_{i}-C_{i}\max_{j}\{\hat{\Upsilon}^{A}_{j}\}+sr \left(\sum_{j=1}^{J}K_{i,j}\rho^{r}_{j}-C_{i}\right)+\frac{\theta_{i}}{2} \tilde{T}-\tilde{\upsilon}\\
&\geq& \hat{w}_{i}-C_{i}\max_{j}\{\hat{\Upsilon}^{A}_{j}\}+\frac{3\theta_{i}}{2}s+\frac{\theta_{i}}{2} \tilde{T}-\tilde{\upsilon}
> \hat{w}_{i}-C_{i}\max_{j}\{\hat{\Upsilon}^{A}_{j}\}-\tilde{\upsilon}+2\theta_{i}\tilde{T}.
\end{eqnarray*}
Since 
$
\hat{w}_{i}-C_{i}\max_{j}\{\hat{\Upsilon}^{A}_{j}\}-\tilde{\upsilon}+2\theta_{i}\tilde{T}=\tilde{\upsilon}$
this means that for $r\geq R$ and $s\in[0,\tilde{T})$ on the set $\mathcal{A}^{r,\tilde{T}}$ we have $\hat{W}^{r}_{i}(s)>\tilde{\upsilon}$ and consequently
$
\{\tilde{\gamma}^{r,\tilde{\upsilon}}_{i,0}\geq \tilde{T}\}\supset \mathcal{A}^{r,\tilde{T}}$.
Therefore for $r\geq R$ if $\hat{w}_{i}-C_{i}\max_{j}\{\hat{\Upsilon}^{A}_{j}\}\geq 2|\theta_{i}|L+2\tilde{\upsilon}$ we have
\begin{align}
V^{r,\tilde{\upsilon}}_{i}(y)&=E_{y}\left[ e^{\tilde{\delta}\tilde{\gamma}^{r,\tilde{\upsilon}}_{i,0}}\right]
\geq P\left(\mathcal{A}^{r,\tilde{T}}\right)e^{\tilde{\delta}\tilde{T}}\nonumber\\
&\geq\left(1- B_{1}e^{-B_{2}L}\right)e^{\tilde{\delta}\tilde{T}}
\geq \frac{e^{-\frac{\tilde{\delta}\tilde{\upsilon}}{|\theta_{i}|}}}{2}e^{\frac{\tilde{\delta}}{2|\theta_{i}|}\left(\hat{w}_{i}-C_{i}\max_{j}\{\hat{\Upsilon}^{A}_{j}\}\right)}. \label{eq:919b}
\end{align}
If $\hat{w}_{i}-C_{i}\max_{j}\{\hat{\Upsilon}^{A}_{j}\}<2|\theta_{i}|L+2\tilde{\upsilon}$ by definition we have
$
V^{r,\tilde{\upsilon}}_{i}(y)=E_{y}\left[ e^{\tilde{\delta}\tilde{\gamma}^{r,\tilde{\upsilon}}_{i,0}}\right]\geq 1$.
Let 
\begin{equation*}
\tilde{B}_{1}\doteq\min\left\{e^{-\frac{\tilde{\delta}}{2|\theta_{i}|}\left( 2|\theta_{i}|L+2\tilde{\upsilon}\right)} ,\frac{e^{-\frac{\tilde{\delta}\tilde{\upsilon}}{|\theta_{i}|}}}{2}\right\},
\mbox{ and, }
\tilde{B}_{2}\doteq \frac{\tilde{\delta}}{2|\theta_{i}|}.
\end{equation*}
Then if $\hat{w}_{i}-C_{i}\max_{j}\{\hat{\Upsilon}^{A}_{j}\}\leq 2|\theta_{i}|L+2\tilde{\upsilon}$ we have
$
\tilde{B}_{1}e^{\tilde{B}_{2}\left(\hat{w}_{i}-C_i\max_{j}\{\hat{\Upsilon}^{A}_{j}\}\right)^{+}}\leq 1\leq V^{r,\tilde{\upsilon}}_{i}(y)$
and if $\hat{w}_{i}-C_{i}\max_{j}\{\hat{\Upsilon}^{A}_{j}\}\geq  2|\theta_{i}|L+2\tilde{\upsilon}$ we have from \eqref{eq:919b}
\begin{equation*}
\tilde{B}_{1}e^{\tilde{B}_{2}\left(\hat{w}_{i}-C_{i}\max_{j}\{\hat{\Upsilon}^{A}_{j}\}\right)^{+}}\leq \frac{e^{-\frac{\tilde{\delta}\tilde{\upsilon}}{|\theta_{i}|}}}{2}e^{\frac{\tilde{\delta}}{2|\theta_{i}|}\left(\hat{w}_{i}-C_{i}\max_{j}\{\hat{\Upsilon}^{A}_{j}\}\right)}\leq V^{r,\tilde{\upsilon}}_{i}(y).
\end{equation*}
The result follows.
\hfill \qed

\section{Proofs of Some Exponential Estimates}
\label{sec:expoestim}
The following lemma gives a key independence property.

\begin{lemma}
\label{thm:sameDist}
Let $t\in \RR_+$  and $r \in \NN$.
Then $(\hat{A}^{r,t}(\cdot),\hat{S}^{r,t}(\cdot))$ is independent of $\mathcal{G}^{r}(t)$ and $(\hat{A}^{r,t}(\cdot),\hat{S}^{r,t}(\cdot))$ has the same distribution as $(\hat{A}^{r}(\cdot),\hat{S}^{r}(\cdot))$.
\end{lemma}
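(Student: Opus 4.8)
The plan is to deduce the statement from a ``fresh start'' (strong Markov) property for the family of primitives at the multiparameter stopping time $\tau^{r}(t)$. First I would note that $\hat{A}^{r,t}(\cdot)$ and $\hat{S}^{r,t}(\cdot)$ are deterministic, measurable functionals of the \emph{shifted} primitive sequences: for each $j\in\AAA_J$, $\hat{A}^{r,t}_{j}$ is obtained from $\bigl(u^{r}_{j}(\tau^{r,A}_{j}(t)+l)\bigr)_{l\ge 1}$ by exactly the same recipe (counting renewals and centering) that produces $\hat{A}^{r}_{j}$ from $\bigl(u^{r}_{j}(l)\bigr)_{l\ge 1}$, and likewise $\hat{S}^{r,t}_{j}$ is the same functional of $\bigl(v^{r}_{j}(\tau^{r,S}_{j}(t)+l)\bigr)_{l\ge 1}$. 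Hence it suffices to prove that the shifted family
\[
\Xi^{t}\doteq\bigl\{u^{r}_{j}(\tau^{r,A}_{j}(t)+l),\,v^{r}_{j}(\tau^{r,S}_{j}(t)+l):\,l\ge 1,\ j\in\AAA_J\bigr\}
\]
is independent of $\clg^{r}(t)=\mathcal{F}^{r}(\tau^{r}(t))$ and has, as a random element of the relevant sequence space, the same joint law as the original family $\Xi\doteq\{u^{r}_{j}(l),v^{r}_{j}(l):l\ge1,\ j\in\AAA_J\}$.

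Next I would record that $\tau^{r}(t)$ is an $\{\mathcal{F}^{r}(n,m)\}$-stopping time (part of the admissibility of the control, Definition \ref{def:AdmissableCntrl}(d)) and takes values in $\mathbb{N}_{0}^{2J}$ almost surely (the relevant partial sums diverge since $u^{r}_{j}(l),v^{r}_{j}(l)>0$ and $\bar{B}^{r}_{j}(t)<\infty$), and then decompose over the at most countably many values $(n,m)\in\mathbb{N}_{0}^{2J}$. For measurable $B$ and $A\in\clg^{r}(t)$,
\[
P\bigl(\Xi^{t}\in B,\,A\bigr)=\sum_{(n,m)}P\bigl(\Xi^{(n,m)}\in B,\ A\cap\{\tau^{r}(t)=(n,m)\}\bigr),
\]
where $\Xi^{(n,m)}$ is the \emph{deterministic} shift by $(n,m)$ and we used $\Xi^{t}=\Xi^{(n,m)}$ on $\{\tau^{r}(t)=(n,m)\}$. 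The two steps that make this work are: (i) $A\cap\{\tau^{r}(t)=(n,m)\}\in\mathcal{F}^{r}(n,m)$, which follows from the identity $\{\tau^{r}(t)=(n,m)\}=\{\tau^{r}(t)\le(n,m)\}\setminus\bigcup_{i:(n,m)_{i}\ge1}\{\tau^{r}(t)\le(n,m)-e_{i}\}$ ($e_i$ the $i$-th standard basis vector), each set on the right being in $\mathcal{F}^{r}(n,m)$ by the stopping-time property and $\mathcal{F}^{r}((n,m)-e_{i})\subseteq\mathcal{F}^{r}(n,m)$, together with the definition of $\mathcal{F}^{r}(\tau^{r}(t))$; and (ii) for \emph{fixed deterministic} $(n,m)$ the family $\Xi^{(n,m)}$ involves only variables $u^{r}_{j}(l)$ with $l>n_{j}$ and $v^{r}_{j}(l)$ with $l>m_{j}$, which are disjoint from (hence, by mutual independence of all primitives, generate a $\sigma$-algebra independent of) the generators of $\mathcal{F}^{r}(n,m)$, and moreover $\Xi^{(n,m)}\overset{d}{=}\Xi$ by the i.i.d.\ property within each sequence. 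Combining (i)--(ii), each summand equals $P(\Xi\in B)\,P(A\cap\{\tau^{r}(t)=(n,m)\})$; summing over $(n,m)$ and using a.s.\ finiteness of $\tau^{r}(t)$ gives $P(\Xi^{t}\in B,A)=P(\Xi\in B)\,P(A)$. Taking $A=\Omega$ gives $\Xi^{t}\overset{d}{=}\Xi$, and the full identity gives independence; transporting through the functionals of the first paragraph then yields the lemma.

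I expect the only genuinely delicate point to be the multiparameter bookkeeping in step (ii) and the identity for $\{\tau^{r}(t)=(n,m)\}$ under the product partial order. This is precisely where the multiparameter framework (as in Ethier--Kurtz, Ch.\ 2, \S 8) is actually needed rather than a sequence-by-sequence strong Markov argument, because $\tau^{r,S}_{j}(t)$ depends through $\bar{B}^{r}_{j}(\cdot)$ on the control policy and hence on \emph{all} of the primitive sequences, not merely on the $j$-th service sequence. Everything else --- the effect of diffusion scaling and the measurability of the functionals --- is routine.
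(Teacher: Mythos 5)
Your proposal is correct and takes essentially the same route as the paper's proof: both decompose over the values $(n,m)$ of the multiparameter stopping time $\tau^{r}(t)$, exploit that $G\cap\{\tau^{r}(t)=(n,m)\}\in\mathcal{F}^{r}(n,m)$ together with the independence and equidistribution of the deterministically shifted primitives, and then sum over $(n,m)$. The only cosmetic differences are that the paper argues with finite windows of the shifted sequences while you work directly with the whole sequence as a random element of product space, and that you spell out the measurability of $\{\tau^{r}(t)=(n,m)\}$ and the a.s.\ finiteness of $\tau^{r}(t)$, which the paper states without derivation.
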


\begin{proof}
For  $a,b\in\mathbb{N}^{J}$ define
\begin{equation*}
\tilde{u}^{r}_{a}(b)=\left(u^{r}_{1}(a_{1}+1)...,u^{r}_{1}(a_{1}+b_{1}),u^{r}_{2}(a_{2}+1)...,u^{r}_{2}(a_2+b_{2})...,u^{r}_{J}(a_{J}+1)...,u^{r}_{J}(a_{J}+b_{J})\right)
\end{equation*}
and
\begin{equation*}
\tilde{v}^{r}_{a}(b)=\left(v^{r}_{1}(a_{1}+1)...,v^{r}_{1}(a_{1}+b_{1}),v^{r}_{2}(a_{2}+1)...,v^{r}_{2}(a_{2}+b_{2}),...,v^{r}_{J}(a_{J}+1)...,v^{r}_{J}(a_{J}+b_{J})\right).
\end{equation*}
It suffices to show that for all $(\tilde{n},\tilde{m})\geq (0,0)$ $(\tilde{u}^{r}_{\tau^{r,A}(t)}(\tilde{n}),\tilde{v}^{r}_{\tau^{r,S}(t)}(\tilde{m}))$ is independent of  $\clg^r(t)=\mathcal{F}^{r}(\tau^{r}(t))$ and $(\tilde{u}^{r}_{\tau^{r,A}(t)}(\tilde{n}),\tilde{v}^{r}_{\tau^{r,S}(t)}(\tilde{m}))$ has the same distribution as $(\tilde{u}^{r}_{0}(\tilde{n}),\tilde{v}^{r}_{0}(\tilde{m}))$.  Let $(n,m)\geq (0,0)$, $f:\mathbb{R}_{+}^{|\tilde{n}|_{1}+|\tilde{m}|_{1}}\rightarrow \mathbb{R}$, and $G\in \mathcal{F}^{r}(\tau^{r}(t))$ be arbitrary.  Then
\begin{align*}
&E\left[ \mathcal{I}_{G}f\left(\tilde{u}^{r}_{\tau^{r,A}(t)}(\tilde{n}),\tilde{v}^{r}_{\tau^{r,S}(t)}(\tilde{m}) \right)\right]\\
&=\sum_{(n,m)\geq (0,0)}E\left[ \mathcal{I}_{\{\tau^{r}(t)=(n,m)\}}\mathcal{I}_{G}f\left(\tilde{u}^{r}_{\tau^{r,A}(t)}(\tilde{n}),\tilde{v}^{r}_{\tau^{r,S}(t)}(\tilde{m}) \right)\right]\\
&=\sum_{(n,m)\geq (0,0)}E\left[\mathcal{I}_{G\cap \{\tau^{r}(t)=(n,m)\}}f\left(\tilde{u}^{r}_{n}(\tilde{n}),\tilde{v}^{r}_{m}(\tilde{m}) \right)\right]\\
&=\sum_{(n,m)\geq (0,0)}E\left[\mathcal{I}_{G\cap \{\tau^{r}(t)=(n,m)\}}\right]E\left[f\left(\tilde{u}^{r}_{n}(\tilde{n}),\tilde{v}^{r}_{m}(\tilde{m}) \right)\right]\\
&=E\left[f\left(\tilde{u}^{r}_{0}(\tilde{n}),\tilde{v}^{r}_{0}(\tilde{m}) \right)\right]\sum_{(n,m)\geq (0,0)}E\left[\mathcal{I}_{G\cap \{\tau^{r}(t)=(n,m)\}}\right]
=P\left(G\right)\left[f\left(\tilde{u}^{r}_{0}(\tilde{n}),\tilde{v}^{r}_{0}(\tilde{m}) \right)\right]
\end{align*} 
where the third equality is from  the fact that $G\cap \{\tau^{r}(t)=(n,m)\}$ is $\mathcal{F}^{r}(n,m)$-measurable and $ (\tilde{u}^{r}_{n}(\tilde{n}),\tilde{v}^{r}_{m}(\tilde{m}))$ is independent of $\mathcal{F}^{r}(n,m)$ and the fourth equality uses the fact that $ (\tilde{u}^{r}_{n}(\tilde{n}),\tilde{v}^{r}_{m}(\tilde{m}))$ has the same distribution as $ (\tilde{u}^{r}_{0}(\tilde{n}),\tilde{v}^{r}_{0}(\tilde{m}))$.  
The result follows.
\end{proof}

\subsection{Proof of Proposition \ref{thm:nextSerTimeBnd}}
\label{sec:propnextser}
  Fix $j\in\AAA_J$. Define
\begin{equation*}
\mathcal{F}^{r,S}_{j}(k)\doteq\sigma\left\{u^{r}_{l}(m^{u}_{l}),v^{r}_{l'}(m^{v}_{l'}): m^{u}_{l}\in\mathbb{N},  l\in\AAA_J, m^{v}_{l'}\in\mathbb{N},  l'\in\AAA_J\setminus \left\{ j \right\}, \text{ and }
m^{v}_{j}\leq k \right\}
\end{equation*}
 which is the filtration that contains the information from all inter-arrival times, all service times from queues other than the $j$-th queue, and
 the first $k$ service times from queue $j$.  Note that 
$\tau^{r,S}_{j}(t)$ is a $\mathcal{F}^{r,S}_{j}(k)$ stopping time.  For all $n\geq 1$ define 
$
\tilde{L}^r_{j}(n)=\sup\left\{s\geq 0 : B^{r}_{j}(s)<\sum_{l=1}^{n}v^{r}_{j}(l) \right\}
$
and  since $B^{r}_{j}(\cdot)$ is continuous we have $B^{r}_{j}\left(\tilde{L}^r_{j}(n)\right)=\sum_{l=1}^{n}v^{r}_{j}(l)$.

  Define 
\begin{equation*}
o^{r}_{j}(n)=\inf\left\{s\geq \tilde{L}^{r}_{j}(n):\mathcal{E}^{r}_{j}(s)=0\right\}
\end{equation*}
and note that due to property (a) of { Proposition} \ref{thm:schemeSum} for all $s\in [\tilde{L}^{r}_{j}(n),o^{r}_{j}(n)]$ we have $B^{r}_{j}\left(s \right)=\sum_{l=1}^{n}v^{r}_{j}(l)$.  Define $\tilde{a}=\frac{\rho^*}{4}$ and note that due to properties (b) and (c) of { Proposition} \ref{thm:schemeSum} for all $r$ sufficiently large we have $\yr_{j}(s)\geq \tilde{a}>0$ for all $s\in  [o^{r}_{j}(n),\tilde{L}^{r}_{j}(n+1)]$.\\

  Let $n\geq 2$ be arbitrary and assume that $\tau^{r,S}_{j}\left(\left(t-\frac{v^{r}_{j}(n)}{\tilde{a}r^{2}}\right)^{+}\right)>n-1$ and $\tau^{r,S}_{j}(t)=n$. Note that
   %
   in this case we must have $t>\frac{v^{r}_{j}(n)}{\tilde{a}r^{2}}$.  Since $\tau^{r,S}_{j}\left(t-\frac{v^{r}_{j}(n)}{\tilde{a}r^{2}}\right)>n-1$ implies $B^{r}_{j}\left(r^{2}\left(t-\frac{v^{r}_{j}(n)}{\tilde{a}r^{2}} \right)\right)>\sum_{l=1}^{n-1}v^{r}_{j}(l)$ we  have
 $o^{r}_{j}(n-1)<r^{2}\left(t-\frac{v^{r}_{j}(n)}{\tilde{a}r^{2}} \right)$ because $o^{r}_{j}(n-1)\geq r^{2}\left(t-\frac{v^{r}_{j}(n)}{\tilde{a}r^{2}} \right)$ implies $B^{r}_{j}\left(r^{2}\left(t-\frac{v^{r}_{j}(n)}{\tilde{a}r^{2}} \right)\right)=\sum_{l=1}^{n-1}v^{r}_{j}(l)$.  In addition, $\tau^{r,S}_{j}(t)=n$ implies $\tilde{L}^{r}_{j}(n)\geq t r^{2}$ so $ [ r^{2}\left(t-\frac{v^{r}_{j}(n)}{r^{2}\tilde{a}}\right),r^{2} t ]\subset [o^{r}_{j}(n-1),\tilde{L}^{r}_{j}(n)]$ and consequently  $\yr_{j}(s)\geq \tilde{a}$ for all $s\in [ r^{2}\left(t-\frac{v^{r}_{j}(n)}{r^{2}\tilde{a}}\right),r^{2} t ] $ and thus in particular $B^{r}_{j}(r^{2}t)-B^{r}_{j}\left(r^{2}\left(t-\frac{v^{r}_{j}(n)}{r^{2}\tilde{a}} \right)\right)\geq v^{r}_{j}(n)$.  Therefore
\begin{equation*}
\sum_{l=1}^{n}v^{r}_{j}(l)=\sum_{l=1}^{n-1}v^{r}_{j}(l)+v^{r}_{j}(n)<B^{r}_{j}\left(r^{2}\left(t-\frac{v^{r}_{j}(n)}{r^{2}\tilde{a}} \right)\right)+v^{r}_{j}(n)\le B^{r}_{j}\left(t r^{2}\right)
\end{equation*}
which contradicts the assumption that $\tau^{r,S}_{j}(t)=n$.  Consequently 
\begin{equation*}
\left\{\tau^{r,S}_{j}\left(\left(t-\frac{v^{r}_{j}(n)}{\tilde{a}r^{2}} \right)^{+}\right)>n-1\right\}\cap\left\{\tau^{r,S}_{j}(t)=n\right\}=\emptyset
\end{equation*}
and so
\begin{equation*}
\left\{\tau^{r,S}_{j}(t)=n\right\}=\left\{\tau^{r,S}_{j}\left(\left(t-\frac{v^{r}_{j}(n)}{\tilde{a}r^{2}} \right)^{+}\right)\leq n-1\right\}\cap\left\{\tau^{r,S}_{j}(t)=n\right\}.
\end{equation*}
Thus we have
\begin{eqnarray*}
\left\{\tau^{r,S}_{j}(t)=n\right\}&=& \left\{\tau^{r,S}_{j}(t)>n-1\right\}\cap\left\{\tau^{r,S}_{j}(t)=n\right\}\\
&=& \left\{\tau^{r,S}_{j}(t)>n-1\right\}\cap\left\{\tau^{r,S}_{j}\left(\left(t-\frac{v^{r}_{j}(n)}{\tilde{a}r^{2}} \right)^{+}\right)\leq n-1\right\}\cap\left\{\tau^{r,S}_{j}(t)=n\right\}\\
&\subset& \left\{\tau^{r,S}_{j}(t)>n-1\right\}\cap\left\{\tau^{r,S}_{j}\left(\left(t-\frac{v^{r}_{j}(n)}{\tilde{a}r^{2}} \right)^{+}\right)\leq n-1\right\}.\\
\end{eqnarray*}
Note that for any $z\geq 0$, 
$t\geq 0$, and $n\geq 2$ both $\left\{\tau^{r,S}_{j}(t)>n-1\right\}$ and \\
 $\left\{\tau^{r,S}_{j}\left(\left(t-\frac{z}{\tilde{a}r^{2}} \right)^{+}\right)\leq n-1\right\}$ are $\mathcal{F}^{r,S}_{j}(n-1)$-measurable and $v^{r}_{j}(n)$ is independent of $\mathcal{F}^{r,S}_{j}(n-1)$. \\

Let $\gamma^{r}_{j}(dz)$ denote the pdf of $v^{r}_{j}(1)$ (recall that the $\{v^{r}_{j}(l)\}_{l=1}^{\infty}$ are i.i.d.)and let $c\in (0,\delta )$ be arbitrary where $\delta$ is as in Condition \ref{eqn:mgfBnd}.  Since $P(\tau^{r,S}_{j}(t)<\infty)=1$, we have, recalling the convention $v_j^r(0)=0$,
\begin{align}
E\left[e^{c v^{r}_{j}(\tau^{r,S}_{j}(t))}\right]&\leq 1 + E\left[e^{c v^{r}_{j}(1)}\right]+\sum_{n=2}^{\infty}E\left[e^{c v^{r}_{j}(n)}\mathbb{I}_{\{ \tau^{r,S}_{j}(t)=n \}}\right] \nonumber\\
&\le 1+ E\left[e^{c v^{r}_{j}(1)}\right] +\sum_{n=2}^{\infty} E\left[e^{cv^{r}_{j}(n)}\mathbb{I}_{ \left\{\tau^{r,S}_{j}(t)>n-1\right\}\cap\left\{\tau^{r,S}_{j}\left(\left(t-\frac{v^{r}_{j}(n)}{r^{2}\tilde{a}} \right)^{+}\right)\leq n-1\right\}}\right]\nonumber\\
&=1+ \int_{(0,\infty)}e^{cz}E\left[1+\sum_{n=1}^{\infty}\mathbb{I}_{\left\{\tau^{r,S}_{j}\left(\left(t-\frac{z}{\tilde{a}r^2}\right)^{+}\right)\leq n \right\}\cap \left\{\tau^{r,S}_{j}(t)> n\right\}}\right]\gamma^{r}_{j}(dz)  \nonumber\\
&=1+\int_{(0,\infty)}e^{cz}\left(1+E\left[\tau^{r,S}_{j}(t)-\tau^{r,S}_{j}\left(\left(t-\frac{z}{\tilde{a}r^2}\right)^{+}\right)\right]\right)\gamma^{r}_{j}(dz).  \label{eq:857b}
\end{align}
Let
\begin{equation*}
\hat{\tau}^{r,S,t}_{j}(z)=\min\left\{n\geq 1:\sum_{l=\tau^{r,S}_{j}\left(\left(t-\frac{z}{\tilde{a}r^2}\right)^{+}\right)+1}^{\tau^{r,S}_{j}\left(\left(t-\frac{z}{\tilde{a}r^2}\right)^{+}\right)+n}v^{r}_{j}(l)\geq \frac{\max_{i}\{C_{i}\}z}{\tilde{a}}\right\}
\end{equation*}
and note that since $B^{r}_{j}(r^{2}t)-B^{r}_{j}\left(r^{2}\left( t-\frac{z}{\tilde{a}r^{2}}\right)^{+}\right)\leq \frac{\max_{i}\{C_{i}\}z}{\tilde{a}}$ and
 $B^{r}_{j}\left(r^{2}\left( t-\frac{z}{\tilde{a}r^{2}}\right)^{+}\right)\leq \sum_{l=1}^{\tau^{r,S}_{j}\left(\left(t-\frac{z}{\tilde{a}r^2}\right)^{+}\right)}v^{r}_{j}(l)$ we have
$
\hat{\tau}^{r,S,t}_{j}(z)\geq \tau^{r,S}_{j}(t)-\tau^{r,S}_{j}\left(\left(t-\frac{z}{\tilde{a}r^2}\right)^{+}\right).
$
In addition, if we define
$
\hat{\tau}^{r,S}_{j}(z)=\min\left\{n\geq 1:\sum_{l=1}^{n}v^{r}_{j}(l)\geq \frac{\max_{i}\{C_{i}\}z}{\tilde{a}}\right\}
$
then we have
$
 \hat{\tau}^{r,S,t}_{j}(z) \overset{d}{=}  \hat{\tau}^{r,S}_{j}(z)
$
due to { Lemma} \ref{thm:sameDist}.
Consequently, from \eqref{eq:857b},
\begin{equation}
E\left[e^{c v^{r}_{j}(\tau^{r,S}_{j}(t))}\right]
\leq 1+ \int_{(0,\infty)}e^{cz}\left(1+E\left[\hat{\tau}^{r,S}_{j}(z)\right]\right)\gamma^{r}_{j}(dz).
\label{eq:902}
\end{equation}
We will now bound $E\left[\hat{\tau}^{r,S}_{j}(z)\right]$ from above.
For all $r$ sufficiently large we have $\frac{2}{\beta_{j}}\geq \frac{1}{\beta^{r}_{j}} \geq \frac{3}{4\beta_{j}}$ and $2\sigma^{v}_{j}\geq \sigma^{v,r}_{j}\geq \frac{1}{2}\sigma^{v}_{j}$.  Due to Condition \ref{eqn:mgfBnd} there exists $1\le K<\infty$ such that
$
\sup_{r}\left\{ E\left[ v^{r}_{j}(1) \mathbb{I}_{\{v^{r}_{j}(1) > K\}} \right] \right\}\leq \frac{1}{4\beta_{j}}$.
Therefore for all $r$ sufficiently large we have
\begin{eqnarray*}
\frac{3}{4\beta_{j}} \leq \frac{1}{\beta^{r}_{j}} 
\leq E\left[ v^{r}_{j}(1) \mathbb{I}_{\{v^{r}_{j}(1)\leq K\}} \right]+E\left[ v^{r}_{j}(1) \mathbb{I}_{\{v^{r}_{j}(1) > K\}} \right]
\end{eqnarray*}
and so 
\begin{eqnarray*} 
E\left[ v^{r}_{j}(1) \mathbb{I}_{\{v^{r}_{j}(1)\leq K\}} \right] \geq \frac{3}{4\beta_{j}}-E\left[ v^{r}_{j}(1) \mathbb{I}_{\{v^{r}_{j}(1) > K\}} \right] 
\geq \frac{1}{2\beta_{j}}.
\end{eqnarray*}
  In addition,
\begin{multline*}
E\left[ v^{r}_{j}(1) \mathbb{I}_{\{v^{r}_{j}(1)\leq K\}} \right] \leq E\left[ v^{r}_{j}(1) \mathbb{I}_{\{v^{r}_{j}(1) < \frac{1}{4\beta_{j}}\}} \right]+E\left[ v^{r}_{j}(1) \mathbb{I}_{\{\frac{1}{4\beta_{j}}\leq v^{r}_{j}(1)\leq K\}} \right] \\
\leq \frac{1}{4\beta_{j}}P\left(v^{r}_{j}(1) < \frac{1}{4\beta_{j}}\right)+KP\left(\frac{1}{4\beta_{i}}\leq v^{r}_{j}(1) \leq K\right) 
\leq \frac{1}{4\beta_{j}}+KP\left(\frac{1}{4\beta_{j}}\leq v^{r}_{j}(1) \right)
\end{multline*}
so for all $r$ sufficiently large we have
\begin{equation}
P\left(\frac{1}{4\beta_{j}}\leq v^{r}_{j}(1) \right) \geq \frac{1}{K}E\left[ v^{r}_{j}(1) \mathbb{I}_{\{v^{r}_{j}(1)\leq K\}} \right]-\frac{1}{4K \beta_{j}} \geq\frac{1}{2K \beta_{j}}- \frac{1}{4K \beta_{j}}\geq \frac{1}{4K \beta_{j}}. \label{eq:904}
\end{equation}
Define 
$
C^{r}_{j}(n)=\sum_{l=1}^{n}\mathbb{I}_{\{ v^{r}_{j}(l)\geq \frac{1}{4K \beta_{j}} \}}$
and 
$
\zeta^{r}_{j}(z)=\min\{n\geq0:C^{r}_{j}(n)= \lceil 4\frac{z}{\tilde{a}}K\max_i\{C_i\} \beta_{j}\rceil \}
$
and note that 
$
E\left[\hat{\tau}^{r,S}_{j}(z)\right] \leq E\left[ \zeta^{r}_{j}(z)\right].
$
However, because the $\{v^{r}_{j}(l)\}_{l=1}^{\infty}$ are i.i.d. it follows that $\zeta^{r}_{j}(z)$ is just the sum of $ \lceil 4\frac{z}{\tilde{a}}K \max_i\{C_i\}\beta_{j}\rceil$ independent geometric distributions with probability of success $p\geq \frac{1}{4K \beta_{j}}$ for all $r$ sufficiently large which gives
\begin{eqnarray*}
E\left[\hat{\tau}^{r,S}_{j}(z)\right] \le E\left[ \zeta^{r}_{j}(z) \right] 
\leq (1+ 4\frac{z}{\tilde{a}}K\max_i\{C_i\} \beta_{j}) 4K \beta_{j} 
\leq 4K \beta_{j} + 16\frac{z}{\tilde{a}}K^2\max_i\{C_i\} \beta^{2}_{j}.
\end{eqnarray*}
Thus, from \eqref{eq:902} we have, for all $r$ sufficiently large,
\begin{align*}
&E\left[e^{c v^{r}_{j}(\tau^{r,S}_{j}(t))}\right]\leq 1+\int_{(0,\infty)}e^{cz}\left(1+E\left[\hat{\tau}^{r,S}_{j}(z)\right]\right)\gamma^{r}_{j}(dz)\\
&\leq  1+\int_{(0,\infty)}e^{cz}(1+ 4K \beta_{j} + 16\frac{z}{\tilde{a}}K^2\max_i\{C_i\} \beta^{2}_{j})\gamma^{r}_{j}(dz) \\
&\leq 1+\left(1+ 4K \beta_{j}\right)\int_{(0,\infty)}e^{cz} \gamma^{r}_{j}(dz)+ \frac{16K^2 \beta^{2}_{j}}{\tilde{a}}\max_i\{C_i\}\int_{(0,\infty)}ze^{cz}\gamma^{r}_{j}(dz). 
\end{align*}
The result follows on using Condition \ref{eqn:mgfBnd}.
\hfill \qed

\subsection{Proof of Proposition \ref{thm:expTailBnd}.}
\label{sec:thmexpTailBnd}
In this section we prove the key large deviation estimates given in { Proposition} \ref{thm:expTailBnd} which have been used on several occasions.

Fix $j\in\AAA_J$.  Since the proof is identical for $A^{r}_{j}$ and $S^{r}_{j}$ we will only present it for $A^{r}_{j}$. Throughout in the proof we suppress the subscript $j$.
Define 
$
C^r(n)\doteq \sum_{l=1}^{n} u^{r}(1)$
and
$
\Lambda^{r}(y)=\log(E[ e^{y(u^{r}(1)-\frac{1}{\alpha^{r}})}]).
$
Note that $y\mapsto \Lambda^{r}(y)$ is infinitely differentiable  for $|y|<\delta$ and due to Jensen's inequality $\Lambda^{r}(y)\geq 0$ for all $|y|<\delta$. Due to Condition \ref{eqn:mgfBnd} there exists $K_{\Lambda}<\infty$ such that 
\begin{eqnarray*}
\sup_{r,|y|\leq\frac{\delta}{2}}\Big|\frac{d^3\Lambda^{r}}{dy^3}(y)\Big|
\leq K_{\Lambda}.
\end{eqnarray*}
 Since, $\Lambda^{r}(0)=0$, $\frac{d\Lambda^{r}}{dy}(0)=0$,
and
$
\frac{d^2\Lambda^{r}}{dy^2}(0)=\left(\sigma^{u,r}\right)^2
$
we have for  $|y| \le \delta/2$,
\begin{equation}
\left|\Lambda^{r}(y)-\frac{y^{2}}{2}\left(\sigma^{u,r}\right)^2 \right|\leq \frac{|y|^{3}}{6}K_{\Lambda} \label{eq:959}.
\end{equation}
We will first prove that, for any $c\ge 0$ there exist  $B_{1},B_{2},R\in(0,\infty)$ such that for all $r\geq R$ and $T\geq 0$ we have
\begin{equation}
P\left(\sup_{0\leq t \leq r^{2c}T}|A^{r}(t)-t\alpha^{r}|\geq \epsilon r^{c}T\right)=P\left(\sup_{0\leq t \leq T}|A^{r}( r^{2c}t)- r^{2c}t\alpha^{r}|\geq \epsilon r^{c}T\right)\leq B_{1}e^{-T B_{2}}. \label{eq:1001}
\end{equation}
Due to the fact that $ A^{r}$ is integer-valued and the definition of $C^r$,  for all $t\in [0,T]$ we have
\begin{eqnarray*}
\left\{ A^{r}(r^{2c}t)-\alpha^{r}r^{2c}t\geq  r^{c}T\epsilon  \right\}&=&\left\{ A^{r}(r^{2c}t)\geq \lceil \alpha^{r}r^{2c}t+r^{c}T\epsilon \rceil \right\}\\
&=&\left\{C^r(\lceil \alpha^{r}r^{2c}t +r^{c}T\epsilon \rceil)\leq r^{2c}t \right\}
\end{eqnarray*}
and 
\begin{eqnarray*}
\left\{ A^{r}(r^{2c}t)-\alpha^{r}r^{2c}t\leq  -r^{c}T\epsilon  \right\}&=&\left\{ A^{r}(r^{2c}t)\leq \lfloor \alpha^{r}r^{2c}t-r^{c}T\epsilon \rfloor \right\}\\
&=&\left\{C^r(\lfloor \alpha^{r}r^{2c}t -r^{c}T\epsilon \rfloor+1)> r^{2c}t \right\}.
\end{eqnarray*}
In addition, since for all $t\in [0,T]$ we have 
\begin{equation*}
\frac{1}{\alpha^{r}}\lceil \alpha^{r}r^{2c}t +r^{c}T\epsilon \rceil-\frac{1}{\alpha^{r}}\left( r^{c}T\epsilon\right)\geq r^{2c}t
\end{equation*}
and
\begin{equation*}
\frac{1}{\alpha^{r}}\left(\lfloor \alpha^{r}r^{2c}t -r^{c}T\epsilon \rfloor+1\right)+\frac{1}{\alpha^{r}} \left(r^{c}T\epsilon-1\right)\leq r^{2c}t 
\end{equation*}
it follows that for all $t\in [0,T]$ we have
\begin{align*}
&\left\{C^r(\lceil \alpha^{r}r^{2c}t +r^{c}T\epsilon \rceil)\leq r^{2c}t \right\}\\
&\subset\left\{\frac{1}{\alpha^{r}}\lceil \alpha^{r}r^{2c}t +r^{c}T\epsilon \rceil-C^r(\lceil \alpha^{r}r^{2c}t +r^{c}T\epsilon \rceil)\geq\frac{1}{\alpha^{r}}\left( r^{c}T\epsilon \right) \right\}\\
&\subset\left\{\frac{1}{\alpha^{r}}\lceil \alpha^{r}r^{2c}t +r^{c}T\epsilon \rceil-C^r(\lceil \alpha^{r}r^{2c}t +r^{c}T\epsilon \rceil)>\frac{1}{\alpha^{r}}\left( r^{c}T\epsilon -1\right) \right\}\\
\end{align*}
and
\begin{eqnarray*}
&&\left\{C^r(\lfloor \alpha^{r}r^{2c}t -r^{c}T\epsilon \rfloor+1)> r^{2c}t \right\}\\
&&\subset\left\{\frac{1}{\alpha^{r}}\left(\lfloor \alpha^{r}r^{2c}t -r^{c}T\epsilon \rfloor+1\right)-C^r(\lfloor \alpha^{r}r^{2c}t -r^{c}T\epsilon \rfloor+1)<-\frac{1}{\alpha^{r}} \left(r^{c}T\epsilon-1\right)\right\}\\
&&=\left\{C^r(\lfloor \alpha^{r}r^{2c}t -r^{c}T\epsilon \rfloor+1)-\frac{1}{\alpha^{r}}\left(\lfloor \alpha^{r}r^{2c}t -r^{c}T\epsilon \rfloor+1\right)>\frac{1}{\alpha^{r}} \left(r^{c}T\epsilon-1\right)\right\}.\\
\end{eqnarray*}
Using these observations, we have, with 
$p_c^r \doteq \lceil \alpha^{r}r^{2c}T+r^{c}\epsilon T \rceil$
\begin{equation}
\left\{\sup_{0\leq t \leq T}\left(A^{r}(r^{2c}t)- \alpha^{r}r^{2c}t \right)\geq r^{c}T\epsilon\right\}\subset \left\{ \sup_{0\leq n \leq p_c^r}\left(\frac{1}{\alpha^{r}}n-C^r(n)\right) >  \frac{1}{\alpha^{r}}\left(r^{c}\epsilon T-1\right) \right\} \label{eq:1009b}
\end{equation}
and with $q_c^r = \lfloor \alpha^{r} r^{2c}T-r^{c}\epsilon T  \rfloor$
\begin{equation}
\left\{\inf_{0\leq t \leq T}\left(A^{r}(r^{2c}t)- \alpha^{r}r^{2c}t \right)\leq -r^{c}T\epsilon\right\}\subset \left\{ \sup_{0\leq n \leq  q_c^r+1}\left(C^r(n)-\frac{1}{\alpha^{r}}n \right)> \frac{1}{\alpha^{r}}\left(r^{c}\epsilon T-1\right) \right\}. \label{eq:1032b}
\end{equation}
For $0<y<\frac{\delta}{4}$
\begin{equation}
P\Big(  \sup_{0\leq n \leq p_c^r}\Big(\frac{1}{\alpha^{r}}n-C^r(n)\Big) >  \frac{1}{\alpha^{r}}\Big(r^{c}\epsilon T-1\Big) \Big)e^{\frac{y}{\alpha^{r}}\Big(r^{c}\epsilon T-1\Big)}
\leq E\Big[\sup_{0\leq n \leq p_c^r}e^{y \Big(\frac{1}{\alpha^{r}}n-C^r(n)\Big)} \Big]. \label{eq:1019b}
\end{equation}
Since $\Lambda^{r}(-\frac{y}{2})\geq 0$ we have
\begin{align}
E\left[\sup_{0\leq n \leq p_c^r}\left(e^{y \left(\frac{1}{\alpha^{r}}n-C^r(n)\right)}\right) \right]
&= E\left[\sup_{0\leq n \leq p_c^r}\left(e^{y \left(\frac{1}{\alpha^{r}}n-C^r(n)\right)-n2\Lambda^{r}(-\frac{y}{2})}e^{n2 \Lambda^{r}(-\frac{y}{2})}\right) \right]\nonumber\\
&\leq e^{p_c^r 2\Lambda^{r}(-\frac{y}{2})}E\left[\sup_{0\leq n \leq p_c^r}\left(e^{y \left(\frac{1}{\alpha^{r}}n-C^r(n)\right)-n2\Lambda^{r}\left(-\frac{y}{2}\right)}\right) \right]
\label{eq:1014}
\end{align}
Since $e^{\frac{y}{2} \left(\frac{1}{\alpha^{r}}n-C^r(n)\right)-n\Lambda^{r}(-\frac{y}{2})}$ is a nonnegative martingale and
$
Ee^{\frac{y}{2} \left(\frac{1}{\alpha^{r}}n-C^r(n)\right)-n\Lambda^{r}(-\frac{y}{2})}<\infty
$
for all $n\in\mathbb{N}$ the Doob-Kolmogorov inequality gives
\begin{eqnarray*}
E\left[\sup_{0\leq n \leq p_c^r}\left(e^{y \left(\frac{1}{\alpha^{r}}n-C^r(n)\right)-2n\Lambda^{r}(-\frac{y}{2})}\right) \right]
&\leq& 4E\left[e^{y \left(\frac{1}{\alpha^{r}}p_c^r-C^r(p_c^r)\right)-2p_c^r\Lambda^{r}(-\frac{y}{2})} \right]\\
&\leq& 4e^{-2p_c^r\Lambda^{r}(-\frac{y}{2})}e^{p_c^r\Lambda^{r}(-y)}.
\end{eqnarray*}
Consequently, from  \eqref{eq:1009b}, \eqref{eq:1019b} and \eqref{eq:1014},
\begin{align}
&P\left(  \sup_{0\leq t \leq T}\left(A^{r}(r^{2c}t)- \alpha^{r}r^{2c}t \right)\geq r^{c}T\epsilon \right)\nonumber \\
&\leq  4e^{-\frac{y}{\alpha^{r}}\left(r^{c}\epsilon T-1\right)+p_c^r \Lambda^{r}(-y)}
\leq  4e^{-\frac{y}{\alpha^{r}}\left(r^{c}\epsilon T-1\right)+p_c^r \left(\frac{y^{2}}{2}\left(\sigma^{u,r}\right)^2+\frac{|y|^{3}}{6}K_{\Lambda}\right)}\nonumber\\
&\leq 4e^{-\frac{y}{\alpha^{r}}\left(r^{c}\epsilon T-1\right)+\left(\alpha^{r} r^{2c}T+r^{c}\epsilon T +1 \right) \left(\frac{y^{2}}{2}\left(\sigma^{u,r}\right)^2+\frac{|y|^{3}}{6}K_{\Lambda}\right)},
\label{eq:1024b}
\end{align}
where the second inequality is from \eqref{eq:959}.

Choose $R<\infty$ sufficiently large that for all $r\geq R$ we have $ \alpha/2 \le \alpha^{r}\leq 2\alpha$, $\left(\sigma^{u,r}\right)^2\leq 2\left(\sigma^{u}\right)^2$.

Consider first the case where $c>0$.
Define 
$
k=\epsilon/(16\left(\alpha\sigma^{u}\right)^2)$,
and assume without loss that $R$  is large enough that for all $r\geq R$
\begin{equation*}
r^{-c}\frac{1}{6}\alpha^{r}k^{3}K_{\Lambda}+r^{-c}\frac{1}{2}\epsilon k^{2}\left(\sigma^{u,r}\right)^2+r^{-2c}\frac{1}{6}\epsilon k^{3}K_{\Lambda}\leq \frac{k\epsilon }{8\alpha},
\end{equation*}
\begin{equation*}
r^{-c}\frac{k}{ \alpha^{r}}+r^{-2c}\frac{1}{2}k^{2}\left(\sigma^{u,r}\right)^2+r^{-3c}\frac{1}{6}k^{3}K_{\Lambda}\leq 1,\;\; \mbox{ and }
kr^{-c}<\frac{\delta}{4}.
 \end{equation*}
Then for all $r\geq R$, the exponent in \eqref{eq:1024b} with $y = kr^{-c}$ satisfies 
\begin{eqnarray*}
&&-\frac{ kr^{-c}}{\alpha^{r}}\left(r^{c}\epsilon T-1\right)+\left(\alpha^{r} r^{2c}T+r^{c}\epsilon T  \right) \left(\frac{\left( kr^{-c}\right)^{2}}{2}\left(\sigma^{u,r}\right)^2+\frac{\left( kr^{-c}\right)^{3}}{6}K_{\Lambda}\right)\\
&&+\left(\frac{\left( kr^{-c}\right)^{2}}{2}\left(\sigma^{u,r}\right)^2+\frac{\left( kr^{-c}\right)^{3}}{6}K_{\Lambda}\right)\\
&&=-\frac{k\epsilon T}{\alpha^{r}}+\frac{k}{r^{c} \alpha^{r}}+\frac{1}{2}\alpha^{r}Tk^{2}\left(\sigma^{u,r}\right)^2+\frac{1}{6r^{c}}\alpha^{r}Tk^{3}K_{\Lambda}+\frac{1}{2r^{c}}\epsilon Tk^{2}\left(\sigma^{u,r}\right)^2+\frac{1}{6r^{2c}}\epsilon Tk^{3}K_{\Lambda}\\
&&\quad+\frac{1}{2r^{2c}}k^{2}\left(\sigma^{u,r}\right)^2+\frac{1}{6r^{3c}}k^{3}K_{\Lambda}\\
&&=T\left(\frac{1}{2}\alpha^{r}k^{2}\left(\sigma^{u,r}\right)^2-\frac{k\epsilon }{\alpha^{r}}\right)+T\left(r^{-c}\frac{1}{6}\alpha^{r}k^{3}K_{\Lambda}+r^{-c}\frac{1}{2}\epsilon k^{2}\left(\sigma^{u,r}\right)^2+r^{-2c}\frac{1}{6}\epsilon k^{3}K_{\Lambda}\right)\\
&&\quad+r^{-c}\frac{k}{ \alpha^{r}}+r^{-2c}\frac{1}{2}k^{2}\left(\sigma^{u,r}\right)^2+r^{-3c}\frac{1}{6}k^{3}K_{\Lambda}\\
&&\leq T\left(2\alpha k^{2}\left(\sigma^{u}\right)^2-\frac{k\epsilon}{2\alpha}\right)+T\left(\frac{k\epsilon }{8\alpha}\right)+1
=T\left(\frac{k\epsilon }{8\alpha}-\frac{k\epsilon }{2\alpha}\right)+T\left(\frac{k\epsilon }{8\alpha}\right)+1
=-T\frac{k\epsilon }{4\alpha}+1.
\end{eqnarray*}
Consequently for $r\geq R$,  substituting $y=kr^{-c}$ in \eqref{eq:1024b} gives
\begin{equation*}
P\left(  \sup_{0\leq t \leq T}\left(A^{r}(r^{2c}t)- \alpha^{r}r^{2c}t \right)\geq r^{c}T\epsilon \right)
\leq 4e^{-T\frac{k\epsilon }{4\alpha}+1}\leq 4ee^{-T\frac{k\epsilon }{4\alpha}}.
\end{equation*}
Similarly, using the  inequality \eqref{eq:1032b},
an almost identical argument (which we omit for the sake of brevity) provides a similar bound on $P\left( \inf_{0\leq t \leq T}\left(A^{r}(r^{2c}t)- \alpha^{r}r^{2c}t \right)\leq -r^{c}T\epsilon\right)$ and combining the two bounds we have the estimate in \eqref{eq:1001} for $c>0$.

Now consider the case $c=0$ and define 
\begin{equation*}
q=\min\left\{1,\frac{\delta}{4},\frac{\epsilon}{8\alpha\left(\alpha\left(\sigma^{u}\right)^2+ \frac{\alpha}{6}K_{\Lambda}+\epsilon\left(\sigma^{u}\right)^2+  \frac{\epsilon}{6}K_{\Lambda}\right)}\right\}.
\end{equation*}
Then for all $r\geq R$ we have, for the exponent in \eqref{eq:1024b} with $y=q$,
\begin{eqnarray*}
&&-\frac{q}{\alpha^{r}}\left(\epsilon T-1\right) +\left(\alpha^{r} T+\epsilon T  \right) \left(\frac{q^{2}}{2}\left(\sigma^{u,r}\right)^2+\frac{q^{3}}{6}K_{\Lambda}\right)+\left(\frac{q^{2}}{2}\left(\sigma^{u,r}\right)^2+\frac{q^{3}}{6}K_{\Lambda}\right)\\
&&\leq-\frac{\epsilon Tq}{2\alpha}+ \frac{2q}{\alpha}+\alpha^{r} Tq^{2}\left(\sigma^{u}\right)^2+\alpha^{r} T\frac{q^{3}}{6}K_{\Lambda}+\epsilon Tq^{2}\left(\sigma^{u}\right)^2+ \epsilon T\frac{q^{3}}{6}K_{\Lambda}  +q^{2}\left(\sigma^{u}\right)^2+\frac{q^{3}}{6}K_{\Lambda}\\
&&\leq-\frac{\epsilon Tq}{2\alpha}+2q^{2}T\left(\alpha\left(\sigma^{u}\right)^2+ \frac{\alpha}{6}K_{\Lambda}+\epsilon\left(\sigma^{u}\right)^2+  \frac{\epsilon}{6}K_{\Lambda}\right)  +q^{2}\left(\sigma^{u}\right)^2+\frac{2q}{\alpha}+\frac{q^{3}}{6}K_{\Lambda}\\
&&\leq-\frac{\epsilon Tq}{2\alpha}+\frac{\epsilon Tq}{4\alpha}  +q^{2}\left(\sigma^{u}\right)^2+\frac{2q}{\alpha}+\frac{q^{3}}{6}K_{\Lambda}
=-T\frac{\epsilon q}{4\alpha}  +q^{2}\left(\sigma^{u}\right)^2+\frac{2q}{\alpha}+\frac{q^{3}}{6}K_{\Lambda}.
\end{eqnarray*}
 Consequently for $r\geq R$, substituting  $y=q$ in \eqref{eq:1024b} with $c=0$, we have
\begin{eqnarray*}
P\left(  \sup_{0\leq t \leq T}\left(A^{r}(t)- \alpha^{r}t \right)\geq T\epsilon \right)&
\leq& \left(4e^{q^{2}\left(\sigma^{u}\right)^2+\frac{2q}{\alpha}+\frac{q^{3}}{6}K_{\Lambda}}\right)e^{ -T\frac{\epsilon q}{4\alpha}}.
\end{eqnarray*}
Finally, using the inclusion in \eqref{eq:1032b} with $c=0$ we have,
by an almost identical argument (which is omitted), a similar upper bound on $P\left(\inf_{0\leq t \leq T}\left(A^{r}(t)- \alpha^{r}t \right)\leq -T\epsilon\right)$.
Combining the two bounds we now have the estimate in \eqref{eq:1001} for $c=0$.
We have thus proved \eqref{eq:31.d} with $c_2=0$.
Now \eqref{eq:31.d} for general $c_1, c_2\ge 0$ (and $T$ replaced by $S\ge 0$) follows on taking $c=c_1$ and $T=Sr^{c_{2}}$ in \eqref{eq:1001}. Finally, the inequality in \eqref{eq:31.b} follows on taking $c_1=c_2=\frac{\kappa}{2}$ in \eqref{eq:31.d}. \hfill \qed

\subsection{Proof of Proposition \ref{thm:largeWaitBnd}.}
\label{sec:proofLargeWaitBnd}
The proof for \eqref{eq:eq706b} is the same as that for \eqref{eq:eq706a} and so 
we will only show the latter.  Let $c>0$, $\epsilon>0$, and $j\in\AAA_J$ be arbitrary.  
Once again we suppress $j$ from the notation.
Due to Condition \ref{eqn:mgfBnd} we have
$
\sup_{r>0}\{E e^{\frac{3\delta}{4}v^{r}(1)}\}=\tilde{K}<\infty
$
where $\delta$ is as in Condition \ref{eqn:mgfBnd}.  Choose $R<\infty$ such that for all $r\geq R$ we have
$
\left(r^{2}+1\right)\tilde{K}e^{-r\frac{\delta}{4}c}\leq \frac{\epsilon\delta}{4}.
$
Consequently
\begin{eqnarray*}
E\left[ e^{\frac{\delta}{2}\mathcal{I}_{\{v^{r}(1)\geq r c\}}v^{r}(1)}\right]&=& 1+E\left[ \mathcal{I}_{\{v^{r}(1)\geq r c\}}e^{\frac{\delta}{2}v^{r}(1)}\right]
\leq 1+ E\left[ \mathcal{I}_{\{v^{r}(1)\geq r c\}}e^{-r\frac{\delta}{4}c}e^{\frac{3\delta}{4}v^{r}(1)}\right]\\
&\leq&1+ e^{-r\frac{\delta}{4}c}E\left[ e^{\frac{3\delta}{4}v^{r}(1)}\right]
\leq 1+ \tilde{K}e^{-r\frac{\delta}{4}c}
\leq e^{ \tilde{K}e^{-r\frac{\delta}{4}c}}.
\end{eqnarray*}
So for all $T\geq 1$ and $r\geq R$ we have
\begin{equation*}
E\left[ e^{\frac{\delta}{2}\sum_{l=1}^{\lceil r^{2}T \rceil}\mathcal{I}_{\{v^{r}(l)\geq r c \}}v^{r}_{l}}\right] \leq
 e^{ T\left(r^{2}+1\right) \tilde{K}e^{-r\frac{\delta}{4}c}}
\leq e^{ T\frac{\epsilon\delta}{4}}
\end{equation*}
which implies
\begin{equation*}
P\left(\sum_{l=1}^{\lceil r^{2}T \rceil}\mathcal{I}_{\{v^{r}(l)\geq r c \}}v^{r}(l) \geq \epsilon T \right)
\leq e^{-\frac{\epsilon\delta}{2} T}e^{ T\frac{\epsilon\delta}{4}}
\leq e^{ -\frac{\epsilon\delta}{4} T}.
\end{equation*}
\hfill \qed

\section{General Network and Cost Properties}
\label{sec:gennetm}
\subsection{Proof of Proposition \ref{thm:lambdaEqu}.}
\label{sec:gennet}
From the discussion above Proposition \ref{thm:lambdaEqu}, the result is clearly true when 
$\mathcal{C}_{K}^{h} = \text{ker}(K)$. Consider now the complementary case, namely,
$\text{dim}(\mathcal{C}_{K}^{h})=\text{dim}(\text{ker}(K))-1=J-I-1$.
Let $q\in\mathbb{R}_{+}^{J}$ be arbitrary.  For  $\tilde{q}\in\Lambda\left(KMq\right)$ define $\tilde{v} = (\tilde q - q)/\beta$.
Note that 
$
0\leq \tilde{q}_{j}=q_{j}+(\tilde{q}_{j}-q_{j})=q_{j}+\tilde{v}_{j}\beta_{j}
$
for all $j\in\AAA_J$ and
$
K\tilde{v}=KM(\tilde{q}-q)=KM\tilde{q}-KMq=0
$
and so $\tilde{v}\in\Xi(q)$.  In addition, for any $\hat{v}\in\Xi(q)$ define $\hat{q}=q+\beta\hat{v}$ and note that by the definition of $\Xi(q)$ we have $\hat{q}\in\mathbb{R}^{J}_{+}$ and $KM\hat{q}=KM(q+\beta\hat{v})=KMq+K\hat{v}=KMq$ so $\hat{q}\in\Lambda\left(KMq\right)$.  Consequently $\inf_{\tilde{q}\in\Lambda(KMq)}(h\cdot \tilde{q})
= \inf_{\tilde{v}\in\Xi(q)}(h\cdot\left(  q+\beta\tilde{v}\right))$.  Therefore
\begin{align*}
&h\cdot q-\hat{h}\left(KMq\right) = h\cdot q-\inf_{\tilde{q}\in\Lambda(KMq)}(h\cdot \tilde{q})
= h\cdot q-\inf_{\tilde{v}\in\Xi(q)}(h\cdot\left(  q+\beta\tilde{v}\right))
\\
&= \sup_{\tilde{v}\in\Xi(q)}(-h\cdot\left( \beta\tilde{v}\right))
=\sup_{\tilde{v}\in\Xi(q)}\left(-\beta h\cdot \sum_{j=1}^{J-I}u_{j}(u_{j} \cdot \tilde{v} )\right)
= \sup_{\tilde{v}\in\Xi(q)}\left(-\beta h\cdot u_{J-I}(u_{J-I}\cdot\tilde{v} )\right)\\
&= |\lambda|\sup_{\tilde{v}\in\Xi(q)}\left(u_{J-I}\cdot\tilde{v} \right)
=|\lambda|\tilde d(q),
\end{align*}
where the last equality on the second line uses $\beta h \cdot u_j=0$ for $j=1, \ldots J-I-1$, and the last line follows on recalling the definition of $\lambda$ and $\tilde d$. The result follows.
\hfill \qed

\subsection{Proof of Proposition \ref{thm:fullCapVect}}
\label{sec:fullcapvec}
Fix $z \in \chi_J$.
Due to the local traffic assumption (Condition \ref{cond:locTraffic}) for each $l\in\mathcal{A}_{z}$ we can choose $s_{l}\in\AAA_J$ such that $K_{l,s_{l}}=1$ and $\sum_{i\in\AAA_I}K_{i,s_{l}}=1$.  Define $S=\{s_{l}:l\in\mathcal{A}_{z}\}$, $P=\{j\in\AAA_J:z_{j}=1\}\setminus S$ and $N=\{j\in\AAA_J:z_{j}=0\}\setminus S$.  For $j\in N$ define $v_{j}=-J$, for $j\in P$ define $v_{j}=1$, and for all $l\in\mathcal{A}_{z}$ define $v_{s_{l}}=-\sum_{j\neq s_{l}}K_{l,j}v_{j}$.  For any $l\in\mathcal{A}_{z}$ we have
\begin{equation*}
\sum_{j\in\AAA_J}K_{l,j}v_{j}=\sum_{j\neq s_{l}}K_{l,j}v_{j}+v_{s_{l}}=\sum_{j\neq s_{l}}K_{l,j}v_{j}-\sum_{j\neq s_{l}}K_{l,j}v_{j}=0.
\end{equation*}
This verifies the first statement in the proposition with the above choice of $v$.
Next consider $j \in \NN_j$ such that $z_{j}=1$. If $j \in P$, then by definition $v_j>0$.
To complete the proof consider now $l\in\mathcal{A}_{z}$ with  $z_{s_{l}}=1$.  Due to the definition of $\mathcal{A}_{z}$ we have
$
\sum_{j:z_{j}=0}K_{l,j}\geq 1$
and we also know that $K_{l,s_{i}}=0$ for all $i\in\mathcal{A}_{z}\setminus\{l\}$.  This implies that there exists $j^{*}\in N$ such that $K_{l,j^{*}}=1$ and for all $j\in\mathbb{N_{J}}\setminus\{j^{*},s_{l}\}$ such that $K_{l,j}=1$, either $v_{j}=1$ or $v_{j}=-J$.  Since $v_{j^{*}}=-J$ we have
\begin{equation*}
v_{s_{l}}=-\sum_{j\neq s_{l}}K_{l,j}v_{j}=J-\sum_{j\in\mathbb{N}\setminus\{j^{*},s_{l}\}}K_{l,j}v_{j}\geq J -(J-2)\geq 2.
\end{equation*}
Thus we have shown 
that for all $j\in\AAA_J$ such that $z_{j}=1$ we have $v_{j}>0$. This proves the second statement in the proposition.
\hfill \qed

\subsection{Proofs of Propositions \ref{thm:dTildeDiffBnd}
and \ref{thm:costDiffBndQ}.}
\label{sec:proofsDTildeCostDiffBnds}
We begin with two auxiliary results.
\begin{lemma}
\label{thm:bndCostDifNeg}
There exists a $\hat{B}_{\hat{h}}^{-} \in (0,\infty)$  such that for all $w^{1},w^{2}\in\mathbb{N}^{I}_{+}$ satisfying $w^{1}\geq w^{2}$, we have
\begin{equation*}
\hat{h}(w^{2})\leq \hat{h}(w^{1})+\hat{B}_{\hat{h}}^{-}|w^{1}-w^{2}|_{2}.
\end{equation*}
\end{lemma}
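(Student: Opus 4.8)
\textbf{Proof proposal for Lemma~\ref{thm:bndCostDifNeg}.}
The plan is to exploit the definition of $\hat h$ as an infimum of the linear functional $q\mapsto h\cdot q$ over the polytope $\Lambda(w)=\{q\in\RR_+^J : KMq=w\}$, together with the local traffic condition, which guarantees (as noted below \eqref{eq:hhatdef}) that $\Lambda(w)$ is a nonempty compact set for every $w\in\RR^I$ and that a minimizer $q^*(w)$ exists. First I would fix $w^1,w^2\in\RR_+^I$ (the statement is phrased for integer vectors but nothing in the argument uses integrality, so I work with general nonnegative real $w^i$) with $w^1\ge w^2$, and let $q^1 = q^*(w^1)$ be a cost-minimizing queue length for $w^1$, so that $h\cdot q^1 = \hat h(w^1)$ and $KMq^1 = w^1$. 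The goal is to produce a feasible $\tilde q\in\Lambda(w^2)$, i.e.\ $\tilde q\ge 0$ and $KM\tilde q = w^2$, whose cost $h\cdot\tilde q$ exceeds $\hat h(w^1)$ by at most a constant multiple of $|w^1-w^2|_2$; then $\hat h(w^2)\le h\cdot\tilde q$ gives the claim.

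The key step is the construction of $\tilde q$. By the local traffic condition (Condition~\ref{cond:locTraffic}), for each resource $i\in\AAA_I$ there is a job-type $j(i)\in\AAA_J$ with $K_{i,j(i)}=1$ and $K_{l,j(i)}=0$ for $l\ne i$; these $j(i)$ are distinct. I would set $\tilde q = q^1 + \sum_{i\in\AAA_I} c_i\, e_{j(i)}$ where $e_j$ is the $j$-th standard basis vector and the scalars $c_i$ are chosen so that $KM\tilde q = w^2$; explicitly, since the $j(i)$-th column of $K$ is $e_i$, we get $(KM\tilde q)_i = (KMq^1)_i + c_i/\beta_{j(i)} = w^1_i + c_i/\beta_{j(i)}$, so $c_i = \beta_{j(i)}(w^2_i - w^1_i)$. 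Because $w^2\le w^1$ we have $c_i\le 0$, and since $w^1-w^2$ may be large I must check $\tilde q\ge 0$. This is where some care is needed: the minimizer $q^1$ need not have large coordinates in the local-traffic positions. To handle this I would instead first decrease $q^1$ by moving along $\ker(K)$ if necessary — actually the cleaner route is to observe that $\hat h$ is nondecreasing is \emph{not} assumed, so I should not decrease $w$ freely; instead I note that the minimizer $q^1=q^*(w^1)$ itself satisfies $q^1 \ge$ some explicit lower bound on the local-traffic coordinates only when $w^1_i$ is large. The robust fix: prove the bound in two regimes. If $|w^1-w^2|_2$ is bounded below by a fixed constant times $|w^1|$, the inequality follows trivially from the linear growth bound $\hat h(w)\le B|w|$ (which is used elsewhere in the paper, e.g.\ in the proof of Theorem~\ref{thm:disccost}) applied to $w^2$ together with $\hat h(w^1)\ge 0$. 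Otherwise $w^2$ is comparable to $w^1$, and one can show the local-traffic coordinates of $q^*(w^1)$ are $\gtrsim \min_i w^1_i$, so the subtraction above keeps $\tilde q\ge 0$; then $h\cdot\tilde q = \hat h(w^1) + \sum_i h_{j(i)} c_i$ and $|\sum_i h_{j(i)} c_i| \le (\max_i |h_{j(i)}\beta_{j(i)}|)\,|w^1-w^2|_1 \le \hat B^-_{\hat h}|w^1-w^2|_2$ with $\hat B^-_{\hat h} = \sqrt I\,\max_{i} h_{j(i)}\beta_{j(i)}$.

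The main obstacle I anticipate is precisely verifying feasibility ($\tilde q\ge 0$) of the constructed shifted queue length, since $q^*(w^1)$ is only an abstract minimizer and controlling its coordinates requires either a case analysis as sketched above or an appeal to a compactness/continuity argument for $w^1$ in a bounded region combined with homogeneity-type scaling for large $w^1$. An alternative that sidesteps this: use the explicit minimizer structure from Proposition~\ref{thm:lambdaEqu} and the orthonormal basis $(u_1,\dots,u_J)$ — the minimizing $q$ for workload $w$ can be taken of a specific form adapted to $\ker(K)$ and its orthogonal complement, and the dependence of this canonical minimizer on $w$ is Lipschitz on the region where all relevant coordinates stay positive, which is exactly the regime $w^2$ comparable to $w^1$. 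Either way, once feasibility is secured the cost estimate is the short linear-algebra computation above. I would present the two-regime argument since it is self-contained and only invokes the already-established bound $\hat h(w)\le B|w|$ and the existence of minimizers.
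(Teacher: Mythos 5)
Your construction has a genuine gap in the ``comparable'' regime. You correct $q^*(w^1)$ by subtracting mass only from the local-traffic coordinates $j(i)$, relying on the claim that when $w^2$ is comparable to $w^1$ one has $q^*(w^1)_{j(i)}\gtrsim\min_i w^1_i$. That claim is false. In the 2LLN network of Example~\ref{ex:nontrivial} (with $h=\beta=(1,1,1)'$), for any $w=(W,W)'$ the unique minimizer is $q^*=(0,0,W)'$, whose local-traffic coordinates $q^*_1,q^*_2$ are identically zero no matter how large $W$ is. More generally, whenever it is cheaper to route workload through shared job types than through local ones, the optimal $q^*(w^1)$ carries no mass in the $j(i)$ coordinates, and the subtraction $\tilde q=q^*(w^1)+\sum_i c_i e_{j(i)}$ with $c_i<0$ is infeasible. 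Your regime split does not remove this case (the infeasibility occurs precisely when $w^2$ is close to $w^1$), and the alternative you sketch, reparametrizing the minimizer via the basis $(u_1,\dots,u_J)$, describes the same $q^*$ and so inherits the same problem. You also lean on ``$\hat h(w)\le B|w|$,'' which itself requires a feasibility construction; it is cheap, but not free, and in the paper it follows from the same local-traffic bookkeeping that the lemma is really about.

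The paper's proof uses a different feasibility mechanism. To reduce the workload at resource $i$ from $w^1_i$ to $w^2_i$, it does not subtract from local-traffic coordinates at all; it subtracts from all job types $j$ with $K_{i,j}=1$, allocating the total decrement $w^1_i-w^2_i$ across those $j$ in proportion to the mass $\tilde q_j$ currently available (which is always enough, since $\sum_{j:K_{i,j}=1}\tilde q_j/\beta_j$ equals the current workload at $i$, and that is $\geq w^1_i-w^2_i$). Each such subtraction also removes workload at the other resources $l$ served by job type $j$, and the paper restores that by adding mass to the local-traffic jobs $s_l$; additions never violate nonnegativity. Iterating over $i=1,\dots,I$ produces a feasible $\tilde q^I\in\Lambda(w^2)$ with $h\cdot\tilde q^I\le\hat h(w^1)+\tilde R\sum_i|w^1_i-w^2_i|$, hence the lemma. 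In short, local-traffic jobs are used only to \emph{add} mass, never to \emph{remove} it; that is the idea your proposal is missing.
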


\begin{proof}
For $i\in \AAA_I$ let $\mathcal{C}_{i}=\left\{j\in\AAA_J: K_{i,j}=1 \right\}$.  
For  $j\in\mathcal{C}_{i}$ define\\
 $\mathcal{O}^{j}_{i}=\left\{l\in\AAA_I: K_{l,j}=1 \text{ and } l\neq i \right\}$.  Thus the set $\mathcal{O}^{j}_{i}$ consists of all the resources that job type $j$ impacts aside from resource $i$.  For each $i\in \AAA_I$ let $s_{i}\in\AAA_J$ be such that $\sum_{l=1}^{I}K_{l,s_{i}}=K_{i,s_{i}}=1$.  For any $j\in\AAA_J$ let $e^{j}\in\mathbb{R}^{J}_{+}$ be the unit vector with one on the $j$th coordinate so $e^{j}_{j}=1$ and $e^{j}_{l}=0$ for $l\neq j$.  Note that for any $c>0$ and $j\in \AAA_J$ if we define $\tilde{w}^{1}=KM c \beta_{j}e^{j}= cK_{\cdot,j}$ (here $K_{\cdot,j}$ is the $j$th column vector of the matrix $K$) and $\tilde{w}^{2}=KM\left(\sum_{l\in\mathcal{O}^{j}_{i}} c \beta_{s_{l}}e^{s_{l}}\right)$ then $\tilde{w}^{2}_{l}=\tilde{w}^{1}_{l}$ for $l\neq i$ but $\tilde{w}^{2}_{i}=0$ and $\tilde{w}^{1}_{i}=c$.  In other words, by replacing $c\beta_{j}e^{j}$ with $\sum_{l\in\mathcal{O}^{i}_{j}}c\beta_{s_{l}}e^{s_{l}}$ in the queue length vector we've reduced the workload for server $i$ by $c$ and we've changed the cost by $c\left( \sum_{l\in\mathcal{O}^{j}_{i}}h_{s_{l}}\beta_{s_{l}}-h_{j}\beta_{j}\right)$.  This is the key idea in the proof.

Define 
\begin{equation*}
\tilde{R_{i}}=\max_{j\in\mathcal{C}_{i}} \left( \sum_{l\in\mathcal{O}^{j}_{i}}h_{s_{l}}\beta_{s_{l}}-h_{j}\beta_{j}\right)^{+} \;\;
\mbox{ and }
\tilde{R}=\max_{i\in \AAA_I}\left\{\tilde{R_{i}} \right\}.
\end{equation*}
Now let $w^{1},w^{2}\in\mathbb{N}^{I}_{+}$ be such that $w^{1}\geq w^{2}$ and let $\tilde{q}^{0}\in\mathbb{R}^{J}_{+}$ satisfy $w^{1}=KM\tilde{q}^{0}$ and $\hat{h}(w^{1})=h\cdot \tilde{q}^{0}$.  Since $ \sum_{j\in\mathcal{C}_{1}}\frac{1}{\beta_{j}}\tilde{q}^{0}_{j}=
\sum_{j\in \AAA_J}K_{1,j}\frac{1}{\beta_{j}}\tilde{q}^{0}_{j} =
w^{1}_{1}\geq w^{1}_{1}-w^{2}_{1}$ we can choose $\tilde{c}^{1}_{j}\in [0, \tilde{q}^{0}_{j}/\beta_{j}]$ for all $j\in\mathcal{C}_{1}$ such that $\sum_{j\in\mathcal{C}_{1}}\tilde{c}^{1}_{j}=w^{1}_{1}-w^{2}_{1}$.  Define
\begin{equation*}
\tilde{q}^{1}=\tilde{q}^{0}+\sum_{j\in\mathcal{C}_{1}}\left(\sum_{l\in\mathcal{O}^{j}_{1}}\tilde{c}^{1}_{j}\beta_{s_{l}}e^{s_{l}}-\tilde{c}^{1}_{j}\beta_{j}e^{j} \right)
\end{equation*}
and note that $\tilde{q}^{1}\in\mathbb{R}^{J}_{+}$,
\begin{equation*}
KM\tilde{q}^{1}=\left(w^{1}_{1}-\sum_{j\in\mathcal{C}_{1}}\tilde{c}^{1}_{j}, w^{1}_{2},w^{1}_{3},...,w^{1}_{I}\right)=\left(w^{2}_{1}, w^{1}_{2},w^{1}_{3},...,w^{1}_{I}\right),
\end{equation*}
and
\begin{eqnarray*}
h\cdot \tilde{q}^{1}&=& h\cdot \tilde{q}^{0} +\sum_{j\in\mathcal{C}_{1}}\left(\sum_{l\in\mathcal{O}^{j}_{1}}\tilde{c}^{1}_{j}h_{s_{l}}\beta_{s_{l}}-\tilde{c}^{1}_{j}h_{j}\beta_{j}\right)\\
&\leq& \hat{h}(w^{1})+\sum_{j\in\mathcal{C}_{1}}\tilde{c}^{1}_{j}\left(\sum_{l\in\mathcal{O}^{j}_{1}}h_{s_{l}}\beta_{s_{l}}-h_{j}\beta_{j}\right)^{+}
\leq  \hat{h}(w^{1}) +\tilde{R}|w^{1}_{1}-w^{2}_{1}|.
\end{eqnarray*}
Now assume that for $k\in\{1,...,I-1\}$ there exists $\tilde{q}^{k}\in \mathbb{R}^{J}_{+}$ such that 
\begin{equation}
KM\tilde{q}^{k}=(w^{2}_{1},w^{2}_{2}, ...,w^{2}_{k},w^{1}_{k+1},w^{1}_{k+2},...,w^{1}_{I}),\;\;
\mbox{ and }
h\cdot \tilde{q}^{k}\leq \hat{h}(w^{1})+\tilde{R}\sum_{i=1}^{k}|w^{1}_{i}-w^{2}_{i}|. \label{eq:409b}
\end{equation}
Since $\sum_{j\in\mathcal{C}_{k+1}}\tilde{q}^{k}_{j}/\beta_{j}=\sum_{j\in\AAA_J}K_{k+1,j}\tilde{q}^{k}_{j}/
\beta_{j}=w^{1}_{k+1}\geq w^{2}_{k+1}$ we can choose $\tilde{c}^{k+1}_{j}\in [0, \tilde{q}^{k}_{j}/\beta_{j}]$ for all $j\in\mathcal{C}_{k+1}$ such that $\sum_{j\in\mathcal{C}_{k+1}}\tilde{c}^{k+1}_{j}=w^{1}_{k+1}-w^{2}_{k+1}$.  Define
\begin{equation*}
\tilde{q}^{k+1}=\tilde{q}^{k}+\sum_{j\in\mathcal{C}_{k+1}}\left(\sum_{l\in\mathcal{O}^{j}_{k+1}}\tilde{c}^{k+1}_{j}\beta_{s_{l}}e^{s_{l}}-\tilde{c}^{k+1}_{j}\beta_{j}e^{j} \right).
\end{equation*}
Then, as before $\tilde{q}^{k+1} \in \RR_+^J$, and from \eqref{eq:409b},
\begin{eqnarray*}
KM\tilde{q}^{k+1}&=&\left(w^{2}_{1},w^{2}_{2}, ...,w^{2}_{k},w^{1}_{k+1}-\sum_{j\in\mathcal{C}_{k+1}}\tilde{c}^{k+1}_{j},w^{1}_{k+2},...,w^{1}_{I}\right)\\
&=&\left(w^{2}_{1},w^{2}_{2}, ...,w^{2}_{k},w^{2}_{k+1},w^{1}_{k+2},...,w^{1}_{I}\right)
\end{eqnarray*}
and
\begin{eqnarray*}
h\cdot \tilde{q}^{k+1}&=& h\cdot \tilde{q}^{k} +\sum_{j\in\mathcal{C}_{k+1}}\left(\sum_{l\in\mathcal{O}^{j}_{k+1}}\tilde{c}^{k+1}_{j}h_{s_{l}}\beta_{s_{l}}-\tilde{c}^{k+1}_{j}h_{j}\beta_{j}\right)\\
&\leq& \hat{h}(w^{1})+\tilde{R}\sum_{i=1}^{k}|w^{1}_{i}-w^{2}_{i}|+\sum_{j\in\mathcal{C}_{k+1}}\tilde{c}^{k+1}_{j}\left(\sum_{l\in\mathcal{O}^{j}_{k+1}}h_{s_{l}}\beta_{s_{l}}-h_{j}\beta_{j}\right)^{+}\\
&\leq& \hat{h}(w^{1})+\tilde{R}\sum_{i=1}^{k}|w^{1}_{i}-w^{2}_{i}| +\tilde{R}|w^{1}_{k+1}-w^{2}_{k+1}|
= \hat{h}(w^{1})+\tilde{R}\sum_{i=1}^{k+1}|w^{1}_{i}-w^{2}_{i}|.
\end{eqnarray*}
By induction this implies that there exists $\tilde{q}^{I}\in \mathbb{R}^{J}_{+}$ such that
\begin{equation*}
KM\tilde{q}^{I}=w^{2}, \mbox{ and, }
h\cdot \tilde{q}^{I} \leq \hat{h}(w^{1})+\tilde{R}\sum_{i=1}^{I}|w^{1}_{i}-w^{2}_{i}|.
\end{equation*}
Since $\sum_{i=1}^{I}|w^{1}_{i}-w^{2}_{i}|\leq \sqrt{J}|w^{1}-w^{2}|_{2}$ and $\hat{h}(w^{2})\leq h\cdot \tilde{q}^{I}$ due to the fact that $KM\tilde{q}^{I}=w^{2}$, we have
\begin{equation*}
\hat{h}(w^{2})\leq \hat{h}(w^{1}) +\tilde{R}\sqrt{J}|w^{1}-w^{2}|_{2}.
\end{equation*}
This completes the proof.
\end{proof}
\begin{lemma}
\label{thm:costDiffBndW}
There exists $\hat{B}_{\hat{h}}\in (0,\infty)$ such that for all $w^{1},w^{2}\in\mathbb{R}^{I}_{+}$ we have
\begin{equation*}
|\hat{h}(w^{1})-\hat{h}(w^{2})|\leq \hat{B}_{\hat{h}}|w^{1}-w^{2}|_{2}.
\end{equation*}
\end{lemma}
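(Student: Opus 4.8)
## Proof Proposal for Lemma~\ref{thm:costDiffBndW}

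The plan is to deduce the two-sided Lipschitz estimate on $\hat h$ from the one-sided estimate already established in Lemma~\ref{thm:bndCostDifNeg}, together with a monotonicity-type observation that lets us compare two arbitrary workloads by inserting the coordinatewise maximum as an intermediary. First I would reduce to the case $w^1, w^2 \in \mathbb{N}^I_+$ (or more precisely to a dense lattice), or alternatively handle general $w^1,w^2\in\RR_+^I$ directly by noting that Lemma~\ref{thm:bndCostDifNeg} in fact extends from $\mathbb{N}^I_+$ to all of $\RR_+^I$ by continuity of $\hat h$ (recall $\hat h$ is the value of a linear program whose feasible region $\Lambda(w)$ varies continuously in $w$, so $\hat h$ is continuous, indeed it has already been asserted in the paper that $\hat h$ is Lipschitz via the minimizer $q^*(w)$; but here we want the explicit constant). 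I would phrase Lemma~\ref{thm:bndCostDifNeg} for real vectors: for $w^1 \ge w^2$ in $\RR_+^I$, $\hat h(w^2) \le \hat h(w^1) + \hat B_{\hat h}^- |w^1 - w^2|_2$.

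The key step is the following. Given arbitrary $w^1, w^2 \in \RR_+^I$, set $w = w^1 \vee w^2$ (coordinatewise maximum), so that $w \ge w^1$ and $w \ge w^2$. The first thing I need is an inequality in the \emph{opposite} direction from Lemma~\ref{thm:bndCostDifNeg}, namely that increasing the workload cannot decrease $\hat h$ by more than a linear amount — but actually $\hat h$ is trivially \emph{monotone nondecreasing} is \emph{false} in general (adding workload to a cheap resource can be cheaper than the configuration it replaces). So instead I use Lemma~\ref{thm:bndCostDifNeg} in the form: $\hat h(w^i) \le \hat h(w) + \hat B_{\hat h}^-|w - w^i|_2$ for $i=1,2$, which bounds $\hat h(w^i)$ \emph{above} in terms of $\hat h(w)$. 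To get a bound \emph{below}, I would argue directly: take $q^* = q^*(w^i)$ a minimizer for $w^i$; I can augment $q^*$ to a feasible queue-length vector for $w$ by adding, for each resource $l$, an appropriate nonnegative multiple of the local-traffic job vector $\beta_{s_l} e^{s_l}$ (which exists by Condition~\ref{cond:locTraffic}), increasing the workload on resource $l$ exactly by $w_l - w^i_l \ge 0$ and leaving all other coordinates unchanged. This gives a feasible $q$ for $w$ with $h\cdot q = \hat h(w^i) + \sum_l (w_l - w^i_l) h_{s_l}\beta_{s_l} \le \hat h(w^i) + R'|w - w^i|_2$ where $R' = \max_l h_{s_l}\beta_{s_l}$, hence $\hat h(w) \le \hat h(w^i) + R'|w - w^i|_2$, i.e. $\hat h(w^i) \ge \hat h(w) - R'|w-w^i|_2$.

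Combining the upper bound from Lemma~\ref{thm:bndCostDifNeg} with this lower bound, for $i=1,2$:
\begin{equation*}
|\hat h(w^i) - \hat h(w)| \le \max\{\hat B_{\hat h}^-, R'\}\,|w - w^i|_2.
\end{equation*}
Then by the triangle inequality $|\hat h(w^1) - \hat h(w^2)| \le |\hat h(w^1)-\hat h(w)| + |\hat h(w)-\hat h(w^2)| \le \max\{\hat B_{\hat h}^-, R'\}(|w-w^1|_2 + |w-w^2|_2)$, and since $w = w^1\vee w^2$ one has $w - w^1 = (w^2 - w^1)^+$ and $w - w^2 = (w^1-w^2)^+$, so $|w-w^1|_2 + |w-w^2|_2 = |(w^2-w^1)^+|_2 + |(w^1-w^2)^+|_2 \le 2|w^1 - w^2|_2$ (in fact $\le \sqrt 2\,|w^1-w^2|_2$ since the supports are disjoint). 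Setting $\hat B_{\hat h} = 2\max\{\hat B_{\hat h}^-, R'\}$ finishes the proof.

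The main obstacle I anticipate is getting the \emph{lower} bound on $\hat h(w^i)$ in terms of $\hat h(w)$ cleanly — that is, the construction showing that one can always raise a minimizing queue-length configuration to meet a larger workload at bounded marginal cost. This requires invoking the local-traffic condition exactly as in the proof of Lemma~\ref{thm:bndCostDifNeg}, and one has to be careful that the augmentation keeps the other workload coordinates fixed (which is why one uses the \emph{local}-traffic job $s_l$ with $\sum_i K_{i,s_l} = 1$). Everything else is routine manipulation with coordinatewise maxima and the triangle inequality. Note that monotonicity-in-the-wrong-direction is the subtlety: neither $w \mapsto \hat h(w)$ need be monotone, so one genuinely needs \emph{both} a one-sided Lipschitz bound (Lemma~\ref{thm:bndCostDifNeg}, handling the direction ``decrease the workload'') and the explicit augmentation argument (handling the direction ``increase the workload''), and then glue them at $w^1\vee w^2$.
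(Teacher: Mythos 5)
Your proof is correct and structurally parallel to the paper's: both glue the one-sided Lipschitz bound from Lemma~\ref{thm:bndCostDifNeg} to a bound in the opposite direction at an intermediate workload obtained by a coordinatewise lattice operation. The paper uses $w^1\wedge w^2$ (so Lemma~\ref{thm:bndCostDifNeg} gives $\hat h(w^1\wedge w^2)\le\hat h(w^i)+\hat B^-_{\hat h}|\cdot|$, and the reverse inequality follows from the subadditivity $\hat h(w^i)\le\hat h(w^1\wedge w^2)+\hat h(w^i-w^1\wedge w^2)$ together with the linear upper bound $\hat h(w)\le\hat B^+_{\hat h}|w|_2$); you use the mirror image $w^1\vee w^2$ and replace the subadditivity step with an explicit augmentation of a minimizer by local-traffic jobs $\beta_{s_l}e^{s_l}$, which is exactly what underlies the linear upper bound $\hat h(w)\le\hat B^+_{\hat h}|w|_2$ in the first place. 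So the two proofs are equivalent up to this cosmetic reflection. One small bookkeeping point: your estimate $\sum_l(w_l-w^i_l)h_{s_l}\beta_{s_l}\le R'|w-w^i|_2$ actually costs an extra $\sqrt{I}$ from passing $|\cdot|_1\le\sqrt{I}|\cdot|_2$, which is harmless but should be absorbed into $R'$; and, as you noted, Lemma~\ref{thm:bndCostDifNeg} is stated with $\mathbb{N}^I_+$ but its proof (and the paper's own use of it here, applied to $w^1\wedge w^2$) requires it on $\RR^I_+$, so that is evidently a typo.
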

\begin{proof}
Let $w^{1},w^{2}\in\mathbb{R}^{I}_{+}$ be arbitrary.    Let $q^{*}\in \mathbb{R}_{+}^{J}$ satisfy $KMq^{*}=w^{1}\wedge w^{2}$ and $h\cdot q^{*}=\hat{h}\left(w^{1}\wedge w^{2}\right)$.  Note that for every $i\in\AAA_I$ we have $|w^{1}_{i}-w^{2}_{i}|=|w^{1}_{i}-w^{1}_{i}\wedge w^{2}_{i}|+|w^{2}_{i}-w^{1}_{i}\wedge w^{2}_{i}|$ so 
\begin{equation*}
|w^{1}-w^{1}\wedge w^{2}|_{2}\leq |w^{1}-w^{2}|_{2}, \mbox{ and, }
|w^{2}-w^{1}\wedge w^{2}|_{2}\leq |w^{1}-w^{2}|_{2},
\end{equation*}
 and 
\begin{equation*}
 |w^{1}-w^{2}|_{2}\leq |w^{1}-w^{1}\wedge w^{2}|_{2}+|w^{2}-w^{1}\wedge w^{2}|_{2}.  
 \end{equation*}
Due to Lemma \ref{thm:bndCostDifNeg} we have
\begin{equation*}
\hat{h}(w^{1}\wedge w^{2}) \leq \hat{h}(w^{1})+\hat{B}^{-}_{\hat{h}}|w^{1}-w^{1}\wedge w^{2}|_{2}\leq \hat{h}(w^{1})+\hat{B}^{-}_{\hat{h}}|w^{1}-w^{2}|_{2}
\end{equation*}
and
\begin{equation*}
\hat{h}(w^{1}\wedge w^{2})\leq  \hat{h}(w^{2}) +\hat{B}^{-}_{\hat{h}}|w^{2}-w^{1}\wedge w^{2}|_{2}\leq  \hat{h}(w^{2})+\hat{B}^{-}_{\hat{h}}|w^{1}-w^{2}|_{2}.
\end{equation*}
Let $\hat{B}_{\hat{h}}^{+}\in (0,\infty)$ be such that for all $w\in\mathbb{R}^{I}_{+}$ we have $\hat{h}(w)\leq \hat{B}_{\hat{h}}^{+}|w|_{2}$.  Then, using the definition of $\hat h$,
\begin{eqnarray*}
\hat{h}(w^{1})&\leq& \hat{h}(w^{1}\wedge w^{2})+\hat{h}(w^{1}-w^{1}\wedge w^{2})\leq \hat{h}(w^{1}\wedge w^{2})+\hat{B}^{+}_{\hat{h}}|w^{1}-w^{1}\wedge w^{2}|_{2}\\
&\leq& \hat{h}(w^{1}\wedge w^{2})+\hat{B}^{+}_{\hat{h}}|w^{1}- w^{2}|_{2}
\end{eqnarray*}
and similarly
\begin{equation*}
\hat{h}(w^{2})\leq \hat{h}(w^{1}\wedge w^{2})+\hat{B}^{+}_{\hat{h}}|w^{1}- w^{2}|_{2}.
\end{equation*}
Consequently
\begin{equation*}
|\hat{h}(w^{1})-\hat{h}(w^{1}\wedge w^{2})|\leq \max\{\hat{B}^{+}_{\hat{h}},\hat{B}^{-}_{\hat{h}}\}|w^{1}- w^{2}|_{2}
\end{equation*}
 and 
\begin{equation*}
|\hat{h}(w^{2})-\hat{h}(w^{1}\wedge w^{2})|\leq \max\{\hat{B}^{+}_{\hat{h}},\hat{B}^{-}_{\hat{h}}\}|w^{1}- w^{2}|_{2}
\end{equation*}
and so
\begin{eqnarray*}
|\hat{h}(w^{1})-\hat{h}(w^{2})|&\leq&|\hat{h}(w^{1})-\hat{h}(w^{1}\wedge w^{2})|+|\hat{h}(w^{2})-\hat{h}(w^{1}\wedge w^{2})|\\
&\leq& 2\max\{\hat{B}^{+}_{\hat{h}},\hat{B}^{-}_{\hat{h}}\}|w^{1}- w^{2}|_{2}.
\end{eqnarray*}
This completes the proof.
\end{proof}

We can now complete the proof of Proposition \ref{thm:dTildeDiffBnd}.\\

\noindent {\bf Proof of Proposition \ref{thm:dTildeDiffBnd}}
For any $q^{1},q^{2}\in\mathbb{R}^{J}_{+}$, from Proposition  \ref{thm:lambdaEqu}, we have
\begin{align*}
&|\lambda|(\tilde{d}(q^{2})-\tilde{d}(q^{1})) = h\cdot q^{2} -\hat{h}(KMq^{2})-\left(h\cdot q^{1}-\hat{h}(KMq^{1}) \right)\\
&= h\cdot q^{2} -h\cdot q^{1}-\left(\hat{h}(KMq^{2})-\hat{h}(KMq^{1}) \right)
\leq h\cdot \left(q^{2} -q^{1}\right) + |\hat{h}(KMq^{2})-\hat{h}(KMq^{1}) |.
\end{align*}
The result now follows from Lemma \ref{thm:costDiffBndW} on observing that we can find  $R \in(0,\infty)$  such that for all $x \in \RR^J$ and $r \ge R$,
$|KMx|_2 \le  2 |KM^rx|_2$.
\hfill \qed

Finally we complete the proof of Proposition \ref{thm:costDiffBndQ}.\\

\noindent {\bf Proof of Proposition  \ref{thm:costDiffBndQ}}
Let  $q^{1},q^{2}\in\mathbb{R}^{J}_{+}$ be arbitrary.  Due to Proposition \ref{thm:dTildeDiffBnd} we have
\begin{eqnarray*}
|\lambda||\tilde{d}(q^{2})-\tilde{d}(q^{1})| &\leq& |h\cdot \left(q^{2}-q^{1}\right)|+B_{\hat{h}}|KMq^{2}-KMq^{1}|_{2}\\
&\leq& |h|_{2}|q^{2}-q^{1}|_{2}+ B_{\hat{h}}|KM||q^{2}-q^{1}|_{2}
\leq (|h|_{2}+ B_{\hat{h}}|KM|)|q^{2}-q^{1}|_{2}
\end{eqnarray*}
which completes the proof.
\hfill \qed

\section{Proofs of Propositions \ref{thm:tightMeas} and \ref{thm:convMeas}}
\label{sec:tightness}
Results analogous to  Propositions \ref{thm:tightMeas} and \ref{thm:convMeas} for exponential primitives were studied in \cite{budjoh} and therefore we only give proof sketches here.
 
\subsection{Proof of Proposition \ref{thm:tightMeas}.}
  The fact that $J^{r}_{E}(B^{r},y^{r})<\infty$ for $r$ sufficiently large follows from Proposition \ref{thm:workloadExpBnd}  and the fact that there exists a constant $B<\infty$ such that $h\cdot q \leq B|w|$ for all $q \in \Lambda(w)$ and $w\in\mathbb{R}^{I}_{+}$.
  To prove tightness of $\{\nu^r\}$ it is sufficient to show that the two marginals are tight. The tightness of $\nu^{r}_{(1)}$ follows immediately from Proposition \ref{thm:workloadExpBnd}.  To show the tightness of the second marginal, $\nu^{r}_{(2)}(dx)$, since $x(0)=0$ $\nu_{(2)}^{r}$-a.s. it is sufficient to show that for any $\epsilon_{1},\epsilon_{2}>0$ there exists $\delta>0$ and $R<\infty$ such that for all $r\geq R$ we have
\begin{equation}
E\left[\nu^{r}_{(2)}\left( \left\{ \sup_{s,t\in [0,1],|s-t|<\delta}\|x(s)-x(t)\|>\epsilon_{1}\right\} \right)\right]<\epsilon_{2}.\label{eq:XhatTightCond}
\end{equation}
Note that the left side above equals
$$\frac{1}{T_{r}}\int_{0}^{T_{r}} P\left( \sup_{s,t\in [u,u+1],|s-t|<\delta}\|\hat{X}^{r}(s)-\hat{X}^{r}(t)\|>\epsilon_{1}\right)du$$
and for any $t,u\geq 0$ we have
\begin{align}
\hat{X}^{r}(u+t)=&\hat{X}^{r}(u)+ KM^{r}\hat{A}^{r,u}\left(\left(t-\bar{\Upsilon}^{A,r}(u)\right)^{+}\right)+\frac{1}{r}KM^{r}\mathcal{I}_{\{t\geq\bar{\Upsilon}^{A,r}(u)>0\}}\nonumber\\
&-KM^{r}\hat{S}^{r,u}\left(\left(\bar{B}^{r}(t+u)-\bar{B}^{r}(u)-\bar{\Upsilon}^{S,r}(u)\right)^{+}\right)\nonumber \\
&-\frac{1}{r}KM^{r}\mathcal{I}_{\{\bar{B}^{r}(t+u)-\bar{B}^{r}(u)\geq\bar{\Upsilon}^{S,r}(u)>0\}} 
+rtK(\rho^{r}-\rho)\nonumber\\
 &-rK\rho^{r}\left(t\wedge \bar{\Upsilon}^{A,r}(u)\right) 
+rK\left((\bar{B}^{r}(t+u)-\bar{B}^{r}(u))\wedge \bar{\Upsilon}^{S,r}(u)\right).\label{eq:XhatDiff} 
\end{align}
From Proposition \ref{thm:expTailBnd} and Lemma \ref{thm:sameDist} it follows that for any $\epsilon_{1},\epsilon_{2}>0$ there exists $\delta>0$ and $R<\infty$ such that for all $r\geq R$ and $j\in\AAA_J$ we have
\begin{equation*}
\frac{1}{T_{r}}\int_{0}^{T_{r}} P\left( \sup_{s,t\in [u,u+1],|s-t|<\delta}\|\hat{A}_{j}^{r,u}(s)-\hat{A}_{j}^{r,u}(t)\|>\epsilon_{1}\right)du<\epsilon_{2}
\end{equation*}
and
\begin{equation*}
\frac{1}{T_{r}}\int_{0}^{T_{r}} P\left( \sup_{s,t\in [u,u+\max_{i}\{C_{i}\}],|s-t|<\max_{i}\{C_{i}\}\delta}\|\hat{S}_{j}^{r,u}(s)-\hat{S}_{j}^{r,u}(t)\|>\epsilon_{1}\right)du<\epsilon_{2}.
\end{equation*}
In addition, due to Proposition \ref{thm:nextArrTimeBnd}, Proposition \ref{thm:nextSerTimeBnd}, and the assumption that $\sup_{r}r\hat{\Upsilon}^{r}<\infty$ it follows that for any $\epsilon_{1},\epsilon_{2}>0$ there exists $\delta>0$ and $R<\infty$ such that for all $r\geq R$ we have
\begin{equation*}
\frac{1}{T_{r}}\int_{0}^{T_{r}} P\left( \|rK\rho^{r}\bar{\Upsilon}^{A,r}(u)\|>\epsilon_{1}\right)du<\epsilon_{2}
\end{equation*}
and
\begin{equation*}
\frac{1}{T_{r}}\int_{0}^{T_{r}} P\left( \|rK\bar{\Upsilon}^{S,r}(u)\|>\epsilon_{1}\right)du<\epsilon_{2}.
\end{equation*}
Also note that $rK(\rho^{r}-\rho)\rightarrow\theta$  (due to Condition \ref{cond:htc} and the paragraph that follows) and for all $s,t\geq 0$ and $j\in\AAA_J$ we have $|\bar{B}_{j}^{r}(s)-\bar{B}_{j}^{r}(t)|\leq \max_{i}\{C_{i}\}(t-s)$.
These observations, together with the form of $\|\hat{X}^{r}(s)-\hat{X}^{r}(t)\|$ for $s,t\geq u$ given by \eqref{eq:XhatDiff}, give \eqref{eq:XhatTightCond} and complete the proof.
\hfill \qed

\subsection{Proof of Proposition \ref{thm:convMeas}.}
Let $\nu^*$ and $(w,x)$ be as in the statement of the proposition. In what follows we let $P_{\nu^{*}(\omega)}$ and $E_{\nu^{*}(\omega)}$ denote probability and expectation under $\nu^{*}(\omega)$.  The proof that for a.e. $\omega$,  $P_{\nu^{*}(\omega)}(x\in\mathcal{C}([0,1]:\RR^I)=1$ is the same as the proof of \cite[Theorem 15 part 1]{budjoh}.  To complete the proof of part (a) it suffices to show that for any $f\in\mathcal{C}^{2}_{c}(\mathbb{R}^{I})$ (which is  the space of continuous, real-valued functions on $\mathbb{R}^{I}$ with compact support and continuous first and second derivatives) $f(x(t))-\int_{0}^{t}\mathcal{L}f(x(s))ds$ is a $\sigma(w,x(s):s\leq t)$-martingale under $\nu^{*}(\omega)$ for a.e $\omega$ where
\begin{equation*}
 \mathcal{L}f(y)=\sum_{i=1}^{I}\theta_{i}\frac{\partial f}{\partial y_{i}}(y)+\frac{1}{2}\sum_{i=1}^{I}\sum_{j=1}^{I}\Sigma_{i,j}\frac{\partial^2 f}{\partial y_{i} \partial y_{j}}(y), \; y \in \RR^I.
\end{equation*}
  Let $0\leq s< t \leq 1$ be fixed and let $g_{s}\in\mathcal{C}_{b}(\mathbb{R}_{+}^{I}\times\mathcal{D}([0,s]))$ and $f\in\mathcal{C}^{2}_{c}(\mathbb{R}^{I})$ be arbitrary (here $\mathcal{C}_{b}(\mathbb{R}_{+}^{I}\times\mathcal{D}([0,s])$ denotes the space of continuous, bounded real-valued functions on $\mathbb{R}_{+}^{I}\times\mathcal{D}([0,s])$).
Denoting, for $s \in [0,1]$, by $x_s$ the restriction of $x$ on $[0,s]$, we  see that 
\begin{equation*}
 g_{s}(w,x_s)\left (f(x(t))-f(x(s))-\int_{s}^{t}\mathcal{L}f(x(z))dz \right)
\end{equation*}
 is a bounded, continuous function on $\mathbb{R}_{+}^{I}\times \mathcal{D}([0,1]:\mathbb{R}^{I})$ so 
\begin{align}
&E\left[E_{\nu^{*}(\omega)}\left[g_{s}(w,x_s)\left(f(x(t))-f(x(s))-\int_{s}^{t}\mathcal{L}f(x(u))du\right) \right]^2\right]\nonumber\\
&=\lim_{m\rightarrow\infty}E\left[E_{\nu^{r_{m}}(\omega)}\left[g_{s}(w,x_s)\left(f(x(t))-f(x(s))-\int_{s}^{t}\mathcal{L}f(x(u))du\right) \right]^2\right].\label{eq:900mm}
\end{align}  
For  $(w,y)\in\mathbb{R}^{I}_{+}\times\sD^{I}$  and $0\le s \le 1$, define
\begin{equation*}
G^{u}_{s}(w,y)=g_{s}(w,[y(u+\cdot)-y(u)]_s),
\end{equation*}
where $[y(u+\cdot)-y(u)]_s$ is the restriction of $y(u+\cdot)-y(u)$ on $[0,s]$ and for $0\le s \le t \le 1$,
\begin{equation*}F^{u}_{s,t}(y)=f(y(u+t)-y(u))-f(y(u+s)-y(u))-\int_{s}^{t}\mathcal{L}f(y(u+z)-y(u))dz.
\end{equation*}
  Then the expectation in \eqref{eq:900mm} can be written as 

\begin{align*}
&E\left[\left(\frac{1}{T_{r_{m}}}\int_{0}^{T_{r_{m}}}G^{u}_{s}(\hat{W}^{r_{m}}(u),\hat{X}^{r_{m}})F^{u}_{s,t}(\hat{X}^{r_{m}})du \right)^2\right]\\
&=E\left[\frac{2}{T^{2}_{r_{m}}}\int_{0}^{T_{r_{m}}}\int_{0}^{u}G^{u}_{s}(\hat{W}^{r_{m}}(u),\hat{X}^{r_{m}})F^{u}_{s,t}(\hat{X}^{r_{m}}) G^{v}_{s}(\hat{W}^{r_{m}}(v),\hat{X}^{r_{m}})F^{v}_{s,t}(\hat{X}^{r_{m}})dvdu\right]
\end{align*}

It can be shown using Proposition \ref{thm:workloadExpBnd}, Proposition \ref{thm:expTailBnd}, Lemma \ref{thm:sameDist}, Propositions \ref{thm:nextArrTimeBnd} and \ref{thm:nextSerTimeBnd}, and the assumptions that $\sup_{r}\hat{q}^{r}<\infty$ and $\sup_{r}r\hat{\Upsilon}^{r}<\infty$, that
\begin{equation}
\lim_{m\rightarrow\infty}\sup_{u\geq 0} P\left(\sup_{t\in [0,1]}\|\bar{B}^{r}(t+u)-\bar{B}^{r}(u)-t\rho\|>\epsilon\right)= 0.
\label{eq:BBarToRhoU}
\end{equation}
The proof of the above assertion is very similar to that of  \cite[Theorem 15 part 2 ]{budjoh} and is therefore omitted.

For any $u\geq 0$ define
\begin{equation*}
\tilde{X}^{r,u}(t)=KM^{r}\hat{A}^{r,u}\left(t\right)-KM^{r}\hat{S}^{r,u}\left(\rho t\right)+rtK(\rho^{r}-\rho).
\end{equation*}
and 
\begin{equation*}
\breve{X}^{r_{m},u}(t)\doteq 
\begin{cases}
\hat{X}^{r_{m}}(t) &\mbox{ if } u\geq t\\
\hat{X}^{r_{m}}(u)+\tilde{X}^{r,u}(t-u)&\mbox{ otherwise.}
\end{cases}
\end{equation*}
Note that due to Lemma \ref{thm:sameDist} the distribution of $\tilde{X}^{r_{m},u}$ does not depend on $u$, meaning $\tilde{X}^{r_{m},u}\overset{d}{=}\tilde{X}^{r_m,0}$ for all $u\geq 0$, and from the central limit theorem for renewal processes (see e.g. \cite[Theorem 14.6]{BillingsleyConv})
 $\tilde{X}^{r,0}\rightarrow\hat{X}$ in distribution on $\sD^{I}$ where $\hat{X}(\cdot)$ is as introduced above \eqref{eq:eqrbm}.  Since $\hat{X}\in\sC^{I}$ a.s., it follows from \eqref{eq:BBarToRhoU}, Propositions \ref{thm:nextArrTimeBnd} and \ref{thm:nextSerTimeBnd}, and the assumption that $\sup_{r}\hat{\Upsilon}^{r}<\infty$ that, for any $\epsilon>0$,
\begin{equation*}
\lim_{m\rightarrow\infty}\sup_{u\geq 0} P\left(\sup_{z\in [0,1]}\|\hat{X}^{r_{m}}(u+z)-\breve{X}^{r_{m},u}(u+z)\|>\epsilon\right)= 0.
\end{equation*}
Since $f$ and it's first and second derivatives are bounded and uniformly continuous it follows that 
\begin{equation}
\lim_{m\rightarrow\infty}\sup_{u\geq 0} E\left[\|F^{u}_{s,t}(\hat{X}^{r_{m}})-F^{u}_{s,t}(\breve{X}^{r_{m},u+s})\|\right]= 0.\label{eq:diffFXHatFXBreve}
\end{equation}
and using the fact that the distribution of $\tilde{X}^{r_{m},u}$ does not depend on $u$, that $f \in \clc^2_c(\RR^{I})$ (in particular it has compact support), and that
\begin{equation*}
E\left[f(\hat{X}(t-s)+x)-f(x)-\int_{s}^{t}\mathcal{L}f(\hat{X}(z-s)+x)dz\right]=0
\end{equation*}
for all $x\in\mathbb{R}^{I}$ we have
\begin{equation}
\lim_{m\rightarrow\infty}\sup_{x\in\mathbb{R}^{I},u\geq 0}\left\{E\left[f(\tilde{X}^{r_{m},u+s}(t-s)+x)-f(x)-\int_{s}^{t}\mathcal{L}f(\tilde{X}^{r_{m},u+s}(z-s)+x)dz\right]\right\}= 0. \label{eq:fXTilde0}
\end{equation}
Recall that 
\begin{align*}
F^{u}_{s,t}(\breve{X}^{r_{m},u+s}) =&f(\tilde{X}^{r_{m},u+s}(t-s)+\hat{X}^{r_{m}}(u+s)-\hat{X}^{r_{m}}(u))-f(\hat{X}^{r_{m}}(u+s)-\hat{X}^{r_{m}}(u))\\
&-\int_{s}^{t}\mathcal{L}f(\tilde{X}^{r_{m},u+s}(z-s)+\hat{X}^{r_{m}}(u+s)-\hat{X}^{r_{m}}(u))dz
\end{align*}
and note that since $\tilde{X}^{r_{m},u+s}$ is independent of $\clg^{r_{m}}(u+s)$ and $\hat{X}^{r_{m}}(u+s)-\hat{X}^{r_{m}}(u)$ is $\clg^{r_{m}}(u+s)$-measurable we have
\begin{align}
&\sup_{u\geq 0} E\left[F^{u}_{s,t}(\breve{X}^{r_{m},u+s}) | \clg^{r_{m}}(u)\right]\nonumber\\
& \leq\sup_{x\in\mathbb{R}^{I},u\geq 0} E\left[f(\tilde{X}^{r_{m},u+s}(t-s)+x)-f(x)-\int_{s}^{t}\mathcal{L}f(\tilde{X}^{r_{m},u+s}(z-s)+x)dz\right]. \label{eq:FXBreveUnif}
\end{align}
The fact that $F^{u}_{s,t}$ and $G^{u}_{s}$ are uniformly bounded in $u$ gives
\begin{align*}
&\lim_{m\rightarrow \infty}E \frac{2}{T^{2}_{r_{m}}}\int_{0}^{T_{r_{m}}}\int_{0}^{u}G^{u}_{s}(\hat{W}^{r_{m}}(u),\hat{X}^{r_{m}})F^{u}_{s,t}(\hat{X}^{r_{m}}) G^{v}_{s}(\hat{W}^{r_{m}}(v),\hat{X}^{r_{m}})F^{v}_{s,t}(\hat{X}^{r_{m}})dvdu \\
&=\lim_{m\rightarrow \infty}E\frac{2}{T^{2}_{r_{m}}}\int_{0}^{T_{r_{m}}}\int_{0}^{u-1}G^{u}_{s}(\hat{W}^{r_{m}}(u),\hat{X}^{r_{m}})F^{u}_{s,t}(\breve{X}^{r_{m},u+s})\\
& \hspace{1.75in} G^{v}_{s}(\hat{W}^{r_{m}}(v),\hat{X}^{r_{m}})F^{v}_{s,t}(\hat{X}^{r_{m}})dvdu \\
&=\lim_{m\rightarrow \infty}E\frac{2}{T^{2}_{r_{m}}}\int_{0}^{T_{r_{m}}}\!\!\!\!\int_{0}^{u-1}\!\!E\left[F^{u}_{s,t}(\breve{X}^{r_{m},u+s})| \clg^{r_{m}}(u+s) \right]\! G^{u}_{s}(\hat{W}^{r_{m}}(u),\hat{X}^{r_{m}}) \\
&\hspace{1.75in}G^{v}_{s}(\hat{W}^{r_{m}}(v),\hat{X}^{r_{m}})F^{v}_{s,t}(\hat{X}^{r_{m}})dvdu=0
\end{align*}
where the first equality comes from \eqref{eq:diffFXHatFXBreve}, and the second comes from the fact that for $u-1\geq v$ $G^{u}_{s}(\hat{W}^{r_{m}}(u),\hat{X}^{r_{m}})G^{v}_{s}(\hat{W}^{r_{m}}(v),\hat{X}^{r_{m}})F^{v}_{s,t}(\hat{X}^{r_{m}})$ and $\hat{X}^{r_{m}}(u+s)-\hat{X}^{r_{m}}(u)$ are $\clg^{r_{m}}(u+s)$-measurable, and the third comes from \eqref{eq:FXBreveUnif} and \eqref{eq:fXTilde0}.  
Putting this all together gives $$E\left[E_{\nu^{*}(\omega)}\left[g_{s}(w,x(\cdot))(f(x(t))-f(x(s))-\int_{s}^{t}\mathcal{L}f(x(u))du \right]^2\right]=0.$$  
Proof of (a) now follows by a standard separability argument.

  We will now prove part (b).  Let $f\in\mathcal{C}_{c}(\mathbb{R}^{I})$ (the space of continuous functions on $\mathbb{R}^{I}$ with compact support)) and $s\in [0,1]$ be arbitrary 
 From part (a), $P_{\nu^{*}(\omega)}(x\in\mathcal{C}([0,1]: \RR^I)=1$ for a.e. $\omega$ which implies
 \begin{equation*}
 E\left|E_{\nu^{*}(\omega)}\left[ f(w)-f(\Gamma(w+x(\cdot))(s))\right]\right|=\lim_{m\rightarrow\infty}E\left|E_{\nu^{r_{m}}(\omega)}\left[ f(w)-f(\Gamma(w+x(\cdot))(s))\right]\right|.
 \end{equation*}
 The expectation on the right side is bounded above by
 \begin{align*}
 &E\left[\left|\frac{1}{T_{r_{m}}}\int_{0}^{T_{r_{m}}} f(\hat{W}^{r_{m}}(u+s))-f(\Gamma(\hat{W}^{r_{m}}(u)+\hat{X}^{r_{m}}(u+\cdot)-\hat{X}^{r_{m}}(u))(s))du\right|\right]\\
 &+ E\left[\left|\frac{1}{T_{r_{m}}}\int_{0}^{T_{r_{m}}} f(\hat{W}^{r_{m}}(u))-f(\hat{W}^{r_{m}}(u+s))du\right|\right].
 \end{align*}
 Due to Proposition \ref{thm:skorokApprox}, Proposition \ref{thm:idleTimeBndMark}, Propositions \ref{thm:nextArrTimeBnd} and \ref{thm:nextSerTimeBnd}, the assumption that $\sup_{r}\hat{\Upsilon}^{r}<\infty$, and the fact that $f$ is uniformly continuous and bounded we have
 \begin{equation*}
 \lim_{m\rightarrow\infty}E\left|\frac{1}{T_{r_{m}}}\int_{0}^{T_{r_{m}}} f(\hat{W}^{r_{m}}(u+s))-f(\Gamma(\hat{W}^{r_{m}}(u)+\hat{X}^{r_{m}}(u+\cdot)-\hat{X}^{r_{m}}(u))(s))du\right|=0.
 \end{equation*}
 In addition, since $f$ is bounded and $T_{r_{m}}\rightarrow\infty$ we have
 \begin{equation*}
 \lim_{m\rightarrow\infty}E\left[\left|\frac{1}{T_{r_{m}}}\int_{0}^{T_{r_{m}}} [f(\hat{W}^{r_{m}}(u))-f(\hat{W}^{r_{m}}(u+s))]du\right|\right]=0.
 \end{equation*}
Together these observations show $E\left[\left|E_{\nu^{*}(\omega)}\left[ f(w)-f(\Gamma(w+x(\cdot))(s))\right]\right|\right]=0$.
By standard separability arguments it now follows that  $w\overset{d}{=}\Gamma(w+x(\cdot))(s)$ for all $s\in[0,1]$ under $\nu^{*}(\omega)$ for a.e. $\omega$ which proves part (b).
 \hfill \qed

%
%

\begin{acks}[Acknowledgments]
We would like to thank Prof. Mike Harrison whose many questions and suggestions significantly improved the exposition of the work.
\end{acks}
\begin{funding}
Research of the first author was supported in part by  awards DMS-2134107, DMS-2152577 from the NSF.  
\end{funding}



\bibliographystyle{imsart-number} 
\bibliography{networks}       

\begin{thebibliography}{19}

\bibitem{atakum}
\begin{barticle}[author]
\bauthor{\bsnm{Ata},~\bfnm{Baris}\binits{B.}} \AND
  \bauthor{\bsnm{Kumar},~\bfnm{Sunil}\binits{S.}}
(\byear{2005}).
\btitle{Heavy traffic analysis of open processing networks with complete
  resource pooling: asymptotic optimality of discrete review policies}.
\bjournal{The Annals of Applied Probability}
\bvolume{15}
\bpages{331--391}.
\end{barticle}
\endbibitem

\bibitem{belwil1}
\begin{barticle}[author]
\bauthor{\bsnm{Bell},~\bfnm{Steven~L}\binits{S.~L.}} \AND
  \bauthor{\bsnm{Williams},~\bfnm{Ruth~J}\binits{R.~J.}}
(\byear{2001}).
\btitle{Dynamic scheduling of a system with two parallel servers in heavy
  traffic with resource pooling: asymptotic optimality of a threshold policy}.
\bjournal{The Annals of Applied Probability}
\bvolume{11}
\bpages{608--649}.
\end{barticle}
\endbibitem

\bibitem{belwil2}
\begin{barticle}[author]
\bauthor{\bsnm{Bell},~\bfnm{Steven~L}\binits{S.~L.}} \AND
  \bauthor{\bsnm{Williams},~\bfnm{Ruth~J}\binits{R.~J.}}
(\byear{2005}).
\btitle{Dynamic scheduling of a parallel server system in heavy traffic with
  complete resource pooling: Asymptotic optimality of a threshold policy}.
\bjournal{Electronic Journal of Probability}
\bvolume{10}
\bpages{1044--1115}.
\end{barticle}
\endbibitem

\bibitem{BillingsleyConv}
\begin{bbook}[author]
\bauthor{\bsnm{Billingsley},~\bfnm{Patrick}\binits{P.}}
(\byear{1999}).
\btitle{Convergence of probability measures},
\bedition{second} ed.
\bseries{Wiley Series in Probability and Statistics: Probability and
  Statistics}.
\bpublisher{John Wiley \& Sons Inc.}, \baddress{New York}.
\bnote{A Wiley-Interscience Publication}.
\bdoi{10.1002/9780470316962}
\bmrnumber{1700749 (2000e:60008)}
\end{bbook}
\endbibitem

\bibitem{budgho1}
\begin{barticle}[author]
\bauthor{\bsnm{Budhiraja},~\bfnm{Amarjit}\binits{A.}} \AND
  \bauthor{\bsnm{Ghosh},~\bfnm{Arka~Prasanna}\binits{A.~P.}}
(\byear{2005}).
\btitle{A large deviations approach to asymptotically optimal control of
  crisscross network in heavy traffic}.
\bjournal{The Annals of Applied Probability}
\bvolume{15}
\bpages{1887--1935}.
\end{barticle}
\endbibitem

\bibitem{budjoh}
\begin{barticle}[author]
\bauthor{\bsnm{Budhiraja},~\bfnm{Amarjit}\binits{A.}} \AND
  \bauthor{\bsnm{Johnson},~\bfnm{Dane}\binits{D.}}
(\byear{2020}).
\btitle{Control Policies Approaching Hierarchical Greedy Ideal Performance in
  Heavy Traffic for Resource Sharing Networks}.
\bjournal{Mathematics of Operations Research}
\bvolume{45}
\bpages{797--832}.
\end{barticle}
\endbibitem

\bibitem{budliusah}
\begin{barticle}[author]
\bauthor{\bsnm{Budhiraja},~\bfnm{Amarjit}\binits{A.}},
  \bauthor{\bsnm{Liu},~\bfnm{Xin}\binits{X.}} \AND
  \bauthor{\bsnm{Saha},~\bfnm{Subhamay}\binits{S.}}
(\byear{2017}).
\btitle{Construction of asymptotically optimal control for crisscross network
  from a free boundary problem}.
\bjournal{Stochastic Systems}
\bvolume{6}
\bpages{459--518}.
\end{barticle}
\endbibitem

\bibitem{ethier2009markov}
\begin{bbook}[author]
\bauthor{\bsnm{Ethier},~\bfnm{SN}\binits{S.}} \AND
  \bauthor{\bsnm{Kurtz},~\bfnm{TG}\binits{T.}}
(\byear{2009}).
\btitle{Markov processes: Characterization and convergence}
\bvolume{282}.
\bpublisher{John Wiley \& Sons}.
\end{bbook}
\endbibitem

\bibitem{har1}
\begin{barticle}[author]
\bauthor{\bsnm{Harrison},~\bfnm{J~Michael}\binits{J.~M.}}
(\byear{2000}).
\btitle{Brownian models of open processing networks: Canonical representation
  of workload}.
\bjournal{Annals of Applied Probability}
\bpages{75--103}.
\end{barticle}
\endbibitem

\bibitem{har2}
\begin{barticle}[author]
\bauthor{\bsnm{Harrison},~\bfnm{J~Michael}\binits{J.~M.}}
(\byear{2003}).
\btitle{Brownian models of open processing networks: {C}anonical representation
  of workload}.
\bjournal{The Annals of Applied Probability}
\bvolume{13}
\bpages{390--393}.
\end{barticle}
\endbibitem

\bibitem{harmandhayan}
\begin{barticle}[author]
\bauthor{\bsnm{Harrison},~\bfnm{J~Michael}\binits{J.~M.}},
  \bauthor{\bsnm{Mandayam},~\bfnm{Chinmoy}\binits{C.}},
  \bauthor{\bsnm{Shah},~\bfnm{Devavrat}\binits{D.}} \AND
  \bauthor{\bsnm{Yang},~\bfnm{Yang}\binits{Y.}}
(\byear{2014}).
\btitle{Resource sharing networks: Overview and an open problem}.
\bjournal{Stochastic Systems}
\bvolume{4}
\bpages{524--555}.
\end{barticle}
\endbibitem

\bibitem{harwil1}
\begin{barticle}[author]
\bauthor{\bsnm{Harrison},~\bfnm{J.~M.}\binits{J.~M.}} \AND
  \bauthor{\bsnm{Williams},~\bfnm{R.~J.}\binits{R.~J.}}
(\byear{1987}).
\btitle{Brownian models of open queueing networks with homogeneous customer
  populations}.
\bjournal{Stochastics}
\bvolume{22}
\bpages{77--115}.
\bdoi{10.1080/17442508708833469}
\bmrnumber{912049}
\end{barticle}
\endbibitem

\bibitem{kankelleewil}
\begin{barticle}[author]
\bauthor{\bsnm{Kang},~\bfnm{WN}\binits{W.}},
  \bauthor{\bsnm{Kelly},~\bfnm{FP}\binits{F.}},
  \bauthor{\bsnm{Lee},~\bfnm{NH}\binits{N.}} \AND
  \bauthor{\bsnm{Williams},~\bfnm{RJ}\binits{R.}}
(\byear{2009}).
\btitle{State space collapse and diffusion approximation for a network
  operating under a fair bandwidth sharing policy}.
\bjournal{The Annals of Applied Probability}
\bpages{1719--1780}.
\end{barticle}
\endbibitem

\bibitem{karshr}
\begin{bbook}[author]
\bauthor{\bsnm{Karatzas},~\bfnm{I.}\binits{I.}} \AND
  \bauthor{\bsnm{Shreve},~\bfnm{S.}\binits{S.}}
(\byear{1991}).
\btitle{Brownian {M}otion and {S}tochastic {C}alculus}
\bvolume{113}.
\bpublisher{Springer Science \& Business Media}.
\end{bbook}
\endbibitem

\bibitem{kelly1997charging}
\begin{barticle}[author]
\bauthor{\bsnm{Kelly},~\bfnm{Frank}\binits{F.}}
(\byear{1997}).
\btitle{Charging and rate control for elastic traffic}.
\bjournal{European transactions on Telecommunications}
\bvolume{8}
\bpages{33--37}.
\end{barticle}
\endbibitem

\bibitem{kelly1998rate}
\begin{barticle}[author]
\bauthor{\bsnm{Kelly},~\bfnm{Frank~P}\binits{F.~P.}},
  \bauthor{\bsnm{Maulloo},~\bfnm{Aman~K}\binits{A.~K.}} \AND
  \bauthor{\bsnm{Tan},~\bfnm{David Kim~Hong}\binits{D.~K.~H.}}
(\byear{1998}).
\btitle{Rate control for communication networks: shadow prices, proportional
  fairness and stability}.
\bjournal{Journal of the Operational Research society}
\bvolume{49}
\bpages{237--252}.
\end{barticle}
\endbibitem

\bibitem{masrob}
\begin{barticle}[author]
\bauthor{\bsnm{Massoulie},~\bfnm{Laurent}\binits{L.}} \AND
  \bauthor{\bsnm{Roberts},~\bfnm{James~W}\binits{J.~W.}}
(\byear{2000}).
\btitle{Bandwidth sharing and admission control for elastic traffic}.
\bjournal{Telecommunication systems}
\bvolume{15}
\bpages{185--201}.
\end{barticle}
\endbibitem

\bibitem{mo2000fair}
\begin{barticle}[author]
\bauthor{\bsnm{Mo},~\bfnm{Jeonghoon}\binits{J.}} \AND
  \bauthor{\bsnm{Walrand},~\bfnm{Jean}\binits{J.}}
(\byear{2000}).
\btitle{Fair end-to-end window-based congestion control}.
\bjournal{IEEE/ACM Transactions on networking}
\bvolume{8}
\bpages{556--567}.
\end{barticle}
\endbibitem

\bibitem{verloop2005stability}
\begin{barticle}[author]
\bauthor{\bsnm{Verloop},~\bfnm{Maaike}\binits{M.}},
  \bauthor{\bsnm{Borst},~\bfnm{Sem}\binits{S.}} \AND
  \bauthor{\bsnm{N{\'u}{\~n}ez-Queija},~\bfnm{Rudesindo}\binits{R.}}
(\byear{2005}).
\btitle{Stability of size-based scheduling disciplines in resource-sharing
  networks}.
\bjournal{Performance Evaluation}
\bvolume{62}
\bpages{247--262}.
\end{barticle}
\endbibitem

\end{thebibliography}


\end{document}